\newcommand{\normmm}[1]{{\left\vert\kern-0.25ex\left\vert\kern-0.25ex\left\vert #1 
    \right\vert\kern-0.25ex\right\vert\kern-0.25ex\right\vert}}
\newtheorem{theorem}{Theorem}[section]
\newtheorem*{theorem*}{Theorem B}
\newtheorem{lemma}[theorem]{Lemma}
\newtheorem{proposition}[theorem]{Proposition}
\newtheorem{corollary}[theorem]{Corollary}
\newtheorem*{observation*}{Observation}
\newtheorem*{assumption*}{Assumption}
\newtheorem*{question*}{Question}
\theoremstyle{definition}
\newtheorem*{definition*}{Definition}
\theoremstyle{remark}
\newtheorem{remark}{Remark}[section]
\newtheorem*{remark*}{Remark}
\newtheorem{example}[remark]{Example}
\newcommand{\R}{\mathbb{R}}
\newcommand{\N}{\mathbb{N}}
\newcommand{\Z}{\mathbb{Z}}
\newcommand{\Q}{\mathbb{Q}}
\newcommand{\C}{\mathbb{C}}
\newcommand{\E}{\mathbb{E}}
\newcommand{\T}{\mathbb{T}}
\newcommand{\PP}{\mathbb{P}}
\newcommand{\fX}{\mathbb{X}}
\newcommand{\MAA}{\mathscr{M}_{n-1}(W^{(2)})}
\newcommand{\MAB}{\mathscr{M}_{n}(W^{(2)})}
\newcommand{\Var}{\mathrm{Var}}
\newcommand{\Cov}{\mathrm{Cov}}
\newcommand{\supp}{\mathrm{supp}}
\newcommand{\sgn}{\mathrm{sgn}}
\newcommand{\Ab}{\mathcal{A}}
\newcommand{\ldim}{\underline{\dim}}
\newcommand{\an}{\text{\, and \,}}
\newcommand{\as}{\text{\, as \,}}
\newcommand{\indi}{\mathds{1}}
\newcommand{\NQ}{\mathcal{N}^{(\alpha, p, q)}}
\newcommand{\VU}{Y(u)}
\numberwithin{equation}{section}
\begin{document}

\title[Harmonic analysis of Mandelbrot cascades]{Harmonic analysis of Mandelbrot cascades\\ {\MakeLowercase{--- in the context of vector-valued martingales}}}

\author
{Xinxin Chen}
\address
{Xinxin CHEN: School of Mathematical Sciences
Beijing Normal (Teachers) University
Beijing 100875, China}
\email{xinxin.chen@bnu.edu.cn}

\author%[authorlabel1]
{Yong Han}
\address%[authorlabel1]
{Yong HAN: School of Mathematical Sciences, Shenzhen University, Shenzhen 518060, Guangdong, China}
\email{hanyong@szu.edu.cn}

\author%[authorlabel1]
{Yanqi Qiu}
\address%[authorlabel1]
{Yanqi Qiu: School of Fundamental Physics and Mathematical Sciences, HIAS, University of Chinese Academy of Sciences, Hangzhou 310024, China}
\email{yanqi.qiu@hotmail.com, yanqiqiu@ucas.ac.cn}

\author{Zipeng Wang}
\address{Zipeng WANG: College of Mathematics and Statistics, Chongqing University, Chongqing
401331, China}
\email{zipengwang2012@gmail.com, zipengwang@cqu.edu.cn}

%\date{\today}

\begin{abstract}

We solve a long-standing open problem of determining the Fourier dimension of the Mandelbrot canonical cascade measure (MCCM). This problem of significant interest was raised by Mandelbrot in 1976 and reiterated by Kahane in 1993. Specifically,
we derive the exact formula for the Fourier dimension of the MCCM for  random weights $W$ satisfying the condition $\E[W^t]<\infty$ for all $t>0$. As a corollary, we prove that  the MCCM is Salem if and only if the random weight has a specific two-point distribution.  In addition, we show that  the  MCCM is Rajchman with polynomial Fourier decay whenever the random weight satisfies $\E[W^{1+\delta}]<\infty$ for some $\delta>0$.  As a consequence, we discover that,   in the Biggins-Kyprianou's boundary case,  the Fourier dimension of the MCCM exhibits a second order phase transition at the inverse temperature $\beta = 1/2$; we establish the upper Frostman regularity for MCCM;  and we obtain a Fourier restriction estimate for MCCM.

The major novelty of this paper is the discovery of putting the fine analysis  of Fourier decay for multiplicative chaos measures into the theory of vector-valued martingales.  This new viewpoint is of fundamental importance in  the study of Fourier decay of multiplicative chaos measures. Indeed, in the sequel to this paper, combining the vector-valued martingale methods and  ideas from Littlewood-Paley theory, the precise Fourier dimensions will be established for various classical models of multiplicative chaos measures including GMC of all dimensions, microcanonical Mandelbrot cascades, Mandelbrot random coverings, as well as Fourier-Walsh analysis of these models.  
\end{abstract}

\subjclass[2020]{Primary  60G57, 42A61, 46B09; Secondary  60J80, 60G46}
%46B09  Probabilistic methods in Banach space theory
%60G57  random measures
%60J80 Branching processes
%60G46 martingales and classical analysis
%42A61 Probability methods for one variable harmnoic analysis
\keywords{Mandelbrot cascades; Fourier dimension;  Vector-valued martingales; Rajchman measures; Salem measures; Branching random walks}

\maketitle

\setcounter{tocdepth}{2}
\tableofcontents

\setcounter{tocdepth}{0}
\setcounter{equation}{0}

%\setcounter{secnumdepth}{2}
%\setcounter{tocdepth}{2}

%\vspace{0.1in}

\section{Introduction}\label{sec-intro}

\subsection{Main results}
Aiming to provide a critical mathematical discussion of  Kolmogorov's claims on the statistics of energy dissipation in intermittent turbulence (see Kolmogorov \cite{K61}, Landau-Lifshitz \cite{LK-59}, Obukhov \cite{O-62} and Yaglom  \cite{Yaglom}),     Mandelbrot  \cite{M72} introduced the log-normal multiplicative martingales to build random measures. Later in  \cite{M74,M74-2}, Mandelbrot  introduced the more tractable model of the  canonical cascade measures (MCCM),  which originally focuses on  the construction of related random measures on the unit interval  and becomes nowadays an important aspect of the active theory of multiplicative chaos.    
Mandelbrot in 1974 \cite{M74-2}  proved or conjectured various fundamental fractal properties of the MCCM,   many of which were rigorously proved by  Kahane-Peyri\`ere  \cite{Kahane-Peyriere-advance}  in 1976.

Following Mandelbrot,   we fix an integer $b\ge 2$ and an initial random weight $W$: 
\begin{align}\label{in-W}
 W\ge 0, \quad  \E[W]=1 \an \text{$W$ is non-constant}.
 \end{align}
Let  $\mu_\infty$ denote the MCCM, i.e., the random cascade measure on $[0,1]$ corresponding to the random weight $W$ and the $b$-adic structure of $[0,1]$. The formal definitions  of MCCM will be recalled in  \S \ref{sec-Mandelbrot-cascade}.  By  Kahane-Peyri\`ere  \cite{Kahane-Peyriere-advance}, $\mu_\infty$ is  non-degenerate  (that is, $\PP(\mu_\infty\ne 0) >0$) 
if and only if 
\begin{align}\label{H-dim}
D_H  := D_H(W, b) = 1- \E[W\log_b W]>0.
\end{align}
And,  under a mild assumption,  almost surely on $\{\mu_\infty\ne 0\}$,  the measure $\mu_\infty$ is unidimensional and the exact formula of its Hausdorff dimension is   $\dim_H(\mu_\infty)= D_H$.

{\flushleft \bf Mandelbrot-Kahane problem.}  
Denote  the Fourier transform of  $\mu_\infty$ by 
\[
\widehat{\mu}_\infty (\zeta)=\int_{[0,1]} e^{-2\pi i\zeta x}d\mu_\infty(x),\quad \zeta\in \mathbb{R}.
\]
In 1976, Mandelbrot \cite{M76} (and in his selected works \cite[pp. 402]{MA-book}) asked {\it  whether the Fourier coefficients of the cascade measure $\mu_\infty$ satisfy 
\text{$|\widehat{\mu}_{\infty}(k)|^2\sim k^{-D}$ as $k\to\infty$
}} for a suitable exponent $D$ and what is the relationship between the optimal $D$ and its Hausdorff dimension $D_H$.  This question was  reiterated  by Kahane \cite{Ka-93} in 1993, where he started the general open program to study the {\it decay behavior of Fourier transforms of natural random measures}. 
%\vspace{0.2cm}

Within Kahane's framework, natural random measures include local times, time set images, and multiplications by random weights (in particular, the  MCCM). Kahane \cite{Ka-93} provided detailed analysis of MCCM and noted that,  {\it except for a few cases, the behavior of $\widehat{\mu}_\infty(\zeta)$ was not known.}  Kahane's program involves the study of : 
\begin{itemize}
	\item  {\it Rajchman property} :  Does $\widehat{\mu}_\infty (\zeta) \to 0$ as $\zeta\to\infty$ ? 
	\item  {\it Polynomial Rajchman property} : Does   $|\widehat{\mu}_\infty (\zeta)|^2 =  O ( |\zeta|^{-D})$ for a certain $D>0$ ?  
	\item {\it Exact Fourier dimension}:  What is the exact value of the Fourier dimension $\dim_F(\mu_\infty)$ ? Here $\dim_F(\mu_\infty)$ is  defined by 
\[
\dim_F(\mu_\infty): =\sup\{D \in[0,1]:  |\widehat{\mu}_\infty(\zeta)|^2 = O(|\zeta|^{-D})\}.
\]
	\item {\it Salem property} : 	When does  the equality $\dim_{F}(\mu_\infty)=\dim_H(\mu_\infty)$  hold ? 
\end{itemize} 

The Mandelbrot-Kahane problem remains open.   Mandelbrot \cite{M76}   wrote 
\begin{quote} 
The nature of the relationship
between $D$ and Fourier analysis has long been central to the fine mathematical
aspects of trigonometric series (see Kahane \& Salem 1963), but the
resulting theory is little known and little used beyond its original context. ... If, as I hope, the importance of fractal shapes in turbulence is recognized, the spectral analysis of the motion of fluids may at long last benefit from a number of pure mathematical results in harmonic analysis.
\end{quote}

Influenced by ideas from  the modern harmonic analysis (see in particular \S \ref{sec-outline}),   we  are now able to  solve this long-standing  problem (the result has been announced in \cite{CHQWa}).       The crucial use of the modern theory of vector-valued martingales distinguishes our current work with many works in the litterature on the Fourier decay of random measures.   To give  the reader a quick glance of our main result, consider  the case of  the log-normal weights ($
W=e^{\sigma N-\sigma^2/2}$ with $N\sim \mathcal{N}(0,1)$ and $\sigma \ge 0$) which were treated  in the origin of the cascade theory  (see \cite{M72}  and \cite[pp. 374]{KP-en}).  An  illustration of the Fourier and Hausdorff dimensions of  the cascade measure $\mu_\infty$ in this crucial case is given in Figure \ref{fig-log-normal}. 
\begin{figure}[H]
\centering
\begin{tikzpicture}[scale=2.3]
		\draw[->](0,0)--(0,0)node[below]{$0$}--(1.9,0)node[below]{$\sigma$};
				\draw(0.7411477585,0.75)node[right]{$\mathrm{dim}_H(\mu_\infty) = 1 - \frac{\sigma^2}{ 2 \log b}$};
				
		\draw[->](0,0)--(0,1.2) node[left]{$\mathrm{dim}_F(\mu_\infty)$};
		\draw[domain=0:sqrt(ln(3)/2),blue]plot(\x,{1-\x^2/ln(3)});
		\draw[domain=sqrt(ln(3)/2):sqrt(2*ln(3)),red]plot(\x,{1-\x^2/ln(3)+(1-sqrt(2*\x^2/ln(3)))^2});

		\draw[densely dashed](0,0.5)--((0.7411477585,0.5);
		\draw[densely dashed](0.7411477585,0)--(0.7411477585,0.5);
\draw [fill] (0.7411477585,0) circle [radius=0.4pt];
\draw [fill] (0.7411477585,0.5) circle [radius=0.4pt];
\draw [fill] (0,0.5) circle [radius=0.4pt];
\draw [fill] (0,1) circle [radius=0.4pt];
\draw (0.7411477585,0) node[below] {$\frac{1}{2}\sqrt{2 \log b}$};
\draw (2*0.7411477585,0) node[below] {$\sqrt{2 \log b}$};
\draw [fill] (2*0.7411477585,0) circle [radius=0.4pt];
\draw (0,0.5) node[left] {$\frac{1}{2}$};
\draw (0,1) node[left] {$1$};
\draw[domain=0:sqrt(2*ln(3)), densely dashed]plot(\x,{1-\x^2/(2 *ln(3))});
\draw[->](0,0)--(0,0)node[below]{$0$}; 
	\end{tikzpicture} \caption{$W=e^{\sigma N-\sigma^2/2}$ with $\E[W\log W]<\log b \Longleftrightarrow \sigma^2< 2 \log b$.}\label{fig-log-normal}
\end{figure}

Now we proceed to state our main results.  In our first result, we are going to assume that $\E[W^t]<\infty$ for all $t>0$.  Hence, we may set 
\[
W^{(2)} =\frac{ W^2}{\E[W^2]}
\]
and 
define 
\begin{align}\label{def-DF}
D_F: = D_F(W, b) = \left\{
\begin{array}{lc}
\displaystyle 1 - \frac{\log \E[W^2]}{\log b} & \quad \text{if $\E [ W^{(2)}\log W^{(2)}] \le \log b$}
\\
\displaystyle 1 -   \inf_{1/2\le t\le 1} \frac{\log  \E[b^{1-t}W^{2t}]}{ t \log b} & \quad \text{if $\E [ W^{(2)}\log W^{(2)}] >\log b$}
\end{array}
\right. .
\end{align}
For further reference, we say that the random weight $W$ is in: 
\begin{itemize}
\item {\it squared sub-critical regime}  if $\E[W^{(2)}\log W^{(2)}]< \log b$; 
\item {\it squared critical regime} if $\E[W^{(2)}\log W^{(2)}] =  \log b$;  
\item {\it squared super-critical regime} if $\E[W^{(2)}\log W^{(2)}]>\log b$. 
\end{itemize}
 The  squared super-critical regime  involves the extremal point process of  {\it branching random walks (BRW)}. Hence, in this situation, as usual, we always make  the standard
 
{\bf \flushleft Assumption:}
$\log W$ is {\it non-lattice}: that is, it is not supported on an arithmetic progression.

\begin{theorem}[Exact Fourier dimension]\label{thm-fourier}
Assume that $\E[W\log W]<\log b$ and $\E[W^t]<\infty$ for all $t>0$.   Then $0< D_F<1$ and  almost surely on $\{\mu_\infty\ne 0\}$,  we have 
\begin{align}\label{FD-gen}
\dim_{F}(\mu_\infty)  = D_F.
\end{align}
\end{theorem}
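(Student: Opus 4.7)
The plan is to split the proof into a lower bound $\dim_F(\mu_\infty)\ge D_F$ and a matching upper bound, both almost surely on $\{\mu_\infty\ne 0\}$. The engine is the $b$-adic self-similarity: writing $[0,1]=\bigsqcup_{i=0}^{b-1}[i/b,(i+1)/b]$,
\[
\widehat{\mu}_\infty(\zeta) \;=\; \frac{1}{b}\sum_{i=0}^{b-1} W_i\, e^{-2\pi i \zeta i/b}\, \widehat{\mu}_\infty^{(i)}(\zeta/b),
\]
where $(W_i)$ are the first-generation weights and $(\mu_\infty^{(i)})$ are conditionally i.i.d.\ copies of $\mu_\infty$. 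Iterating $n\sim\log_b|\zeta|$ times realizes $\widehat{\mu}_\infty(\zeta)$ as a sum over leaves of the $b$-ary tree, weighted by the multiplicative cascade products $b^{-n}\prod_{k=1}^n W_{i_1\cdots i_k}$ and oscillating phases. This sum is precisely the vector-valued martingale alluded to in the outline, with values in an appropriate frequency-function Banach space, and it governs the entire analysis.

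\textbf{Squared (sub-)critical regime.} When $\E[W^{(2)}\log W^{(2)}]\le \log b$, the $W^{(2)}$-cascade is $L^1$-bounded, and the linear recursion for $F(\zeta):=\E[|\widehat{\mu}_\infty(\zeta)|^2]$,
\[
F(\zeta) \;=\; \frac{\E[W^2]}{b^2}\sum_{i=0}^{b-1} F(\zeta/b) \;+\; (\text{cross terms}),
\]
has cross terms controlled by cancellation of $e^{-2\pi i\zeta i/b}$ against the bounded renewal contribution from earlier scales. Iterating gives $F(\zeta)\asymp |\zeta|^{-(1-\log_b\E[W^2])}$, modulo a logarithmic correction at the squared critical point absorbed into any $\varepsilon$-slack. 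Chebyshev plus Borel--Cantelli along $\zeta_j=2^j$, combined with a crude derivative bound on $\widehat{\mu}_\infty$ via $\int|x|\dd\mu_\infty$, extends the decay to continuous $\zeta$ and yields $\dim_F(\mu_\infty)\ge D_F$.

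\textbf{Squared super-critical regime.} Now $\E[W^{(2)}\log W^{(2)}]>\log b$, so the second moment is inflated by rare heavy branches of the cascade (equivalently, the upper tail of the branching random walk with step $-\log W$) and only delivers $\dim_F\ge 1-\log_b\E[W^2]<D_F$. The remedy is fractional moments $\E[|\widehat{\mu}_\infty(\zeta)|^{2t}]$ with $t\in[1/2,1]$: via a Khintchine--Burkholder-Gundy inequality on the cascade tree, available in the vector-valued martingale framework,
\[
\E[|\widehat{\mu}_\infty(\zeta)|^{2t}] \;\lesssim\; |\zeta|^{-tD_F(t)},\qquad D_F(t):=1-\frac{\log \E[b^{1-t}W^{2t}]}{t\log b}.
\]
Optimizing $t$ matches the Legendre transform of the BRW log-moment generating function (the non-lattice hypothesis on $\log W$ rules out periodic degeneracies in the minimizer), producing the infimum formula for $D_F$. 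Markov plus Borel--Cantelli then yields $\dim_F(\mu_\infty)\ge D_F$.

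\textbf{Matching upper bound and main obstacle.} For $\dim_F(\mu_\infty)\le D_F$, one must exhibit $\zeta_j\to\infty$ with $|\widehat{\mu}_\infty(\zeta_j)|^2\gtrsim |\zeta_j|^{-D_F}$. Choose $\zeta/b^n$ inside a compact set on which $\widehat{\mu}_\infty$ is generically of unit order, then pick the cascade branch carrying the dominant mass — a typical particle in the sub-critical regime, a near-extremal particle of the associated BRW in the super-critical regime — so that a single leaf controls the amplitude of the iterated self-similar sum, while Fourier oscillations on the remaining leaves average down. I expect the hard part to be the super-critical case on two fronts: the fractional-moment bound requires a decomposition that intertwines Fourier cancellation with BRW tilting at the optimal $t^{\ast}$, and the matching lower bound on $|\widehat{\mu}_\infty(\zeta_j)|$ demands sharp near-extremal BRW statistics with Bramson-type logarithmic corrections — exactly the regime where the Biggins--Kyprianou boundary case highlighted in the abstract produces the second-order phase transition at $\beta=1/2$ and where the full strength of the vector-valued martingale machinery is needed.
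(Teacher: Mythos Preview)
Your lower-bound argument has a genuine gap at the step ``Chebyshev plus Borel--Cantelli along $\zeta_j=2^j$, combined with a crude derivative bound on $\widehat{\mu}_\infty$ via $\int|x|\dd\mu_\infty$, extends the decay to continuous $\zeta$.'' Chebyshev from $\E[|\widehat{\mu}_\infty(2^j)|^2]\asymp 2^{-jD_F}$ gives $\PP(|\widehat{\mu}_\infty(2^j)|^2>2^{-j(D_F-\varepsilon)})\lesssim 2^{-j\varepsilon}$, summable in $j$, fine. But the derivative bound is $|\widehat{\mu}_\infty'(\zeta)|\le 2\pi\mu_\infty([0,1])$, so between $2^j$ and $2^{j+1}$ the function can vary by $O(2^j)$, which swamps the target size $2^{-j(D_F-\varepsilon)/2}$. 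To make interpolation work you would need a grid of spacing $\sim 2^{-j(D_F-\varepsilon)/2}$, i.e.\ $\sim 2^{j(1+(D_F-\varepsilon)/2)}$ many points at level $j$; the union bound then produces $2^{j(1+(D_F-\varepsilon)/2)}\cdot 2^{-j\varepsilon}$, which diverges. Higher integer moments would rescue this in many models, but the paper explicitly computes that $\E[|\widehat{\mu}_\infty(k)|^4]$ can already decay \emph{slower} than $(\E[|\widehat{\mu}_\infty(k)|^2])^2$ (even in the squared sub-critical regime), so the standard $2m$-th moment route is blocked. The same obstruction applies to your fractional moments $2t\in[1,2]$ in the super-critical case: Markov gives tail probability $\sim n^{-t\varepsilon}$, still not summable over integers $n$.

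This is precisely why the paper does \emph{not} estimate $\E[|\widehat{\mu}_\infty(n)|^q]$ coordinate by coordinate. The key idea you are missing is to treat $(\widehat{\mu}_\infty(n))_{n\ge 1}$ as a single $\ell^q$-valued random vector and control the mixed norm
\[
\mathcal{N}^{(\alpha,p,q)}(\mu_\infty)=\Big\|\big(n^\alpha\widehat{\mu}_\infty(n)\big)_{n\ge 1}\Big\|_{L^p(\PP;\,\ell^q)}
\]
with $1<p<2<q<\infty$. Finiteness of this quantity for $\alpha<D_F/2$ immediately gives $\sum_n|n^\alpha\widehat{\mu}_\infty(n)|^q<\infty$ a.s., hence the pointwise decay for \emph{all} $n$ simultaneously, with no Borel--Cantelli or interpolation needed. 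Establishing this requires genuine vector-valued martingale machinery (Dirksen--Yaroslavtsev $\ell^q$-valued Burkholder--Rosenthal, Bourgain--Stein) that reduces the problem to small moments $\E[\mathscr{M}_n(W^{(2)})^{p/2}]$ of the \emph{squared} cascade martingale --- and it is only at this secondary stage that the BRW freezing/small-moment asymptotics you mention enter. Your upper-bound sketch is closer in spirit to the paper (which uses CLT/stable limits for $b^{nD_F/2}\widehat{\mu}_\infty(b^n)$ rather than single-branch dominance), but without the correct lower-bound mechanism the argument does not close.
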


\begin{remark}
Under the assumption that $W$  has sub-Gaussian tail $
\PP(W>t)\le 2 \exp (-ct^2)$,  the Fourier dimension \eqref{FD-gen} is established simultaneously and indepedently by Changhao Chen, Bing Li and Ville Suomala \cite{CLS24}. They later  replace this tail by the sub-exponential tail $\PP(W>t)\le 2 \exp (-ct)$. Note that, neither sub-Gaussian nor sub-exponential tail is satisfied by the log-normal weights.  
\end{remark}

\begin{remark}\label{rem-non-lattice}
The non-lattice assumption  on $\log W$  is needed only in the upper bound of $\dim_F(\mu_\infty)$ when $W$ is  in the squared super-critical regime.  In the sequel to this paper,   we will  on the one hand relax the condition $\E[W^t]<\infty$ for all $t>0$ to the much simpler condition $\E[W^2]<\infty$, and on the other hand  obtain the same upper bound of $\dim_F(\mu_\infty)$ without the non-lattice assumption on $\log W$   by establishing explicitly  the optimal  exponent  $\alpha$ of the  H\"older regularity: 
\begin{align}\label{Hol-optimal}
\mu_\infty([x,y]) = O (|x-y|^{\alpha})  \quad \text{for all  $0\le x<y \le 1$.} 
\end{align}
 See  Corollary \ref{cor-holder} below and its proof in \S \ref{sec-cor-holder} for more details. 
\end{remark}

\begin{corollary}[Salem property]\label{cor-salem}
Assume that $\E[W\log W]<\log b$ and $\E[W^t]<\infty$ for all $t>0$.  Then almost surely on $\{\mu_\infty\ne 0\}$,   $\mu_\infty$ is Salem if and only if $W$ has a two-point distribution : 
\begin{align}\label{2-pt-w}
\PP(W= x^{-1})= 1 - \PP(W=0) = x \quad \text{\,with\,} \quad  b^{-1}<x\le 1.
\end{align}
\end{corollary}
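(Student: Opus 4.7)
The plan is to combine Theorem~\ref{thm-fourier} (which gives $\dim_F(\mu_\infty)=D_F$ a.s.\ on $\{\mu_\infty\ne 0\}$) with the Kahane--Peyri\`ere formula $\dim_H(\mu_\infty)=D_H$, thereby reducing the Salem condition $\dim_F(\mu_\infty)=\dim_H(\mu_\infty)$ to the identity $D_F=D_H$, a purely analytic statement about the law of $W$. I would organize everything around the log-Laplace transform
\[
\tau(s):=\log_b\E[W^s],\qquad s\in[0,2],
\]
which is convex, vanishes at $s\in\{0,1\}$, and satisfies $\tau'(1)=\E[W\log_b W]=1-D_H\in(0,1)$, $\tau(2)=\log_b\E[W^2]$, and $\tau''(s)\log b=\Var_{\PP_s}(\log W)$ for the size-biased tilt $d\PP_s\propto W^s\,d\PP$. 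The ``if'' direction is then a direct check: for $W$ of the form \eqref{2-pt-w} one computes $\E[W^2]=x^{-1}$ and $\E[W^{(2)}\log W^{(2)}]=-\log x<\log b$, placing $W$ in the squared sub-critical regime, so $D_F=1+\log_b x=D_H$.

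For the converse I would split according to the regime. In the squared sub-critical/critical regime $D_F=1-\tau(2)$, so $D_F=D_H$ reads $\tau(2)-\tau(1)=\tau'(1)$: the secant of $\tau$ over $[1,2]$ equals the tangent at $1$, and convexity forces $\tau$ affine on $[1,2]$, hence $\Var_{\PP_s}(\log W)=0$ for every $s\in[1,2]$. Since $\PP_s$ charges only $\{W>0\}$, $\log W$ must be a.s.\ constant there, so $W$ takes only the two values $\{0,c\}$; the normalization $\E[W]=1$ gives $c\,\PP(W=c)=1$, which produces \eqref{2-pt-w} with $x=\PP(W=c)$, and $D_H>0$ together with the non-constancy of $W$ restrict $x$ to $(b^{-1},1)$. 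It remains to rule out Salem in the squared super-critical regime, i.e.\ to prove $D_F<D_H$ strictly. Substituting $s=2t$,
\[
D_F=1-\inf_{s\in[1,2]}\Psi(s),\qquad \Psi(s):=\frac{2-s+2\tau(s)}{s},
\]
and setting $\Phi(s):=s(\Psi(s)-\tau'(1))=2-s+2\tau(s)-s\tau'(1)$ yields a convex function on $[1,2]$ with $\Phi(1)=1-\tau'(1)>0$ and $\Phi'(1)=\tau'(1)-1<0$. The supporting-line inequality at $s=1$ then gives
\[
\Phi(s)\ge (1-\tau'(1))(2-s)>0\qquad\text{for every }s\in[1,2).
\]
A routine differentiation of $\Psi$ shows that the squared super-critical hypothesis $\E[W^{(2)}\log W^{(2)}]>\log b$ is equivalent to $\Psi'(2)>0$; combined with the unconditional $\Psi'(1)<0$ (which follows from $\tau'(1)<1$), this forces any minimizer $s^\ast$ of $\Psi$ on $[1,2]$ to lie in $(1,2)$, whence $\Psi(s^\ast)>\tau'(1)$ and $D_F<D_H$.

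I expect the main obstacle to be the super-critical step: $\inf_s\Psi(s)$ has no closed form and the location of $s^\ast$ depends intricately on the entire law of $W$, so a direct comparison with $\tau'(1)$ at the minimizer is unwieldy. The key manoeuvre above is to transfer the comparison from the unknown point $s^\ast$ to the endpoint $s=1$, where $\Psi(1)=1$ and $\Phi'(1)$ are controlled by $\tau'(1)<1$ alone; convexity of $\tau$ then propagates the strict gap $(1-\tau'(1))(2-s)$ across the whole interval $[1,2)$, producing a quantitative loss $D_H-D_F\ge (1-\tau'(1))(2-s^\ast)/s^\ast>0$ at the minimizer and completing the dichotomy.
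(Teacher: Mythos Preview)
Your proof is correct. In the squared sub-critical/critical regime you and the paper both reduce $D_F=D_H$ to the equality case of a convexity inequality: the paper phrases it as Jensen for the size-bias $\Q=W\PP$, you as ``secant equals tangent'' for $\tau$, and these are equivalent.

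The super-critical case is where the two arguments genuinely diverge. The paper passes through the Biggins--Kyprianou transform $W=e^{-\beta\xi}/\E[e^{-\beta\xi}]$, invokes Lemma~\ref{lem-psi-beta} to rewrite $D_F=2\psi(\beta)/\log b$, computes $D_H=(\psi(\beta)-\beta\psi'(\beta))/\log b$, and then proves $\psi(\beta)<-\beta\psi'(\beta)$ via the mean value theorem together with strict convexity of $\psi$. You instead stay with the primal variable $s$ and the structure function $\tau(s)=\log_b\E[W^s]$: you locate the minimizer $s^\ast\in(1,2)$ from the sign change of $\Psi'$ (using that super-criticality is exactly $\Psi'(2)>0$) and control $\Psi(s^\ast)-\tau'(1)$ through the supporting line of the auxiliary convex function $\Phi(s)=s(\Psi(s)-\tau'(1))$. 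Your route is more self-contained --- it bypasses Lemmas~\ref{lem-non-sub}, \ref{lem-psi}, and \ref{lem-psi-beta} entirely --- and even gives the quantitative gap $D_H-D_F\ge(1-\tau'(1))(2-s^\ast)/s^\ast$; the paper's route is natural given that the Biggins--Kyprianou machinery is already in place for the lower bound of $\dim_F(\mu_\infty)$.

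One cosmetic point: $\tau(0^+)=\log_b\PP(W>0)$ need not vanish when $\PP(W=0)>0$, but since your argument only uses $\tau$ on $[1,2]$ this is harmless.
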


\begin{remark*}
By a rather different method,  the sufficient part of Corollary \ref{cor-salem} was  obtained by Shmerkin and Suomala \cite{SS18} following a construction of  {\L}aba and Pramanik \cite{LP09}. 
\end{remark*}

\begin{remark*}
When the random weights are given by \eqref{2-pt-w}, the corresponding multiplicative cascades  are called   {\it $\beta$-models}, which are also known as {\it absolute curdling} in intermittent turbulence \cite{M76}.   Note that in $\beta$-model case, since $W^{(2)} = W$, the random weight is automatically in the squared sub-critical regime once it satisfies Mandelbrot-Kahane's non-degeneracy condition. 
\end{remark*}

 \begin{corollary}[Log-normal weights]\label{cor-log-normal}
For the   log-normal random weights $W=e^{\sigma N-\sigma^2/2}$ with $0< \sigma < \sqrt{2 \log b}$,  we have 
 \begin{align}\label{log-normal-F}
 \dim_F(\mu_\infty)  =  \left\{
 \begin{array}{ll}
  \displaystyle 1- \frac{ \sigma^2}{\log b}  & \text{if $ \displaystyle \frac{\sigma^2}{\log b} \le  \frac{1}{2}$}
  \\  
    \displaystyle    2 \Big(1- \frac{\sigma}{\sqrt{2\log b}} \Big)^2=  1-  \frac{\sigma^2}{\log b}  +     \Big( 1 -   \sqrt{\frac{2\sigma^2}{\log b}} \Big)^2 & \text{if $ \displaystyle \frac{1}{2} <   \frac{\sigma^2}{\log b} <  2$}
 \end{array}
 \right.. 
 \end{align}
 \end{corollary}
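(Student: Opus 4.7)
The plan is to deduce Corollary \ref{cor-log-normal} as a direct computation from Theorem \ref{thm-fourier}. First I verify the hypotheses of the theorem for the log-normal weight $W = e^{\sigma N - \sigma^2/2}$. Using the identity $\E[N f(N)] = \E[f'(N)]$ for $N\sim\mathcal{N}(0,1)$, one gets $\E[W\log W]=\sigma^2/2$, so the non-degeneracy condition $\E[W\log W]<\log b$ is exactly $\sigma^2 < 2\log b$. Moreover $\E[W^t]=e^{\sigma^2 t(t-1)/2}<\infty$ for every $t>0$, so both assumptions of Theorem \ref{thm-fourier} hold, and $\dim_F(\mu_\infty)=D_F$ almost surely on $\{\mu_\infty\ne 0\}$.

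Next I identify which branch of the piecewise formula \eqref{def-DF} applies. Since $\E[W^2]=e^{\sigma^2}$, the renormalized weight $W^{(2)}=W^2/\E[W^2]=e^{2\sigma N-2\sigma^2}$ is log-normal with parameter $2\sigma$, and by the same computation $\E[W^{(2)}\log W^{(2)}]=2\sigma^2$. Hence the squared sub-critical regime corresponds to $\sigma^2/\log b \le 1/2$, while the squared super-critical regime corresponds to $\sigma^2/\log b > 1/2$. In the first case, plugging $\log \E[W^2]=\sigma^2$ into \eqref{def-DF} gives immediately $D_F = 1-\sigma^2/\log b$, which is the first line of \eqref{log-normal-F}.

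For the super-critical range $1/2 < \sigma^2/\log b < 2$, I have to evaluate the infimum in \eqref{def-DF}. Writing $s=\sigma^2/\log b$ and computing
\begin{equation*}
\log\E[b^{1-t}W^{2t}] \;=\; (1-t)\log b + \sigma^2 t(2t-1),
\end{equation*}
the quantity to be minimized on $[1/2,1]$ becomes $f(t):=(1-t)/t + s(2t-1)$. Setting $f'(t)=-1/t^2+2s=0$ gives the critical point $t^*=1/\sqrt{2s}$, and the constraint $s\in(1/2,2)$ is precisely what places $t^*$ in $[1/2,1]$. Substituting back yields $f(t^*)=2\sqrt{2s}-1-s$, so
\begin{equation*}
D_F \;=\; 1 - f(t^*) \;=\; 2+s-2\sqrt{2s} \;=\; 2\bigl(1-\sqrt{s/2}\bigr)^2,
\end{equation*}
which matches the second line of \eqref{log-normal-F}, the equivalent rewriting as $1-s+(1-\sqrt{2s})^2$ being a trivial algebraic identity.

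The only real task is the one-variable optimization on $[1/2,1]$, and its solution lies in the interior precisely in the super-critical regime, so there is no boundary case to worry about; I expect no serious obstacle beyond careful bookkeeping of the algebraic identities $2s/\sqrt{2s}=\sqrt{2s}$ and $4\sqrt{s/2}=2\sqrt{2s}$ that reconcile the two stated forms of $D_F$.
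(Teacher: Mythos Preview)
Your proof is correct and is precisely the direct computation from Theorem~\ref{thm-fourier} that the paper leaves to the reader; the only small omission is that in the squared super-critical regime Theorem~\ref{thm-fourier} carries the standing assumption that $\log W$ is non-lattice, which is trivially satisfied here since $\log W=\sigma N-\sigma^2/2$ is Gaussian. As a remark, the paper also offers an alternative route in this regime via the Biggins--Kyprianou transform (Lemma~\ref{lem-psi-beta}): one finds $\xi=-\sqrt{2\log b}\,N+2\log b$, $\beta=\sigma/\sqrt{2\log b}$, $\psi(t)=(\log b)(1-t)^2$, and then $D_F=2\psi(\beta)/\log b=2(1-\sigma/\sqrt{2\log b})^2$ without having to locate the minimizer of $f$ by hand---but your direct optimization is equally valid and arguably more transparent.
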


\begin{theorem}[Polynomial Rajchman property]\label{thm-pRaj}
	Assume that $\E[W\log W]<\log b$ and $\E[W^{1+\delta}]<\infty$ for some $\delta>0$. Then almost surely on $\{\mu_\infty\ne 0\}$, we have $\dim_F(\mu_\infty)>0$. In other words,  $\mu_\infty$ has  polynomial Rajchman property. 
\end{theorem}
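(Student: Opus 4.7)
The plan is two-stage: first establish a pointwise $L^p(\Omega)$ Fourier decay $\|\widehat{\mu}_\infty(\zeta)\|_{L^p(\Omega)} = O(|\zeta|^{-\alpha})$ for some $p = 1 + \delta' \in (1, 1 + \delta]$ and some $\alpha > 0$, and then lift this pointwise decay to the almost sure polynomial decay by the vector-valued martingale methodology that is the central novelty of this paper.

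For the first stage, observe that $\varphi(p) := \log \E[W^p] - (p-1) \log b$ satisfies $\varphi(1) = 0$ and $\varphi'(1) = \E[W \log W] - \log b < 0$ by the sub-criticality hypothesis. Hence for $\delta'$ small enough, $r := \E[W^p]/b^{p-1} < 1$; Biggins' theorem then also ensures that the total-mass branching martingale is $L^p$-bounded, so in particular $\mu_\infty([0,1]) \in L^p(\Omega)$. Taking $\mu_0$ to be Lebesgue measure on $[0,1]$ (so that $\widehat{\mu}_0 = \phi$), the cascade self-similarity yields an explicit formula for the Fourier martingale differences
\[
D_k(\zeta) := \widehat{\mu}_k(\zeta) - \widehat{\mu}_{k-1}(\zeta) = \frac{\phi(\zeta/b^k)}{b^k} \sum_{|v|=k-1} W(v)\, e^{-2\pi i \zeta \langle v \rangle / b^{k-1}}\, Y_v(\zeta),
\]
with $W(v)$ the product of the weights along the path $v$, and $Y_v(\zeta) = \sum_{i=0}^{b-1} (W_{vi} - 1)\, e^{-2\pi i \zeta i / b^k}$ a centred sum of independent random variables, the $Y_v$'s themselves being conditionally independent across $v$ given $\mathcal{F}_{k-1}$. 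Two successive applications of the von Bahr--Esseen inequality---conditionally to each $Y_v$, then unconditionally to the outer sum over $v$---together with the branching identity $\E\sum_{|v|=k-1} W(v)^p = b^{k-1} (\E W^p)^{k-1}$ give $\E |D_k(\zeta)|^p \le C\, |\phi(\zeta/b^k)|^p\, r^k$. Summing over $k$ by the triangle inequality in $L^p(\Omega)$ and splitting at $k_\ast = \lfloor \log_b |\zeta| \rfloor$ then yields $\|\widehat{\mu}_\infty(\zeta)\|_{L^p(\Omega)} = O(|\zeta|^{-\alpha})$ with $\alpha = -\log_b r / p > 0$.

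For the almost sure lift, I view $\zeta \mapsto \widehat{\mu}_n(\zeta)$ on each dyadic frequency block $I_N := \{|\zeta| \in [N, 2N]\}$ as a martingale with values in the reflexive UMD Banach function space $L^p(I_N)$. Fubini and the previous bound give $\E\|\widehat{\mu}_\infty\|_{L^p(I_N)}^p \le C\, N^{1 - p\alpha}$. Combined with the Lipschitz regularity $|\widehat{\mu}_\infty(\zeta_1) - \widehat{\mu}_\infty(\zeta_2)| \le 2\pi\, \mu_\infty([0,1])\, |\zeta_1 - \zeta_2|$ (whose random constant lies in $L^p(\Omega)$) and a rescaled Sobolev-type interpolation on $I_N$ that exploits the band-limited character of each $\mu_k - \mu_{k-1}$ (all supported in $[0,1]$), this upgrades to an $L^\infty(I_N)$-estimate of the form $\E \|\widehat{\mu}_\infty\|_{L^\infty(I_N)}^p \le C\, N^{-p\alpha''}$ for some $\alpha'' > 0$. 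A Borel--Cantelli argument along $N = 2^j$ then delivers $|\widehat{\mu}_\infty(\zeta)| = O(|\zeta|^{-\alpha''})$ almost surely, hence $\dim_F(\mu_\infty) \ge 2\alpha'' > 0$ on the event $\{\mu_\infty \ne 0\}$.

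The crux of the difficulty is precisely this $L^p \to L^\infty$ transfer: since the sub-criticality margin $\log b - \E[W \log W]$ controls $\alpha$ only linearly in $p-1$, the product $p\alpha$ is at most $O(\delta)$ and may well lie below $1$, so a naive chaining-plus-union-bound on an integer grid in $I_N$ loses the decisive factor of $N$ and is doomed. The vector-valued martingale framework---exploiting UMD-type maximal inequalities in tandem with the band-limited structure of the cascade's Fourier increments---is what recovers this loss and allows the theorem to hold in the full $\E[W^{1+\delta}] < \infty$ regime.
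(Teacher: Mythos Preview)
Your Stage 1 is fine: the von Bahr--Esseen / martingale-type-$p$ argument does yield $\|\widehat{\mu}_\infty(\zeta)\|_{L^p(\Omega)} \le C|\zeta|^{-\alpha}$ with $p\alpha = (p-1)-\log_b\E[W^p]$, which is positive for $p$ close to $1$ but may be arbitrarily small.

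Stage 2, however, has a genuine gap, and it is exactly where you locate it. The Lipschitz--Sobolev interpolation you invoke reads, for an $L$-Lipschitz function $f$ on an interval of length $N$,
\[
\|f\|_{L^\infty(I_N)} \le C\, L^{1/(p+1)}\,\|f\|_{L^p(I_N)}^{\,p/(p+1)}.
\]
With $L = 2\pi\,\mu_\infty([0,1]) \in L^p(\Omega)$ and $\E\|\widehat{\mu}_\infty\|_{L^p(I_N)}^p \le CN^{1-p\alpha}$, a single H\"older step (the exponents match exactly) gives
\[
\E\big[\|\widehat{\mu}_\infty\|_{L^\infty(I_N)}^{\,p}\big]\;\le\; C\,N^{\,p(1-p\alpha)/(p+1)},
\]
which \emph{grows} with $N$ precisely when $p\alpha<1$. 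Your appeal to ``band-limited character'' adds nothing beyond the Lipschitz bound you already used (for a measure supported in $[0,1]$ one only gets $|\widehat{\mu}_\infty'|\le 2\pi\,\mu_\infty([0,1])$; Bernstein/Plancherel--P\'olya go the wrong way here), and ``UMD-type maximal inequalities'' is not a mechanism---$L^\infty(I_N)$ has no useful martingale type, and for $L^p(I_N)$ the martingale input gives you nothing beyond the Fubini bound you already have.

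What the paper does instead is decouple the two exponents. After Kahane's reduction to integer frequencies (Lemma~\ref{lem-trans-coef}), one views the Fourier sequence as a single random vector in $\ell^q$ with $q$ \emph{large} and asks for
\[
\E\Big[\big\|(s^\alpha\,\widehat{\mu}_\infty(s))_{s\ge 1}\big\|_{\ell^q}^{\,p}\Big]<\infty,\qquad 1<p<2\le q<\infty.
\]
The Banach space $\ell^q$ (for $q\ge 2$) has martingale type $p$ for every $p\in(1,2]$, so Proposition~\ref{prop-mart-type}/Corollary~\ref{cor-HM-comp} applies to the $\ell^q$-valued cascade martingale. The growth-rate computation (Proposition~\ref{lem-grf}/Lemma~\ref{lem-gen-measure}) gives $H_{\mathcal{V}}(p)=p(\alpha+q^{-1})$ for the Lebesgue measure, so the condition $\varphi_W(p)+H_{\mathcal{V}}(p)<0$ becomes
\[
\alpha+\tfrac{1}{q}<\tfrac{(p-1)-\log_b\E[W^p]}{p}.
\]
The right-hand side is positive for $p$ close to $1$, and since $q$ is a \emph{free} parameter, $1/q$ can be made as small as one likes, leaving room for some $\alpha>0$. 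In other words, the loss incurred by passing from pointwise to uniform control is $N^{1/q}$ (coming from $\|\cdot\|_{\ell^\infty}\le\|\cdot\|_{\ell^q}$ over $\sim N$ frequencies), not $N^{1}$, and sending $q\to\infty$ kills it. This is the step your proposal is missing.
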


Theorem \ref{thm-pRaj}  reveals a  general common feature (proved in Theorem \ref{thm-preFdim} below) of the multiplicative  cascades (under the assumption that $\E[W^{1+\delta}]<\infty$ for some $\delta>0$) :
\[
\text{\it the action by the multiplicative cascade preserves the polynomial Rajchman property}. 
\]
Indeed,   the action by  multiplicative cascades on any Borel probability measure $\nu$ on $[0,1]$ gives a random cascade measure  $Q\nu$ (see \S \ref{sec-g-m} for the precise definition of $Q\nu$).    Suppose that the weight $W$ satisfies $\E[W^{1+\delta}]<\infty$ for some $\delta>0$ and  let $\ldim_{p}(\nu)$ denote the $L^p$-dimension introduced by R\'enyi \cite{Renyi} (see \S \ref{sec-def-Dp}).  Then  under a natural Mandelbrot-Kahane type non-degeneracy condition  
$
 \E[W\log_b W]< \lim_{p\to 1^{+}} \ldim_{p}(\nu), 
$ 
  we prove  in Theorem \ref{thm-preFdim} that, 
$$
\dim_F(\nu)>0   \Longrightarrow  \text{$\dim_F(Q\nu)> 0$ almost surely on $\{Q\nu\ne 0\}$}. 
$$
 In particular,  under the assumption  that  $p\mapsto \ldim_{p}(\nu)$ is a constant function  on $p\in (1, 2)$ with \footnote{The condition \eqref{typical-D} holds for typical  (in the sence of Baire's category) measures on Euclidean spaces, see \cite{Ol05}. }
\begin{align}\label{typical-D}
\ldim_{p}(\nu) =  D \quad  \text{for all $p \in (1,2)$,}
\end{align}   for the   log-normal random weights $W=e^{\sigma N-\sigma^2/2}$ with $0< \sigma < \sqrt{2  D \log b}$,  the multiplicative chaos measure $Q\nu$ is non-degenerate and almost surely on $\{Q\nu\ne 0\}$, we have  (see Corollary \ref{cor-nu})
 \[
 \dim_F(Q\nu)  \ge   \left\{
 \begin{array}{ll}
  \displaystyle  \Big(1-   \frac{\sigma^2}{D\log b}\Big)  \dim_F(\nu)  & \text{if $ \displaystyle \frac{\sigma^2}{\log b} \le  \frac{D}{2}$}
  \vspace{2mm}
  \\  
    \displaystyle    2 \Big(  1  - \frac{\sigma}{\sqrt{2D \log b}} \Big)^2 \dim_F(\nu) & \text{if $ \displaystyle \frac{D}{2} <   \frac{\sigma^2}{\log b} <  2D$}
 \end{array}
 \right.. 
 \]

{\flushleft \bf Failure of the Kahane's moment method in predicting the Fourier dimension.}
   In Kahane's open program, to predict and obtain a lower estimate of the Fourier dimension of a random measure, a useful method is to compute all the higher moments of the Fourier coefficients.   However,  in general,  this moment method fails in   predicting the Fourier dimension of MCCMs. We shall see that,   the second moment prediction may even differ from  the fourth moment prediction.  For instance,  the second moment method works  for predicting the Fourier dimension  works  when the random weights are in  the squared sub-critical or squared critical regimes but fails  in  the squared super-critical  regime.    Moreover,  in general, the fourth moment method fails in predicting the Fourier dimension even in the squared sub-critical regime.  More precisely, we have 
 \begin{itemize}
 \item {\it Second moment prediction in squared sub-critical or squared critical regimes}.   One shall see from  Lemma~\ref{lem-DF} below that,  under the assumption of Theorem~\ref{thm-fourier},   the following  implication holds: 
\[
\E [ W^{(2)}\log W^{(2)}] \le \log b \Longrightarrow \E[W^2]<b. 
\]
Moreover, if  the random weight $W$ satisfies $\E[W^2]<b$, then (see \S \ref{S-second moment} in the Appendix) 
	\begin{align}\label{2-m-intuition}
	\E[|\widehat{\mu}_\infty(k)|^2] \asymp k^{-(1-\frac{\log \E[W^2]}{\log b})} \quad \text{as $\N\ni k\to\infty$}.
	\end{align}
Therefore,  here the asymptotic in the average sense \eqref{2-m-intuition}  gives a correct prediction of the almost sure asymptotics of $|\widehat{\mu}_\infty(k)|$ and thus the Fourier dimension of $\mu_\infty$: 
\[
\dim_F(\mu_\infty)= D_F = 1-\frac{\log \E[W^2]}{\log b}. 
\]
 \item {\it Second moment prediction fails in  squared super-critical regime}.  In this case, we could have $\E[|\widehat{\mu}_\infty(k)|^2] = \infty$ since the condition $\E[W^2]<b$ is in general not guaranteed. Moreover,  even under the additional assumption $\E[W^2]<b$ (such random weights do exist, for instance, the log-normal weights $W = e^{\sigma N- \sigma^2/2}$ with $\frac{\log b}{2} < \sigma^2 <\log b$),  the   asymptotic  \eqref{2-m-intuition} does not provide a correct prediction of  the almost sure asymptotics of $|\widehat{\mu}_\infty(k)|$. Indeed,  in this case, we always have 
 \[
\dim_F(\mu_\infty)   = D_F >1-\frac{\log \E[W^2]}{\log b}.
 \]
 That is, the almost sure asymptotics  for  $|\widehat{\mu}_\infty(k)|$ has faster decay than the asymptotics in the average sense \eqref{2-m-intuition}. Note that when this happens, we must have,   for all  small enough $\varepsilon>0$, 
 \[
 \sup_{k\in \N}  k^{D_F - \varepsilon} |\widehat{\mu}_\infty(k)|^2 <\infty   \quad a.s.  \an \E \big[ \sup_{k\in \N}  k^{D_F - \varepsilon} |\widehat{\mu}_\infty(k)|^2\big] = \infty. 
 \]
And, in fact,  from our proof, we shall see that for some small enough $\delta>0$ depending on the random weight $W$ (see the stronger inequality \eqref{max-more} below),  
\[
 \E\big [ \big( \sup_{k\in \N}  k^{D_F - \varepsilon} |\widehat{\mu}_\infty(k)|^2\big)^{\frac{1+\delta}{2}}\big] < \infty.
\]
\item {\it Fourth moment prediction may fail even in squared sub-critical regime}.  Using the method developped in \cite{HQW-23},  for the log-normal weights $W= e^{\sigma N - \sigma^2/2}$ with  $0<\sigma^2< \frac{ \log b}{2}$ (thus $W$ are in the squared sub-critical regime), we shall show that 
\begin{align}\label{4-mom}
\E[|\widehat{\mu}_\infty(k)|^4] \asymp    \left\{  \begin{array}{ll}    
k^{- 2 ( 1 -  \frac{\log \E[W^2]}{\log b})} & \text{if $0<\sigma^2< \frac{ \log b}{4}$}
\\
 k^{- 2 ( 1 -  \frac{\log \E[W^2]}{\log b})}  \cdot  \log k   & \text{if $\sigma^2 =\frac{\log b}{4}$}
\\
 k^{- (3 - \frac{\log \E[W^4]}{\log b})}  &   \text{if $\frac{\log b}{4}<\sigma^2< \frac{ \log b}{2}$}
\end{array}
\right. . 
\end{align}
Thus the fourth moment method fails in  predicting the correct Fourier dimension.  Since the calculation \eqref{4-mom} is not needed in this paper,  we shall give its proof in a different paper. 
	\end{itemize}

{\flushleft \bf  Highlights in the proofs.} 
The strategy and the main novelty of this work is  to put the study of the fine analysis of  Fourier decay  of  random cascade measures in the framework of vector-valued martingale theory. This allows us to overcome the main difficulty in obtaining the sharp lower bound of  $\dim_F(\mu_\infty)$.        Below are the main ingredients of our proofs:
\begin{itemize}
	\item 
	The proofs of Theorems \ref{thm-fourier} and \ref{thm-pRaj}   are  based on an elementary observation of the natural connections between Fourier coefficients of the  random cascade measures and the vector-valued martingale theory.   

	\item The classical vector-valued martingale inequalities (including Pisier's martingale type inequalities,  vector-valued Burkholder inequalities, the recent $\ell^q$-vector-valued Burkholder-Rosenthdal inequalities of Dirksen and Yaroslavtsev \cite{PLMS-2019}, Bourgain-Stein inequalities, Kahane-Khintchine inequalities, etc)  will play natural roles in the lower bound of the Fourier dimension of $\mu_\infty$. 
	
	\item To obtain the upper bound of the Fourier dimension of $\mu_\infty$ is relatively simpler and we use the  fluctuations of martingales (in particular a freezing phenomenon \cite{D-S}) in BRW (which seem to be  of their own interests), as well as the stochastic self-similar structure of the Fourier coefficients   $\widehat{\mu}_\infty(b^n)$ on the subset of $b$-adic integers.     
\end{itemize}

In a previous work \cite{HQW-23}, we used  the scalar martingale inequalities due to Burkholder and Burkholder-Rosenthal  to  obtain the precise asymptotic growth rate of the $L^p$-moment  of MCCM  at the critical exponent \cite[Theorem 1.2]{HQW-23}, and also give an alternative proof of Kahane-Peyri\`ere’s $L^p$-boundedness condition \cite[Proposition 1.3]{HQW-23} for $p\ge 2$.

{\flushleft \bf Further applications of the vector-valued martingale method.}
Our  viewpoint of putting the fine analysis of Fourier decay of multiplicative chaos in the context of the vector-valued martingales, is of fundamental importance in  the study of Fourier decay of classical models of multiplicative chaos. Indeed, in the sequel to this paper, combining the vector-valued martingale methods and  ideas from Littlewood-Paley theory,  we are  able to  deal with Fourier decay of multiplicative chaos in the following situations: 
\begin{itemize}
\item the resolution of the Garban-Vargas conjecture for the Gaussian multiplicative chaos measures of all dimensions. See \cite{GV23} for the  backgrounds. 
	\item the cascade measures on higher dimensional cubes $[0,1]^d$,  including Marstrand-type projection problem.
	See \cite{RS14, SV14, FFJ15, FJ17, SS18, Mat19} for more  backgrounds on this topics. 
	\item the  precise decay of Fourier-Walsh coefficients of  the cascade measures on $[0,1]$. See \cite{PQ09} for its difference with the decay of usual Fourier coefficients. 
	\item the cascade measures on $[0,1]$ associated to a random Galton-Watson tree (see \cite{Way05}).
	\item the cascade measures on classical  self-similar sets such as Sierpinski's carpet or sponge (see \cite{Bar13, LBM15}); 
	\item the analysis of   oscillatory integrals of the cascade measures, for instance, 
	\[
	I(\lambda):=\int e^{i \lambda\Psi(x)} d\mu_\infty(x) \quad  \text{with phase function $\Psi: [0,1]\rightarrow \R^d$ and $\lambda>0$}.
	\] 
	See \cite[Chapters VIII and IX]{S93} for oscillatory integrals in classical harmonic analysis.  
\end{itemize}

In particular, in the sequel to this paper, the {\it exact values of Fourier dimensions} will be established (see \cite{CHQWb, LQT24, LQT25}) for various classical models of multiplicative chaos measures including GMC of all dimensions, microcanonical Mandelbrot cascades, Mandelbrot random coverings,  as well as the Fourier-Walsh analysis of these models.  

\subsection{Consequences and discussions}

{\flushleft\it A.  Common second order phase transition of the Fourier dimension.}

The Hausdorff dimension of MCCM is given in \eqref{H-dim} by $\dim_H(\mu_\infty) = D_H(W,b)= 1 - \E[W\log_b W]$ and  depends analytically on the random weight.  In sharp contrast, by Corollary \ref{cor-log-normal}, we see that, in the log-normal case,  the Fourier dimension of $\mu_\infty$ has a second order phase transition. This phase transition turns out to be a common feature for the MCCM and it would be interesting to have a physical interpretation of this phase transition. 

To see this, we consider the  random weights in  the Biggins-Kyprianou's boundary case (the precise definitions are recalled in \S \ref{sec-BK-transformation}): 
\[
W= \frac{e^{-\beta \xi}}{\E[e^{-\beta \xi}]}  \text{\,\, with \,\,}
\E[\xi e^{-\xi}] = 0, \quad \E[e^{-\xi}] = \frac{1}{b} \an 
\PP(\xi\in (-\infty, 0))> 0.
\]  
Assume that $\E[e^{-t \xi}]<\infty$ for all $t> 0$. Then the expression \eqref{def-DF} for the Fourier dimension $D_F$ has a simpler form and exhibits a second order phase transition. Indeed, 
\[
D_F =D_F(\beta, \xi)=  \left\{
\begin{array}{lc}
\displaystyle \frac{2 \psi(\beta)-  \psi(2\beta) }{\log b} & \quad \text{if $0<\beta\le 1/2$}
\\
\displaystyle  \frac{2\psi(\beta)}{\log b} & \quad \text{if $1/2<\beta<1$}
\end{array}
\right. ,
\]
where $\psi$ is the strict convex function defined by 
$
\psi(t) = \psi_\xi(t)=\log \E[be^{-t\xi}]. 
$
See Lemma \ref{lem-psi} below for the basic properties of the function $\psi$.  In particular, we have $\psi(1)= \psi'(1)=0$ and $\psi''(1)>0$.  Therefore, for fixed $\xi$,  the map $\beta \mapsto D_F(\beta, \xi)$ exhibits a second order phase transition at $\beta = 1/2$.

\medskip

{\flushleft \it  B. MCCM and GMC.} 

Based on  Mandelbrot’s groundbreaking work   \cite{M74,M74-2} on MCCM, Kahane introduced the influential theory of scalar $T$-martingales. Kahane's theory  confirmed  the three fundamental conjectures in MCCM  of Mandelbrot.  Furthermore, inspired by Mandelbrot's model of multiplicative chaos, Kahane introduced  the powerful theory of  Gaussian multiplicative chaos (GMC). A comparison between MCCM and GMC is investigated by Kahane in \cite{Ka85b}.  See Rhodes-Vargas' survey paper \cite{RV14} for more developments on GMC and its relations to MCCM. 

For the GMC setting, Falconer and Jin \cite{FJ19} obtained a non-trivial lower bound for the Fourier dimension of 2D GMC with small parameters $\gamma<\frac{1}{33}\sqrt{858-132\sqrt{34}}$.  In a recent pionner work \cite{GV23}, Garban and Vargas studied the Fourier decay of the standard GMC measure on the unit circle and  demonstrated \cite[Theorem~1.1]{GV23} that in the  sub-critical  GMC measure on the unit circle (denoted there by $M_\gamma$ with $\gamma<\sqrt{2}$) almost surely
is Rajchman. Moreover, they  proved  \cite[Theorem~1.2]{GV23} that $M_\gamma$
has a positive Fourier dimension for small parameters $\gamma<1/\sqrt{2}$: 
\[
\frac{1}{2} - \gamma^2 \le \dim_F(M_\gamma) \le 1 - \gamma^2<  \dim_H(M_\gamma) = 1 - \frac{\gamma^2}{2}. 
\]
 In addition,  from   \cite[Theorem~1.3]{GV23}, one naturally conjectures that  the Fourier dimension of $M_\gamma$ is given by $1- \gamma^2$ for small parameters $\gamma<1/\sqrt{3}$.  We believe that, similar to the MCCM setting in our work, in the GMC setting, there is also a second order phase transition for the Fourier dimension. 
 
  Other works on Fourier coefficients of GMC are included in the construction of the Virasoro algebra of Liouville conformal field theory and in the number theory or random matrix theory (see \cite{BGK,CN19} and their references).

\medskip
{\flushleft \it C. Upper Frostman regularity and KPZ relation. }

A non-negative Borel measure $\nu$ on $\R$ is said to be  $\gamma$-upper Frostman regular if 
\begin{align}\label{def-FA}
\sup  \big\{    \frac{\nu(I)}{|I|^\gamma}:   \text{$I$ are finite  intervals of $\R$}  \big\} <\infty. 
\end{align}

The upper-Frostman regularity for the classical sub-critical  GMC  measures on the unit circle was first established in  \cite{AJKS}. 
The upper Frostman regularity for MCCM  seems closely related to its KPZ relation established by Benjamini and Schramm in \cite{BS}. As pointed by Benjamini and Schramm, the following average H\"older estimate serves to {\em motivate KPZ relation for MCCM}: if $b =2$, then for any $x,y\in [0,1]$ and $s\in (0,1)$,
\[
\mathbb{E}[\mu_\infty([x,y])^s]\leq 8 |x-y|^{\phi(s)}, \quad \text{where $\phi(s)=s-\log_2 \E[W^s]$.}
\]
One can also find continuity estimates for other cascade models in \cite{MRV15}.

Theorem \ref{thm-fourier} has the following consequence on the stronger almost sure H\"older estimate of MCCM. 
 
\begin{corollary}\label{cor-holder}
Assume that $\E[W\log W]<\log b$ and $\E[W^t]<\infty$ for all $t>0$.   Then   almost surely on $\{\mu_\infty\ne 0\}$,  the measure $\mu_\infty$ is $\gamma$-upper Frostman regular for any 
\[
0\le \gamma< D_F/2. 
\]
\end{corollary}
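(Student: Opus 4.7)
The plan is to extract the uniform ball-counting bound from the pointwise Fourier decay supplied by Theorem \ref{thm-fourier} via the classical bump-function smoothing argument that converts pointwise Fourier decay into local mass estimates.

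Fix $0 \le \gamma < D_F/2$ and choose $\alpha$ with $\gamma < \alpha < D_F/2$. By Theorem \ref{thm-fourier} and the definition of $\dim_F(\mu_\infty)$, almost surely on $\{\mu_\infty \ne 0\}$ there is a random constant $C_\omega < \infty$ such that
\[
|\widehat{\mu}_\infty(\zeta)| \le C_\omega (1+|\zeta|)^{-\alpha}, \qquad \zeta \in \mathbb{R},
\]
the bound near $\zeta = 0$ being automatic from $\widehat{\mu}_\infty(0) = \mu_\infty([0,1]) < \infty$ almost surely. Fix a non-negative Schwartz function $\phi$ on $\mathbb{R}$ with $\phi \ge \mathbf{1}_{[-1,1]}$, and for every interval $I$ of half-length $r \le 1$ centered at $x_0$ set $\phi_{r,x_0}(y) := \phi((y-x_0)/r)$, so that $\phi_{r,x_0} \ge \mathbf{1}_I$. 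Plancherel's identity yields
\[
\mu_\infty(I) \le \int \phi_{r,x_0}\, d\mu_\infty = r \int_{\mathbb{R}} \widehat{\mu}_\infty(\zeta)\, \overline{\widehat{\phi}(r\zeta)}\, e^{2\pi i x_0 \zeta}\, d\zeta.
\]

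Inserting the decay bound of $\widehat{\mu}_\infty$ above, using the Schwartz decay $|\widehat{\phi}(\xi)| \le C_N (1+|\xi|)^{-N}$ for any large $N$, and splitting the integration into the low-frequency region $|\zeta| \le 1/r$ and the high-frequency region $|\zeta| > 1/r$, a direct computation yields
\[
\mu_\infty(I) \le C_\omega\, r \int_{\mathbb{R}} (1+|\zeta|)^{-\alpha} (1+r|\zeta|)^{-N}\, d\zeta \le C_\omega'\, r^{\alpha},
\]
with a deterministic implicit constant depending only on $\alpha, N$, and $\phi$. For intervals with $r > 1$ the trivial bound $\mu_\infty(I) \le \mu_\infty([0,1]) < \infty$ suffices. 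Combining both cases and invoking $\alpha > \gamma$ produces a random finite $K_\omega$ with $\mu_\infty(I) \le K_\omega |I|^{\gamma}$ for every finite interval $I$, which is precisely \eqref{def-FA}.

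The derivation is essentially mechanical once Theorem \ref{thm-fourier} is in hand, so no genuine obstruction arises. The only point requiring mild attention is that the constant $C_\omega$ in the pointwise Fourier decay is random and not uniform over $\omega$, but this is harmless because upper Frostman regularity \eqref{def-FA} is itself a pathwise finiteness statement that permits a sample-dependent implicit constant.
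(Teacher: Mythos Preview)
Your argument is correct and complete: the bump-function smoothing is the standard harmonic-analysis route from pointwise Fourier decay $|\widehat{\mu}(\zeta)|\lesssim |\zeta|^{-\alpha}$ (with $\alpha<1$) to $\alpha$-upper Frostman regularity, and you apply it cleanly. The displayed Parseval identity has a harmless complex-conjugation discrepancy, but since the left side is real the modulus bound you actually use is unaffected.

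The paper takes a genuinely different path. Instead of smoothing against a Schwartz bump on $\mathbb{R}$, it works entirely with Fourier \emph{coefficients}: it passes to the primitive $F_\nu(t)=\nu([0,t])-t\,\nu([0,1])$, observes that $\widehat{F}_\nu(n)=\widehat{\nu}(n)/(i2\pi n)$, and then invokes Boas' classical criterion (if $\sum_{|k|\ge n}|a_k|=O(n^{-\gamma})$ then the Fourier series lies in $\mathrm{Lip}(\gamma)$) to conclude $F_\nu\in\mathrm{Lip}(\gamma)$, hence $t\mapsto\nu([0,t])\in\mathrm{Lip}(\gamma)$. Your approach is more self-contained and uses the continuous Fourier transform directly, which is arguably more natural once Theorem~\ref{thm-fourier} already delivers decay for all real $\zeta$; the paper's approach stays within the discrete Fourier-coefficient framework that pervades the rest of the article and trades the bump-function computation for a citation to \cite{Boa67}. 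Both yield the same exponent range $\gamma<D_F/2$ with no loss.
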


\begin{remark*}
The  upper Frostman regularity  or equivalently the global H\"older regularity of Mandelbrot cascade  measures has been investigated in \cite[Theorem 3 and Theorem 4]{BKNSW14}.  Note also that it is known that the exponent of the  upper Frostman regularity is always at least half of the Fourier dimension, see \cite{LL24}. Here we include a proof for completeness.   
\end{remark*}

\medskip
{\flushleft \it D. The Fourier restriction estimate for MCCM. }

Theorem \ref{thm-fourier} and Corollary \ref{cor-holder} combined with the celebrated Fourier restriction estimate obtained in \cite[Theorem~4.1]{M00}   imply 

\begin{corollary}\label{cor-FR}
Assume that $\E[W\log W]<\log b$ and $\E[W^t]<\infty$ for all $t>0$.   Then   almost surely on $\{\mu_\infty\ne 0\}$,  the measure $\mu_\infty$ satisfies the following Fourier restriction estimate: for any $1\le r< \frac{4}{4-D_F}$,  there exists $C(r, \mu_\infty)>0$ such that for all $f\in L^r(\R)$, 
\begin{align}\label{F-R-ineq}
\| \widehat{f}\|_{L^2(\mu_\infty)} \le C(r, \mu_\infty) \| f\|_{L^r(\R)}. 
\end{align}
\end{corollary}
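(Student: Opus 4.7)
The plan is to derive Corollary~\ref{cor-FR} as a straightforward consequence of combining the two ingredients already in hand---the exact Fourier decay of Theorem~\ref{thm-fourier} and the upper Frostman regularity of Corollary~\ref{cor-holder}---together with the Mockenhaupt restriction theorem \cite[Theorem~4.1]{M00}. Recall that the latter asserts that if $\mu$ is a compactly supported finite measure on $\R^n$ satisfying the pointwise Frostman bound $\mu(B(x,\rho)) \le c_1 \rho^\alpha$ for all $x \in \R^n$, $\rho > 0$, and the Fourier-decay bound $|\widehat{\mu}(\xi)| \le c_2(1+|\xi|)^{-\beta/2}$ for all $\xi \in \R^n$, then
\[
\|\widehat{f}\|_{L^2(\mu)} \le C_p \|f\|_{L^p(\R^n)} \qquad \text{for every } 1 \le p < p_0 := \frac{2(2n-2\alpha+\beta)}{4(n-\alpha)+\beta}.
\]
Specializing to $n=1$ and plugging in the critical values $\alpha = D_F/2$ and $\beta = D_F$ yields exactly $p_0 = \frac{4}{4-D_F}$, which is precisely the threshold in~\eqref{F-R-ineq}. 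The whole corollary is thus reduced to verifying the two hypotheses of the Mockenhaupt theorem for $\mu_\infty$ with parameters arbitrarily close to $(\alpha,\beta) = (D_F/2, D_F)$.

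Working almost surely on $\{\mu_\infty \ne 0\}$, fix an arbitrary $r < \frac{4}{4-D_F}$. For every small $\varepsilon > 0$, Theorem~\ref{thm-fourier} together with the definition of $\dim_F$ provides a random constant $C_1(\omega,\varepsilon)$ with
\[
|\widehat{\mu}_\infty(\xi)| \le C_1(\omega,\varepsilon)\,(1+|\xi|)^{-(D_F-\varepsilon)/2} \qquad \text{for all } \xi \in \R;
\]
and, since $D_F/2 - \varepsilon < D_F/2$, Corollary~\ref{cor-holder} supplies a random constant $C_2(\omega,\varepsilon)$ with $\mu_\infty(I) \le C_2(\omega,\varepsilon)\,|I|^{D_F/2-\varepsilon}$ for every finite interval $I \subset \R$. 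Because $\mathrm{supp}(\mu_\infty) \subseteq [0,1]$, this interval estimate upgrades trivially to a ball bound $\mu_\infty(B(x,\rho)) \le C_2'(\omega,\varepsilon)\,\rho^{D_F/2-\varepsilon}$ valid uniformly over all $x \in \R$ and all $\rho > 0$ (for $\rho \ge 1$ one simply uses $\mu_\infty([0,1]) < \infty$).

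Applying the Mockenhaupt theorem with these parameters $\alpha = D_F/2 - \varepsilon$ and $\beta = D_F - \varepsilon$ then delivers the restriction estimate~\eqref{F-R-ineq} for every exponent strictly below
\[
p_0(\varepsilon) = \frac{2(2-2\alpha+\beta)}{4-4\alpha+\beta} = \frac{4+2\varepsilon}{4-D_F+3\varepsilon}.
\]
Since $p_0(\varepsilon) \nearrow \frac{4}{4-D_F}$ as $\varepsilon \searrow 0$, one can choose $\varepsilon > 0$ small enough that $r < p_0(\varepsilon)$, producing~\eqref{F-R-ineq} for this $r$ with a random constant $C(r,\mu_\infty)$ depending on $C_1(\omega,\varepsilon)$, $C_2'(\omega,\varepsilon)$, and $r$. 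I anticipate no serious obstacle beyond this bookkeeping: the only subtlety is the unavoidable $\varepsilon$-loss in both exponents, and the exact arithmetic cancellation in Mockenhaupt's formula for $p_0$ at $(\alpha,\beta) = (D_F/2, D_F)$ is precisely what recovers the sharp range $r < \frac{4}{4-D_F}$ claimed in the statement.
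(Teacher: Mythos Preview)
Your proof is correct and follows exactly the approach indicated in the paper: combine the Fourier decay from Theorem~\ref{thm-fourier} and the Frostman regularity from Corollary~\ref{cor-holder}, then apply Mockenhaupt's restriction theorem \cite[Theorem~4.1]{M00} with an $\varepsilon$-loss in both exponents and let $\varepsilon\to 0$. The arithmetic verification that $p_0(\varepsilon)\nearrow \tfrac{4}{4-D_F}$ is exactly the content the paper leaves implicit.
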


\begin{remark*}
In obtaining Corollary \ref{cor-FR}, we only use the Fourier dimension and \cite[Theorem~4.1]{M00}.  A natural question is: what is the optimal exponent $r\in (1,2)$ in the estimate \eqref{F-R-ineq}?  
\end{remark*}

\medskip
{\flushleft  \it E. Sharp estimate of certain maximal functions.}

 Theorem \ref{thm-fourier} can be reformulated as the following sharp  estimate of certain  maximal functions: it asserts that, almost surely on $\{\mu_\infty\ne 0\}$,  for any $\varepsilon >0$,  
\[
\sup_{n} |n|^{(D_F-\varepsilon)/2}  \cdot   |\widehat{\mu}_\infty(n)|<\infty \an 
 \sup_{n} |n|^{(D_F+\varepsilon)/2}  \cdot   |\widehat{\mu}_\infty(n)| = \infty. 
\]
These assertions follow from  the sharp estimate of certain  maximal function: under the assumption of Theorem~\ref{thm-fourier},   there exists $\delta= \delta(b, W)> 0$ such that 
for any $\varepsilon >0$, 
\[
\E\Big[\Big(\sup_{n} |n|^{(D_F-\varepsilon)/2}  \cdot   |\widehat{\mu}_\infty(n)|\Big)^{1+ \delta}\Big] <\infty. 
\]
Indeed, we do not know a direct proof of the above inequality.   Instead we shall prove more: there exists a sufficiently large $q>2$, 
\begin{align}\label{max-more}
\E\Big[\Big\{ \sum_{n}  \Big( |n|^{(D_F-\varepsilon)/2}  \cdot   |\widehat{\mu}_\infty(n)|\Big)^q\Big\}^{(1+ \delta)/q}\Big] <\infty. 
\end{align}

{\flushleft \it F.  Sobolev regularity.}

The inequality \eqref{max-more}  reflects certain Sobolev regularity  of the random measure $\mu_\infty$.  Indeed,  identify  $[0,1)$ with the unit circle $\T$, then the inequality \eqref{max-more} means that,  for any $0\le \beta< D_F/2$ and for suitable  parameters $p, q$ with $1< p< 2< q<\infty$,  we have $ \big\| \mathcal{F}\big[ (-\Delta)^{\beta} \mu_\infty \big] \big\|_{L^p(\PP; \, \ell^q)}<\infty$, where $(-\Delta)^{\beta}$ is the fractional Laplacian on  $\T$ and $\mathcal{F}$ denotes the Fourier transform on $\T$. 

For the parameters $p,q$ in the range $1<p<2<q<\infty$,  if we formally apply the standard Hausdorff-Young inequality for Fourier series, then \[ \big\| \mathcal{F}\big[ (-\Delta)^{\beta} \mu_\infty \big] \big\|_{L^p(\PP; \, \ell^q)} \lesssim_{p,q}    \big\| (-\Delta)^{\beta} \mu_\infty \big\|_{L^p(\PP; \, L^{q'} (\T))} \quad \text{with $\frac{1}{q} + \frac{1}{q'} = 1$}.\]However, almost surely on $\{\mu_\infty \ne 0\}$,  the measure $\mu_\infty$ is  not absolutely continuous with respect to the Lebesgue measure  and hence
$
\| (-\Delta)^{\beta} \mu_\infty \|_{L^p(\PP; \, L^{q'} (\T))} = \infty.
$
In a certain sense,  one may regard $\| \mathcal{F} [ (-\Delta)^{\beta} \mu_\infty  ] \|_{L^p(\PP; \, \ell^q)} $ as  a substitute of  $ \| (-\Delta)^{\beta} \mu_\infty \|_{L^p(\PP; \, L^{q'} (\T))}$.

%\newpage

\subsection{Main ingredients:  Vector-valued martingales meet BRW}

The lower and upper  bounds of the Fourier dimension $\dim_F(\mu_\infty)$ are given by the {\it vector-valued martingale} method and fluctuations of martingales in BRW respectively.   The proofs are outlined in the   schematic graph in   Figure \ref{fig-graph}.

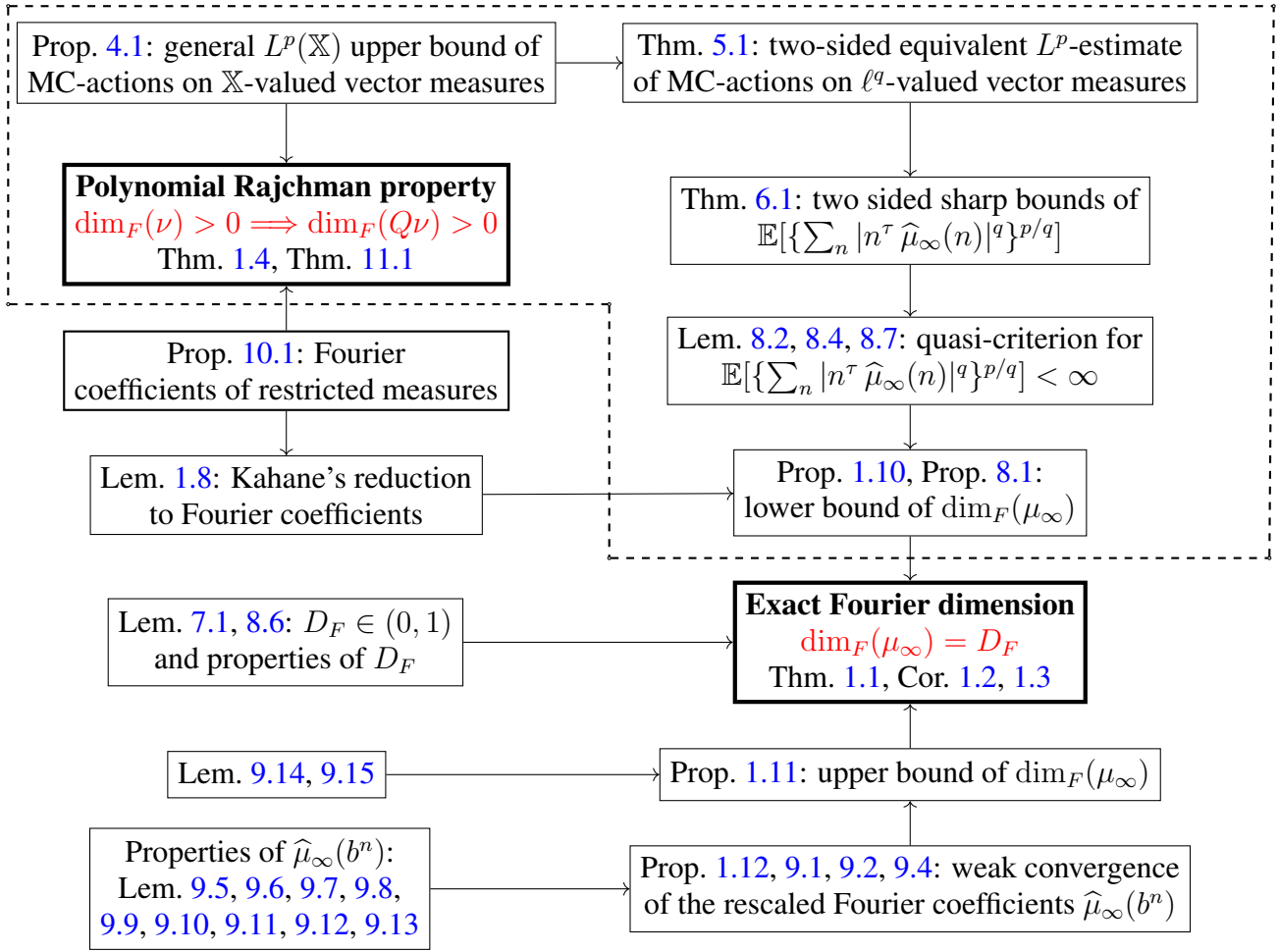
\begin{figure}[H]
\usetikzlibrary{arrows.meta, positioning}
\begin{tikzpicture}[node distance=2.5cm]
\node  (lemma82) [draw, ultra  thick, align=center,rectangle] {\textbf{Exact Fourier  dimension} \\${\color{red}\dim_F(\mu_\infty)=D_F}$ \\ Thm.~\ref{thm-fourier}, Cor.~\ref{cor-salem}, \ref{cor-log-normal} };
\node  (lemma71) [draw,align=center,rectangle, left=3.65cm of lemma82,node distance=3cm] {Lem.~\ref{lem-DF}, \ref{lem-psi-beta}: $D_F\in (0,1)$ \\and properties of $D_F$};
\node  (prop19) [draw,align=center,rectangle, below=0.6cm of lemma82,node distance=3cm] {Prop.~\ref{prop-to-infty}: upper bound of  $\dim_F(\mu_\infty)$};
\node  (lemma914) [draw,align=center,rectangle, left=3.7cm of prop19,node distance=3cm] { Lem.~\ref{lemma-cov-law-con-two},  \ref{lemma-cov-law-con-one}};         
 \node  (prop110) [draw,align=center,rectangle, below=0.6cm of prop19,node distance=3cm] { Prop.~\ref{prop-clt}, \ref{thm-sq-sub-critical},  \ref{thm-sq-critical}, \ref{thm-non-cri-super-critical}:   weak  convergence  \\ of 
 the  rescaled Fourier coefficients $\widehat{\mu}_\infty(b^n)$};   
\node  (lemma98) [draw,align=center,rectangle, left=2.7cm of prop110,node distance=3cm] {
Properties of $\widehat{\mu}_\infty(b^n)$:
\\
   Lem.~\ref{lemma-eqn-function-equality}, \ref{lemma-jihi}, \ref{coro-97},  
    \ref{lemma-mean-zero-s},  \\ \ref{lem-b2},  \ref{lemma-Yu-zero-s}, \ref{lemma-CLT-critical}, \ref{lemma-tightness}, \ref{lem-partition}
     };
\node  (prop81) [draw,align=center,rectangle, above=0.6cm of lemma82,node distance=3cm] {Prop.~\ref{prop-a-c}, Prop.~\ref{prop-sq-3case}: \\ lower bound of  $\dim_F(\mu_\infty)$};
\node  (lemma18) [draw,align=center,rectangle, above=0.6cm of prop81,node distance=3cm] {Lem.~\ref{prop-sub}, \ref{prop-sq-crit}, \ref{prop-super}: quasi-criterion for \\
 $\E[\{ \sum_{n} | n^{\tau}  \,\widehat{\mu}_\infty(n)|^q\}^{p/q} ]<\infty$
};
 \node (thm51) [draw, align=center, rectangle, above=2.9cm of lemma18, node distance=3cm]
 {Thm.~\ref{thm-precise}: two-sided  equivalent $L^p$-estimate 
 \\
 of   MC-actions on $\ell^q$-valued vector measures};
  \node (thm61) [draw, align=center, rectangle, below=1 cm of thm51, node distance=3cm] {Thm.~\ref{thm-CCM}: 
 two  sided sharp  bounds of  \\ 
 $\E[\{ \sum_{n} | n^{\tau}  \,\widehat{\mu}_\infty(n)|^q\}^{p/q} ]$ };
  \node (prop41) [draw, align=center, rectangle, left=0.9cm of thm51, node distance=3cm] {Prop.~\ref{prop-mart-type}: general  $L^p(\fX)$ upper bound of \\
  MC-actions  on $\fX$-valued vector measures};
  \node(A)[draw, circle, scale  = 0.1, above left = 0.2 cm and 0.1 cm of prop41]{};
  \node(B)[draw, circle, scale  = 0.1, right = 17.1 cm of A]{};
  \node(C)[draw, circle, scale  = 0.1, below  = 4.0cm of A]{};
    \node(D)[draw, circle, scale  = 0.1, right  = 8.1cm of C]{};
        \node(E)[draw, circle, scale  = 0.1, below  = 3.4cm of D]{};
                \node(F)[draw, circle, scale  = 0.1, right  = 8.9cm of E]{};
    \node (thm111) [draw, ultra thick, align=center, rectangle, below=0.8cm of prop41, node distance=3cm] { \textbf{Polynomial Rajchman property}\\ ${\color{red}\dim_F(\nu)>0   \Longrightarrow  \dim_F(Q\nu)> 0}$ \\Thm.~\ref{thm-pRaj}, Thm.~\ref{thm-preFdim}};
     \node (prop1001) [draw, thick, align=center, rectangle, below=0.6cm of thm111, node distance=3cm] {Prop.~\ref{lem-Fdecay}:
 Fourier\\  coefficients of restricted measures };  
\node  (prop101) [draw,align=center,rectangle, below =0.6cm of prop1001,node distance=3cm] {  Lem.~\ref{lem-trans-coef}:  Kahane's reduction\\  to Fourier coefficients};
\draw[->] (lemma71) -- (lemma82);
\draw[->] (prop19) -- (lemma82);
\draw[->] (lemma914) -- (prop19);
\draw[->] (prop110) -- (prop19);
\draw[->] (lemma98) -- (prop110);
\draw[->] (prop81) -- (lemma82);
\draw[->] (prop101) -- (prop81);
\draw[->] (lemma18) -- (prop81);
\draw[->] (thm61)--(lemma18);
\draw[->] (prop41)--(thm51);
\draw[->] (thm51)--(thm61);
 \draw[->] (prop41)--(thm111);
\draw[->] (prop1001)--(thm111);
\draw[->] (prop1001)--(prop101);
\draw[dashed, thick](A)--(B);
\draw[dashed, thick](A)--(C);
\draw[dashed, thick](C)--(D);
\draw[dashed, thick](D)--(E);
\draw[dashed, thick](E)--(F);
\draw[dashed, thick](F)--(B);
\end{tikzpicture}\caption{The schematic graph of the proofs of the main results:  the vector-valued martingale theory is mainly applied in the part enclosed by the dashed lines.}\label{fig-graph}
\end{figure}

{\flushleft \bf Part 0. Kahane's reduction to Fourier coefficients.}

Throughout the whole paper, when dealing with the Fourier dimension of a measure, we shall always assume that the measure is not the zero measure.

Following Kahane, we may reduce the study of the decay behavior of the Fourier transform of $\widehat{\mu}_\infty(\zeta)$ as $\zeta \to\infty$ to that of its Fourier coefficients $\widehat{\mu}_\infty(n/2)$ or $\widehat{\mu}_\infty(n)$ on the half-integers or on the integers as $n\to\infty$.   Indeed,  there are  two alternative ways to realize such reduction, both of which are essentially due to  Kahane  \cite[Chapter 17, Lemma 1]{Kahane-book} (for notational ease, we shall follow the second one, see Part 1 below of this subsection for the details): 
\begin{itemize}
\item  The first one is:  we use Lemma~\ref{lem-trans-coef-bis} to reduce the study of decay behavior of  $\widehat{\mu}_\infty(\zeta)$  to that of $\widehat{\mu}_\infty(n/2)$ and then  replace all  the estimate in Part 1  below on the random sequence  $(\widehat{\mu}_\infty(n))_{n\ge 1}$ by that on the random sequence  $(\widehat{\mu}_\infty(n/2))_{n\ge 1}$. 
\item The second one is: we use Lemma~\ref{lem-trans-coef} to reduce the study of decay behavior of  $\widehat{\mu}_\infty(\zeta)$  to that of $\widehat{\mu}_\infty(n)$ and then in Part 1 below, we estimate  the random sequence  $(\widehat{\mu}_\infty(n))_{n\ge 1}$. 
\end{itemize}

\begin{lemma}\label{lem-trans-coef-bis}
Let $\mu$ be a finite positive Borel  measure on $[0,1]$.  Then 
\[
\dim_F(\mu)= \sup \Big\{ D \in [0,1):    \text{$|\widehat{\mu}(n/2)|^2  = O(n^{-D})$ as $\N \ni n \to \infty$}    \Big\}. 
\]
\end{lemma}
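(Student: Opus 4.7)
One inequality is immediate from the definition: if $|\widehat\mu(\zeta)|^2 = O(|\zeta|^{-D})$ as $|\zeta|\to\infty$, then restricting $\zeta$ to the subsequence $\zeta = n/2$ with $n\to\infty$ yields $|\widehat\mu(n/2)|^2 = O(n^{-D})$. Hence $\dim_F(\mu)$ is dominated by the right-hand supremum.

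For the reverse, the plan is to promote a polynomial decay bound at half-integer samples to a bound on the full real line, exploiting that $\widehat\mu$ is strongly bandlimited. Since $\mu$ is supported on $[0,1]$, Paley--Wiener gives that $\widehat\mu$ extends to an entire function of exponential type $2\pi$ bounded by $\mu([0,1])$ on $\R$. A harmless translation sending $\mathrm{supp}(\mu)$ into $[-1/2,1/2]$ multiplies $\widehat\mu$ only by a unimodular factor (and so preserves $|\widehat\mu|$), after which the resulting function has exponential type $\pi$ and its inverse Fourier transform is supported in $[-1/2,1/2]$. Sampling at half-integer points then lies strictly above the Nyquist rate, which is what enables a Shannon--Whittaker-type reconstruction with a smooth, rapidly decaying kernel instead of the critical $\mathrm{sinc}$.

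Concretely, I would fix a Schwartz function $K$ whose Fourier transform $\widehat K$ is a smooth bump equal to $1$ on $[-1/2,1/2]$ and supported in $[-1,1]$, and establish the interpolation identity
\[
\widehat\mu(\zeta) \;=\; \tfrac{1}{2}\sum_{n\in\Z}\widehat\mu(n/2)\,K(\zeta-n/2),
\]
for example by applying Poisson summation to suitably truncated approximations of $\widehat\mu$ and passing to the limit using the rapid decay of $K$ together with the boundedness of $\widehat\mu$. Given $|\widehat\mu(n/2)| = O((|n|+1)^{-D/2})$ (the negative-$n$ bound follows from $\widehat\mu(-\zeta) = \overline{\widehat\mu(\zeta)}$), I would split the sum into the near-diagonal range $|n-2\zeta|\le |\zeta|$ and its tail. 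The near-diagonal terms contribute $O(|\zeta|^{-D/2})$ by absolute summability of any kernel power $(1+|m|)^{-M}$ with $M>1$, while the tail is $O(|\zeta|^{-M})$ for arbitrarily large $M$ by the rapid decay of $K$. This gives $|\widehat\mu(\zeta)|^2 = O(|\zeta|^{-D})$ for every $D$ admissible on the right-hand side, and thus $\dim_F(\mu)$ dominates the right-hand supremum.

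The main obstacle I anticipate is justifying the sampling identity for $\widehat\mu$, which is only bounded and in general not $L^2(\R)$; this is precisely where the strict oversampling (half-integer rather than integer spacing) matters, since the kernel's rapid decay makes the reconstruction series absolutely convergent and permits termwise estimation without any $L^2$-hypothesis. This is the content of Kahane's Lemma 1 in Chapter~17 of \cite{Kahane-book}.
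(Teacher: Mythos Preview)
Your argument is correct and ultimately rests on the same ingredient as the paper---Kahane's Chapter~17, Lemma~1---but the two proofs package the reduction differently.

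The paper's proof is a one-line change of variable: push $\mu$ forward under $x\mapsto x/2$ to get a probability measure $\PP_Z$ supported on $[0,1/2]$, observe that $\widehat{\PP_Z}(n)=\widehat{\mu}(n/2)$ and $\widehat{\PP_Z}(\zeta)=\widehat{\mu}(\zeta/2)$, and then invoke Kahane's lemma directly for $\PP_Z$ (which is legitimate since its support has length $1/2<1$). This converts half-integer sampling of $\widehat{\mu}$ into integer sampling of $\widehat{\PP_Z}$ and avoids writing out any interpolation formula.

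Your route instead keeps the half-integer grid, centres the support by translation, and then unpacks the mechanism behind Kahane's lemma: the strict oversampling lets you replace the critical $\mathrm{sinc}$ kernel by a Schwartz kernel $K$ with compactly supported transform equal to $1$ on $[-1/2,1/2]$, so the reconstruction series is absolutely summable and the near/far split goes through without any $L^2$ hypothesis. This is more work, but it makes transparent \emph{why} half-integer sampling suffices while integer sampling (support length exactly $1$) would not---precisely the point the paper is trying to convey by introducing this lemma alongside Lemma~\ref{lem-trans-coef}.

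One small point: after your translation to $[-1/2,1/2]$ the modulus $|\widehat{\mu}|$ is unchanged, so the hypothesis $|\widehat{\mu}(n/2)|^2=O(n^{-D})$ transfers verbatim; you use this implicitly and it is worth stating. Otherwise the argument is sound.
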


\begin{lemma}\label{lem-trans-coef}
Let $\mu$ be a  finite positive  Borel  measure on $[0,1]$.  Then 
\[
\dim_F(\mu)= \sup \{ D \in [0,1):    \text{$|\widehat{\mu}(n)|^2  = O(n^{-D})$ as $\N \ni n \to \infty$}    \}. 
\]
\end{lemma}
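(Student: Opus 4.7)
The ``$\le$'' direction is immediate, since decay of $|\widehat\mu(\zeta)|^2$ along all real $\zeta$ in particular holds along the integers. The substance is the reverse inequality, and my plan is to derive it from an explicit sampling-type identity expressing $\widehat\mu$ at a real point as a discrete sum of its values at integers, in the spirit of \cite[Chap.~17, Lem.~1]{Kahane-book}.

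Fix $D\in(0,1)$ such that $|\widehat\mu(n)|^2=O(|n|^{-D})$ and write a non-integer $\zeta$ as $\zeta=n+\theta$ with $n\in\Z$ and $\theta\in(0,1)$. The first step is to expand the bounded-variation function $e^{-2\pi i\theta x}$ in Fourier series on $[0,1]$, giving coefficients $c_k(\theta)=\frac{1-e^{-2\pi i\theta}}{2\pi i(\theta+k)}$. The Dirichlet--Jordan theorem guarantees that the partial sums $S_N$ converge pointwise to $e^{-2\pi i\theta x}$ on $(0,1)$ and are uniformly bounded on $[0,1]$. Since $D>0$ forces $\widehat\mu(n)\to 0$ and hence, by Wiener's lemma, $\mu$ is atomless, dominated convergence legitimates swapping the partial sums and the $\mu$-integral, producing the sampling formula
\[
\widehat\mu(\zeta)=\sum_{m\in\Z}\frac{1-e^{-2\pi i\theta}}{2\pi i(\zeta-m)}\,\widehat\mu(m),
\]
absolutely convergent under the decay hypothesis with $D>0$.

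The remaining step is to estimate the right-hand side. Taking absolute values and using $|1-e^{-2\pi i\theta}|=2|\sin(\pi\theta)|$, I split the sum into the three regimes $|m|\le n/2$, $n/2<|m|\le 2n$, and $|m|>2n$. A routine calculation with $|\widehat\mu(m)|\lesssim(1+|m|)^{-D/2}$ yields contributions $O(|\zeta|^{-D/2})$ from the outer regimes and $O(|\zeta|^{-D/2}\log|\zeta|)$ from the middle regime, while the potentially singular term $m=n$ (where $|\zeta-m|=|\theta|$ may be small) is tamed by the factor $|\sin(\pi\theta)|/|\theta|\le\pi$ and bounded by $|\widehat\mu(n)|$. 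Combining these yields $|\widehat\mu(\zeta)|\lesssim|\zeta|^{-D/2}\log|\zeta|$, hence $|\widehat\mu(\zeta)|^2=O(|\zeta|^{-D'})$ for every $D'<D$; taking the supremum over $D$ delivers the reverse inequality. The main obstacle I anticipate is the rigorous justification of the sampling identity, where the $\sin(\pi\theta)$ prefactor must be carefully tracked to absorb the near-resonance contribution from $m=n$ and the absolute convergence relies on $D>0$; the boundary cases $D=0$ (trivial) and $\theta=0$ (integer $\zeta$) are handled separately without further calculation.
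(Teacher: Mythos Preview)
Your argument is correct. The sampling identity you derive is valid: the Fourier partial sums of the BV function $e^{-2\pi i\theta x}$ are uniformly bounded on $[0,1]$ and converge pointwise on $(0,1)$, while the decay hypothesis with $D>0$ forces $\widehat\mu(n)\to 0$ and hence, via Wiener's lemma, $\mu(\{0\})=\mu(\{1\})=0$, so dominated convergence applies. The three-regime estimate is standard; one small omission is that the term $m=n+1$ is just as near-resonant as $m=n$ (since $|\zeta-(n+1)|=1-\theta$ can be small), but the same $|\sin(\pi\theta)|/|\zeta-m|\le\pi$ bound handles it.

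Your route differs from the paper's. The paper does not pass directly from integers to real $\zeta$; instead it first proves (Proposition~\ref{lem-Fdecay}) that integer decay is inherited---with a $\log$ loss---by the restrictions $\mu|_I$ to sub-intervals, via the convolution identity $\widehat{\mathds{1}_I\cdot\mu}=\widehat{\mathds{1}_I}*\widehat\mu$ and an analogous three-regime split. It then decomposes $\mu=\mu|_{[0,1/2)}+\mu|_{[1/2,1]}$, applies Kahane's original lemma (valid for measures supported in intervals of length $<1$) to each piece, and sums. Your approach is more direct and avoids both the decomposition and the appeal to Kahane's lemma, at the cost of a slightly more delicate justification of the interchange (you need atomlessness, which the decay hypothesis supplies). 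The paper's detour, however, yields Proposition~\ref{lem-Fdecay} as an independent byproduct, and that result is reused later (Lemma~\ref{lem-gen-measure}) to control the growth rate of the vector measure $\mathcal{V}_{\alpha,q}$ associated with a general reference measure $\nu$.
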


 It is worthwhile to note that Lemma~\ref{lem-trans-coef}  seems to  have been overseen in the litterature where the measures were usually assumed to be supported in $[\delta, 1- \delta]$ for some $0< \delta<1/2$.   In Remark~\ref{rem-ka} below, we briefly explain how this assumption on the support is dropped here. 
 \begin{remark}\label{rem-ka}
  Kahane \cite[Chapter 17, Lemma 1]{Kahane-book} actually proved  more. Here we present a version that is suitable for our purpose. Let $\mu$ be a finite positive Borel measure supported on $[\delta, 1 - \delta]$ for some $0< \delta<1/2$ and let  $\Psi(t)$ be  a positive function on $(0, \infty)$ which is decreasing on $[t_0, \infty)$ for some $t_0>0$ and  such that $\Psi (t/2)=O(\Psi(t))$ as $t\to\infty$. Then 
 \begin{align}\label{Kah-rel}
  \widehat{\mu}(n)=O(\Psi(|n|))  \text{\,\,as\,\,} n \rightarrow \infty  \Longrightarrow \widehat{\mu}(\zeta)=O(\Psi(|\zeta|))   \text{\,\,as\,\,} \zeta \rightarrow \infty. 
  \end{align}
 Clearly, the support assumption of Kahane can be replaced by the assumption that $\mu$ is supported on a sub-interval of length strictly less than $1$.  Lemma \ref{lem-trans-coef-bis}  is obtained as follows: we may assume that $\mu$ is a probability measure and $X \stackrel{d}{=}\PP_X = \mu$.  Then  $Z = X/2$ is supported on $[0, 1/2]$  and $\widehat{\mu}(\zeta)  = \widehat{\PP_X}(\zeta)= \E[\exp(-i 2 \pi X \zeta) ]= \E[\exp(-i 2 \pi Z 2 \zeta) ] = \widehat{\PP_Z}(2\zeta)$.  Since $\PP_Z$ is supported on the interval $[0,1/2]$ of length strictly less than $1$, we may apply Kahane's result \eqref{Kah-rel} and obtain 
 \[
 \widehat{\mu}(n/2) = \widehat{\PP_Z}(n) = O(n^{-D/2})  \text{\,\,as\,\,} n \rightarrow \infty   \Longrightarrow  \widehat{\mu}(\zeta/2) = \widehat{\PP_Z}(\zeta) = O(\zeta^{-D/2})  \text{\,\,as\,\,} \zeta \rightarrow \infty. 
 \]
Lemma~\ref{lem-trans-coef} is  proved in a slightly different way.  Indeed, we shall prove that, for $0<D<1$, 
\[
 \widehat{\mu}(n) = O(n^{-D/2})  \text{\,\,as\,\,} n \rightarrow \infty   \Longrightarrow  \widehat{\mu}(\zeta) = O(|\zeta|^{-D/2} \log |\zeta|)  \text{\,\,as\,\,} \zeta \rightarrow \infty. 
\]
More precisely,   in  Proposition~\ref{lem-Fdecay},  we prove that  if $|\widehat{\mu}(n)|  = O(n^{-\kappa})$ with $\kappa \in (0, 1]$, then 
\[
|\widehat{\mu}_0(n)|  = O(n^{-\kappa} \log n) \an  |\widehat{\mu}_1(n)| = O(n^{-\kappa} \log n),
\]
where $\mu_0 = \mu|_{[0,1/2)}$ and $\mu_1= \mu|_{[1/2, 1]}$ are the restrictions of $\mu$ on two sub-intervals of length $1/2$. We can then apply Kahane's result  \eqref{Kah-rel}  to $\mu_0$ and $\mu_1$ respectively and obtain 
\[
|\widehat{\mu}(\zeta)| \le    |\widehat{\mu}_0(\zeta)|  + |\widehat{\mu}_1(\zeta)|   = O  \big( |\zeta|^{-\kappa} \log (|\zeta|)\big) \as \zeta \to \infty.
\]
\end{remark}

As an immediate consequence of Lemma \ref{lem-trans-coef}, we have 

\begin{corollary}\label{cor-series-dim}
Let $\mu$ be a finite positive Borel  measure on $[0,1]$.  Then 
\[
\dim_F(\mu)= \sup \Big\{D \in [0,1):    \text{$  \sum_{n\ge 1 } | n^{\frac{D}{2}}  \cdot \widehat{\mu}(n)|^q <\infty$ for some $q>2$}    \Big\}. 
\]
\end{corollary}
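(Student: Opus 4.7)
\textbf{Proof plan for Corollary~\ref{cor-series-dim}.} The plan is to show both directions of the equality by direct comparison with the characterization of $\dim_F(\mu)$ in Lemma~\ref{lem-trans-coef}. Denote the right-hand side of the claimed identity by $D^\ast$.

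First, for the direction $D^\ast \le \dim_F(\mu)$: take any $D \in [0,1)$ with the property that $\sum_{n\ge 1} |n^{D/2}\widehat{\mu}(n)|^q < \infty$ for some $q > 2$. Since each term of a convergent nonnegative series is bounded by a constant (indeed, tends to zero), we have $n^{qD/2}|\widehat{\mu}(n)|^q \le C$ uniformly in $n$, so $|\widehat{\mu}(n)|^2 \le C^{2/q} n^{-D}$ and hence $|\widehat{\mu}(n)|^2 = O(n^{-D})$. Lemma~\ref{lem-trans-coef} then gives $\dim_F(\mu)\ge D$, and taking the supremum yields $\dim_F(\mu)\ge D^\ast$.

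Second, for $\dim_F(\mu) \le D^\ast$: fix $D \in [0,1)$ with $D < \dim_F(\mu)$. Then there exists $D'$ with $D<D'<\dim_F(\mu)$ such that $|\widehat{\mu}(n)|^2 = O(n^{-D'})$, i.e.\ $|\widehat{\mu}(n)| = O(n^{-D'/2})$. For any $q>2$, we then have
\[
n^{qD/2}|\widehat{\mu}(n)|^q = O\bigl(n^{-q(D'-D)/2}\bigr).
\]
Choosing $q$ large enough that $q(D'-D)/2 > 1$, say $q = \max\{3,\, 3/(D'-D)\}$, we obtain $\sum_{n\ge 1}|n^{D/2}\widehat{\mu}(n)|^q < \infty$, so $D \le D^\ast$. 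Letting $D \nearrow \dim_F(\mu)$ gives the desired inequality.

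There is no real obstacle here; the statement is essentially a restatement of Lemma~\ref{lem-trans-coef} via the elementary equivalence between pointwise polynomial decay of a sequence and $\ell^q$-summability of the weighted sequence for some (equivalently, all sufficiently large) $q$. The only care needed is to check that one may always choose the exponent $q$ to lie strictly above $2$, which is automatic since $q$ can be taken arbitrarily large.
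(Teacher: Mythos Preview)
Your proof is correct and follows essentially the same approach as the paper's: both directions reduce to Lemma~\ref{lem-trans-coef} via the elementary equivalence between pointwise polynomial decay $|\widehat{\mu}(n)|^2=O(n^{-D})$ and $\ell^q$-summability of $n^{D/2}\widehat{\mu}(n)$ for sufficiently large $q$. The only cosmetic difference is that the paper fixes a $D$ at which the decay holds and shows summability at $D-\varepsilon$, whereas you fix $D<\dim_F(\mu)$ and pick an intermediate $D'$; these are the same argument with the variables relabeled.
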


Therefore,  the central idea   in our proof of Theorem \ref{thm-fourier} is to find the optimal exponent $\tau>0$ such that 
\[
\sum_{n\ge 1} | n^{\tau}  \cdot \widehat{\mu}_\infty(n)|^q<\infty
\]
holds for large enough $q>2$ (here $q$ can not be taken in $(1, 2]$ since otherwise $\mu_\infty$ would be absolutely continuous, which violates the standard known fact).  Indeed,  set
\begin{align}\label{def-tau-c}
\tau_c(\mu_\infty): =  \sup \Big\{\tau \in \R: \sum_{n\ge 1} | n^{\tau}  \cdot \widehat{\mu}_\infty(n)|^q<\infty \text{\, for some $q>2$} \Big\}. 
\end{align}
Then by Corollary \ref{cor-series-dim},  for  $\mu_\infty\ne 0$, 
\[
\dim_F(\mu_\infty) = \min( 2 \tau_c(\mu_\infty), 1).
\]

{\flushleft \bf    Part 1.  Vector-valued martingales.}

  At first glance, one may think that the optimal exponent $\tau_c(\mu_\infty)$ can be obtained by finding a  sharp sufficient condition or a  criterion for 
\begin{align}\label{q-moment}
\E\Big[ \sum_{n\ge 1} | n^{\tau}  \cdot \widehat{\mu}_\infty(n)|^q\Big]<\infty. 
\end{align}
And  this is indeed the common strategy for computing the Fourier dimension for  many random measures in the litterature.  However, the estimate \eqref{q-moment} does not work in our setting. Instead,  we shall find optimal $\tau$ such that for large enough $q>2$ and small enough $\varepsilon> 0$, 
\begin{align}\label{vec-moment}
\E\Big[\Big\{ \sum_{n\ge 1} | n^{\tau}  \cdot \widehat{\mu}_\infty(n)|^q\Big\}^{\frac{1+\varepsilon}{q}} \Big]<\infty.
\end{align}

The estimate \eqref{q-moment} seems to be  much simpler than that of \eqref{vec-moment}, it can be  reduced to the estimate of the asymptotic behavior $\E[|\widehat{\mu}_\infty(n)|^q]$.  Namely, we compute separately all the $q$-moments of $\widehat{\mu}_\infty(n)$ for each $n$.  In a separate work,  based on a method developed in our previous work \cite{HQW-23},  for fixed $q>2$, we shall give  the precise asymptotics of $\E[|\widehat{\mu}_\infty(n)|^q]$  as $n\to\infty$ and thus give a simple criterion for \eqref{q-moment}.  However,  to use the estimate \eqref{q-moment} for finding the optimal exponent $\tau_c(\mu_\infty)$ would require  very restrictive condition on $W$ (roughly speaking, it would require that  $W$ is bounded and a strong condition on the $L^\infty$-bound of $W$).

Therefore, instead of considering coordinatewisely all the random  Fourier coefficients $\widehat{\mu}_\infty(n)$, we shall consider $\widehat{\mu}_\infty$ as a whole random object. Namely, $\widehat{\mu}_\infty$  is identified with the random  vector : 
\[
\widehat{\mu}_\infty  = \big(\widehat{\mu}_\infty (n) \big)_{ n\in \N}. 
\]
Note that,  since $\mu_\infty$ is a non-negative measure, $\widehat{\mu}_\infty(-n) = \overline{\widehat{\mu}_\infty(n)}$,  we may only consider its Fourier coefficients on non-negative integers. Moreover, it is convenient to work with mean zero random vector, hence we shall only consider the Fourier coefficients $\widehat{\mu}_\infty(n)$ for $n\ge 1$ (indeed,  $\E[\widehat{\mu}_\infty(n)] =  \widehat{\mathds{1}_{[0,1]}}(n) = 0$ for all integers $n\ne 0$).  

For this random vector $\widehat{\mu}_\infty$, we can define a series of norms as follows. For any $\alpha \ge  0$ and $p, q\ge 1$, define the $(\alpha,p,q)$-norm of $\mu_\infty$ by
\begin{align}\label{eq-npq}
\mathcal{N}^{(\alpha, p, q)}(\mu_\infty): =   \Big(\E\Big[  \Big\{ \sum_{n \ge 1}    \big| n^{\alpha} \cdot \widehat{\mu}_\infty(n)\big|^q  \Big\}^{\frac{p}{q}}\Big] \Big)^{\frac{1}{p}}   = \Big\|  \big( n^{\alpha} \cdot \widehat{\mu}_\infty(n) \big)_{n \ge 1}  \Big\|_{L^p(\PP; \, \ell^q)}\in [0, +\infty].
\end{align}
Then for appropriate parameters $\alpha, p, q$,  we shall give in Theorem \ref{thm-CCM} below a sharp two-sided estimate (where $\mathring{W} = W- \E[W])$
\begin{align}\label{2-side-es}
 \frac{\|\mathring{W}\|_1}{\|W\|_2}  \cdot  \big\|\mathcal{R}\big\|_{\ell^{\frac{p}{q}}}^{\frac{1}{p}}
 \lesssim_{b, \alpha, p, q}   \NQ(\mu_\infty) 
\lesssim_{b, \alpha, p, q}  \|\mathring{W}\|_q     \cdot \big\|\mathcal{R}\big\|_{\ell^1}^{\frac{1}{p}},
\end{align}
where $\mathcal{R}$ (see  \eqref{def-Rn} for its precise definition) is an explicitly given  scalar sequence  related to the MC-action  with the initial random weight $W^{(2)} = W^2/\E[W^2]$.  The two-sided estimate \eqref{2-side-es} is sharp in the sense that, except in a certain critical case (which can be further removed by varying the parameters $p, q$), the scalar sequence   $\mathcal{R}$ has exponential decay. This  allows the two-sided estimate \eqref{2-side-es} to capture the  quasi-criterion (namely, almost necessary and sufficient condition) for the finiteness of the norm $\mathcal{N}^{(\alpha, p, q)}(\mu_\infty)$, see Lemma ~\ref{prop-sub},  Lemma \ref{prop-sq-crit} and Lemma \ref{prop-super}.  To give the reader a quick insight,  consider  the log-normal weight $
W=e^{\sigma N-\sigma^2/2}$ with $N\sim \mathcal{N}(0,1)$ and $0<\sigma^2<\frac{1}{2}\log b$ (i.e.,  $W$ is in the squared sub-critical regime). Then the two-sided estimate \eqref{2-side-es} reads as (with $c_1, c_2$ two finite constants depending on $\alpha, p, q, b$)
\begin{align*}
c_1\left(\sum_{n=1}^\infty e^{\frac{np^2\,\log b}{q}\left[\alpha-\frac{1}{2}\left(1-\frac{\sigma^2}{\log b}-\frac{2}{q}\right)\right] }\right)^{\frac{q}{p^2}}
  \leq \mathcal{N}^{(\alpha, p, q)}\left(\mu_{\infty}\right)\leq c_2\left(\sum_{n=1}^\infty e^{np\,\log b\left[\alpha-\frac{1}{2}\left(1-\frac{\sigma^2}{\log b}-\frac{2}{q}\right)\right] }\right)^{\frac{1}{p}}.
\end{align*}
Hence  in this case, for any $1<p\le 2\le q<\infty$ and any $\sigma \in (0, \sqrt{\log b/2})$, we can obtain  the following equivalence
\[
\mathcal{N}^{(\alpha, p, q)}<\infty \Longleftrightarrow  \alpha-\frac{1}{2}\left(1-\frac{\sigma^2}{\log b}-\frac{2}{q}\right)<0.
\]

In the next step, we define a critical exponent $\alpha_c$ by 
\begin{align}\label{def-a-c}
\alpha_c: =  \sup \Big\{\alpha \in \R:  \mathcal{N}^{(\alpha, p, q)}(\mu_\infty) <\infty \text{\, for some $1<p<2<q<\infty$} \Big\}. 
\end{align}
Almost surely, $\tau_c(\mu_\infty) \ge \alpha_c$. The sharp two-sided estimate \eqref{2-side-es} then  gives

\begin{proposition}\label{prop-a-c}
If $\E[W\log W]<\log b$ and $\E[W^t]<\infty$ for all $t>0$,   then  almost surely, 
\[
\tau_c(\mu_\infty) \ge \alpha_c \ge D_F/2, 
\]
and hence, almost surely on $\{\mu_\infty \ne 0\}$, 
\[
\dim_F(\mu_\infty) \ge D_F.
\]
\end{proposition}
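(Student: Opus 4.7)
\textbf{Proof plan for Proposition \ref{prop-a-c}.}

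The plan is to reduce everything to the quasi-criteria for the finiteness of the norms $\mathcal{N}^{(\alpha,p,q)}(\mu_\infty)$ supplied by the sharp two-sided estimate \eqref{2-side-es}. The first inequality $\tau_c(\mu_\infty) \ge \alpha_c$ is essentially automatic: if $\mathcal{N}^{(\alpha,p,q)}(\mu_\infty)<\infty$ for some $1<p<2<q<\infty$, then the non-negative random variable $\sum_{n\ge 1}|n^\alpha \widehat{\mu}_\infty(n)|^q$ has a finite $(p/q)$-moment and is therefore a.s.\ finite, which by the defining formula \eqref{def-tau-c} places $\alpha$ below $\tau_c(\mu_\infty)$ almost surely. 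Taking a countable sequence $\alpha\uparrow \alpha_c$ yields $\tau_c(\mu_\infty)\ge \alpha_c$ a.s. The final assertion on $\dim_F(\mu_\infty)$ then follows from Corollary \ref{cor-series-dim} together with Lemma \ref{lem-DF}, which guarantees $D_F<1$.

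The heart of the argument is therefore $\alpha_c \ge D_F/2$. I would fix an arbitrary $\alpha<D_F/2$ and produce a pair $1<p<2<q<\infty$ for which the upper bound in \eqref{2-side-es} is finite. The proof splits naturally into the three regimes separating the definition of $D_F$ in \eqref{def-DF}. In the squared sub-critical regime and the squared critical regime, one has $D_F = 1 - \log_b\E[W^2]$; here I would use the sub-critical (resp.\ critical) quasi-criterion of Lemma \ref{prop-sub} (resp.\ Lemma \ref{prop-sq-crit}) for the scalar sequence $\mathcal{R}$, which under $\E[W^t]<\infty$ for all $t>0$ reduces the finiteness of $\|\mathcal{R}\|_{\ell^1}^{1/p}\|\mathring{W}\|_q$ to a linear inequality of the form $\alpha < \tfrac{1}{2}(1-\log_b\E[W^2]) - \tfrac{1}{q} + o(1)$, whose right-hand side converges to $D_F/2$ as $q\to\infty$. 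Consequently, given $\alpha<D_F/2$ I can pick $q$ sufficiently large and $p\in(1,2)$ arbitrary. In the squared super-critical regime, $D_F$ is given by the infimum $\inf_{1/2\le t\le 1}(\log\E[b^{1-t}W^{2t}])/(t\log b)$, and I would appeal to Lemma \ref{prop-super}: choose a near-optimal $t_\ast\in [1/2,1]$ so that $(\log\E[b^{1-t_\ast}W^{2t_\ast}])/(t_\ast\log b)$ is within $D_F/2 - \alpha$ of the infimum, then select $(p,q)$ so that the quasi-criterion at exponent $t_\ast$ is satisfied; the moment hypothesis $\E[W^t]<\infty$ for all $t>0$ ensures $\|\mathring{W}\|_q<\infty$ for any $q$.

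The main obstacle is the parameter juggling in the super-critical regime, where the two-sided bound is controlled by a geometric series whose ratio depends jointly on $(\alpha,p,q)$ and the choice of $t_\ast$; one must verify that, for the correct infimizing $t_\ast$, the inequality ensuring summability of $\mathcal{R}$ still leaves room to take $p>1$ and $q>2$ strictly. This is where Lemma \ref{lem-DF} and Lemma \ref{lem-psi-beta}, which catalog the analytic properties of $D_F$ and the associated function $\psi$, enter crucially: they guarantee strict convexity and give the precise location and regularity of the infimum, so that for $\alpha<D_F/2$ the linear constraints in $(p,q)$ define a non-empty open region inside $\{1<p<2<q<\infty\}$. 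Once a pair $(p,q)$ is produced in each regime, the upper bound in \eqref{2-side-es} gives $\mathcal{N}^{(\alpha,p,q)}(\mu_\infty)<\infty$, hence $\alpha\le \alpha_c$; letting $\alpha\uparrow D_F/2$ concludes the proof.
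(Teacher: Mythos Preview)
Your proposal is correct and follows essentially the same approach as the paper: the inequality $\tau_c(\mu_\infty)\ge \alpha_c$ is immediate, and $\alpha_c\ge D_F/2$ is obtained by the three-regime split using Lemmas~\ref{prop-sub}, \ref{prop-sq-crit}, \ref{prop-super} (packaged in the paper as Proposition~\ref{prop-sq-3case} via Corollaries~\ref{cor-sqsub}, \ref{cor-cri}, \ref{cor-sqsup}), with the final dimension bound coming from Corollary~\ref{cor-series-dim} and Lemma~\ref{lem-DF}. One small imprecision: in the squared super-critical regime there is no free ``exponent $t_\ast$'' to optimize in Lemma~\ref{prop-super}---the criterion there is simply $\alpha+q^{-1}\le \psi(\beta)/\log b$ with $\beta\in(1/2,1)$ fixed by the Biggins--Kyprianou transform, and Lemma~\ref{lem-psi-beta} identifies $\psi(\beta)/\log b$ with $D_F/2$ directly (the infimum in the definition of $D_F$ is attained at $t_0=1/(2\beta)$ because $\psi(1)=0$); also note $p$ must lie in $(1,1/\beta)\subsetneq(1,2)$, not be arbitrary.
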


{\flushleft \bf  Part 2. Fluctuations of martingales in BRW.}

The sharp upper bound of $\tau_c(\mu_\infty)$ relies on the  weak convergence type theorems from BRW. 
\begin{proposition}\label{prop-to-infty}
Assume that $\E[W\log W]<\log b$ and $\E[W^t]<\infty$ for all $t>0$.   Then  almost surely on $\{\mu_\infty\ne 0\}$,   for any $\varepsilon>0$, there exists some subsequence $(n_k)$  depending on $\varepsilon$,  such that 
\[
\lim_{k\to\infty} b^{\frac{n_k (D_F+\varepsilon)}{2}} |\widehat{\mu}_\infty(b^{n_k})| =  \infty. 
\]
\end{proposition}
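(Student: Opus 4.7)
The plan is to exploit the exact self-similarity of the Fourier coefficients of $\mu_\infty$ along the $b$-adic integers. Because $e^{-2\pi i b^{n-1}j}=1$ for every $j\in\Z$, splitting $[0,1]$ into its $b$ first-generation sub-intervals yields the fundamental recursion
\begin{equation}\label{pf-plan-recur}
\widehat{\mu}_\infty(b^n) \;=\; \frac{1}{b}\sum_{j=0}^{b-1} W_j\, \widehat{\mu}_\infty^{(j)}(b^{n-1}),
\end{equation}
where $(W_j,\mu_\infty^{(j)})_{j=0}^{b-1}$ are independent copies of $(W,\mu_\infty)$ with $W_j$ independent of $\mu_\infty^{(j)}$. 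Iterating \eqref{pf-plan-recur} to depth $k$ exposes a branching random walk with multiplicative displacements $W_u$ on the $b$-ary tree, and pins down the correct scaling $b^{nD_F/2}$: in the squared sub-critical and critical regimes this comes from the variance identity $\E[|\widehat{\mu}_\infty(b^n)|^2]=(\E[W^2]/b)^n\E[|\widehat{\mu}_\infty(1)|^2]$, while in the squared super-critical regime it is dictated by the Legendre transform appearing in the second branch of \eqref{def-DF} combined with the extremal theory of the BRW.

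The first step is to prove a non-degenerate weak limit for $Y_n := b^{nD_F/2}\widehat{\mu}_\infty(b^n)$; this is precisely the content of Propositions \ref{prop-clt}, \ref{thm-sq-sub-critical}, \ref{thm-sq-critical} and \ref{thm-non-cri-super-critical}, one for each regime. In the squared sub-critical regime, \eqref{pf-plan-recur} is a sum of centered bounded-variance i.i.d. terms and a CLT iterated across generations produces a complex Gaussian limit. In the squared critical regime one needs the standard logarithmic correction coming from the derivative martingale of the associated BRW. In the squared super-critical regime $Y_n$ has infinite variance, so the correct asymptotics must be read off from the freezing phenomenon for the extremal particles of the BRW with step law $\log W$; the non-lattice hypothesis on $\log W$ then delivers a non-degenerate decorated point-process limit.

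The second step promotes each weak limit to the almost-sure subsequential statement. It suffices to establish $\limsup_n |Y_n|>0$ almost surely on $\{\mu_\infty\ne 0\}$, since any subsequence $(n_k)$ along which $|Y_{n_k}|\ge\delta$ automatically satisfies $b^{n_k(D_F+\varepsilon)/2}|\widehat{\mu}_\infty(b^{n_k})|\ge \delta\, b^{n_k\varepsilon/2}\to\infty$. Iterating \eqref{pf-plan-recur} expresses $Y_n$, conditionally on the first $k$ generations, as a weighted sum of $b^k$ independent copies of $Y_{n-k}$. Since the weak limit is non-degenerate, $\PP(|Y_m|\ge\delta)\ge p>0$ for all large $m$, and a conditional Borel--Cantelli argument applied to disjoint subtrees---with $\{\mu_\infty\ne 0\}$ belonging to the tail $\sigma$-algebra of the cascade---forces $\limsup_n |Y_n|>0$ on $\{\mu_\infty\ne 0\}$. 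Lemmas \ref{lemma-cov-law-con-two} and \ref{lemma-cov-law-con-one} are exactly the covariance/law-convergence devices performing this upgrade.

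The principal obstacle is the squared super-critical regime. There is no $L^2$ convergence of $Y_n$ and the normalization must be extracted from the extremal process of a BRW rather than from moment calculations. Identifying the exponent $D_F$ given by the second branch of \eqref{def-DF} and establishing non-degeneracy of the decorated limit demand sharp BRW asymptotics (freezing at the boundary, derivative-martingale non-degeneracy) and use the non-lattice hypothesis on $\log W$ in an essential way. Once the super-critical weak limit is in hand, the almost-sure upgrade step proceeds exactly as in the sub-critical case.
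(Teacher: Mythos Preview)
Your first step---invoking Propositions~\ref{prop-clt}, \ref{thm-sq-sub-critical}, \ref{thm-sq-critical}, \ref{thm-non-cri-super-critical} to get a non-degenerate weak limit for the rescaled Fourier coefficient---is exactly what the paper does, and your description of the self-similarity \eqref{pf-plan-recur} and of the three regimes is accurate.

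The gap is in your second step. You reduce the problem to showing $\limsup_n |Y_n|>0$ almost surely on $\{\mu_\infty\ne 0\}$, where $Y_n=b^{nD_F/2}\widehat{\mu}_\infty(b^n)$. But this need not hold outside the squared sub-critical regime: in the squared critical and super-critical regimes the weak limit is for $r(n)Y_n$ with $r(n)=n^{1/4}$ or $r(n)=n^{3\beta/2}$, so $Y_n\to 0$ in probability and there is no reason to expect $\limsup_n|Y_n|>0$. Your Borel--Cantelli sketch also glosses over cancellation: $Y_n$ is a \emph{signed} weighted sum of $b^k$ independent copies of $Y_{n-k}$, and knowing each copy is large with probability $\ge p$ does not prevent the sum from being small. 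Finally, you misidentify what Lemmas~\ref{lemma-cov-law-con-two} and \ref{lemma-cov-law-con-one} do: they are not Borel--Cantelli devices.

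The paper's route is shorter and avoids all of this. Lemma~\ref{lemma-cov-law-con-two} transfers the weak convergence $r(n)Y_n\xrightarrow{d}\mathcal{X}_\infty$ from $\PP$ to $\PP^*=\PP(\,\cdot\mid\mu_\infty\ne 0)$, with $\PP^*(\mathcal{X}_\infty\ne 0)=1$. Lemma~\ref{lemma-cov-law-con-one} is the elementary fact that if $X_n\xrightarrow{d}X_\infty$ with $\PP(X_\infty\ne 0)=1$ and $a_n\uparrow\infty$, then $a_n|X_n|\to\infty$ in probability. Applying it with $X_n=r(n)Y_n$ and $a_n=b^{n\varepsilon/2}/r(n)\to\infty$ (since $r(n)$ is only polynomial) gives $b^{n(D_F+\varepsilon)/2}|\widehat{\mu}_\infty(b^n)|\to\infty$ in $\PP^*$-probability; then one extracts an almost-sure subsequence by the standard fact that convergence in probability implies almost-sure convergence along a subsequence. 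No branching Borel--Cantelli is needed, and the argument is uniform across all three regimes.
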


 Proposition~\ref{prop-to-infty} will be derived as a consequence of  the following Proposition~\ref{prop-clt} on the precise weak convergence limit of the rescaled  Fourier coefficients $\widehat{\mu}_\infty(b^n)$.     In the proof of  Proposition~\ref{prop-clt}, the non-lattice assumption on $\log W$  is used when $W$ is in the squared super-critical regime. 
 
 \begin{remark*}
As we explained in Remark \ref{rem-non-lattice},  in the sequel to this paper,  by using the optimal estimate of the H\"older continuity exponent \eqref{Hol-optimal}, we will prove that Proposition~\ref{prop-to-infty} holds   without the non-lattice assumption on $\log W$. 
 \end{remark*}

\begin{remark*}
As suggested to us by a referee, the quantity $D_F$ in \eqref{def-DF} in fact coincides with the standard correlation dimension $\dim_2(\mu_\infty)$ (see \eqref{def-D-p} below for its precise definition), which is essentially known to the experts and follows from  Molchan \cite{Mol96} and Barral-Gon\c{c}alves \cite{BG11}.  Therefore, the upper bound of $\dim_F(\mu_\infty)\le D_F$ is straightforward by using the standard inequality $\dim_F(\mu_\infty)\le \dim_2(\mu_\infty)$.    We noticed that the latest version of Chen-Li-Suomala \cite{CLS24}, the authors applied the aforementioned method to give the details of  $\dim_2(\mu_\infty) = D_F$. Therefore, to avoid duplication, we decide to keep our original method using the fluctuation  of $\widehat{\mu}_\infty(b^n)$, which we believe may have its own interests.
\end{remark*}

 There are three different rescalings of the Fourier coefficients $\widehat{\mu}_\infty(b^n)$ which are briefly explained as follows.  Recall that we denote $W^{(2)}  = W^2/\E[W^2]$.
This random weight  $W^{(2)}$ can be used to   generate a new MC-action, which is referred to as the {\it squared MC-action}. 
The quantity $\E[ W^{(2)} \log W^{(2)}]$ is important in our work. Indeed,  in the proof of Proposition~\ref{prop-clt} below,  we need to use the asymptotic behavior of the {\it small moments}, namely the moments of order $p/2$ with $1<p<2$,  of certain {\it squared-martingale} (see  \eqref{eqn-def-mnab} below for its precise definition)  related to the squared MC-action. In particular,  the precise asymptotics of small moments of {\it  additive martingales} of critical or supercritical parameters in BRW will play a key role. Therefore,    from Mandelbrot-Kahane's non-degeneracy condition, it is not surprising that we need to deal with three cases:  squared sub-critical regime, squared critical regime and squared super-critical regime. 

\vspace{2mm}

We now proceed to the precise statement. 

Recall that  the standard complex Gaussian random variable is given by 
\begin{align}\label{def-cg}
 \mathcal{N}_{\C}(0,1)  =   \frac{X+iY}{\sqrt{2}} 
\end{align}
with $X$ and $Y$  two independent standard real Gaussian $ \mathcal{N}(0,1)$ random variables.

Let $\mathscr{M}_n(W^{(2)})$ denote the total mass of the random measure  with respect to the random weight $W^{(2)}$  after $n$-step multiplicative cascade operation and let $\mathscr{M}_\infty(W^{(2)})$ denote the total mass of the corresponding limiting random cascade measure, see \S \ref{sec-sq-mc}  and \eqref{eqn-def-mnab} for their precise definitions.   Note that   (see Remark \ref{rk-w-w2}),  in the squared sub-critical regime,
\[
\PP(\mathscr{M}_\infty(W^{(2)}) > 0  | \mu_\infty\ne 0)=1.
\] 

\begin{proposition}\label{prop-clt}
Assume that $\E[W\log W]<\log b$ and $\E[W^t]<\infty$ for all $t>0$.  We have the following convergences in distribution  under $\PP^* = \PP(\cdot | \mu_\infty\ne 0)$:
\begin{itemize}
\item Squared sub-critical regime: if $\E[ W^{(2)} \log W^{(2)}] < \log b$ and $b\ge 3$, then 
\[
b^{\frac{nD_F}{2}} \cdot  \widehat{\mu}_\infty(b^n)  \xrightarrow[n\to\infty]{\quad d \quad }      \sqrt{ \E[|\widehat{\mu}_\infty(1)|^2] \cdot  \mathscr{M}_\infty(W^{(2)})} \cdot \mathcal{N}_{\C}(0,1),
\]
where $\mathcal{N}_{\C}(0,1)$ is  independent of $\mathscr{M}_\infty(W^{(2)})$ and $\E[|\widehat{\mu}_\infty(1)|^2]\in (0,\infty) $ is given in \eqref{eqn-mu-s-square-s=1}. 
\item  Squared critical regime:  if $\E[ W^{(2)} \log W^{(2)}]  = \log b$ and $b \ge 3$, then 
\[
n^{\frac{1}{4}} \cdot b^{\frac{nD_F}{2}} \cdot  \widehat{\mu}_\infty(b^n)  \xrightarrow[n\to\infty]{\quad d \quad }      \sqrt{ \E[|\widehat{\mu}_\infty(1)|^2] \cdot  \sqrt{\frac{2}{\pi \sigma^2}}  \mathscr{D}_\infty(W^{(2)})} \cdot \mathcal{N}_{\C}(0,1),
\]
where $\mathscr{D}_\infty(W^{(2)})$ is the a.s. limit of the derivative martingale of BRW,  coming from the Seneta-Heyde norming limit of the $\mathscr{M}_n(W^{(2)})$  proved in \cite{Aideon-Shi} as follows: 
\[
 \sqrt{n} \cdot  \mathscr{M}_n(W^{(2)}) \xrightarrow[n\to\infty]{\text{in probability}} \mathscr{D}_\infty(W^{(2)}) \sqrt{\frac{2}{\pi \sigma^2}} 
\]
   with $\sigma^2\in(0,\infty)$ given in \eqref{def-BRW-var} and  $\mathcal{N}_\C(0,1)$  independent of $\mathscr{D}_\infty(W^{(2)})$.
   \item Squared super-critical regime:  if $\E[ W^{(2)} \log W^{(2)}] > \log b$ and $\log W$ is non-lattice, then there exists  a unique constant $\beta\in (1/2,1)$ depending only on $W$ (see \eqref{def-beta}), such that  
 \begin{align*}
% orignial version {\color{blue}b^{\frac{n D_F}{2}}n^{\frac{3}{2}\beta} \widehat{\mu}_{\infty}\left(b^{n}\right)\xrightarrow[n\rightarrow\infty]{d}  \sum_{x \in \mathcal{Z}_\infty} e^{-\beta x}\widehat{\mu}^{(x)}_\infty(1),}
b^{\frac{n D_F}{2}}n^{\frac{3}{2}\beta} \widehat{\mu}_{\infty}\left(b^{n}\right)\xrightarrow[n\rightarrow\infty]{d}  \mathscr{D}_\infty(W^{(\frac1\beta)})^\beta \cdot \widetilde{\Upsilon} ,
 \end{align*}
where $\mathscr{D}_\infty(W^{(\frac1\beta)})$ is given in \eqref{eqn-dmartingale-beta} and is strictly positive $\PP^*$-a.s., and  $\widetilde{\Upsilon}$ is independent of $\mathscr{D}_\infty(W^{(\frac1\beta)})$ and has a real part following a $\frac{1}{\beta}$-stable law.
% where $\mathcal{Z}_\infty$ is a  decorated Poisson point process  (see  \eqref{eqn-z-infinity-s}), given $\mathcal{Z}_\infty$, $\{\widehat{\mu}^{(x)}_\infty(1)\}_{x\in \mathcal{Z}_\infty}$ are i.i.d. copies of $\widehat{\mu}_\infty(1)$
% .  {\color{blue}Moreover, the real part of $ \sum_{x \in \mathcal{Z}_\infty} e^{-\beta x}\widehat{\mu}^{(x)}_\infty(1)$ can be written as $ \mathscr{D}_\infty(W^{(\frac1\beta)})^\beta \widetilde{X} $ where $\widetilde{X}$ has a $\frac{1}{\beta}$-stable law and $\mathscr{D}_\infty(W^{(\frac1\beta)})>0$ a.s. }
\end{itemize}
\end{proposition}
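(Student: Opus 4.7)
My plan starts from the $b$-adic self-similarity $\mu_\infty=\sum_{i=0}^{b-1}b^{-1}W_i\,T_{i*}\mu_\infty^{(i)}$, where $T_i(y)=(y+i)/b$ and the $\mu_\infty^{(i)}$ are i.i.d.\ copies of $\mu_\infty$ independent of $(W_i)$. Iterating $n$ times and using the cancellation $e^{-2\pi i b^{n-1}i}=1$ at every scale yields the branching-random-walk-type representation
\[
\widehat{\mu}_\infty(b^n) \;=\; b^{-n}\sum_{|\mathbf{i}|=n}A_{\mathbf{i}}\,\eta_{\mathbf{i}},\qquad A_{\mathbf{i}} := \prod_{k=1}^n W_{i_1\cdots i_k},\qquad \eta_{\mathbf{i}}:=\widehat{\mu_\infty^{(\mathbf{i})}}(1),
\]
with $(\eta_{\mathbf{i}})$ i.i.d.\ centered complex random variables of variance $\E[|\eta|^2]<\infty$, independent of the cascade filtration $\mathcal{F}_n^W:=\sigma(W_u:|u|\le n)$. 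Since $\sum_{|\mathbf{i}|=n}A_{\mathbf{i}}^2=b^n\E[W^2]^n\mathscr{M}_n(W^{(2)})$ and $\E[W^2]=b^{1-D_F}$ in the first two regimes, this gives the key conditional second-moment identity
\[
\E\bigl[\,|\widehat{\mu}_\infty(b^n)|^2\,\big|\,\mathcal{F}_n^W\,\bigr]\;=\;\E[|\eta|^2]\,b^{-nD_F}\,\mathscr{M}_n(W^{(2)}).
\]
Conditionally on $\mathcal{F}_n^W$, $\widehat{\mu}_\infty(b^n)$ is a sum of independent centered complex random variables, so the task reduces to identifying the limit law of a triangular array whose weights are governed by the BRW $(-\log A_{\mathbf{i}})_{|\mathbf{i}|=n}$.

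\textbf{Squared sub-critical regime.} I would apply a conditional Lindeberg-Feller CLT to the array $X_{n,\mathbf{i}}:=b^{nD_F/2-n}A_{\mathbf{i}}\eta_{\mathbf{i}}$. The sum of conditional second moments equals $\E[|\eta|^2]\mathscr{M}_n(W^{(2)})$ and converges a.s.\ to $\E[|\eta|^2]\mathscr{M}_\infty(W^{(2)})$, which is strictly positive $\PP^*$-a.s.\ by Remark \ref{rk-w-w2}. For the Lindeberg condition I would use the bound
\[
\E\Bigl[\,\sum_{\mathbf{i}}|X_{n,\mathbf{i}}|^{2+\delta}\,\Bigr]\;=\;\E[|\eta|^{2+\delta}]\bigl(b^{(1+\delta/2)D_F-1-\delta}\,\E[W^{2+\delta}]\bigr)^n,
\]
and observe that, by differentiating in $\delta$ at $\delta=0$, this decays exponentially for sufficiently small $\delta>0$ precisely because of the squared sub-critical condition $\E[W^{(2)}\log W^{(2)}]<\log b$; the assumption $\E[W^t]<\infty$ for all $t>0$ supplies the needed $(2+\delta)$-th moments. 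Convergence of the conditional characteristic functions then yields the desired mixed complex Gaussian limit.

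\textbf{Squared critical regime.} Here $\mathscr{M}_n(W^{(2)})\to 0$ a.s., which explains the extra factor $n^{1/4}$ in the normalization. By the Aidekon-Shi Seneta-Heyde limit stated in the proposition, $\sqrt{n}\,\mathscr{M}_n(W^{(2)})\to\sqrt{2/(\pi\sigma^2)}\,\mathscr{D}_\infty(W^{(2)})$ in probability; hence the sum of conditional second moments of $n^{1/4}b^{nD_F/2}\widehat{\mu}_\infty(b^n)$ converges in probability to $\E[|\eta|^2]\sqrt{2/(\pi\sigma^2)}\,\mathscr{D}_\infty(W^{(2)})$. The Lindeberg verification is analogous, with the mild additional polynomial-in-$n$ factor absorbed by the small-moment asymptotics at the critical exponent supplied by Lemma \ref{lemma-CLT-critical}.

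\textbf{Squared super-critical regime and main obstacle.} This is the delicate case: the variance machinery collapses because $\sum A_{\mathbf{i}}^2$, after rescaling, is dominated by a handful of its largest terms rather than spread uniformly across the $b^n$ particles, and the conditional law belongs to a heavy-tailed rather than Gaussian universality class. My plan is to replace the Gaussian CLT by a conditional stable CLT of index $1/\beta$. With $\beta\in(1/2,1)$ as in \eqref{def-beta}, the weight $W^{(1/\beta)}=W^{1/\beta}/\E[W^{1/\beta}]$ sits at the BRW boundary case. By the extremal point-process theory of Madaule and Aidekon, under the non-lattice hypothesis on $\log W$, the rescaled extremal process of $(-\log A_{\mathbf{i}})$ converges to a decorated shifted Poisson point process whose intensity is driven by $\mathscr{D}_\infty(W^{(1/\beta)})$. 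Combining this extremal-process convergence with a Lepage-type representation of the conditional sum, one turns the front statistics into weak convergence of $b^{nD_F/2}n^{3\beta/2}\widehat{\mu}_\infty(b^n)$ toward $\mathscr{D}_\infty(W^{(1/\beta)})^\beta\cdot\widetilde{\Upsilon}$, with $\widetilde{\Upsilon}$ a $1/\beta$-stable law independent of the cascade; the non-lattice hypothesis enters precisely through this extremal-process step. The principal technical obstacles are: (i) controlling the decoration contributions of the descendants of each leading particle, the role of the tightness lemma \ref{lemma-tightness}, and (ii) showing that particles outside the extremal front give negligible contribution after the heavy-tailed rescaling, the role of the partitioning lemma \ref{lem-partition}.
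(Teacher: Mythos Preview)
Your outline is close to the paper's argument and the self-similar decomposition is exactly right (it is the paper's Lemma~\ref{lemma-eqn-function-equality}). Two points deserve comment.

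\textbf{A genuine gap: the shape of the complex Gaussian.} You treat $\eta=\widehat{\mu}_\infty(1)$ as a ``centered complex random variable of variance $\E[|\eta|^2]$'' and conclude that the conditional CLT yields $\sqrt{\varrho\,\mathscr{M}_\infty(W^{(2)})}\,\mathcal{N}_\C(0,1)$. But a complex Gaussian is determined by \emph{two} parameters, $\E[|\eta|^2]$ and the pseudo-covariance $\E[\eta^2]$; to land on the standard (circularly symmetric) $\mathcal{N}_\C(0,1)$ you must show $\E[\eta^2]=0$, equivalently that the conditional covariance matrix of the real and imaginary parts is scalar. This is precisely where the hypothesis $b\ge 3$ enters: the paper computes (Lemma~\ref{coro-97}) $\E[(\widehat{\mu}_\infty(1))^2]$ via the exponential sum $\sum_{|u|=m}e^{i4\pi t_u}=\sum_{k=0}^{b^m-1}e^{i4\pi k/b^m}$, which vanishes for $b\ge 3$ but equals $2$ at $m=1$ when $b=2$. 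Without this step your conclusion in the first two regimes is incomplete, and you have not explained why the statement singles out $b\ge 3$. You should add this computation (or cite it) and, for completeness, note that for $b=2$ the limit is a non-isotropic complex Gaussian as in the paper's Remark after Proposition~\ref{prop-clt}.

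\textbf{A different route in the sub-critical regime.} Your Lyapunov $(2{+}\delta)$-moment argument for Lindeberg is a valid alternative to the paper's method. The paper instead verifies Lindeberg via the pointwise bound $\sum_{|u|=n}Y(u)\sigma(\varepsilon/Y(u))\le \mathscr{M}_n(W^{(2)})\,\sigma(\varepsilon/\max_{|u|=n}Y(u))$ and then shows $\max_{|u|=n}Y(u)\to 0$ a.s.\ using the BRW first-order asymptotics (Lemma~\ref{lemma-Yu-zero-s}). Your approach is cleaner when $\E[|\eta|^{2+\delta}]<\infty$ (which holds here since $\E[W^2]<b$), but note two caveats: you compute an \emph{unconditional} expectation, so you still need Borel--Cantelli (using the exponential decay) to upgrade to a.s.\ vanishing of the conditional Lindeberg sum; and this moment trick does not extend to the critical regime, where $f'(0)=0$, so there you are right to fall back on the paper's max-particle Lemma~\ref{lemma-CLT-critical}.

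\textbf{Minor points.} You do not mention the passage from $\PP$ to $\PP^*$; the paper handles this by the elementary Lemma~\ref{lemma-cov-law-con-two} together with the equalities $\PP(\mu_\infty\ne 0)=\PP(\mathscr{M}_\infty(W^{(2)})\ne 0)$ etc. In the super-critical case your roles for Lemmas~\ref{lemma-tightness} and~\ref{lem-partition} are slightly off: \ref{lemma-tightness} gives tightness of the whole rescaled sequence (not decoration control), while the ``bulk negligibility'' is the $R_{n,K}$ truncation in the proof of Proposition~\ref{thm-non-cri-super-critical}; Lemma~\ref{lem-partition} is only used to make sense of the limiting infinite sum.
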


\begin{remark*}
In Proposition~\ref{prop-clt}, in the squared sub-critical and critical regimes,  our statements work for $b \ge 3$. The case $b = 2$ are slightly different in these two regimes: the standard complex normal distribution $\mathcal{N}_\C(0,1)$ should be replaced by 
\[
    \Big(\frac{\E[|\widehat{\mu}_\infty(1)|^2] + \E[(\widehat{\mu}_\infty(1))^2]}{\E[|\widehat{\mu}_\infty(1)|^2]}\Big)^{1/2} \cdot \frac{X}{\sqrt{2}} + i    \Big(\frac{\E[|\widehat{\mu}_\infty(1)|^2] - \E[(\widehat{\mu}_\infty(1))^2]}{\E[|\widehat{\mu}_\infty(1)|^2]}\Big)^{1/2} \cdot   \frac{Y}{\sqrt{2}},
\]
with $\E[(\widehat{\mu}_\infty(1))^2] = - 2 \E[\mathring{W}^2]/\pi^2 \in \R$    and $X, Y$ being two independent standard real normal  random variables.  See    Propositions \ref{thm-sq-sub-critical} and \ref{thm-sq-critical} in \S \ref{sec-clt} for the details. 
\end{remark*}

\subsection{Litterature review}

At the early stages of his cascade theory, Mandelbrot had already recognized the importance of the Fourier transform of MCCM in turbulence theory. Kahane’s open program on random measures was formally proposed in \cite{Ka-93}. It is closely related to the Fourier transforms of random fractal measures which are active problems in fractal geometry \cite{Mattila,BBS}. One can consult  \cite{Kahane-1987,FOS,FS}. Furthermore, the majority of these studies are valid for the Rajchman property and the polynomial Rajchman property.

Shmerkin and Suomala, in 2018, comprehensively elucidated the Fourier dimension of an MCCM of the $\beta$-model by the construction in \cite{LP09} (see \cite[Chapter 14]{SS18} for more details). In particular, the MCCM of the $\beta$-model has Salem property. By our Corollary \ref{cor-salem}, it is the sole Salem measure within MCCMs.

The study of the behavior at infinity of the Fourier transform of the Borel measure dates back to the works of Riemann and Lebesgue. According to the Riemann-Lebesgue Lemma,  all absolutely continuous measures on the unit circle satisfy the Rajchman property $\widehat{\mu}(n)=o(1)$.  It is remarkable that not all Rajchman measures are absolutely continuous. In 1916, Menchoff provided an example of singular Rajchman measure. Yet, Kershner \cite{K36} 
showed in 1936 that Menchoff's example does not possess a positive Fourier dimension. In the same year, Littlewood \cite{L36} demonstrated the existence of a singular measure with a positive Fourier dimension. Littlewood's proof was purely existential. Later, in 1938, Wiener and Wintner \cite{WW38} constructed explicit examples where $\widehat{\mu}(n)=O(|n|^{-1/2+\epsilon})$ for any $\epsilon>0$.

Rajchman property is intimately related to the set of uniqueness in harmonic analysis (See \cite{Ly95} for the early history and \cite{Li18,LS20,LS22,VY22} for recent developments on fractal sets).
The groundbreaking work of Lyons in the 1980s \cite{Ly085} gives theoretical characterizations of the Rajchman measures. On the other hand, given a specific measure, it remains a notoriously challenging problem to determine the Rajchman property.
Started by Bourgain and Dyatlov \cite{BD}, the Fourier decay bounds for the Patterson-Sullivan and Furstenberg measures are investigated in \cite{LNP,Li22, DKW}. The relationship between the positive Fourier dimension of a measure and the Fourier restriction theorem was first established by Mockenhaupt in \cite{M00}. This connection was further explored by Mitsis \cite{Mit02} and Bak-Seeger \cite{BS11}. The sharpness of the Mockenhaupt-Mitsis-Bak-Seeger restriction theorem was subsequently investigated by Hambrook and Łaba in \cite{HL13,HL16}.  One can find further applications to 3-term arithmetic progressions on fractal set in \cite{LP09}. For more details, see also \cite{Laba} and \cite{Pra23}.

In quantum chaos, by the Fractal Uncertainty Principles \cite{BD, DZ}, the asymptotic behavior of Fourier transform of stationary measures is a powerful tool for studying the relationship between classical chaotic dynamics and their quantization. 
Details of connections between iterated function systems and the Fourier  decay can be found in the survey by Sahlsten \cite{Sah}. One can consult the remarkable recent work by Baker-Khalil-Sahlsten \cite{BKS} for the  Fourier decay of various self-similar measures in dynamical systems.

Rajchman and polynomial Rajchman properties are also related to the equidistribution problem and  Diophantine approximation. By Davenport-Erdos-LeVeque \cite{DEL} and Rauzy \cite{Ra76}, let $\mu$ be a  measure on $[0,1]$ with $\dim_F(\mu)>0$, then for any  strictly increasing integer sequence $\{u_n\}_{n\in\mathbb{N}}$ the sequence $\{u_nx \,(\text{mod}1)\}_{n\in\mathbb{N}}$ is  uniformly distributed for $\mu$-$a.e. \, x\in [0,1]$.  One can consult \cite{Bug12} for classical equidistribution theory and \cite{HS15} for fractal measures. For instance, the Rajchman property of self-conformal measures and self-similar measures are widely studied in \cite{ARW,Bre21,Sol22,Rap22}. See also \cite[Section 6.1]{V23} for the recent developments.

In contrast to the Salem property, a continuously parametrized family of dimensions that vary between the Fourier and Hausdorff dimensions is defined as the Fourier spectrum of a measure. This concept was introduced in \cite{Fra24} and further explored in \cite{CF24}. One can find various aspects of the Fourier dimension
and its variants in \cite{ES17}.

The polynomial Rajchman property, more precisely the Fourier dimension, is naturally associated with the celebrated Salem set. 
By using random constructions, Salem in \cite{Sal51} firstly constructed a Salem set. More random examples can be  found in \cite{Kahane-book, Blu99, SX06, LP09, Eks16, FS18}. An explicit example of Salem set in $\mathbb{R}$ was provided by Kaufman \cite{Kau81}, in $\mathbb{R}^2$ by Hambrook \cite{Ham17}, and in the general $\mathbb{R}^n$ by Fraser and Hambrook \cite{FH23}.

In \cite{BBS, Fal86, Mattila, Wolff}, one can find  more information on Hausdorff and Fourier dimensions, as well as their applications.

\subsection{Organization of the paper}

The paper is organized as follows.   In the introduction  \S \ref{sec-intro}, we give the statement of the main results and the  history of Mandelbrot-Kahane problem, as well as other related topics.  In \S \ref{sec-pre}, we give preliminary  materials on Mandelbrot cascades, the  classical definitions of  various dimensions of measures on the real line $\R$.  The classical tools in vector-valued  martingale theory are recalled in \S \ref{sec-pre-bis}.   The sections \S \ref{sec-CAV} and \S\ref{sec-Lplq} are devoted to the  formalization of  the framework of the actions by multiplicative cascades on finitely additive vector measures and general properties of such actions when the Banach space has martingale type $p$.  The sharp lower bound of the Fourier dimension $\dim_F(\mu_\infty)$ is given in  \S \ref{sec-mccm}, \S\ref{sec-W} and \S \ref{sec-squared}.    The sharp upper bound of the Fourier dimension $\dim_F(\mu_\infty)$ is based on   Proposition \ref{prop-clt} about the  weak convergence results on BRW, whose proof is  given  in \S \ref{sec-clt}.     The concluding parts which finalize the whole proofs of the main results Theorem~\ref{thm-fourier}, Corollary~\ref{cor-salem}, Theorem~\ref{thm-pRaj} and Corollary \ref{cor-holder} are included in \S \ref{sec-con-1} and \S \ref{sec-con-2} respectively.  Finally, in the Appendix  of this paper , we include the routine proofs of some classical results such as  the uniqueness of the Biggins-Kyprianou transform, the conditional Linderberg-Feller central limit theorem, etc.

\subsection*{Acknowledgements.} 
During the preparation of our manuscript, we discovered that Changhao Chen, Bing Li, and Ville Suomala \cite{CLS24} were also investigating from multifractal analysis perspective the Fourier dimension of Mandelbrot cascades, a topic closely related to Theorem 1 in our paper. The two groups exchanged preliminary manuscripts, though without full details. Through these communications, we noted that their approach and ideas differ significantly from ours. We appreciate their openness to academic dialogue and value the gracious exchange of progress in their work.

The authors are grateful to  Michel Pain  who pointed out to us the appearance of the stable law in the squared super-critical regime in  Proposition \ref{prop-clt}.  This work is supported by the National Natural Science Foundation of China (No.12288201). XC is supported by Nation Key R\&D Program of China 2022YFA1006500. YH is supported by NSFC 12131016 and 12201419,  ZW is supported by NSF of Chongqing (CSTB2022BSXM-JCX0088,
CSTB2022NSCQ-MSX0321).

\section{Preliminaries}\label{sec-pre}
\subsection{Notation}

Throughout the paper, by writing $A\lesssim_{x,y} B$, we mean there exists a finite constant $C_{x,y}>0$ depending only on $x,y$ such that $A \le C_{x,y}B$. And, by writing  $A \asymp_{x,y} B$, we mean  $A\lesssim_{x,y}B$ and $B\lesssim_{x,y} A$.

Let $\N = \{1, 2, 3, \cdots\}$ denote the set of positive integers.  

Given any integrable random variable $X$, we shall write $\mathring{X}$ the centerization of $X$: 
\[
\mathring{X}: = X-\E[X]. 
\]

\subsection{Mandelbrot  canonical  cascade measures}\label{sec-Mandelbrot-cascade}

Fix an integer $b \ge 2$ and the set    of alphabets $\Ab= \{0, 1, \cdots, b-1\}$.  The rooted $b$-homogeneous tree (with the root denoted by $\emptyset$ and the convention $\Ab^0 = \{\emptyset\}$) can be canonically identified with  
\[
\Ab^* = \bigsqcup_{n\ge 0}\Ab^n.
\] 
 Elements of $\Ab^*$ are written as words: If $u= x_1x_2\cdots x_n$ with $x_j\in \Ab$, then we set $|u| = n, u_j = x_j$ and $u|_k= x_1\cdots x_k$ (with $u|_0=\emptyset$).

Recall that the $b$-adic intervals  on the unit interval  $[0, 1]$ are defined  for any $n =1, 2, \dots$ and $u\in \Ab^*$ with $|u|=n$ by
\begin{align}\label{def-Iu}
I_u = \Big[\sum_1^n u_k b^{-k},  \sum_1^n u_k b^{-k} + b^{-n} \Big).
\end{align}

As in the introduction,   a random weight  is a  random variable $W \ge 0$  which is non-constant and  $\E[W]=1$. Let $(W(u))_{u\in \Ab^*\setminus \{\emptyset\}}$ denote the  independent copies of $W$,  and as usual, on the root vertex, we set $W(\emptyset) \equiv 1$.  Then, for any  integer $n\ge 0$,  Mandelbrot \cite{M74} defined the random measure  $\mu_n$ on $[0,1]$ by 
\begin{align}\label{def-Lebn}
\mu_n (dt): = \sum_{|u|=n} \Big(\prod_{j=0}^n W(u|_j) \Big) \indi_{I_u} (t)dt. 
\end{align}
\begin{remark*}
Note that, since  $W(\emptyset)=1$, for any $n\ge 1$, we have 
\[
\prod_{j=0}^n W(u|_j) = \prod_{j=1}^n W(u|_j). 
\]
So we can use both way of writing but we prefer to write $\prod_{j=0}^n W(u|_j)$ for avoiding the notation $\prod_{j=1}^0 W(u|_j)$ later.  However, in  \S \ref{sec-mccm} and  \S \ref{sec-clt}, we shall note that for $n\ge 1$, 
\[ 
 \prod_{j=1}^{n} W(u|_j)^2 =  \prod_{j=0}^{n} W(u|_j)^2 \an  \prod_{j=1}^{n} \frac{W(u|_j)^2}{\E[W^2]} \ne \prod_{j=0}^{n} \frac{W(u|_j)^2}{\E[W^2]}. 
\]
\end{remark*}

 Let $\mathscr{F}_n$ denote the sigma-algebra defined by 
\begin{align}\label{def-fil-F}
\mathscr{F}_n: = \sigma\Big(\Big\{ W(u)\Big| \text{$u\in \Ab^*$ with $|u|\le n$}\Big\}\Big) \quad \text{for all $n\ge 0$}. 
\end{align}
Note that $\mathscr{F}_0$ is the trivial sigma-algebra.  We also set
\[
\mathscr{F}_\infty= \bigvee_{n\ge 1} \mathscr{F}_n =   \sigma\Big\{\bigcup_{n\ge 1} \mathscr{F}_n\Big\}.
\]
Clearly,  $(\mu_n)_{n\ge 0}$ is a measure-valued martingale with respect to the filtration $(\mathscr{F}_n)_{n \ge 0}$ and almost surely, the sequence converges weakly to a limiting random measure $\mu_\infty$: 
\[
\mu_n \xrightarrow[a.s.]{n\to\infty}\mu_\infty. 
\]
The random measure $\mu_\infty$ is usually referred to as the Mandelbrot canonical cascade measure (MCCM). 

By  Kahane's fundamental  work \cite{Kahane-Peyriere-advance}, the following assertions are equivalent: 
\begin{itemize}
\item[(i)] the random measure $\mu_\infty$ is non-degenerate (that is $\E[\mu_\infty([0,1])] >0$);
\item[(ii)] the martingale  $(\mu_n([0,1]))_{n\ge 1}$ is uniformly integrable and hence 
\[
\mu_n([0,1])\xrightarrow[a.s. \an  L^1]{n\to\infty} \mu_\infty([0,1]);
\]
\item[(iii)] $W$ satisfies Mandelbrot-Kahane's non-degeneracy condition:
\begin{align}\label{non-deg}
\E[W\log W] <\log b.
\end{align}
\end{itemize}
Moreover,  Kahane \cite{Kahane-Peyriere-advance} proved that, for any $1<p<\infty$,  the martingale $(\mu_n([0,1]))_{n\ge 1}$ is $L^p$-uniformly bounded if and only if $W$ satisfies Kahane's $L^p$-condition:
\begin{align}\label{Kahane-Lp-bdd} 
\E[W^p]<b^{p-1}. 
\end{align}

\begin{remark}\label{rem-Lp-non-deg}
From Kahane's statements, we see that Kahane's $L^p$-condition \eqref{Kahane-Lp-bdd}  implies the $L^r$-condition for all $1<r<p$ and it also implies  the non-degeneracy condition \eqref{non-deg}.   In fact, one can see these implications directly from the following equivalence: 
\[
\E[W^p]<b^{p-1} \Longleftrightarrow \|W\|_{L^{p-1}(W \PP)} = \Big( \int  W^{p-1} \cdot Wd\PP\Big)^{\frac{1}{p-1}} <b.
\]
\end{remark}

\subsection{Actions on general measures by multiplicative cascades}

\subsubsection{General measure case}\label{sec-g-m}

In the above construction \eqref{def-Lebn}, one may replace the Lebesgue measure by general measures. More precisely,  for any  finite positive  Borel measure $\nu$ on $[0,1]$, define the random measure  $Q_n\nu$ on $[0,1]$ by 
\begin{align}\label{def-Qnu-gen}
Q_n\nu (dt): = \sum_{|u|=n} \Big(\prod_{j=0}^n W(u|_j) \Big) \indi_{I_u} (t)\nu(dt). 
\end{align}
By Kahane's general $T$-martingale  theory  \cite{Kahane-1987}, almost surely, $Q_n\nu$ converges weakly to a random measure, denoted by $Q\nu$: 
\[
Q_n\nu \xrightarrow[a.s.]{n\to\infty} Q\nu.
\]
In what follows,  the maps  $\nu\mapsto Q_n \nu$ and $\nu\mapsto Q\nu$  will be referred to as  multiplicative cascade actions (MC-actions) on $\nu$.    And we say that $Q$ acts fully on $\nu$ if the martingale $(Q_n\nu([0,1]))_{n\ge 1}$ is uniformly integrable, or equivalently, for all $n\ge 1$, 
\[
Q_n\nu([0,1]) = \E[Q\nu([0,1])|\mathscr{F}_n] \quad a.s. 
\]

\subsubsection{Squared MC-actions}\label{sec-sq-mc}
In the construction of the Mandelbrot canonical cascade measures as well as the general constructions of $Q_n\nu$ in \eqref{def-Qnu-gen},  the random variable  $W$ is referred to as the {\it initial random weight} and    the martingale 
\[
Q_n\nu([0,1])  = \sum_{|u|= n}  \Big(\prod_{j=0}^n W(u|_j) \Big) \nu(I_u)
\]
is referred to as the {\it MC-total mass martingale}. 

In our study,  under the assumption $\E[W^2]<\infty$,   we are led naturally to the study of the  MC-actions associated  to the new initial random weight $W^{(2)} = W^2/\E[W^2]$, see \S \ref{sec-squared} for more details. Such MC-actions will be referred to as  the {\it squared MC-actions} and denoted by $Q_n^{(2)}$ and $Q^{(2)}$, the  related MC-martingales will also be referred to as {\it squared MC-martingales}.

\subsection{Dimensions of measures}

In this subsection, we briefly review  basic materials on dimensions for measures on the one-dimensional real line $\R$.

{\flushleft \it (i) Hausdorff dimension.} Given any subset $A\subset \R$, let $\dim_H(A)$ denote its Hausdorff dimension.  The Hausdorff dimension of a finite Borel measure $\nu$ on $\R$ is defined by  (see \cite[Section 1.9, pp. 42--43]{BBS}): 
\[
\dim_H(\nu):= \inf\{\dim_H(A):    A\subset \R \an    \nu(A^c) = 0 \}. 
\]
And, the lower Hausdorff dimension of $\nu$ is defined as 
\[
\underline{\dim}_H(\nu):= \inf\{\dim_H(A):    A\subset \R \an    \nu(A) > 0 \}. 
\]
The lower Hausdorff dimension $\underline{\dim}_H(\nu)$ was denoted   $\dim_{*} (\nu)$ in \cite{Fan-aihua-lq} and coincides with the lower local dimension \cite[Theorem 1.2]{Fan-dimension}.  A measure $\nu$ is called unidimensional if $\underline{\dim}_H(\nu) = \dim_H(\nu)$.

{\flushleft \it (ii) Fourier dimension.} Given  a finite Borel measure $\nu$ on $\R$, its   Fourier transform  $\widehat{\nu}(\zeta)$ is defined by 
\[
\widehat{\nu}(\zeta)  = \int_\R  e^{-i 2 \pi x \cdot \zeta} d\nu(x). 
\] 
The Fourier dimension $\dim_F(\nu)$ is defined as  (see \cite[Section 8.2, p. 231]{BBS}): 
\begin{align}\label{def-Fdim}
\dim_F(\nu):  = \sup\Big\{ D \in [0, 1]: \text{$| \widehat{\nu}(\zeta)| \le c_D (1+ |\zeta|)^{-  D /2}$ for all $\zeta \in \R$} \Big\}.
\end{align}

For any measure supported on a compact subset of $\R$,  its  Fourier dimension has the following well-known equivalent formulation due to Kahane \cite[Lemma 1, p. 252]{Kahane-book} (the following equivalence can also be derived, for instance, from \cite[Lemma 9.A.4]{Wolff}). 
\begin{lemma}\label{lem-Kahane}
Let $\nu$ be a finite Borel measure  on $[0,1]$. If $\nu$  is supported on  a sub-interval $I\subset [0,1]$ of length $|I|<1$,  then its Fourier dimension is given by 
\begin{align}\label{Fdim-lattice}
\dim_F(\nu)= \sup\Big\{D \in [0, 1]: \text{$| \widehat{\nu}(n)| \le c_D (1+ |n|)^{-  D /2}$ for all $n \in  \Z$} \Big\}.
\end{align}
\end{lemma}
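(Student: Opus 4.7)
The plan is to reduce $\widehat{\nu}(\zeta)$ at arbitrary real $\zeta$ to an absolutely convergent series in the integer Fourier coefficients $\{\widehat{\nu}(m)\}_{m\in\Z}$, exploiting that $\supp(\nu)$ sits inside a strict sub-interval of $[0,1]$ of length less than $1$. The inclusion
\[
\dim_F(\nu)\le\sup\{D\in[0,1]:|\widehat{\nu}(n)|\le c_D(1+|n|)^{-D/2},\ n\in\Z\}
\]
is immediate from the definition \eqref{def-Fdim} by restricting $\zeta$ to integers, so the substance of the lemma lies entirely in the reverse inequality.

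For the reverse bound, fix $D$ in the integer supremum set, so $|\widehat{\nu}(m)|\le c_D(1+|m|)^{-D/2}$ for every $m\in\Z$. Translating $\nu$ only multiplies $\widehat{\nu}(\zeta)$ by a unimodular factor and preserves both sides, so I may assume $\supp(\nu)\subset[0,L]$ with $L<1$. Fix $L'\in(L,1)$ and a bump $\chi\in C_c^\infty(\R)$ with $\chi\equiv 1$ on $[0,L]$ and $\supp(\chi)\subset[0,L']$. The main identity to establish is
\[
\widehat{\nu}(\zeta)=\sum_{m\in\Z}\widehat{\chi}(\zeta-m)\,\widehat{\nu}(m),\qquad \zeta\in\R.
\]
To derive it, let $F_\zeta(x):=\sum_{k\in\Z}\chi(x+k)\,e^{-2\pi i\zeta(x+k)}$ be the $1$-periodization of $\chi\cdot e^{-2\pi i\zeta(\cdot)}$. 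A direct unfolding identifies the $m$-th Fourier coefficient of $F_\zeta$ on $\T=[0,1)$ as $\widehat{\chi}(\zeta+m)$, so that
\[
F_\zeta(x)=\sum_{m\in\Z}\widehat{\chi}(\zeta+m)\,e^{2\pi i m x},
\]
with the series converging absolutely and uniformly in $(\zeta,x)$ by the Schwartz decay of $\widehat{\chi}$. The constraint $L'<1$ forces at most the $k=0$ summand of the periodization to be nonzero on $[0,L]$, and on that set $\chi\equiv 1$; hence $F_\zeta(x)=e^{-2\pi i\zeta x}$ on $\supp(\nu)$. Integrating against $\nu$ and swapping sum and integral (justified by the uniform absolute convergence) yields the displayed identity after the harmless reindexing $m\mapsto-m$.

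The final step is a standard two-block estimate. The block $|m|\ge|\zeta|/2$ is controlled by
\[
c_D(1+|\zeta|/2)^{-D/2}\;\sup_{\zeta\in\R}\sum_{m\in\Z}|\widehat{\chi}(\zeta-m)|\;\lesssim\;(1+|\zeta|)^{-D/2},
\]
the inner sum being uniformly bounded by the Schwartz decay of $\widehat{\chi}$. The block $|m|\le|\zeta|/2$ consists of $O(|\zeta|)$ terms each of size at most $C_N(|\zeta|/2)^{-N}$ and thus contributes $O(|\zeta|^{1-N})$, negligible for $N$ large. Combining the two blocks gives $|\widehat{\nu}(\zeta)|\le C(1+|\zeta|)^{-D/2}$ on all of $\R$, placing $D$ in the Fourier dimension set and completing the proof. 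The one delicate point is the uniform absolute convergence of the Fourier expansion of $F_\zeta$ together with the accompanying Fubini interchange; this is precisely where the smoothness of $\chi$ (equivalently, the rapid decay of $\widehat{\chi}$) is essential, and why Kahane's identity requires a cutoff whose support fits strictly inside one period.
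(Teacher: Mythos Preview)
Your proof is correct and follows the standard smooth-cutoff approach; the paper does not give its own proof of this lemma but simply cites Kahane \cite[Chapter 17, Lemma 1]{Kahane-book} and remarks that it can also be derived from \cite[Lemma 9.A.4]{Wolff}, which is precisely your argument. One small slip: a smooth $\chi$ cannot satisfy both $\chi\equiv 1$ on $[0,L]$ and $\supp(\chi)\subset[0,L']$, since that would force $\chi(0)=1$ while $\chi\equiv 0$ on $(-\infty,0)$; the fix is trivial---either translate $\nu$ so its support sits in $(\epsilon,1-\epsilon)$, or take $\supp(\chi)\subset(-\delta,L')$ with $0<\delta<1-L$, after which your periodization identity and two-block estimate go through verbatim.
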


As we mentioned in Lemma \ref{lem-trans-coef} and Remark \ref{rem-ka} in the introduction, in fact,  the support assumption in Lemma~\ref{lem-Kahane}  can be removed.

It should also be noticed that, in the definition of the Fourier dimension \eqref{def-Fdim} of $\dim_F(\nu)$, we require the restriction  $D \le 1$ and thus $\dim_F(\nu)\le 1$.   However, when considering absolutely continuous measures, it is also natural to consider the definition (see \cite[formula (8.14), p. 233]{BBS}) without this  restriction: 
\[
\widetilde{\dim_F}(\nu): = \sup\Big\{\sigma \ge 0: \text{$| \widehat{\nu}(\zeta)| \le c(\sigma) (1+ |\zeta|)^{-  \sigma /2}$ for all $\zeta \in \R$} \Big\}. 
\]
Indeed, we shall use the elementary fact \S \ref{sec-grv} below:  the Lebesgue measure $\lambda|_I$ restricted on any sub-interval  $I\subset [0,1]$,  satisfies
\[
\widetilde{\dim_F}(\lambda|_I)= 2. 
\]

\subsubsection{Salem measures}\label{sec-salem}

For any finite Borel measure $\nu$ on $\R$,  
$
\dim_F(\nu) \le \dim_H(\nu)$ (see for instance \cite[formula (8.12), p. 231]{BBS}). 
We say that  $\nu$ is a {\it Salem measure} if 
\[
\dim_F(\nu)= \dim_H(\nu). 
\]

The reader is referred to \cite{Laba} (the definition there is slightly different) for more discussions on recent works on  Salem measures and  the related topics of Salem sets.

\subsubsection{$L^p$-dimensions}\label{sec-def-Dp}
  We shall need the definition of  $L^p$-dimensions introduced by R\'enyi \cite{Renyi}.  Let $\nu$ be a Borel probability measure on $[0,1)$.  For any $p>1$, the  $L^p$-dimension of $\nu$ is defined by
\begin{align}\label{def-D-p}
\underline{\dim}_p(\nu): = \liminf_{n\to\infty} \frac{-\log \big(\sum_{|u|=n}  \nu(I_u)^p\big) }{ n  (p-1)\log b}.
\end{align}
One can show that
$
0\le \underline{\dim}_p(\nu) \le 1. 
$
By H\"older's inequalities, one can show that  the  function  $p\mapsto \underline{\dim}_p(\nu)$ is non-increasing, while the function $p \mapsto (p-1) \underline{\dim}_p(\nu)$ is concave and non-decreasing on $(1, \infty)$.  And  they are both  continuous on $(1, \infty)$. See  \cite[Section 2.6, pp. 70--72]{BBS} and  \cite[p. 261]{Lq-survey}  for more details.

In the litterature,  the  lower $L^p$-dimension of $\nu$ is defined as (see  \cite[Section 2.6, pp. 70--72]{BBS})
\[
D_\nu(p)=   \liminf_{r\to 0^{+}} \frac{  \log \big(\int_{[0,1]} \nu(B(x,r))^{p-1}  d\nu(x)\big)}{ (p-1)\log r }. 
\]
It is known that (see \cite{Fan-dimension} and \cite[Lemma 2.6.6]{BBS}) that 
$
D_\nu(p) = \underline{\dim}_p(\nu). 
$

\section{Standard inequalities in vector-valued martingale theory}\label{sec-pre-bis}

We are going to use the theory of martingales in Banach spaces and the UMD properties for Banach spaces.   For the background of martingales in Banach spaces, the reader is referred to Pisier's recent book \cite[Chapter 5 and Chapter 10]{Pisier-book}. 

\subsection{Notation and conventions}\label{sec-not}
Throughout the paper,  slightly abusing the notation, for any index set $I$,  we set $\Delta  = \Delta_I= \{\pm 1\}^I$ (the index set will vary in different places).  Elements of $\Delta$ are denoted by $\epsilon= (\epsilon_i)_i$ and $\Delta$ is  equipped with the probability measure  
\begin{align}\label{def-dep}
d \epsilon = \Big(\frac{\delta_{-1} + \delta_1}{2}\Big)^{\otimes I}.
\end{align}

Let $(\Sigma,  \mathcal{B},  \rho)$  be  any measure space with $\rho$ a non-negative $\sigma$-finite measure.  Then for any real or complex Banach space (or quasi-Banach space) $\fX$ and any $0<p<\infty$, we denote $L^p(\Sigma, \mathcal{B}, \rho; \fX)$ the space of $\fX$-vector valued $L^p$-Bochner integrable functions $f: \Sigma\rightarrow \fX$.  When the measure space is clear, we shall use  the simplified notation
\[
 L^p(\rho; \,\fX), \,  L^p(\Sigma; \,\fX) \,\, \text{or}\,\,  L^p(\fX). 
\]

For any $0<r< \infty$, let
$
\ell^r : = \ell^r(\N)$ be 
the space of $\ell^r$-summable complex sequences, equipped with the standard norm (when $r\ge 1$) or quasi-norm (when $0<r<1$).  For any $v = (v_n)_{n\ge 1}\in \C^\N$, write 
\[
|v| : = (|v_n|)_{n\ge 1}. 
\]
Hence, for any vector-valued function  $f\in L^p(\ell^r)$, we  define  $g= |f|\in L^p(\ell^r)$ in the above sense and 
\[
\| f\|_{L^p(\ell^r)}  = \| g \|_{L^p(\ell^r)}. 
\]

Given a Banach space $\fX$ and  any probability space $(\Omega,  \mathscr{G}, \PP)$ equipped with an increasing filtration  of sub-sigma-algebras
 \[
 \mathscr{G}_1\subset \cdots  \mathscr{G}_n \subset  \mathscr{G}_{n+1} \subset \cdots \subset  \mathscr{G},
 \] 
 we may define $\fX$-valued martingales adapted to the above filtration. In what follows, we shall frequently say that  $(Z_n)_{n\ge 1}$ is an $\fX$-valued martingale, without referring to the filtration and sometimes we shall use the simplified notation $\E_n[\cdot]$ for the conditional expectation $\E[\cdot|\mathscr{G}_n]$.

 \subsection{Kahane-Khintchine inequalities}  
 
 The celebrated Kahane-Khintchine inequalities (see, e.g., \cite[Theorem 5.2, p.154]{Pisier-book}) state that, for any $0< p_1 \neq p_2<\infty$,  for an arbitrary real or complex Banach space $\fX$ and an arbitrary  finitely supported sequence $(x_i)_{i}$ in $\fX$, 
 \begin{align}\label{KK-ineq}
\big \| \sum_i \epsilon_i x_i\big \|_{L^{p_1}(\Delta, d\epsilon; \, \fX)} \asymp_{p_1, p_2} \big \| \sum_i \epsilon_i x_i\big \|_{L^{p_2}(\Delta, d\epsilon; \, \fX)}.
 \end{align}
  When $\fX$ is the one-dimensional space $\R$ or $\C$, the above inequalities \eqref{KK-ineq} reduce to the  classical Khintchine inequalities. 
 The following elementary consequence of the Kahane-Khintchine inequalities  will be useful.   For any $1\le  q<\infty$ and any finite  sequence of scalar functions $(f_i)_i$ in  $L^q(\Sigma, \rho)$, 
\begin{align}\label{LpLq-abs}
\Big\| \sum_i \epsilon_i f_i\Big\|_{L^p(\Delta, d\epsilon; \, L^q(\Sigma, \rho))} \asymp_{p,q}    \Big\| \Big(\sum_i  | f_i|^2\Big)^{1/2}\Big\|_{L^q(\Sigma, \rho)}.
\end{align}

\subsection{UMD Banach spaces}

A Banach space $\fX$ is said to be a UMD space if there exists an exponent $1<r<\infty$  (which is then equivalent to hold for all $1<r<\infty$)  and a constant $C(r, \fX)>0$ such that  for any probability space $(\Omega,  \mathscr{G}, \PP)$ equipped with an increasing filtration  of sub-sigma-algebras and  any $\fX$-valued martingales $(Z_n)_{n\ge 0}$ in $L^r(\fX)$, 
\begin{align}\label{def-UMD}
\sup_{\theta \in \{\pm 1\}^\N}    \sup_N\Big\|\sum_{n = 0}^N    \theta_n  dZ_n  \Big\|_{L^r(\Omega; \fX)} \le C(r, \fX)   \cdot \sup_N\|Z_N\|_{L^r(\Omega; \fX)},
\end{align}
where $dZ_n= Z_n-Z_{n-1}$ with the convention $Z_{-1}: =0$ such that $dZ_0=Z_0$.   An equivalent definition of the UMD property of $\fX$ is to replace the inequality \eqref{def-UMD} by the following  Burkholder inequalities for UMD spaces (see \cite[Theorem 5.2 and Proposition 5.10]{Pisier-book} for this equivalence): for any $N$, 
\begin{align}\label{burk-ineq}
\| Z_N\|_{L^r(\fX)} \asymp_{r, \fX}  \Big\|\sum_{n = 1}^N    \epsilon_n  dZ_n  \Big\|_{L^r(\Delta \times \Omega; \fX)}.
\end{align}

The classical examples of UMD Banach spaces are the usual Lebesgue spaces $L^p$ and the sequence spaces $\ell^p$ for $1< p<\infty$.  The reader is referred to \cite[Chapters 5-6 and Chapter 10]{Pisier-book} for the general theory on the UMD Banach spaces.

 \subsection{Bourgain-Stein inequalities for UMD spaces}
 
  The Bourgain-Stein inequalities (see, e.g.,  \cite[Theorem 5.60, p. 196]{Pisier-book}) are given as follows.  Let $\fX$ be a UMD Banach space and let $1<r<\infty$. Then there exists a constant $C(r, \fX)>0$ such that  for  any probability space $(\Omega,  \mathscr{G}, \PP)$ equipped with an increasing filtration  of sub-sigma-algebras  and  any sequence $(f_n)_{n\ge 1}$  (which is not necessarily adapted) of  vector-valued functions in $L^r(\Omega, \mathscr{G}, \PP; \, \fX)$, 
 \begin{align}\label{BS-ineq}
\Big\| \sum_{n\ge 1} \epsilon_n \E_n[f_n] \Big\|_{L^r(\Delta\times \Omega, d\epsilon\times d\PP; \, \fX)} \le C(r, \fX)  \Big\| \sum_{n\ge 1} \epsilon_n f_n \Big\|_{L^r(\Delta\times \Omega, d\epsilon\times d\PP; \, \fX)}. 
 \end{align}
Note that,   in the application of Bourgain-Stein inequalities, it is of crucial importance that the sequences are only required to be  measurable with respect to the largest sigma-algebra.

 \subsection{Martingale types and Rademacher types}
Fix $1< p\le 2$. A Banach space $\fX$ is said to have martingale type $p$  (see \cite[Definition 10.41, p. 409]{Pisier-book}) if there exists a constant $C(p, \fX)>0$ such that all $\fX$-valued martingales $(Z_n)_{n\ge 0}$ in $L^p(\fX)$ satisfy
\begin{align}\label{def-Mtype}
\sup_n    \E \big[ \| Z_n\|_\fX^p\big] \le  C(p, \fX) \sum_{n \ge 0}   \E\big[ \|dZ_n\|_\fX^p\big].
\end{align}
The inequality \eqref{def-Mtype} implies in particular that,  for any family of {\it independent and  centered}  random variables $(F_k)_{k=1}^n$  in $L^p(\fX)$, 
\begin{align}\label{def-ind-Mtype}
\E\Big[ \Big\|\sum_{k=1}^n F_k \Big\|_{\fX}^p\Big] \le C(p, \fX)   \sum_{k =1}^n \E\big[\|F_k \|_{\fX}^p\big].  
\end{align} 

A closely related  property is the Rademacher type (see \cite[formula (10.38), p. 407]{Pisier-book}) defined as follows. Let $1<p\le 2$. A Banach space $\fX$ is said to have  Rademacher type $p$ (or merely said to have type $p$) if there is a constant $C(p, \fX)$ such that for any finite sequence $(x_j)$ in $\fX$, 
\[
\Big\| \sum_{j} \epsilon_j x_j \Big\|_{L^2(\Delta, d\epsilon; \, \fX)} \le C(p, \fX) \Big( \sum_{j} \| x_j\|_\fX^p  \Big)^{1/p}. 
\]
Clearly, if a Banach space $\fX$ has Rademacher type $p$ with $1< p\le 2$, then it has Rademacher type $r$ for all $1<r \le p$.   It is also clear from \eqref{def-ind-Mtype} that martingale type $p$ implies Rademacher type $p$.   Conversely, for UMD spaces, Rademacher type $p$ implies martingale type $p$, see \cite[Corollary~10.23, Proposition~10.31 and Proposition~10.40]{Pisier-book}. 

 For any $2\le q<\infty$, the sequence space $\ell^q$ is a UMD space and has Rademacher type $2$ (see \cite[Proposition~10.36, p. 407]{Pisier-book}).  Thus, we have   the following well-known fact. 

\begin{proposition}\label{prop-type}
  For any  $2\le q<\infty$, the space $\ell^q$ has martingale type $p$ for all $1<p\le 2$. 
\end{proposition}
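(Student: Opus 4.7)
The plan is to assemble Proposition~\ref{prop-type} as an immediate corollary of three facts that have already been recorded in the paragraphs leading up to the statement. First, I would cite that for every $2 \le q < \infty$, the sequence space $\ell^q$ is UMD and has Rademacher type~$2$; this is \cite[Proposition~10.36]{Pisier-book} (and is exactly the observation noted right before the proposition). Second, I would invoke the monotonicity of Rademacher type in the parameter, mentioned just above the proposition: since $\ell^q$ has Rademacher type~$2$, it automatically has Rademacher type~$p$ for every $1 < p \le 2$. Third, I would promote Rademacher type to martingale type using the fact that within the UMD class these two notions coincide; this is the content of \cite[Corollary~10.23, Proposition~10.31 and Proposition~10.40]{Pisier-book}, already quoted in the excerpt.

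Chaining these three steps together, for any fixed $1 < p \le 2$, one obtains a constant $C(p,q) > 0$ such that every $\ell^q$-valued martingale $(Z_n)_{n \ge 0}$ in $L^p(\ell^q)$ satisfies the defining inequality
\begin{equation*}
\sup_n \mathbb{E}\bigl[\|Z_n\|_{\ell^q}^p\bigr] \le C(p,q) \sum_{n \ge 0} \mathbb{E}\bigl[\|dZ_n\|_{\ell^q}^p\bigr],
\end{equation*}
which is precisely the martingale type~$p$ property in \eqref{def-Mtype}.

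There is no real obstacle here: the proposition is stated as a ``well-known fact'' and the proof amounts to a bookkeeping of citations. The only point worth double-checking is that the UMD constant and the Rademacher-type constant for $\ell^q$ are uniformly controlled, so that the resulting martingale-type constant $C(p, \ell^q)$ is finite for the full range $1 < p \le 2$; this is automatic from the Pisier references because the passage from Rademacher type to martingale type in the UMD setting is quantitative. Consequently, no further work beyond quoting the established machinery is required.
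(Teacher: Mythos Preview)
Your proposal is correct and follows exactly the same approach as the paper: the paper does not give a formal proof but simply records the three ingredients (UMD and Rademacher type~$2$ for $\ell^q$ via \cite[Proposition~10.36]{Pisier-book}, monotonicity of Rademacher type, and the equivalence of Rademacher and martingale type for UMD spaces via \cite[Corollary~10.23, Proposition~10.31 and Proposition~10.40]{Pisier-book}) in the paragraphs immediately preceding the proposition, and then states it as a ``well-known fact.'' Your write-up just makes this chain explicit.
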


\subsection{Some useful elementary inequalities}
The following inequalities will be used frequently:  for any $0< \lambda \le 1$ and $\Lambda\ge  1$ and any non-negative numbers $a_i\ge 0$,  
\begin{align}\label{2-ele-ineq}
\Big(\sum_i a_i \Big)^{\lambda} \le \sum_i a_i^\lambda \an   \Big(\sum_i a_i \Big)^{\Lambda} \ge \sum_i a_i^\Lambda.
\end{align}

We are going to use the Minkowski inequalities: Let $(\Sigma_1, \rho_1), (\Sigma_2, \rho_2)$ be two measure spaces equipped  with $\sigma$-finite non-negative measures $\rho_1, \rho_2$, which are not necessarily probability measures.  Then for any $p_1, p_2$ with $1\le p_1<p_2<\infty$ and any measurable function $f: \Sigma_1\times \Sigma_2 \rightarrow \R$, we have  (see \cite[A1]{S70})
\[
\Big\{ \int_{\Sigma_2}      \Big(   \int_{\Sigma_1}  |f(x_1, x_2)|^{p_1}  d\rho_1(x_1) \Big)^{\frac{p_2}{p_1}}       d\rho_2(x_2) \Big\}^{\frac{1}{p_2}} 
\le  \Big\{ \int_{\Sigma_1}      \Big(   \int_{\Sigma_2}  |f(x_1, x_2)|^{p_2}  d\rho_2(x_2) \Big)^{\frac{p_1}{p_2}}       d\rho_1(x_1) \Big\}^{\frac{1}{p_1}}.
\]
In particular,  if $1<p<q<\infty$ and $(X_n)_{n\ge 1}$ is a sequence of non-negative random variables, then 
\begin{align}\label{Min-pq}
\E\Big[ \Big\{ \sum_{n=1}^\infty  X_n   \Big\}^{\frac{p}{q}}\Big]  \ge   \Big\{ \sum_{n=1}^\infty   \Big(  \E  \big[  X_n^{\frac{p}{q}}\big]\Big)^{\frac{q}{p}}  \Big\}^{\frac{p}{q}}.
\end{align}

\section{MC-actions on vector-measures}\label{sec-CAV}

\subsection{MC-actions on finitely additive vector measures}
Throughout the whole paper, by vector measures, we always mean finitely additive ones.   The actions by multiplicative cascades on  vector measures play a crucial role in our formalism.   Now we define such actions as follows.

Recall the definition \eqref{def-Iu} for the $b$-adic intervals $I_u$.   For any $n\ge 1$, let $\mathscr{B}_n^b$ be  the atomic sigma-algebra generated by the $b$-adic intervals $I_u\subset [0,1]$ with $|u|=n$. And,  for $n=0$, let $\mathscr{B}_0^b$  be the trivial sigma-algebra on $[0,1]$.   Let
\begin{align}\label{def-B-alg}
\mathfrak{B}^b: = \bigcup_{n\ge 1} \mathscr{B}_n^b. 
\end{align}
Note that $\mathfrak{B}^b$ is not a sigma-algebra.

Let  $\fX$ be any complex Banach space. By an $\fX$-valued  vector measure on $[0,1]$, we  mean a   finitely additive mapping  $\mathcal{V}: \mathfrak{B}^b \rightarrow \fX$.   
Given any vector measure $\mathcal{V}: \mathfrak{B}^b \rightarrow \fX$, we define the cascade action $Q_n$ on $\mathcal{V}$ as follows. For any $n\ge 0$, let $Q_n \mathcal{V}$ be  the random vector measure  on $\mathfrak{B}^b$ given as
\begin{align}\label{def-ran-vec}
Q_n\mathcal{V} (dt):  =   \sum_{|u|=n} \Big(\prod_{j=0}^n W(u|_j) \Big) \indi_{I_u} (t)\mathcal{V}(dt). 
\end{align}
That is, 
\[
Q_n\mathcal{V} (A)=    \sum_{|u|=n} \Big(\prod_{j=0}^n W(u|_j) \Big)  \mathcal{V}  (I_u \cap A) \quad \text{for all $A\in \mathfrak{B}^b$}.  
\]
With respect to the natural filtration $(\mathscr{F}_n)_{n\ge 0}$ defined in \eqref{def-fil-F},  the sequence  $(Q_n\mathcal{V})_{n\ge 0}$ is a martingale taking values in the space of vector measures defined on $\mathfrak{B}^b$.  This leads to an $\fX$-valued martingale 
\begin{align}\label{def-mart}
M_n^{\mathcal{V}} : = Q_n\mathcal{V}([0,1] )=  \sum_{|u|=n} \Big(\prod_{j=0}^n W(u|_j) \Big) \mathcal{V}(I_u), \quad n\ge 0.
\end{align}
The martingale increments of  \eqref{def-mart}  are given by 
\begin{align}\label{def-dM}
dM_n^\mathcal{V}: = M_n^\mathcal{V}- M_{n-1}^\mathcal{V}=  \sum_{|u|=n}  \Big(\prod_{j=1}^{n-1} W(u|_j)\Big) \mathring{W}(u) \cdot \mathcal{V}(I_u), \quad n\ge 1. 
\end{align}
By convention, we set 
$
dM_0^\mathcal{V}  =  M_0^\mathcal{V} = \mathcal{V}([0,1]). 
$

\begin{remark*}
Vector-valued multiplicative processes or multiplicative chaos acting on Banach spaces with nice properties of martingale convergence were studied in other context than that of this paper. See \cite{JLN73, Big92, Barral2001, LRR03, JSW19}. 
\end{remark*}

\subsection{Growth rate of vector measures}\label{sec-vm}
Given a vector measure  $\mathcal{V}: \mathfrak{B}^b \rightarrow \fX$,   we  can define another  $\fX$-valued martingale  $(T^\mathcal{V}_n)_{n\ge 0}$ on the probability space $([0,1], dt)$ with respect to the  standard $b$-adic filtration   $(\mathscr{B}_n^b)_{n\ge 0}$: 
\begin{align}\label{def-H-mart}
T^\mathcal{V}_n :  = b^n \sum_{|u|=n} \mathcal{V}(I_u) \indi_{I_u}, \quad n\ge 0. 
\end{align}
Clearly, $T^\mathcal{V}_n \in L^p([0,1], dt; \fX)$ for any $1\le p\le \infty$, where $L^p([0,1], dt; \fX)$  denotes the space of $\fX$-valued Bochner integrable functions.

\begin{definition*}[Growth rate of vector measures]  Given any vector measure  $\mathcal{V}: \mathfrak{B}^b \rightarrow \fX$ which is not identically zero, we define  a growth rate function  $H_\mathcal{V}: [1, \infty) \rightarrow [0, \infty]$ as 
\begin{align}\label{def-rsigma}
H_\mathcal{V}(p):=  \limsup_{n\to\infty} \frac{\log \|T_n^\mathcal{V}\|_{L^p(\fX)}^p}{n \log b}  = p-1 + \limsup_{n\to\infty}\frac{ \log \Big(\sum_{|u|=n} \| \mathcal{V}(I_u)\|_{\fX}^p\Big)}{n\log b},
\end{align}
where $L^p(\fX) = L^p([0,1], dt; \fX)$.  
\end{definition*}

\begin{remark*}
For  vector measure $\mathcal{V}$ which is non-identically zero, we indeed have $H_\mathcal{V}(p) \ge 0$ for all $p\ge 1$. Indeed,  there exists $n_0\ge 0$ with $T_{n_0}^\mathcal{V}\ne 0$, then  by the martingale property, $\|T_n^\mathcal{V}\|_{L^p(\fX)} \ge \|T_{n_0}^\mathcal{V}\|_{L^p(\fX)}$ for all $n\ge n_0$ and hence 
\[
H_\mathcal{V}(p) =  \limsup_{n\to\infty} \frac{\log \|T_n^\mathcal{V}\|_{L^p(\fX)}^p}{n \log b} 
\ge   \limsup_{n\to\infty} \frac{\log \|T_{n_0}^\mathcal{V}\|_{L^p(\fX)}^p}{n \log b}  = 0.
\]
\end{remark*}

Note that  if $\fX= \R$ or $\fX=\C$ and   the vector measure $\mathcal{V}$ is a probability measure $\nu$ on $[0,1]$, then by comparing the definition \eqref{def-rsigma}  and the definition \eqref{def-D-p} for  the $L^p$-dimension $\ldim_p(\nu)$, we have 
\begin{align}\label{H=pdim}
\frac{H_\nu(p)}{p-1}=  1- \ldim_p(\nu) \quad \text{for all $p>1$}.
\end{align}

\subsection{A general $L^p$-estimate via martingale type inequalities}\label{sec-Mtype}

\begin{proposition}\label{prop-mart-type}
Assume that $\fX$ is a  Banach space   with martingale type  $p \in (1,2]$.  Then there exists a constant $C = C(p, \fX)$ such that for any  vector measure $\mathcal{V}: \mathfrak{B}^b \rightarrow \fX$, 
\begin{align}\label{sigma-mart-p}
\sup_{n\ge 1} \E\big[\big\|Q_n\mathcal{V}([0,1])\big\|_{\fX}^p\big] \le      C   \sum_{n\ge 0} \Big( \frac{\E[W^p]}{b^{p-1}}\Big)^n  \cdot    
\|T_n^\mathcal{V}\|_{L^p(\fX)}^p.
\end{align}
\end{proposition}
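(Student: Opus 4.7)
The plan is to recognize $M_n^\mathcal{V} := Q_n\mathcal{V}([0,1])$ as an $\fX$-valued martingale relative to $(\mathscr{F}_n)_{n\ge 0}$ (as recorded in \eqref{def-mart}), apply the defining martingale type $p$ inequality \eqref{def-Mtype} to reduce matters to controlling the $L^p(\fX)$-norms of the increments $dM_n^\mathcal{V}$, and then exploit the conditional independence built into the cascade construction to evaluate those increments. Concretely, \eqref{def-Mtype} gives
\[
\sup_{n\ge 0}\E\big[\|M_n^\mathcal{V}\|_\fX^p\big] \le C(p,\fX)\sum_{n\ge 0}\E\big[\|dM_n^\mathcal{V}\|_\fX^p\big],
\]
so it suffices to bound each summand by $C\,(\E[W^p]/b^{p-1})^n \,\|T_n^\mathcal{V}\|_{L^p(\fX)}^p$.

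For $n=0$ this is immediate since $dM_0^\mathcal{V}=\mathcal{V}([0,1])=T_0^\mathcal{V}$. For $n\ge 1$, I would use the expression \eqref{def-dM} and observe that, \emph{conditionally} on $\mathscr{F}_{n-1}$, the summands
\[
X_u := \Big(\prod_{j=1}^{n-1}W(u|_j)\Big)\mathring{W}(u)\,\mathcal{V}(I_u), \qquad |u|=n,
\]
are independent and centered $\fX$-valued random variables (the factors $\prod_{j=1}^{n-1}W(u|_j)$ and $\mathcal{V}(I_u)$ are $\mathscr{F}_{n-1}$-measurable, while the fresh weights $\mathring{W}(u)$ are i.i.d.\ and independent of $\mathscr{F}_{n-1}$). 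Applying the consequence \eqref{def-ind-Mtype} of martingale type $p$ conditionally on $\mathscr{F}_{n-1}$ yields
\[
\E\big[\|dM_n^\mathcal{V}\|_\fX^p\,\big|\,\mathscr{F}_{n-1}\big]\le C(p,\fX)\,\E[|\mathring{W}|^p]\sum_{|u|=n}\Big(\prod_{j=1}^{n-1}W(u|_j)\Big)^p\|\mathcal{V}(I_u)\|_\fX^p.
\]
Taking expectations and using the independence of the weights attached to distinct ancestors, $\E\big[\prod_{j=1}^{n-1}W(u|_j)^p\big]=\E[W^p]^{n-1}$, gives
\[
\E\big[\|dM_n^\mathcal{V}\|_\fX^p\big]\le C(p,\fX)\,\E[|\mathring{W}|^p]\,\E[W^p]^{n-1}\sum_{|u|=n}\|\mathcal{V}(I_u)\|_\fX^p.
\]

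To finish, I would translate the geometric sum into the quantity appearing in the statement via the elementary identity
\[
\|T_n^\mathcal{V}\|_{L^p(\fX)}^p=\sum_{|u|=n}b^{np}\|\mathcal{V}(I_u)\|_\fX^p\cdot b^{-n}=b^{n(p-1)}\sum_{|u|=n}\|\mathcal{V}(I_u)\|_\fX^p,
\]
which comes directly from the definition \eqref{def-H-mart} of $T_n^\mathcal{V}$ and the fact that the atoms $I_u$ have length $b^{-n}$. Combining this with the trivial bound $\E[|\mathring{W}|^p]\le 2^p\E[W^p]$ (valid since $W\ge 0$ and $\E[W]=1$ forces $\E[W^p]\ge 1$ by Jensen) converts the factor $\E[W^p]^{n-1}b^{-n(p-1)}$ into $(\E[W^p]/b^{p-1})^n$ up to a constant depending only on $p$, and absorbing everything into $C=C(p,\fX)$ produces the claimed inequality \eqref{sigma-mart-p}.

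There is no real obstacle here beyond properly justifying the conditional application of the martingale type $p$ inequality: one needs to observe that \eqref{def-ind-Mtype} is really a pointwise statement about sums of independent centered vectors with a Banach-space constant, so it transfers verbatim to conditional laws. Everything else is bookkeeping between the three equivalent ways of presenting the quantity $\sum_{|u|=n}\|\mathcal{V}(I_u)\|_\fX^p$.
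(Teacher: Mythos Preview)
Your proposal is correct and follows essentially the same route as the paper's proof: apply the martingale type $p$ inequality \eqref{def-Mtype} to the martingale $(M_n^\mathcal{V})$, then for each increment apply the independent-sum version \eqref{def-ind-Mtype} conditionally on $\mathscr{F}_{n-1}$, take expectations using independence of the weights, and translate via $\|T_n^\mathcal{V}\|_{L^p(\fX)}^p = b^{n(p-1)}\sum_{|u|=n}\|\mathcal{V}(I_u)\|_\fX^p$ together with $\E[|\mathring{W}|^p]\le 2^p\E[W^p]$. The paper's proof is line-for-line the same argument.
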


Proposition \ref{prop-mart-type} has a  useful consequence given as follows.  Given any random weight $W$, we define the structure function as  \footnote{For the convenience of our computations,  in this paper, $\varphi_W$ is defined differently  from  the standard structure function  (see \cite[Formula (4)]{Kahane-Peyriere-advance}    or \cite[Formula (3.9)]{Heu}), which is usually defined as 
$
\varphi_W(t)/\log b = \log_b( \E[W^t]) - (t-1). 
$ }
 \begin{align}\label{def-phi-W}
\varphi_W(t): = \log \E[W^t] - (t-1) \log b \in \R\cup\{+\infty\}, 
\end{align}
which is always finite for $0 < t \le 1$ (since $W$ is integrable with $\E[W]=1$). And $\varphi_W(t)<\infty$ if and only if $\E[W^t]<\infty$.    See Figure \ref{phi-graph} for an illustration of the function $\varphi_W$. 

\begin{figure}[H]
\begin{tikzpicture}[scale=1.8]
\draw[->](0,0)--(3.2,0)node[below]{$t$};
\draw[->](0,0)node[below]{$0$}--(0,1.5)node[left]{$\varphi_W(t)$};	
\draw[domain=0:2.5, blue]plot(\x,{0.5*\x^2-0.5*\x-(\x-1)*ln(3)});
\draw [fill] (1,0) circle [radius=0.4pt];
\draw [fill] (0,1.098612288668) circle [radius=0.4pt];
\draw (0,1.098612288668) node[left] {$\log b$};
\draw (1,0) node[below] {$1$};
\draw[]node[below]{$0$}; 
	\end{tikzpicture}\caption{Illustration of the function  $\varphi_W$.}\label{phi-graph}
\end{figure}
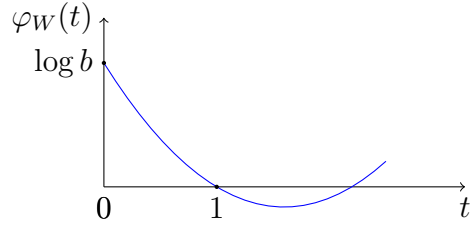

\begin{corollary}\label{cor-HM-comp}
Assume that $\fX$ is a  Banach space  having martingale type $p$ with $p \in (1,2]$ and let $\mathcal{V}: \mathfrak{B}^b([0,1])\rightarrow \fX$ be a vector measure.  Assume that $\E[W^p]<\infty$.  If
\begin{align}\label{H-exp-exp}
 H_\mathcal{V}(p) +  \varphi_W(p) <   0, 
\end{align}
then the martingale $(Q_n \mathcal{V}([0,1]))_{n\ge 1}$ is $L^p$-uniformly bounded. 
\end{corollary}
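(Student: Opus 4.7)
The plan is to read Corollary \ref{cor-HM-comp} as a direct quantitative consequence of the series bound already furnished by Proposition \ref{prop-mart-type}, with the hypothesis \eqref{H-exp-exp} re-interpreted as a geometric-decay condition on the general term of that series.

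First, I would rewrite the assumption $H_\mathcal{V}(p)+\varphi_W(p)<0$ using the definitions \eqref{def-rsigma} and \eqref{def-phi-W}. Adding the constant $\varphi_W(p)=\log\E[W^p]-(p-1)\log b$ to the limsup that defines $H_\mathcal{V}(p)$ and multiplying by $n\log b$, one obtains
\[
\limsup_{n\to\infty}\frac{1}{n}\log\!\left(\Big(\tfrac{\E[W^p]}{b^{p-1}}\Big)^{n}\!\cdot\|T_n^\mathcal{V}\|_{L^p(\fX)}^{p}\right)
= \log b\cdot\bigl(H_\mathcal{V}(p)+\varphi_W(p)\bigr)<0.
\]
Hence there exist $\delta>0$ and $n_0\ge 1$ such that for every $n\ge n_0$,
\[
\Big(\tfrac{\E[W^p]}{b^{p-1}}\Big)^{n}\!\cdot\|T_n^\mathcal{V}\|_{L^p(\fX)}^{p}\le e^{-\delta n}.
\]
For $0\le n<n_0$ the same terms are trivially finite (here one uses $\E[W^p]<\infty$ and $T_n^\mathcal{V}\in L^p([0,1],dt;\fX)$, as noted right after \eqref{def-H-mart}). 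Summing, the series
\[
\sum_{n\ge 0}\Big(\tfrac{\E[W^p]}{b^{p-1}}\Big)^{n}\!\cdot\|T_n^\mathcal{V}\|_{L^p(\fX)}^{p}
\]
converges.

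Second, I would invoke Proposition \ref{prop-mart-type}. Since $\fX$ has martingale type $p$, that proposition yields
\[
\sup_{n\ge 1}\E\bigl[\|Q_n\mathcal{V}([0,1])\|_\fX^{p}\bigr]\le C(p,\fX)\sum_{n\ge 0}\Big(\tfrac{\E[W^p]}{b^{p-1}}\Big)^{n}\!\cdot\|T_n^\mathcal{V}\|_{L^p(\fX)}^{p}<\infty,
\]
which is precisely the $L^p$-uniform boundedness of the $\fX$-valued martingale $(Q_n\mathcal{V}([0,1]))_{n\ge 1}$.

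There is no real obstacle here beyond correctly unpacking the two limsup/structure-function definitions, so this proof is essentially a one-line application of Proposition \ref{prop-mart-type} once the hypothesis is recast as geometric summability. The only minor point worth care is the finiteness of the initial terms of the series, but this is immediate from $\E[W^p]<\infty$ together with $\mathcal{V}(I_u)\in\fX$ for each $b$-adic cell, so each $T_n^\mathcal{V}$ is a simple $\fX$-valued function in $L^p$.
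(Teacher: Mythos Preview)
Your approach is exactly the paper's: reinterpret the hypothesis as geometric decay of the general term $\big(\E[W^p]/b^{p-1}\big)^n\|T_n^{\mathcal{V}}\|_{L^p(\fX)}^p$ and then invoke Proposition~\ref{prop-mart-type}. One small slip to fix: the displayed limsup is $\varphi_W(p)+\log b\cdot H_{\mathcal{V}}(p)$, not $\log b\cdot\bigl(H_{\mathcal{V}}(p)+\varphi_W(p)\bigr)$, since only the $\|T_n^{\mathcal{V}}\|$ part carries the factor $\log b$ from \eqref{def-rsigma}; the paper's own proof glosses over the same normalization.
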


\begin{remark*}
The condition  \eqref{H-exp-exp} implies that $\varphi_W(p)< 0$ and hence $\E[W^p]<b^{p-1}$.  It follows that our condition \eqref{H-exp-exp} implies the $L^p$-boundedness of the martingale $(\mu_n([0,1]))_{n\ge 1}$ defined in \S \ref{sec-Mandelbrot-cascade}.  Note that when one considers the cascade action on scalar measures, Corollary~\ref{cor-HM-comp} and its proof are restatement of a part of \cite[Theorem A]{Fan-JMPA} and its proof. 
\end{remark*}

\begin{comment}
\begin{corollary}\label{cor-superR}
Let $\fX$ be a super-reflexive Banach space and let $\sigma: \mathfrak{B}(\Ab^\N)\rightarrow \fX$ be a vector measure. Assume that the right derivative of the function $\Delta_{\sigma, W}$ defined in \eqref{H-exp-exp} satisfies 
\[
(\Delta_{\sigma,W})_{+}'(1)<0. 
\]
Then the martingale $(M_n^\sigma)_{n\ge 1}$ converges in $\fX$ almost surely  and in $L^p(\fX)$ for some $p>1$. 
\end{corollary}
\end{comment}

\begin{proof}[Proof of Proposition \ref{prop-mart-type}]
Recall the definition \eqref{def-Mtype} of martingale type $p$ of a Banach space.   Consider the martingale $(M_n^\mathcal{V})_{n\ge 1}$  defined in   \eqref{def-mart}.
By the martingale type $p$ assumption on $\fX$, there exists $C>0$ such that for any $N\ge 1$, 
\[
\E[\|M_N^\mathcal{V} \|_{\fX}^p] \le C   \sum_{n=0}^N \E[\|dM_n^\mathcal{V} \|_{\fX}^p],
\]
Now for each $n\ge 1$, since $\{\mathring{W}(u): |u|=n\}$ are mutually independent centered random variables, with respect to the conditional expectation $\E_{n-1}[\cdot] = \E[\cdot|\mathscr{F}_{n-1}]$,   we  apply again the martingale type $p$ inequality \eqref{def-ind-Mtype}  to the expression \eqref{def-dM} for $dM_n^\mathcal{V}$  and obtain 
\begin{align*}
\E_{n-1}[ \|dM_n^\mathcal{V}\|_{\fX}^p]  & =    \E_{n-1}\Big[   \Big\|  \sum_{|u|=n}  \Big(\prod_{j=0}^{n-1} W(u|_j)\Big) \mathring{W}(u) \cdot \mathcal{V}(I_u)   \Big\|_{\fX}^p \Big]  
\\
&  \le C    \sum_{|u|=n}    \E_{n-1}\Big[ \Big\| \Big(\prod_{j=0}^{n-1} W(u|_j)\Big) \mathring{W}(u) \cdot \mathcal{V}(I_u)   \Big\|_{\fX}^p\Big]
\\
& = C   \E[|\mathring{W}(u)|^p]    \cdot    \sum_{|u|=n}     \Big(\prod_{j=0}^{n-1} W(u|_j)^p \Big)  \cdot     \| \mathcal{V}(I_u)   \|_{\fX}^p. 
\end{align*}
Hence, by $\E[|\mathring{W}|^p] \le 2^p  \E[W^p]$, we have 
\[
\E[ \|dM_n^\mathcal{V}\|_{\fX}^p]  \le   C  2^p (\E[W^p])^{n}  \cdot    \sum_{|u|=n}       \| \mathcal{V}(I_u)   \|_{\fX}^p.  
\]
By noting the equality
\[
\|T_n^\mathcal{V}\|_{L^p(\fX)}^p = b^{n(p-1)} \sum_{|u|=n}  \| \mathcal{V}(I_u)\|_{\fX}^p, 
\]
we complete the proof of the proposition. 
\end{proof}

\begin{proof}[Proof of Corollary \ref{cor-HM-comp}]
The assumption \eqref{H-exp-exp} implies that there exists $\delta>0$ such that 
\[
 \|T_n^\mathcal{V}\|_{L^p(\fX)}^p  \le       \Big(  \frac{b^{p-1}}{\E[W^p]}  \Big)^n    e^{-n \delta} \quad  \text{for large enough $n$}. 
\]
Therefore,  Corollary \ref{cor-HM-comp} follows from Proposition \ref{prop-mart-type}.
\end{proof}

%%%%%%%%%%%%%%%%%%%%%%%%%%%%
%%%%%%%%%%%%%%%%%%%%%%%%%%%%%%
\begin{comment}
\begin{proof}[Proof of Corollary \ref{cor-p-var}]
By the assumption that $\sigma$ has bounded  $p$-variation with respect to the uniform measure $m_b$  on $\Ab^\N$,  we have 
\[
  K: = \sup_n   \sum_{|u|=n}  \frac{\| \sigma([u])\|_{\fX}^p}{m_b([u])^{p-1}} <\infty. 
\]
The above inequality combined with \eqref{sigma-mart-p}  and  the assumption $\E[W^p]<b^{p-1}$ implies
\[
\sup_n \E[\|M_n^\sigma\|_{\fX}^p] \le CK   \sum_{n\ge 1} \Big(\frac{\E[W^p]}{b^{p-1}}\Big)^{n}  <\infty. 
\]
This completes the proof of the corollary. 
\end{proof}
\end{comment}
%%%%%%%%%%%%%%%%%%%%%%%%
%%%%%%%%%%%%%%%%%%%%%%%%%5

\section{Equivalent $L^p(\ell^q)$-estimate of vector-valued MC-actions}\label{sec-Lplq}

This section is devoted to the special case $\fX= \ell^q$ for  $2\le q<\infty$ (by  Proposition~\ref{prop-type},  $\fX$ has martingale type $2$). In this case, under a mild condition on the  initial random weight $W$,  the upper estimate  obtained in Proposition~\ref{prop-mart-type} can be improved to be a two sided {\it equivalent estimate}. 

For any $1< p <2 \le q <\infty$ and  any vector measure $\mathcal{V}: \mathfrak{B}^b \rightarrow \ell^q$, we define a characteristic quantity
\begin{align}\label{chi-VWpq}
\chi(\mathcal{V}, W, p, q): =   \E\Big[ \Big\| \sum_{|u|\ge 1}    \Big(  \prod_{j=0}^{|u|-1} W(u|_j)^2 \Big)   \cdot  \big| \mathcal{V}(I_u)  \big|^2       \Big\|_{\ell^{q/2}}^{p/2} \Big].
\end{align}
Recall the definition \eqref{def-mart} of $Q_n\mathcal{V}([0,1])$: 
\[
Q_n\mathcal{V} ([0,1]) = \sum_{|u|=n}  \Big(\prod_{j=0}^n W(u|_j)\Big) \cdot \mathcal{V}(I_u), \quad n\ge 1. 
\] 

\begin{theorem}\label{thm-precise}
Assume that $1<p<2\le q<\infty$.   Let  $\mathcal{V}: \mathfrak{B}^b\rightarrow \ell^q$ be a vector measure. Suppose that $\E[W^q]<\infty$.  Then
\begin{multline*}
\|\mathcal{V}([0,1])\|_{\ell^q}^p +    \|\mathring{W}\|_1^p\cdot    \chi(\mathcal{V}, W, p, q)   
\\
\lesssim_{p,q}\sup_{n\ge 1} \E \Big[ \big  \| Q_n\mathcal{V}([0,1])\big\|_{\ell^q}^p \Big]  \lesssim_{p,q} 
\\
  \lesssim_{p,q}    \|\mathcal{V}([0,1]) \|_{\ell^q}^p +     \|\mathring{W}\|_q^{p}
 \cdot  \chi(\mathcal{V}, W, p, q). 
\end{multline*}  
In particular, the martingale $(Q_n\mathcal{V}([0,1]))_{n\ge 1}$ is uniformly  bounded in $L^p(\ell^q)$  if and only if the characteristic quantity $\chi(\mathcal{V}, W, p, q)<\infty$. 
\end{theorem}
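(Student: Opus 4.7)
The proof follows the framework of vector-valued martingale analysis. Decomposing $M_N^\mathcal{V} = \mathcal{V}([0,1]) + \sum_{n=1}^N dM_n^\mathcal{V}$ as in \eqref{def-mart}--\eqref{def-dM}, I write the martingale increment as $dM_n^\mathcal{V} = \sum_{|u|=n} c_u \mathring{W}(u)$ with the $\ell^q$-valued, $\mathscr{F}_{n-1}$-measurable coefficient $c_u = \bigl(\prod_{j=0}^{n-1} W(u|_j)\bigr) \mathcal{V}(I_u)$. Because $\mathcal{V}([0,1])$ is deterministic (hence orthogonal in $L^p(\ell^q)$ to the zero-mean martingale differences), a standard Jensen/triangle argument yields $\E\|M_N^\mathcal{V}\|_{\ell^q}^p \asymp_p \|\mathcal{V}([0,1])\|_{\ell^q}^p + \E\|\sum_n dM_n^\mathcal{V}\|_{\ell^q}^p$, so the $\|\mathcal{V}([0,1])\|_{\ell^q}^p$ contribution separates out and the task reduces to two-sided bounds of $\E\|\sum_n dM_n^\mathcal{V}\|_{\ell^q}^p$ in terms of $\chi(\mathcal{V}, W, p, q)$. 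A direct computation shows that the characteristic quantity can be rewritten as $\chi = \E\|(\sum_{|u|\ge 1} |c_u|^2)^{1/2}\|_{\ell^q}^p$ (interpreting $|c_u|^2$ coordinatewise in $\ell^q$).

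Since $\ell^q$ is a UMD Banach space for $1 < q < \infty$, the UMD-Burkholder inequality \eqref{burk-ineq} combined with the coordinatewise Kahane-Khintchine identity \eqref{LpLq-abs} yields the pointwise-then-integrated equivalence
\begin{equation*}
\E\Big\|\sum_{n=1}^N dM_n^\mathcal{V}\Big\|_{\ell^q}^p \asymp_{p,q} \E\Big\|\Big(\sum_{n=1}^N |dM_n^\mathcal{V}|^2\Big)^{1/2}\Big\|_{\ell^q}^p,
\end{equation*}
where the square is taken coordinatewise. The remaining problem is to relate this $L^p(\ell^q)$-square function to $\sum_u |c_u|^2$ while extracting appropriate moments of $\mathring{W}$.

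For the \emph{upper bound} I will invoke the $\ell^q$-valued Burkholder-Rosenthal inequality of Dirksen-Yaroslavtsev \cite{PLMS-2019}. At each level $n$ (conditional on $\mathscr{F}_{n-1}$), the increment $dM_n^\mathcal{V}$ is a sum of independent centered $\ell^q$-valued vectors $c_u \mathring{W}(u)$; the Dirksen-Yaroslavtsev decomposition controls its contribution to the vector-valued square function by a type-$2$ piece (scaling like $\|\mathring{W}\|_2$) and a Rosenthal piece (scaling like $\|\mathring{W}\|_q$, which dominates since $p<q$), both in turn controlled by $\chi^{1/p}$. Summing levels and using the elementary subadditivity \eqref{2-ele-ineq} together with $p<2$ reassembles the bound $\|\mathring{W}\|_q^p\,\chi$. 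For the \emph{lower bound}, I will employ symmetrization: conditionally at each level, the standard sign-symmetrization for independent centered random variables gives $\E\|\sum_u c_u \mathring{W}(u)\|_{\ell^q}^p \gtrsim_p \E\|\sum_u \epsilon_u c_u \mathring{W}(u)\|_{\ell^q}^p$ for independent Rademachers $\epsilon_u$. Applying \eqref{LpLq-abs} in $\epsilon$ converts this into $\E\|(\sum_u |c_u|^2 \mathring{W}(u)^2)^{1/2}\|_{\ell^q}^p$; then a Paley-Zygmund-type argument on each factor $|\mathring{W}(u)|$ (using $\PP(|\mathring{W}|\ge \|\mathring{W}\|_1/2)\gtrsim 1$) together with the concavity of $x\mapsto x^{p/2}$ extracts the factor $\|\mathring{W}\|_1^p\,\chi$.

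The principal obstacle is the upper bound. In the regime $1<p<2\le q<\infty$, the naive type-$2$ estimate inherited from Proposition \ref{prop-mart-type} would yield $\|\mathring{W}\|_2^p \sum_n \E\bigl(\sum_{|u|=n}\|c_u\|_{\ell^q}^2\bigr)^{p/2}$, which over-counts $\chi$: by Minkowski $\sum_u \|c_u\|_{\ell^q}^2 \ge \|(\sum_u |c_u|^2)^{1/2}\|_{\ell^q}^2$, generically with strict inequality. Capturing the sharp characteristic quantity $\chi$ on the right-hand side is precisely what the Dirksen-Yaroslavtsev inequality is designed to do, simultaneously exploiting the UMD structure of $\ell^q$ and a Rosenthal-type decomposition in the off-diagonal regime $p<q$; getting the constants right (in particular the exact threshold $\|\mathring{W}\|_q$ versus $\|\mathring{W}\|_2$) is where the bulk of the technical work will reside.
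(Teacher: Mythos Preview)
Your outline points in the right direction (UMD Burkholder, Kahane--Khintchine, Dirksen--Yaroslavtsev), but there are two genuine gaps.

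First, you are implicitly treating each summand $c_u\mathring{W}(u)$ as a martingale difference in its own right, which is what you need in order to introduce Rademachers $\epsilon_u$ indexed by $u$ (not just by the level $n$) and thereby reach the square function $\sum_u |c_u|^2 \mathring{W}(u)^2$. With respect to the coarse filtration $(\mathscr{F}_n)$ the increments are only $dM_n^\mathcal{V}=\sum_{|u|=n} c_u\mathring{W}(u)$, and the corresponding square function $\sum_n |dM_n^\mathcal{V}|^2$ retains cross terms. The paper resolves this by introducing a \emph{refined filtration} $(\mathscr{G}_u)_{u\in\mathcal{A}^*}$ coming from a total order on the tree, under which $c_u$ is $\mathscr{G}_{u^-}$-measurable and $\mathring{W}(u)$ is independent of $\mathscr{G}_{u^-}$; then each $c_u\mathring{W}(u)=D_u[M_N^\mathcal{V}]$ is a genuine martingale difference. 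Both the upper and lower estimates are run once against this refined martingale, rather than level by level as your ``at each level $n$~\ldots\ summing levels'' sketch suggests.

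Second, and more seriously, your proposed lower-bound step is a gap. After reaching $\E\|(\sum_u |c_u|^2 \mathring{W}(u)^2)^{1/2}\|_{\ell^q}^p$, you want to extract $\|\mathring{W}\|_1^p$ via a ``Paley--Zygmund-type argument on each factor $|\mathring{W}(u)|$'' plus concavity of $x\mapsto x^{p/2}$. This does not work: the coefficients $c_u$ at different levels are entangled through shared $W$'s, so you cannot condition them all out simultaneously, and the composite functional $\{a_u\}\mapsto (\sum_s(\sum_u |c_u(s)|^2 a_u)^{q/2})^{p/q}$ is neither concave nor convex (inner exponent $q/2\ge 1$, outer $p/q<1$), so Jensen is unavailable. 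A Paley--Zygmund event $\{|\mathring{W}(u)|\ge \tfrac12\|\mathring{W}\|_1\ \text{for all }u\}$ has probability shrinking to zero. The paper instead, after UMD Burkholder and the coordinatewise absolute-value replacement \eqref{KK-abs}, applies the \emph{Bourgain--Stein inequality} \eqref{BS-ineq} to the refined filtration: this pulls the conditional expectations $\E_{u^-}[\,\cdot\,]$ inside the Rademacher sum, and since $\E_{u^-}[|c_u\mathring{W}(u)|]=\E[|\mathring{W}|]\cdot |c_u|\cdot\prod_{j\le |u|-1}W(u|_j)$, the factor $\|\mathring{W}\|_1$ drops out cleanly. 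This is the key missing device in your lower bound.
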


\begin{remark*}
To  obtain the lower estimate in Theorem \ref{thm-precise},  one does not need the condition $\E[W^q]<\infty$.  For the details, see the proof of the lower estimate in  \S \ref{sec-low}.   
\end{remark*}

\subsection{$\ell^q$-valued Burkholder-Rosenthal martingale inequalities} 

We shall need the $\ell^q$-valued Burkholder-Rosenthal martingale  inequalities obtained by Dirksen and Yaroslavtsev in \cite{PLMS-2019}.

Assume that $1<p<2\le q<\infty$.    For any  centered $\ell^q$-valued martingale $Z = (Z_n)_{n\ge 1}$,   define a norm  $\normmm{Z}_1$ by 
\begin{align*}
\normmm{Z}_1  :& = \Big(\E\Big\|   \Big(\sum_{n\ge 1}\mathbb{E}_{n-1} |dZ_n|^2 \Big)^{1/2}\Big\|_{\ell^q}^p \Big)^{1/p},
\end{align*}
where $\E_n$ denote the conditional expectations and $\E_0 = \E$.   Another two norms  $\normmm{Z}_2$ and $\normmm{Z}_3$ are defined by 
\begin{align*}
\normmm{Z}_2   : & =\Big(\mathbb{E}\Big[ \Big\{ \sum_{n\ge 1} \mathbb{E}_{n-1}  \|dZ_n \|_{\ell^q}^q \Big\}^{p/q}\Big]\Big)^{1/p} \an 
 \normmm{Z}_3  : =\Big(\sum_{n\ge 1} \mathbb{E} \|dZ_n \|_{\ell^q}^p\Big)^{1/p}. 
\end{align*}
Dirksen and Yaroslavtsev's vector-valued Burkholder-Rosenthal inequality    \cite[Theorem 1]{PLMS-2019} states that, under the assumption  $1<p<2\le q<\infty$, 
\[
\sup_{n\ge 1} \|Z_n\|_{L^p(\ell^q)} \approx_{p,q}   \inf_{Z = X+Y}  \Big( \max\{   \normmm{X}_1, \normmm{X}_2 \}   +  \normmm{Y}_3  \Big),
\] 
where the infimum ranges over all decompositions $Z = X+Y$ with $X = (X_n)_{n\ge 1}$ and $Y = (Y_n)_{n\ge 1}$ being centered $\ell^q$-valued martingales.

In particular,  for any $\ell^q$-valued martingale $M = (M_n)_{n\ge 1}$,   Dirksen and Yaroslavtsev's vector-valued Burkholder-Rosenthal inequality    \cite[Theorem 1]{PLMS-2019} applied  to the centered martingale 
\[
\mathring{M} : = (M_n-\E[M_n])_{n\ge 1} = (M_n-\E[M_1])_{n\ge 1}
\]
 and to the trivial decomposition 
 \[
 \mathring{M}= \mathring{M} +0
 \]
  yields the following   simple  form  of  upper estimate:
\begin{align}\label{BR-vec}
\begin{split}
\sup_{n\ge 1} \|M_n\|_{L^p(\ell^q)}^p \lesssim_{p,q} &    \|\E[M_1]\|_{\ell^q}^p +    \E\Big[ \Big\|\Big(\sum_{n\ge 1}\mathbb{E}_{n-1}\big[|dM_n|^2\big]\Big)^{1/2}\Big\|_{\ell^q}^p \Big] 
\\
& \quad +   \E\Big[\Big(\sum_{n\ge1}\E_{n-1}\big[\|dM_n\|_{\ell^q}^q\big]\Big)^{p/q}\Big].
\end{split}
\end{align}
The above simple  form  \eqref{BR-vec} of  upper estimate of \cite[Theorem 1]{PLMS-2019}   is enough for our purpose.

\subsection{The upper estimate}

Consider the total order on $\Ab^*$ defined as follows: for any $u, v\in \Ab^*$, we write  $u<v$ if and only if  either $|u|<|v|$ or $|u| = |v|$ and 
\[
\sum_{j} u_jb^{-j} < \sum_{j} v_j b^{-j}. 
\] 
With respect to the above total order on $\Ab^*$, we  can define an increasing filtration of sigma-algebras $(\mathscr{G}_u)_{u\in \Ab^*}$ by 
\[
\mathscr{G}_u: = \sigma\Big(\Big\{W(v): v\le u\Big\}\Big) \quad  \text{ with $|u|\ge 1$}.
\]
By convention,  if $u = \emptyset\in \Ab^*$ is the empty word,  let $\mathscr{G}_\emptyset$ be the trivial sigma-algebra.

For any $u\in \Ab^*$ with $|u|\ge 1$,  let  $u^{-}$ denote  the unique element in $\Ab^*$ which is largest among all elements strictly smaller than $u$, in notation,  $u^{-}$ is defined as 
\[
u^{-}: = \max\{v\in \Ab^*: v<u \}.
\]
 We denote the martingale difference operator 
\[
D_u(\cdot)= \E_u[\cdot] - \E_{u^{-}} [\cdot],
\]
where $\E_u[\cdot]$ and $\E_{u^{-}} [\cdot]$ denote the conditional expectations $\E_u[\cdot] =\E[\cdot|\mathscr{G}_u]$ and $\E_{u^{-}}[\cdot] =\E[\cdot|\mathscr{G}_{u^{-}}]$. 

For the vector measure $\mathcal{V}: \mathfrak{B}^b([0,1])\rightarrow \ell^q$,  we have 
\begin{align}\label{def-MNV}
\begin{split}
M_N^\mathcal{V}: = Q_N\mathcal{V}([0,1]) & =       \mathcal{V}([0,1]) + \sum_{n=1}^N  \sum_{|u|=n}  \Big(  \prod_{j=0}^{n-1} W(u|_j) \Big) \mathring{W}(u) \mathcal{V}(I_u)
\\
&  =  \mathcal{V}([0,1]) + \sum_{1\le |u|\le N}  \Big(  \prod_{j=0}^{|u|-1} W(u|_j) \Big) \mathring{W}(u) \mathcal{V}(I_u). 
\end{split}
\end{align}
Clearly,  for all $u\in \Ab^*$ with $1\le |u|\le N$, 
\begin{align}\label{Du-form}
\Big(  \prod_{j=0}^{|u|-1} W(u|_j) \Big) \mathring{W}(u) \mathcal{V}(I_u) = D_u \big[ M_N^\mathcal{V}\big]. 
 \end{align}
 By convention, we write $D_\emptyset [M_N^\mathcal{V}] = \mathcal{V}([0,1])$. Then 
 \begin{align}\label{MN-diff}
 M_N^\mathcal{V} = \sum_{|u|\le N} D_u\big[M_N^\mathcal{V}\big]. 
 \end{align}
Consequently, by  the vector-valued Burkholder-Rosenthal inequalities \eqref{BR-vec}, 
\begin{align}\label{MnV-norm}
\begin{split}
\| M_N^\mathcal{V}\|_{L^p(\ell^q)}^p \lesssim_{p,q} &  \|\mathcal{V}([0,1]) \|_{\ell^q}^p +   \underbrace{ \E\Big[ \Big\|\Big(\sum_{1\le |u|\le N}\E_{u^{-}}\big[\big|D_u \big[ M_N^\mathcal{V}\big]\big|^2\big]\Big)^{1/2}\Big\|_{\ell^q}^p \Big]}_{\text{denoted $T_1(N)$}} 
\\
& +   \underbrace{\E\Big[\Big(\sum_{1\le |u|\le N}\E_{u^{-}}\big[\big\|  D_u \big[ M_N^\mathcal{V}\big] \big \|_{\ell^q}^q\big]\Big)^{p/q}\Big]}_{\text{denoted $T_2(N)$}}.
\end{split}
\end{align}

Our next goal is to  control  the two terms $T_1(N)$ and $T_2(N)$. 

\begin{lemma}\label{lem-I}
Given any integer $N\ge 1$. For any $u\in \Ab^*$ with $1\le |u|\le N$, we have 
\[
\E_{u^{-}}\Big[\big|D_u \big[ M_N^\mathcal{V}\big]\big|^2\Big] =\Var(W)  \cdot \Big(  \prod_{j=0}^{|u|-1} W(u|_j)^2 \Big)   \cdot  \big| \mathcal{V}(I_u)  \big|^2. 
\]
In particular, $\E_{u^{-}}\big[\big|D_u \big[ M_N^\mathcal{V}\big]\big|^2\big]$ does not depend on $N$ provided that $1\le |u|\le N$. Hence  
\begin{align*}
 T_1 (N)=    \Var(W)^{p/2} 
 \cdot  \E\Big[ \Big\| \sum_{1\le |u|\le N}    \Big(  \prod_{j=0}^{|u|-1} W(u|_j)^2 \Big)   \cdot  \big| \mathcal{V}(I_u)  \big|^2       \Big\|_{\ell^{q/2}}^{p/2} \Big]. 
\end{align*}
\end{lemma}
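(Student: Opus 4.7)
\textbf{Proof plan for Lemma \ref{lem-I}.} The plan is a direct computation based on the explicit formula \eqref{Du-form}. First, from \eqref{Du-form} I have
\[
\bigl|D_u[M_N^\mathcal{V}]\bigr|^2 = \Big(\prod_{j=0}^{|u|-1} W(u|_j)^2\Big) \cdot \mathring{W}(u)^2 \cdot \bigl|\mathcal{V}(I_u)\bigr|^2,
\]
where $|\cdot|^2$ is understood coordinatewise on $\ell^q$. The key observation is an adaptedness/independence split with respect to $\mathscr{G}_{u^{-}}$: for every $0 \le j \le |u|-1$, the ancestor $u|_j$ has strictly smaller length than $u$, hence $u|_j < u$ in the total order on $\Ab^*$, and therefore $u|_j \le u^{-}$. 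This shows that $\prod_{j=0}^{|u|-1} W(u|_j)^2$ is $\mathscr{G}_{u^{-}}$-measurable, while the deterministic vector $\mathcal{V}(I_u)$ is of course $\mathscr{G}_{u^{-}}$-measurable as well.

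On the other hand, $W(u)$ does not appear in the generating family $\{W(v): v \le u^{-}\}$ of $\mathscr{G}_{u^{-}}$, so $\mathring{W}(u)$ is independent of $\mathscr{G}_{u^{-}}$. I can therefore pull the $\mathscr{G}_{u^{-}}$-measurable factors out of the conditional expectation and apply independence to the remaining factor, yielding
\[
\E_{u^{-}}\bigl[\bigl|D_u[M_N^\mathcal{V}]\bigr|^2\bigr] = \Big(\prod_{j=0}^{|u|-1} W(u|_j)^2\Big) \cdot \E[\mathring{W}(u)^2] \cdot \bigl|\mathcal{V}(I_u)\bigr|^2,
\]
and $\E[\mathring{W}(u)^2] = \Var(W)$ since $W(u) \stackrel{d}{=} W$. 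This gives the first claim, and makes it manifest that the right-hand side does not involve $N$.

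For the formula for $T_1(N)$, I would then substitute the identity just obtained into the definition of $T_1(N)$ in \eqref{MnV-norm} and use the elementary fact that for a non-negative sequence $(x_k) \in \ell^{q/2}$ one has
\[
\bigl\|(x_k^{1/2})\bigr\|_{\ell^q} = \bigl\|(x_k)\bigr\|_{\ell^{q/2}}^{1/2},
\]
applied coordinatewise in $k$ to the non-negative $\ell^{q/2}$-valued sum $\sum_{1 \le |u| \le N} \bigl(\prod_{j=0}^{|u|-1} W(u|_j)^2\bigr) |\mathcal{V}(I_u)|^2$. Raising to the power $p$, the factor $\Var(W)^{p/2}$ can be pulled out of the expectation, which yields exactly the stated expression for $T_1(N)$.

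There is no real obstacle here: the only step requiring care is verifying the order-theoretic assertion $u|_j \le u^{-}$ for $0 \le j \le |u|-1$, which is immediate from the definition of the total order (length first, lexicographic second) on $\Ab^*$. Everything else is a one-line independence computation followed by the standard identification $\|(x_k)^{1/2}\|_{\ell^q} = \|(x_k)\|_{\ell^{q/2}}^{1/2}$.
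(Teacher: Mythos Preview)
Your proposal is correct and follows essentially the same approach as the paper's proof, which is a terse one-liner citing \eqref{Du-form} and the identity $\|\,|f|^{1/2}\|_{\ell^q}^2 = \|f\|_{\ell^{q/2}}$. You have simply spelled out the measurability/independence reasoning (that $u|_j \le u^{-}$ for $j<|u|$, so the product is $\mathscr{G}_{u^{-}}$-measurable while $\mathring W(u)$ is independent of $\mathscr{G}_{u^{-}}$) that the paper leaves implicit.
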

\begin{proof}
It follows from \eqref{Du-form} and  the equality $\| |f|^{1/2}\|_{\ell^q}^2   = \|  f\|_{\ell^{q/2}}$ for any $f\in \ell^{q/2}$. 
\end{proof}

\begin{lemma}\label{lem-II-le-I}
Assume that $
\E[W^q]<\infty$. Then 
\begin{align*}
T_2(N) \le   \|\mathring{W}\|_q^p   
 \cdot   \E\Big[ \Big\| \sum_{1\le |u|\le N}    \Big(  \prod_{j=0}^{|u|-1} W(u|_j)^2 \Big)   \cdot  \big| \mathcal{V}(I_u)  \big|^2       \Big\|_{\ell^{q/2}}^{p/2} \Big].
\end{align*}
\end{lemma}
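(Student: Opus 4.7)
\textbf{Proof proposal for Lemma~\ref{lem-II-le-I}.} The plan is to first compute $\E_{u^{-}}\big[\|D_u[M_N^\mathcal{V}]\|_{\ell^q}^q\big]$ explicitly using \eqref{Du-form}, and then to bound the resulting quantity by the one appearing in Lemma~\ref{lem-I} via a coordinate-wise elementary inequality.

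First I would substitute the expression \eqref{Du-form} for $D_u[M_N^\mathcal{V}]$ to get
\[
\|D_u[M_N^\mathcal{V}]\|_{\ell^q}^q = \Big(\prod_{j=0}^{|u|-1} W(u|_j)^q\Big)\,|\mathring{W}(u)|^q\,\|\mathcal{V}(I_u)\|_{\ell^q}^q.
\]
Since every ancestor $u|_j$ satisfies $u|_j < u$ (and hence $u|_j \le u^{-}$), the product $\prod_{j=0}^{|u|-1} W(u|_j)$ is $\mathscr{G}_{u^{-}}$-measurable, while $\mathring{W}(u)$ is independent of $\mathscr{G}_{u^{-}}$. Here the assumption $\E[W^q]<\infty$ is used to ensure $\|\mathring W\|_q<\infty$. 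This yields
\[
\E_{u^{-}}\big[\|D_u[M_N^\mathcal{V}]\|_{\ell^q}^q\big] = \|\mathring{W}\|_q^{q}\cdot\Big(\prod_{j=0}^{|u|-1} W(u|_j)^q\Big)\,\|\mathcal{V}(I_u)\|_{\ell^q}^q,
\]
so after pulling the constant $\|\mathring{W}\|_q^p$ outside (using $p/q < 1$), we obtain
\[
T_2(N) = \|\mathring{W}\|_q^{p}\cdot \E\Big[\Big(\sum_{1\le|u|\le N}\Big(\prod_{j=0}^{|u|-1}W(u|_j)^{2}\Big)^{q/2}\|\mathcal{V}(I_u)\|_{\ell^q}^{q}\Big)^{p/q}\Big].
\]

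The next step is a purely deterministic coordinate-wise comparison. Writing $\mathcal{V}(I_u) = (v_u^{(k)})_{k\ge 1}\in\ell^q$ and $a_u = \prod_{j=0}^{|u|-1} W(u|_j)^2$, I would exchange the order of summation to get
\[
\sum_{1\le|u|\le N} a_u^{q/2}\|\mathcal{V}(I_u)\|_{\ell^q}^q = \sum_k \sum_u \big(a_u\,|v_u^{(k)}|^2\big)^{q/2}.
\]
Since $q/2\ge 1$, the elementary inequality $\sum_i x_i^{q/2} \le (\sum_i x_i)^{q/2}$ (the second inequality in \eqref{2-ele-ineq}) applied for each fixed $k$ gives
\[
\sum_u \big(a_u\,|v_u^{(k)}|^2\big)^{q/2} \le \Big(\sum_u a_u\,|v_u^{(k)}|^2\Big)^{q/2},
\]
and summing over $k$ identifies the right-hand side as $\big\|\sum_u a_u |\mathcal{V}(I_u)|^2\big\|_{\ell^{q/2}}^{q/2}$.

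Raising both sides to the power $p/q$ (which preserves the inequality since $p/q > 0$) and taking expectation yields precisely
\[
T_2(N) \le \|\mathring{W}\|_q^{p}\cdot\E\Big[\Big\|\sum_{1\le|u|\le N}\Big(\prod_{j=0}^{|u|-1} W(u|_j)^2\Big)|\mathcal{V}(I_u)|^2\Big\|_{\ell^{q/2}}^{p/2}\Big],
\]
which is the claim. The only subtle point is recognizing that the coordinate-wise super-additivity $\sum_i x_i^{q/2}\le(\sum_i x_i)^{q/2}$ is what converts the $\ell^q$-Rosenthal-type term $T_2(N)$ into the $\ell^{q/2}$-square-function term from Lemma~\ref{lem-I}; this is really the only place where $q/2\ge 1$ is used, and it is what allows the two upper bounds contributed by \eqref{BR-vec} to be packaged into a single characteristic quantity $\chi(\mathcal{V},W,p,q)$.
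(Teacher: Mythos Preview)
Your proof is correct and follows essentially the same route as the paper: compute $\E_{u^{-}}\big[\|D_u[M_N^\mathcal{V}]\|_{\ell^q}^q\big]$ from \eqref{Du-form}, factor out $\|\mathring{W}\|_q^p$, and then use the coordinate-wise super-additivity $\sum_i x_i^{q/2}\le(\sum_i x_i)^{q/2}$ for $q/2\ge 1$ to pass to the $\ell^{q/2}$-square-function term. The only difference is cosmetic: the paper isolates your coordinate-wise comparison as a separate Lemma~\ref{lem-convex} (with $f_u = \big(\prod_{j=0}^{|u|-1} W(u|_j)\big)\,|\mathcal{V}(I_u)|$), whereas you inline it; also, your parenthetical ``using $p/q<1$'' when pulling out the constant is unnecessary, since this is just scalar factoring $(cX)^{p/q}=c^{p/q}X^{p/q}$.
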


\begin{lemma}\label{lem-convex}
For any $q\ge 2$, and any family of sequences $f_i: \N\rightarrow \C$, we have 
\[
\Big\| \sum_{i} |f_i|^2 \Big\|_{\ell^{q/2}} \ge    \Big(\sum_i \| f_i\|_{\ell^q}^q\Big)^{2/q}. 
\]
\end{lemma}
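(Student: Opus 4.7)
The plan is to reduce the claim to the elementary inequality $(\sum_i a_i)^\Lambda \ge \sum_i a_i^\Lambda$ for $\Lambda \ge 1$ and $a_i \ge 0$, which is the second half of \eqref{2-ele-ineq}. First, I would set $r := q/2 \ge 1$ and write $g_i := |f_i|^2$, a non-negative sequence on $\N$. The inequality to prove then becomes
\[
\Big\|\sum_i g_i\Big\|_{\ell^r} \ge \Big(\sum_i \|g_i\|_{\ell^r}^r\Big)^{1/r},
\]
because $\|f_i\|_{\ell^q}^q = \sum_k |f_i(k)|^{2r} = \||f_i|^2\|_{\ell^r}^r = \|g_i\|_{\ell^r}^r$ and $(\,\cdot\,)^{2/q} = (\,\cdot\,)^{1/r}$.

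Next, I would raise both sides to the $r$-th power and evaluate each side coordinate by coordinate on $\N$. On the left,
\[
\Big\|\sum_i g_i\Big\|_{\ell^r}^r = \sum_{k \in \N} \Big(\sum_i g_i(k)\Big)^r,
\]
and on the right,
\[
\sum_i \|g_i\|_{\ell^r}^r = \sum_i \sum_{k \in \N} g_i(k)^r = \sum_{k \in \N} \sum_i g_i(k)^r.
\]
Since $g_i(k)\ge 0$ and $r\ge 1$, the elementary inequality \eqref{2-ele-ineq} (applied with $\Lambda=r$ to the non-negative numbers $g_i(k)$) gives $(\sum_i g_i(k))^r \ge \sum_i g_i(k)^r$ pointwise in $k$. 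Summing over $k\in\N$ yields the desired bound, after which taking $r$-th roots and recalling $1/r = 2/q$ completes the proof.

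There is no serious obstacle here: the lemma is essentially the statement that the $\ell^r$-norm of a superposition of non-negative sequences dominates the $\ell^r$-norm of the sequence of individual $\ell^r$-norms whenever $r\ge 1$, and the only ingredient needed beyond rewriting is the pointwise superadditivity of $t \mapsto t^r$ on $[0,\infty)$ for $r\ge 1$.
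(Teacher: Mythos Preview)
Your proof is correct and essentially identical to the paper's: both raise to the power $q/2$, apply the elementary superadditivity $(\sum_i a_i)^{q/2}\ge \sum_i a_i^{q/2}$ pointwise in the coordinate $k\in\N$, and then swap the two sums. Your substitution $r=q/2$, $g_i=|f_i|^2$ is just a notational repackaging of the same one-line argument.
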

\begin{proof}
Since $q\ge 2$,  by the   inequality $(a+b)^t \ge a^t + b^t$ for  any $a, b \ge 0$ and $t \ge 1$, we get 
\begin{align*}
\Big\| \sum_{i} |f_i|^2 \Big\|_{\ell^{q/2}}^{q/2}   = \sum_{s\in \N} \Big(\sum_{i}  |f_i(s) |^2\Big)^{q/2} \ge \sum_{s\in \N} \sum_{i}  |f_i(s) |^q  =  \sum_{i}   \sum_{s\in \N} |f_i(s) |^q   =  \sum_i \| f_i\|_{\ell^q}^q. 
\end{align*} 
This is the desired inequality. 
\end{proof}

\begin{proof}[Proof of Lemma \ref{lem-II-le-I}]
Clearly,  for any $u\in \Ab^*$ with $1\le |u|\le N$, 
\[
\E_{u^{-}}\Big[\big\|  D_u \big[ M_N^\mathcal{V}\big] \big \|_{\ell^q}^q\Big]  =   \E\big[ |\mathring{W}|^q \big] \cdot  \Big(  \prod_{j=0}^{|u|-1} W(u|_j)^q \Big)   \cdot  \|\mathcal{V}(I_u)\|_{\ell^q}^q .
\]
Hence by Lemma \ref{lem-convex}, 
\begin{align*}
& \E\Big[\Big(\sum_{1\le |u|\le N}\E_{u^{-}}\big[\big\|  D_u \big[ M_N^\mathcal{V}\big] \big \|_{\ell^q}^q\big]\Big)^{p/q}\Big]
\\
= &    \|\mathring{W}\|_q^p   \cdot   \E\Big[\Big(\sum_{1\le |u|\le N}      \Big(  \prod_{j=0}^{|u|-1} W(u|_j)^q \Big)   \cdot  \|\mathcal{V}(I_u)\|_{\ell^q}^q   \Big)^{p/q}\Big] 
\\
\le & \|\mathring{W}\|_q^p   \cdot   \E\Big[ \Big\| \sum_{1\le |u|\le N}    \Big(  \prod_{j=0}^{|u|-1} W(u|_j)^2 \Big)   \cdot  \big| \mathcal{V}(I_u)  \big|^2       \Big\|_{\ell^{q/2}}^{p/2} \Big].
\end{align*}
This ends the proof. 
\end{proof}

\begin{proof}[Proof of the upper estimate of Theorem \ref{thm-precise}]
Note that  $\Var(W)  =   \|\mathring{W}\|_2^2 \le  \|\mathring{W}\|_q^2$. 
By  \eqref{MnV-norm}, Lemma \ref{lem-I}, Lemma \ref{lem-II-le-I},  we obtain the desired upper estimate of Theorem~\ref{thm-precise}. 
\end{proof}

\subsection{The lower estimate}\label{sec-low}

Recall the product space  $\Delta  = \Delta_I= \{\pm 1\}^I$ equipped with the product measure $d\epsilon$ in \eqref{def-dep}.  
\begin{lemma}\label{lem-abs}
Let $(\Sigma,  \mathcal{B},  \rho)$  be  a measure space equipped  with a non-negative $\sigma$-finite measure  $\rho$. 
 Consider any $p_1\ge 1$, then for any $0<p_2<\infty$ and  any finite sequence  of vector-valued functions $(g_i)_{i}$ in $L^{p_2}(\Sigma, \rho; \ell^{p_1})$, 
 \begin{align}\label{KK-abs}
\big \| \sum_i \epsilon_i g_i\big\|_{L^{p_2}(\Delta\times \Sigma; \, \ell^{p_1})} \asymp_{p_1, p_2}  \big \| \sum_i \epsilon_i |g_i|\big\|_{L^{p_2}(\Delta\times \Sigma; \, \ell^{p_1})},
 \end{align}
 where  $\Delta\times \Sigma$ is equipped with the measure $d\epsilon \otimes d\rho$.
\end{lemma}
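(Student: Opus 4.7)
\textbf{Proof plan for Lemma \ref{lem-abs}.}

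The plan is to reduce both sides of \eqref{KK-abs} to a common expression involving the $\ell^{p_1}$-norm of the pointwise square function $S(\sigma)(k) := \bigl(\sum_i |g_i(\sigma,k)|^2\bigr)^{1/2}$, which is manifestly invariant under the replacement $g_i \mapsto |g_i|$. First, working pointwise in $\sigma \in \Sigma$ and regarding the $g_i(\sigma)$ as vectors in the Banach space $\fX = \ell^{p_1}$, I will apply the vector-valued Kahane--Khintchine inequality \eqref{KK-ineq} to exchange the exponent $p_2$ for the exponent $p_1$ in the Rademacher integral:
\[
\Big\| \sum_i \epsilon_i g_i(\sigma) \Big\|_{L^{p_2}(\Delta, d\epsilon;\, \ell^{p_1})} \asymp_{p_1,p_2} \Big\| \sum_i \epsilon_i g_i(\sigma) \Big\|_{L^{p_1}(\Delta, d\epsilon;\, \ell^{p_1})}.
\]

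Next, since $L^{p_1}(\Delta, d\epsilon;\, \ell^{p_1})$ coincides isometrically with an $L^{p_1}$ space on $\Delta \times \N$ equipped with $d\epsilon$ tensored with counting measure, Fubini's theorem gives
\[
\Big\| \sum_i \epsilon_i g_i(\sigma) \Big\|_{L^{p_1}(\Delta;\, \ell^{p_1})}^{p_1} = \sum_k \int_\Delta \Big| \sum_i \epsilon_i g_i(\sigma,k) \Big|^{p_1} d\epsilon.
\]
For each fixed $k$, the scalar Khintchine inequality (the $\fX = \C$ case of \eqref{KK-ineq}) yields $\int_\Delta |\sum_i \epsilon_i g_i(\sigma,k)|^{p_1} d\epsilon \asymp_{p_1} \bigl(\sum_i |g_i(\sigma,k)|^2\bigr)^{p_1/2}$, so summing over $k$ shows
\[
\Big\| \sum_i \epsilon_i g_i(\sigma) \Big\|_{L^{p_1}(\Delta;\, \ell^{p_1})}^{p_1} \asymp_{p_1} \sum_k \Big( \sum_i |g_i(\sigma,k)|^2 \Big)^{p_1/2} = \| S(\sigma) \|_{\ell^{p_1}}^{p_1}.
\]
Since $|g_i(\sigma,k)| = \bigl||g_i|(\sigma,k)\bigr|$, replacing $g_i$ by $|g_i|$ leaves the square function $S(\sigma)$ unchanged, and exactly the same chain of estimates applies to the right-hand side of \eqref{KK-abs}.

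Finally, raising the pointwise equivalences to the power $p_2$ (which only affects the constants through a fixed power $p_2/p_1$) and integrating over $\sigma$ with respect to $\rho$ yields
\[
\Big\| \sum_i \epsilon_i g_i \Big\|_{L^{p_2}(\Delta \times \Sigma;\, \ell^{p_1})}^{p_2} \asymp_{p_1,p_2} \int_\Sigma \| S(\sigma) \|_{\ell^{p_1}}^{p_2}\, d\rho(\sigma) \asymp_{p_1,p_2} \Big\| \sum_i \epsilon_i |g_i| \Big\|_{L^{p_2}(\Delta \times \Sigma;\, \ell^{p_1})}^{p_2},
\]
which is precisely \eqref{KK-abs}. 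There is no real obstacle here — the argument is a bookkeeping exercise combining vector-valued Kahane--Khintchine with Fubini and scalar Khintchine — but one should be mindful that the Kahane--Khintchine step requires $\ell^{p_1}$ to be a genuine Banach space, whence the hypothesis $p_1 \ge 1$ is used in an essential way.
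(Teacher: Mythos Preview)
Your proof is correct and follows essentially the same approach as the paper: both arguments fix $\sigma$ and reduce via Fubini and the vector-valued Kahane--Khintchine inequality \eqref{KK-ineq} to the case $L^{p_1}(\Delta;\ell^{p_1})$, then apply scalar Khintchine coordinatewise to pass through the square function (equivalently, the $L^2(\Delta)$ norm), which is invariant under $g_i\mapsto |g_i|$. The only cosmetic difference is that the paper phrases the last step as $\|\sum_i \epsilon_i a_i\|_{L^{p_1}(\Delta)} \asymp \|\sum_i \epsilon_i a_i\|_{L^2(\Delta)} = \|\sum_i \epsilon_i |a_i|\|_{L^2(\Delta)}$ rather than naming $S(\sigma)$ explicitly.
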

 
\begin{proof}
  Indeed, by the Fubini Theorem, we have 
 \[
 \big \| \sum_i \epsilon_i g_i\big\|_{L^{p_2}(\Delta\times \Sigma; \, \ell^{p_1})} = \big \| \sum_i \epsilon_i g_i\big\|_{L^{p_2}( \Sigma; \, L^{p_2} (\Delta; \, \ell^{p_1}))}. 
 \]
Therefore,   \eqref{KK-abs} follows from the following: for any finite sequence  $(x_i)_{i}$ in $\ell^{p_1}$, 
 \[
 \big \| \sum_i \epsilon_i x_i\big\|_{L^{p_2} (\Delta; \, \ell^{p_1})} \asymp_{p_1, p_2}  \big \| \sum_i \epsilon_i |x_i|\big\|_{L^{p_2}(\Delta; \, \ell^{p_1})}.
 \]
 By Kahane-Khintchine inequalities \eqref{KK-ineq}, it suffices to show 
 \begin{align}\label{suf-to-prove}
 \big \| \sum_i \epsilon_i x_i\big\|_{L^{p_1}(\Delta; \, \ell^{p_1})} \asymp_{p_1} \big \| \sum_i \epsilon_i |x_i|\big\|_{L^{p_1}(\Delta; \, \ell^{p_1})}. 
 \end{align}
 Recall the classical  Khintchine inequalities for scalars: for any finite sequence $(a_i)_{i}$ in $\C$, 
 \[
 \big \| \sum_i \epsilon_i a_i\big\|_{L^{p_1}(\Delta)}  \asymp_{p_1} \big \| \sum_i \epsilon_i a_i\big\|_{L^2(\Delta)}  = \| \sum_i \epsilon_i |a_i|\big\|_{L^2(\Delta)}  \asymp_{p_1} \| \sum_i \epsilon_i |a_i|\big\|_{L^{p_1}(\Delta)}. 
 \]
 The desired equivalence \eqref{suf-to-prove} follows immediately. 
\end{proof}

\begin{proof}[Proof of the lower estimate of Theorem \ref{thm-precise}]
Recall the equalities \eqref{def-MNV}, \eqref{Du-form} and \eqref{MN-diff}.  By Burkholder inequality for the UMD Banach space $\ell^q$, for any $N\ge 1$, we have 
\begin{align*}
\|M_N^\mathcal{V}\|_{L^p(\ell^q)}   & \asymp_{p,q}   \Big \| \sum_{|u|\le N} \epsilon_u  D_u\big[M_N^\mathcal{V}\big] \Big\|_{L^p(\Delta\times \Omega; \, \ell^q)}
\\
&  \asymp_{p,q}  \Big \| \sum_{|u|\le N} \epsilon_u  \big| D_u\big[M_N^\mathcal{V}\big]\big| \Big\|_{L^p(\Delta \times \Omega; \, \ell^q)}  \quad \text{(by \eqref{KK-abs})}
\\
  & =  \quad \Big \| \sum_{|u|\le N} \epsilon_u  \big| D_u\big[M_N^\mathcal{V}\big]\big| \Big\|_{L^p(\Delta; \, L^p( \Omega; \, \ell^q))}
\\
&   \gtrsim_{p,q}   \Big \| \sum_{|u|\le N} \epsilon_u   \E_{u^{-}} \big[ \big| D_u\big[M_N^\mathcal{V}\big]\big|\big] \Big\|_{L^p(\Delta; \, L^p( \Omega; \, \ell^q))}  ( \text{Bourgain-Stein \eqref{BS-ineq}})
\\
& = \quad  \Big \| \sum_{|u|\le N} \epsilon_u   \E_{u^{-}} \big[ \big| D_u\big[M_N^\mathcal{V}\big]\big|\big] \Big\|_{L^p(\Omega; \, L^p( \Delta; \, \ell^q))}. 
\end{align*}
Now by \eqref{Du-form}, for any $1\le |u|\le N$,  we have
\[
\E_{u^{-}} \big[ \big| D_u\big[M_N^\mathcal{V}\big]\big|\big] =  \E[|\mathring{W}|] \cdot \Big(  \prod_{j=0}^{|u|-1} W(u|_j) \Big)  \cdot |\mathcal{V}(I_u) |. 
\]
Hence, by Kahane-Khintchine inequalities, 
\begin{align*}
\|M_N^\mathcal{V}\|_{L^p(\ell^q)}  &  \gtrsim_{p,q}  \Big\| \epsilon_\emptyset \mathcal{V}([0,1]) +  \E[|\mathring{W}|] \sum_{1\le |u|\le N}  \epsilon_u   \cdot \Big(  \prod_{j=0}^{|u|-1} W(u|_j) \Big)  \cdot |\mathcal{V}(I_u) |  \Big\|_{L^p(\Omega; \, L^p( \Delta; \, \ell^q))} 
\\
& \asymp_{p,q}   \Big\| \epsilon_\emptyset \mathcal{V}([0,1]) +  \E[|\mathring{W}|] \sum_{1\le |u|\le N}  \epsilon_u   \cdot \Big(  \prod_{j=0}^{|u|-1} W(u|_j) \Big)  \cdot |\mathcal{V}(I_u) |  \Big\|_{L^p(\Omega; \, L^q( \Delta; \, \ell^q))},
\end{align*}
where in the last equivalence, we have replaced the norm of $L^p(\Delta; \ell^q)$ by that of  $L^q(\Delta; \ell^q)$.

Note that  for any non-negative sequences $f, g \in \ell^{q/2}$ (with $q\ge 2$), 
 \begin{align}\label{pos-f-g}
 \| f+ g\|_{\ell^{q/2}} \ge 2^{\frac{2}{q}-1} (\| f\|_{\ell^{q/2}} + \| g\|_{\ell^{q/2}}).
 \end{align}
 This combined with  \eqref{LpLq-abs}  yields the following pointwise (with respect to $\Omega$) estimate  : 
\begin{align*}
&  \Big\| \epsilon_\emptyset \mathcal{V}([0,1]) +  \E[|\mathring{W}|] \sum_{1\le |u|\le N}  \epsilon_u   \cdot \Big(  \prod_{j=0}^{|u|-1} W(u|_j) \Big)  \cdot |\mathcal{V}(I_u) |  \Big\|_{L^q( \Delta; \, \ell^q)}
 \\
&\asymp_{p,q}    \Big\|  \Big ( |\mathcal{V}([0,1])|^2 +  (\E[|\mathring{W}|])^2 \sum_{1\le |u|\le N}   \Big(  \prod_{j=0}^{|u|-1} W(u|_j)^2 \Big)  \cdot |\mathcal{V}(I_u) |^2 \Big)^{1/2}\Big\|_{\ell^q}
\\
&=  \quad  \Big\|   |\mathcal{V}([0,1])|^2 +  (\E[|\mathring{W}|])^2 \sum_{1\le |u|\le N}   \Big(  \prod_{j=0}^{|u|-1} W(u|_j)^2 \Big)  \cdot |\mathcal{V}(I_u) |^2  \Big\|_{\ell^{q/2}}^{1/2}
\\
& \gtrsim_{q}  \| \mathcal{V}([0,1])\|_{\ell^q} +   (\E[|\mathring{W}|]) \cdot  \Big\|    \sum_{1\le |u|\le N}   \Big(  \prod_{j=0}^{|u|-1} W(u|_j)^2 \Big)  \cdot |\mathcal{V}(I_u) |^2  \Big\|_{\ell^{q/2}}^{1/2} . 
 \end{align*}
Finally, by applying an inequality similar to \eqref{pos-f-g} valid for $L^p(\Omega)$ and letting $N\to\infty$, we obtain 
 \begin{align*}
  \sup_{n\ge 1}\|M_n^\mathcal{V}\|_{L^p(\ell^q)}^p  &  \gtrsim_{p,q}    \|\mathcal{V}([0,1])\|_{\ell^q}^p +    (\E[|\mathring{W}|])^p\cdot    \E\Big[ \Big\|   \sum_{ |u|\ge 1}   \Big(  \prod_{j=0}^{|u|-1} W(u|_j)^2 \Big)  \cdot |\mathcal{V}(I_u) |^2  \Big\|_{\ell^{q/2}}^{p/2}\Big]. 
 \end{align*}
 This ends the proof of the lower estimate. 
\end{proof}

\section{Sharp two-sided $L^p(\ell^q)$-bounds for MCCM}\label{sec-mccm}
We now turn back to the Mandelbrot's canonical models,  namely, the MC-action on the Lebesgue measure on $[0,1]$. Recall the resulting sequence of random measures $(\mu_n)_{n\ge 1}$ given by \eqref{def-Lebn} and the limiting cascade measure $\mu_\infty$.   In this section, we shall always assume  that the Mandelbrot-Kahane's non-degeneracy condition 
\begin{align}\label{K-cond}
\E[W\log W]<\log b
\end{align}
 is satisfied. Then the random measures $\mu_\infty$ is non-degenerate and the martingale $(\mu_n([0,1]))_{n\ge 1}$ is uniformly integrable and hence 
\begin{align}\label{mu-mart}
\widehat{\mu}_n(s)= \E_n[\widehat{\mu}_\infty(s)] = \E[\widehat{\mu}_\infty(s) |\mathscr{F}_n] \,\, a.s. \,\, \text{for all $  s\in \N$}. 
\end{align}

If $\E[W^2]<\infty$,  then the new random weight $W^{(2)} = W^2/\E[W^2]$ can be used to construct new MC-action. We call this new MC-action the squared MC-action and denote the corresponding operators by $Q^{(2)}_n$ and $Q^{(2)}$.   The squared-MC action on Lebesgue measure $\lambda(dt)= dt$ naturally gives rise to the following martingale (which we call the squared martingale): 
\begin{align}\label{eqn-def-mnab}
\mathscr{M}_0(W^{(2)})=1 \an 
\MAB:=(Q^{(2)} \lambda) ([0,1]) = \frac{1}{b^n}\sum_{|u|=n}\prod_{j=1}^n \frac{W(u|_j)^2}{\E[W^2]} \quad \text{for $n\geq 1$}.
\end{align}

Given $0\le \alpha<1$ and $1< p\le 2 \le q <\infty$ with $q> \frac{1}{1 - \alpha}$.  Let $\mathcal{R}= (\mathcal{R}_n(W, b, \alpha, p, q))_{n\ge 1}$ be the sequence  given by 
\begin{align}\label{def-Rn}
\mathcal{R}_n  (W, b, \alpha, p, q)  : =  \E \big[  \MAA^{\frac{p}{2}}\big] \cdot \Big(  \frac{\E[W^2]}{b^{1- 2\alpha- \frac{2}{q}}}\Big)^{\frac{np}{2}}. 
\end{align}
Recall  the $(\alpha,p,q)$-norm  $\mathcal{N}^{(\alpha, p, q)}(\mu_\infty)$  defined in  \eqref{eq-npq}:
\begin{align*}
\mathcal{N}^{(\alpha, p, q)}(\mu_\infty)=   \Big(\E\Big[  \Big\{ \sum_{n \ge 1}    \big| n^{\alpha} \cdot \widehat{\mu}_\infty(n)\big|^q  \Big\}^{\frac{p}{q}}\Big] \Big)^{\frac{1}{p}}   = \Big\|  \big( n^{\alpha} \cdot \widehat{\mu}_\infty(n) \big)_{n \ge 1}  \Big\|_{L^p(\PP; \, \ell^q)}\in[0,\infty].
\end{align*}
\begin{theorem}\label{thm-CCM}
Let $0\le \alpha<1$.  Assume that $1< p\le 2 \le q <\infty$ and $q> \frac{1}{1 - \alpha}$.  Assume moreover that $\E[W^q]<\infty$. Then there exist two constants $c_1, c_2>0$ depending on $b, \alpha, p, q$ such that 
\begin{align*}
c_1  \cdot   \frac{\|\mathring{W}\|_1}{\|W\|_2}  \cdot  \big\|\mathcal{R}\big\|_{\ell^{\frac{p}{q}}}^{\frac{1}{p}}
 \le   \NQ(\mu_\infty) 
\le c_2  \cdot  \|\mathring{W}\|_q     \cdot \big\|\mathcal{R}\big\|_{\ell^1}^{\frac{1}{p}}.
\end{align*}

\end{theorem}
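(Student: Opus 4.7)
The plan is to apply Theorem \ref{thm-precise} to the tailor-made $\ell^{q}$-valued finitely additive vector measure $\mathcal{V}:\mathfrak{B}^{b}\to\ell^{q}$ given by
\[
\mathcal{V}(A) := \bigl(s^{\alpha}\,\widehat{\lambda|_{A}}(s)\bigr)_{s\ge 1},
\]
where $\lambda|_{A}$ denotes Lebesgue measure restricted to $A$. The assumption $q > 1/(1-\alpha)$ is precisely what ensures $\mathcal{V}(A)\in\ell^{q}$ uniformly in $A$, since $|\widehat{\lambda|_A}(s)|\le 1/(\pi s)$ gives $\sum_s s^{q(\alpha-1)}<\infty$. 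By linearity of the Fourier transform, $Q_n\mathcal{V}([0,1])$ is the sequence $(s^{\alpha}\widehat{\mu_n}(s))_{s\ge 1}$, and $\mathcal{V}([0,1])=0$ because $\widehat{\mathds{1}_{[0,1]}}(s)=0$ for every nonzero integer $s$. Combined with the martingale convergence \eqref{mu-mart}, this identifies $\mathcal{N}^{(\alpha,p,q)}(\mu_\infty)$ with $\sup_{n\ge 1}\|Q_n\mathcal{V}([0,1])\|_{L^p(\ell^q)}$, so the two-sided estimate of Theorem \ref{thm-precise} becomes the engine of the proof.

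The characteristic quantity $\chi(\mathcal{V},W,p,q)$ from \eqref{chi-VWpq} then equals $\E[\|G\|_{\ell^{q/2}}^{p/2}]$ where, using $|\widehat{\lambda|_{I_u}}(s)|^2 = \sin^2(\pi s/b^n)/(\pi^2 s^2)$ for $|u|=n$ together with the identity $\sum_{|u|=n}\prod_{j=0}^{n-1}W(u|_j)^2 = b^n(\E[W^2])^{n-1}\mathscr{M}_{n-1}(W^{(2)})$,
\[
G(s) = \frac{s^{2\alpha-2}}{\pi^{2}}\sum_{n\ge 1} b^{n}(\E[W^2])^{n-1}\,\mathscr{M}_{n-1}(W^{(2)})\,\sin^{2}(\pi s/b^n).
\]
The key analytic input is the two-sided deterministic asymptotic
\[
\sum_{s\ge 1}s^{q\alpha-q}\sin^q(\pi s/b^n)\asymp_{b,\alpha,q} b^{n(1+q\alpha-q)},
\]
whose upper bound follows from $|\sin x|\le|x|$ for small $x$ together with the tail convergence of $\sum s^{q(\alpha-1)}$ supplied by $q(1-\alpha)>1$, and whose matching lower bound comes from restricting to disjoint sets $S_n\subset[b^{n-1},b^n)$ of cardinality $\asymp b^n$ on which $|\sin(\pi s/b^n)|$ is bounded below by a positive constant depending only on $b$. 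Bookkeeping of exponents then identifies $\E[\|f_n\|_{\ell^{q/2}}^{p/2}]$, where $f_n$ is the $n$-th summand of $G$, with $\|W\|_2^{-p}\mathcal{R}_n$ up to a multiplicative constant depending only on $b,\alpha,p,q$.

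For the upper bound, the triangle inequality in $\ell^{q/2}$ (valid since $q\ge 2$) together with $(\sum a_n)^{p/2}\le\sum a_n^{p/2}$ (valid since $p<2$) yield $\chi\le\sum_n\E[\|f_n\|_{\ell^{q/2}}^{p/2}]\lesssim\|W\|_2^{-p}\|\mathcal{R}\|_{\ell^1}$, which combined with $\|W\|_2\ge 1$ and the upper side of Theorem \ref{thm-precise} gives the right-hand inequality of Theorem \ref{thm-CCM}. For the lower bound, the disjointness of the $S_n$'s yields $\|G\|_{\ell^{q/2}}^{q/2}\ge\sum_n\|f_n\mathds{1}_{S_n}\|_{\ell^{q/2}}^{q/2}$; raising to the $p/q$-power, taking expectation, and applying the reverse Minkowski inequality \eqref{Min-pq} reduces matters to a sum of $(p/2)$-moments of $\mathscr{M}_{n-1}(W^{(2)})$ that, after reassembling into $\mathcal{R}_n$, produces the desired lower bound together with the factor $\|\mathring{W}\|_1/\|W\|_2$ coming from the lower side of Theorem \ref{thm-precise}. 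The main obstacle I expect is bookkeeping: ensuring that after the reverse-Minkowski and identification steps, the exact exponents of $b$, $\E[W^2]$ and of $\mathscr{M}_{n-1}(W^{(2)})$ reassemble precisely into $\mathcal{R}_n$, and that the $\ell^{p/q}$-quasi-norm is recovered with the stated dependence on $\|W\|_2$.
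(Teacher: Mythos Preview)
Your proposal is correct and follows the paper's overall architecture: reduce to Theorem~\ref{thm-precise} via the vector measure $\mathcal{V}(A)=(s^\alpha\widehat{\lambda|_A}(s))_{s\ge1}$, use $\mathcal{V}([0,1])=0$, and then estimate the characteristic quantity $\chi(\mathcal{V},W,p,q)$. Your lower bound is essentially the paper's Lemma~\ref{lem-low-chi2}: restricting $s$ to disjoint $b$-adic blocks $S_n\subset[b^{n-1},b^n)$ is exactly the mechanism the paper uses to justify the reverse Minkowski step. (Your bound, like the paper's Lemma~\ref{lem-low-chi2}, produces $\|\mathcal{R}\|_{\ell^{q/p}}$; the $\ell^{p/q}$ in the theorem statement appears to be a typo.)

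Your upper bound, however, is a genuine simplification of the paper's. The paper first splits $\Theta_\alpha(n,s)$ into two regimes ($\chi_1$ and $\chi_2$), then runs a four-step Littlewood--Paley argument on $s$ (\S\ref{sec-outline}) before decomposing over the tree levels $n$. You instead decompose $G=\sum_n f_n$ over $n$ \emph{first}, apply the triangle inequality in $\ell^{q/2}$ (valid since $q\ge2$) followed by the subadditivity $(\sum a_n)^{p/2}\le\sum a_n^{p/2}$ (valid since $p\le2$), and then invoke the scalar asymptotic $\sum_{s\ge1}s^{q(\alpha-1)}\sin^q(\pi s/b^n)\asymp b^{n(1+q\alpha-q)}$ to compute each $\|f_n\|_{\ell^{q/2}}$ directly. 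This bypasses the Littlewood--Paley machinery entirely and lands on the same bound $\chi\lesssim\|W\|_2^{-p}\|\mathcal{R}\|_{\ell^1}$ in one stroke. The paper's longer route may be motivated by the general template it develops for later sections, but for Theorem~\ref{thm-CCM} itself your argument is more direct.
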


\subsection{Initial step of the proof of Theorem \ref{thm-CCM}}

Under the Mandelbrot-Kahane's non-degeneracy condition \eqref{K-cond}, by  \eqref{mu-mart} and the standard fact about uniformly integrable vector-valued martingales in a finite-dimensional Banach space, for any finite  $N\ge 1$ and any $0\le \alpha<1$, 
\[
\E\Big[\Big\{\sum_{s =  1}^N \big|s^{\alpha} \widehat{\mu}_\infty (s)\big|^q\Big\}^{p/q}\Big]  = \sup_{n\ge 1}  \E\Big[\Big\{\sum_{s =  1}^N \big|s^{\alpha} \widehat{\mu}_n(s)\big|^q\Big\}^{p/q}\Big]. 
\]
By monotone convergence theorem, one can  easily see that
\begin{align*}
\big[\NQ(\mu_\infty)\big]^p =  \E\Big[\Big\{\sum_{s =  1}^\infty \big|s^{\alpha} \widehat{\mu}_\infty(s)\big|^q\Big\}^{\frac{p}{q}}\Big]  & = \sup_{n\ge 1}  \E\Big[\Big\{\sum_{s =  1}^\infty \big|s^{\alpha} \widehat{\mu}_n(s)\big|^q\Big\}^{\frac{p}{q}}\Big] 
 =  \sup_{n\ge 1} \E \Big[ \big  \| Q_n\mathcal{V}([0,1])\big\|_{\ell^q}^p \Big], 
\end{align*}
where  $(Q_n\mathcal{V}([0,1]))_{n\ge 1}$ is the $\ell^q$-valued martingale  defined in \eqref{def-mart} with respect to  the vector measure  $\mathcal{V} = \mathcal{V}_{\alpha, q}: \mathfrak{B}^b \rightarrow \ell^q$ defined as (with $q> \frac{1}{1- \alpha}$)
\begin{align}\label{def-nuq}
\mathcal{V}(A): = \mathcal{V}_{q,\alpha}(A) =   \Big( s^\alpha \int_{A}e^{-i 2\pi st} dt \Big)_{s\ge 1} \in \ell^q \quad \text{for all $A\in \mathfrak{B}^b([0,1])$}.
\end{align}
Therefore, by Theorem \ref{thm-precise}, we only need to estimate the quantity $\chi(\mathcal{V}, W, p, q)$ defined in \eqref{chi-VWpq}: 
\[
\chi(\mathcal{V}, W, p, q) = \E \Big[   \Big\{     \sum_{s \ge 1}       \Big(   \sum_{|u|\ge 1}    \Big(  \prod_{j=0}^{|u|-1} W(u|_j)^2 \Big)   \cdot  \big| \mathcal{V}(I_u)(s)  \big|^2    \Big)^{q/2}  \Big\}^{p/q}\Big]. 
\]
In particular, since $\mathcal{V}([0,1])=0$ is the zero vector,  we have 
\begin{align}\label{2-side-zero}
 \|\mathring{W}\|_1^p \cdot  \chi(\mathcal{V}, W, p, q) \lesssim_{p,q}       \big[ \NQ(\mu_\infty)\big]^p    \lesssim_{p,q}   \|\mathring{W}\|_q^p \cdot \chi(\mathcal{V}, W, p, q). 
\end{align}
 
In the current situation, by direct computation,  for any $u\in \Ab^*$ with $|u| = n$, we have 
\begin{align}\label{V-exp}
\big| \mathcal{V}(I_u)(s)  \big| = \frac{s^{-(1-\alpha)}|  \exp(i  2\pi s b^{-n}) -1|}{2\pi} \quad \text{for all $s\ge 1$},
\end{align}
which depends only on $s$ and  the level $|u|$ of $u\in \Ab^*$.  
Then, by setting
\[
\Theta_\alpha(n, s):  =\frac{s^{-2(1-\alpha)} |  \exp(i  2\pi s b^{-n}) -1|^2}{4\pi^2} \quad \text{for all $n, s \ge 1$}, 
\]
we obtain 
\begin{align*}
\chi(\mathcal{V}, W, p, q)  &  = \E \Big[   \Big\{    \sum_{s \ge 1}       \Big(   \sum_{|u|\ge 1}    \Big(  \prod_{j=0}^{|u|-1} W(u|_j)^2 \Big)    \Theta_\alpha(|u|, s)    \Big)^{q/2}  \Big\}^{p/q}\Big]. 
\end{align*}

\subsection{Decomposition of $\chi(\mathcal{V}, W, p,q)$}
We are going to use the following elementary properties of the quantity $\Theta_\alpha(n,s)$: 
\begin{itemize}
\item for all $n, s \in \N$, 
\begin{align}\label{trivial-bdd}
  \Theta_\alpha(n, s)\le  s^{-2(1-\alpha)};
\end{align}
\item  there exist two constants $c_1, c_2>0$ such that 
\begin{align}\label{fine-bdd}
c_1  s^{2\alpha} b^{-2n}  \le \Theta_\alpha(n, s) \le  c_2   s^{2\alpha} b^{-2n} \quad \text{for all $1\le s\le b^n$}.
\end{align}
\end{itemize}
According to \eqref{trivial-bdd} and \eqref{fine-bdd},  we can bound $\chi(\mathcal{V}, W, p, q)$ from both sides by 
\begin{align}\label{chi-2-12}
\chi_2\lesssim_{p,q}  \chi(\mathcal{V}, W, p, q)   \lesssim_{p,q} \chi_1 + \chi_2,
\end{align}
where $\chi_1$ and $\chi_2$ are given by 
\begin{align}\label{def-chi-I}
\chi_1 : =  \E \Big[   \Big\{     \sum_{s \ge 1}       \Big(   \sum_{|u|\ge 1}    \Big(  \prod_{j=0}^{|u|-1} W(u|_j)^2 \Big)   \cdot    s^{-2 (1-\alpha)} \cdot  \indi (s \ge  b^{|u|}) \Big)^{q/2}  \Big\}^{p/q}\Big]
\end{align}
and 
\begin{align}\label{def-chi-II}
\chi_2: = \E \Big[   \Big\{     \sum_{s \ge 1}       \Big(   \sum_{|u|\ge 1}    \Big(  \prod_{j=0}^{|u|-1} W(u|_j)^2 \Big)   \cdot    s^{2 \alpha} \cdot   b^{-2|u|}  \cdot \indi(1\le s\le b^{|u|})  \Big)^{q/2}  \Big\}^{p/q}\Big].
\end{align}

Our next goal is to obtain a sharp upper estimate of both quantities  $\chi_1, \chi_2$ and a sharp lower estimate of the quantity $\chi_2$.

\subsection{Outline of the proof of upper estimate of $\chi_1, \chi_2$}\label{sec-outline}
Here we briefly outline the strategy for obtaining a sharp upper estimate of $\chi_1, \chi_2$.

{\flushleft \it Step 1.  Littlewood-Paley decomposition.}
 We shall first use the Littlewood-Paley decomposition for the summation $\sum_{s\ge 1}$ in both expressions \eqref{def-chi-I} and \eqref{def-chi-II} for $\chi_1$ and $\chi_2$: 
\begin{align}\label{sum-sum}
\sum_{s\ge 1}  f(s) = \sum_{l=1}^\infty \sum_{b^{l-1}\le s < b^l}  f(s),
\end{align}
where $f(s)$ has the form 
\[
f(s) =\Big( \sum_{|u|\ge 1}  x(s, u) \Big)^{q/2}. 
\]
{\flushleft \it Step 2. Pointwise control of  the summands in $b$-adic intervals. }  
We  then control  pointwisely all the summands in $\sum_{b^{l-1}\le s< b^l}$ by a single term $T_l$ (independent of $s$): 
\[
  \max_{b^{l-1}\le s<b^l} f(s)  =     \max_{b^{l-1}\le s<b^l}    \Big( \sum_{|u|\ge 1}  x(s, u) \Big)^{q/2}  \le   T_l: =  \Big( \sum_{|u|\ge 1}   x^*(l, u) \Big)^{q/2},
\] 
where $x^*(l, u)$ is taken to be a suitable random variable such that 
\[
x^*(l, u) \ge  \max_{b^{l-1}\le s<b^l}    x(s, u). 
\]
Hence we obtain an upper estimate of the form
\[
\sum_{s=1}^\infty f(s) \lesssim \sum_{l=1}^\infty b^l T_l   =  \sum_{l=1}^\infty   b^l \Big(\sum_{|u|\ge 1} x^*(u, l) \Big)^{q/2}. 
\]
Then  by applying  the subadditive inequality \eqref{2-ele-ineq} (since  $p/q\le 1$), we obtain 
\begin{align}\label{two-times}
\Big\{\sum_{s=1}^\infty f(s)\Big\}^{p/q}  \lesssim \Big\{\sum_{l=1} b^l T_l\Big\}^{p/q} \le  \sum_{l=1}^\infty b^{lp/q}  \cdot T_l^{p/q} =  \sum_{l=1}^\infty b^{lp/q}  \cdot \Big(\sum_{|u|\ge 1} x^*(u, l) \Big)^{p/2}. 
\end{align}

{\flushleft \it Step 3. Smoothing over spherical summation blocks.}  Since $p/2\le 1$, we may again apply the subadditive inequality \eqref{2-ele-ineq} to obtain an upper estimate of $T_l^{p/q}$: 
\[
\sum_{|u|\ge 1} = \sum_{n=1}^\infty\sum_{|u|=n}. 
\]
More precisely, we have 
\[
T_l^{p/q} = \Big(\sum_{|u|\ge 1} x^*(u, l) \Big)^{p/2}\le   \sum_{|u|\ge 1} x^*(u, l)^{p/2}. 
\]
However, it turns out that the above upper estimate of $T_l^{p/q}$  is not  sufficient for our purpose. Instead,   we need to first decompose the summation in $T_l^{p/q}$ into summations over smaller blocks according to the  spheres of the tree $\Ab^*$ and then apply the subadditive inequality \eqref{2-ele-ineq}: 
\[
T_l^{p/q} =  \Big(\sum_{|u|\ge 1} x^*(u, l) \Big)^{p/2} =   \Big(\sum_{n=1}^\infty \sum_{|u| = n } x^*(u, l) \Big)^{p/2}  \le \sum_{n=1}^\infty  \Big ( \sum_{|u| = n } x^*(u, l) \Big)^{p/2}. 
\] 
And we obtain the upper estimate: 
\begin{align*}
\Big\{\sum_{s=1}^\infty f(s)\Big\}^{p/q} \lesssim  \sum_{l=1}^\infty b^{lp/q}  \cdot \sum_{n=1}^\infty  \Big ( \sum_{|u| = n } x^*(u, l) \Big)^{p/2}. 
\end{align*}

{\flushleft \it Step 4. Separation-of-variables reduction of $x^*(u, l)$. } 
The random variables $x^*(u, l)$ admit the following  separation-of-variables upper estimate:  
\[
x^*(u, l) \lesssim     A(u)B_n(l)  \quad \text{for $|u|=n$}
\]
with $B_n(l)\ge 0$  deterministic and $A(u)\ge 0$ random.  
Therefore, 
\begin{align*}
\Big\{\sum_{s=1}^\infty f(s)\Big\}^{p/q}  & \lesssim  \sum_{l=1}^\infty b^{lp/q}  \cdot \sum_{n=1}^\infty  \Big ( \sum_{|u| = n } A(u) B_n(l) \Big)^{p/2}
\\
& =   \sum_{n=1}^\infty  \Big ( \sum_{|u| = n } A(u) \Big)^{p/2}  \Big(\sum_{l=1}^\infty b^{lp/q}  B_n(l)^{p/2}\Big). 
\end{align*}
Since $B_n(l)$ are deterministic, we  finally obtain the following  upper estimate which is already convenient for further analysis: 
\begin{align*}
\E\Big[ \Big\{\sum_{s=1}^\infty f(s)\Big\}^{p/q} \Big]  \lesssim     \sum_{n=1}^\infty   \E\Big[  \Big ( \sum_{|u| = n } A(u) \Big)^{p/2}  \Big] \Big(\sum_{l=1}^\infty b^{lp/q}  B_n(l)^{p/2}\Big). 
\end{align*}

{\flushleft \it Step 5. Small moments of the squared martingales. } The quantity  $\E[   ( \sum_{|u| = n } A(u) )^{p/2} ]$  in Step 4 turns out to be related to the $p/2$ moment of the following martingale  associated to the  random weight $W^2/\E[W^2]$ (which we refer as the squared martingale): 
\[
\Big( \frac{1}{b^n}\sum_{|u|=n}\prod_{j=1}^n \frac{W(u|_j)^2}{\E[W^2]}  \Big)_{n\ge 1}.
\]
The study of the small moments of the above martingale should be divided into three cases: squared sub-critical regime, squared critical regime and squared super-critical regime. Therefore,  the well-studied small moments of the additive martingales with  critical and super-critical parameters in BRW  will play a key role here. 

\begin{comment}
\begin{remark*}
If we do not use the Littlewood-Paley decomposition \eqref{sum-sum}, then roughly speaking, we obtain something of the order 
\[
\sum_{l=1}^\infty b^l \sum_{|u|\ge 1} x(u, l)^{p/2},
\]
which clearly is much larger than the upper estimate obtained in \eqref{two-times} and  the exponent $q$ can not play any role. 
\end{remark*}
\end{comment}

\subsection{Sharp upper estimate for $\chi_1$ and $\chi_2$}

Recall the definition \eqref{def-Rn} of the sequence  
\[
\mathcal{R} = (\mathcal{R}_n  (W, b, \alpha, p, q))_{n\ge 1}.
\]
\begin{lemma}\label{lem-chi-I}
Let $0<\alpha<1$.  Assume that $1< p\le 2 \le q <\infty$ and $q> \frac{1}{1 - \alpha}$.  Then there exists a constant $c(b, \alpha, p, q)>0$ such that 
\[
\max(\chi_1, \chi_2)   \le   c(b, \alpha, p, q)  \cdot \| \mathcal{R}\|_{\ell^1}.
\]
\end{lemma}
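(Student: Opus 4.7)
I will follow the five-step outline in \S \ref{sec-outline}. The starting point is to observe that for $|u| = n \ge 1$, using $W(\emptyset)=1$, we have $\prod_{j=0}^{n-1} W(u|_j)^2 = \prod_{j=1}^{n-1} W(u|_j)^2$, which depends only on the prefix $u|_{n-1}$. Therefore, summing over the $b$ children of each word of length $n-1$,
\[
X_n := \sum_{|u|=n} \prod_{j=0}^{|u|-1} W(u|_j)^2 \;=\; b^{n} (\E[W^2])^{n-1} \, \mathscr{M}_{n-1}(W^{(2)}).
\]
Consequently $\E[X_n^{p/2}] = b^{np/2} (\E[W^2])^{(n-1)p/2} \, \E[\mathscr{M}_{n-1}(W^{(2)})^{p/2}]$, which up to the harmless constant $(\E[W^2])^{-p/2} \le 1$ is precisely $\mathcal{R}_n \cdot b^{np(1-\alpha) - np/q}$. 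This identification is the key dictionary that converts all of my subsequent estimates into the sequence $\mathcal{R}$.

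For $\chi_1$, perform a Littlewood--Paley decomposition $\sum_{s\ge 1} = \sum_{l\ge 1}\sum_{b^{l-1}\le s <b^l}$. On each block I use the crude bounds $s^{-2(1-\alpha)} \le b^{-2(l-1)(1-\alpha)}$ and $\indi(s\ge b^{|u|}) \le \indi(|u|\le l)$, so every term in the block is dominated by the $s$-independent quantity $b^{-2(l-1)(1-\alpha)} \sum_{1\le |u|\le l} \prod_{j=0}^{|u|-1} W(u|_j)^2$. The dyadic block contributes a factor $b^l$, and two applications of the subadditive inequality \eqref{2-ele-ineq} (first pulling the exponent $p/q\le 1$ through the sum in $l$, then pulling $p/2\le 1$ through the sum over levels $n = 1,\dots,l$) give
\[
\chi_1 \;\lesssim\; \sum_{l\ge 1} b^{lp/q-(l-1)p(1-\alpha)} \sum_{n=1}^{l} \E\bigl[X_n^{p/2}\bigr].
\]
Swap the order of summation and use the assumption $q>1/(1-\alpha)$, which makes the exponent $p/q-p(1-\alpha)$ strictly negative, so that $\sum_{l\ge n} b^{lp/q - (l-1)p(1-\alpha)} \asymp_{b,p,q,\alpha} b^{np/q - (n-1)p(1-\alpha)}$. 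Substituting the formula for $\E[X_n^{p/2}]$ and collecting the $b$-exponents yields exactly $b^{-np(1-2\alpha-2/q)/2}$, matching $\mathcal{R}_n$ up to the factor $(\E[W^2])^{-p/2} \le 1$.

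For $\chi_2$, the same Littlewood--Paley scheme applies with the roles of the indicator reversed: I bound $s^{2\alpha} \le b^{2l\alpha}$ and $\indi(1\le s \le b^{|u|}) \le \indi(|u|\ge l-1)$. After the block summation and the two subadditivity steps I obtain
\[
\chi_2 \;\lesssim\; \sum_{l\ge 1} b^{l(p/q + p\alpha)} \sum_{n\ge l-1} b^{-np} \, \E[X_n^{p/2}].
\]
Swapping the order of summation gives an inner geometric sum $\sum_{l=1}^{n+1} b^{l(p/q+p\alpha)}$ which, since $p/q + p\alpha > 0$, is comparable to $b^{n(p/q+p\alpha)}$. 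Combining with $\E[X_n^{p/2}]$ again produces $\mathcal{R}_n$ up to a bounded constant.

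The only real obstacle is bookkeeping: making sure the sign $p(1/q - 1 + \alpha) < 0$ (for $\chi_1$) and $p(1/q + \alpha) > 0$ (for $\chi_2$) both provide convergent geometric tails, and verifying that the exponents of $b$ and $\E[W^2]$ align precisely with $\mathcal{R}_n$. Both sign conditions follow from $0\le\alpha<1$ and $q>1/(1-\alpha)$. Absorbing all finite factors that depend only on $b,\alpha,p,q$ into the final constant $c(b,\alpha,p,q)$ completes the proof.
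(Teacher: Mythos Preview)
Your proof is correct and follows essentially the same approach as the paper's: the same Littlewood--Paley decomposition, the same two applications of the subadditive inequality \eqref{2-ele-ineq} (first in $l$, then over the spherical levels $n$), and the same swap-and-geometric-sum endgame. Your $X_n$ is the paper's $Y_n$ (equation \eqref{Yn-Mn}), and the exponent bookkeeping you describe matches the paper's computations line for line.
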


\begin{proof}

For simplifying the notation, for all $u\in \Ab^*$ with $|u|\ge 1$ and for all $n\ge 1$, we write 
\begin{align}\label{def-Xu}
X(u) : =  \prod_{j=0}^{|u|-1} W(u|_j)^2  \ge 0  \an Y_n: = \sum_{|u|=n} X(u)
\end{align}
and 
\[
F(s): = \sum_{|u|\ge 1}   X(u)   \cdot    s^{-2 (1 -\alpha)} \cdot  \indi (s  \ge  b^{|u|}).
\]
Recall the definition \eqref{eqn-def-mnab} for $\MAB$.  For all $n\ge 1$, we have 
\begin{align}\label{Yn-Mn}
Y_n = \sum_{|u|=n} \prod_{j=0}^{n-1} W(u|_j)^2 =    b  \sum_{|v|=n-1}  \prod_{j=0}^{n-1} W(v|_j)^2 =   b( b \E[W^2])^{n-1}    \MAA.
\end{align}
Then
\begin{align*}
\chi_1 & =   \E \Big[   \Big\{   \sum_{s \ge 1}       \Big(   \sum_{|u|\ge 1}   X(u)   \cdot    s^{-2 (1 -\alpha)} \cdot  \indi (s \ge b^{|u|}) \Big)^{q/2}  \Big\}^{p/q}\Big]
= \E \Big[   \Big\{     \sum_{l  = 1}^\infty    \sum_{b^{l-1} \le s < b^{l}}  F(s)^{q/2} \Big\}^{p/q}\Big].
\end{align*}
For all integers in the  $b$-adic interval $b^{l-1}\le s < b^l$, we have
\begin{align*}
F(s)\le  T_l: =  & \sum_{|u|\ge 1} X(u)
\cdot   b^{-2(l-1)(1 - \alpha)}  \cdot \indi(l \ge |u|)  =  \sum_{n=1}^\infty   \underbrace{ Y_n
\cdot   b^{-2(l-1)(1 - \alpha)}  \cdot \indi(l  \ge n)}_{\text{denoted $T_l(n)$}}. 
\end{align*}
Therefore, 
\begin{align*}
\chi_1 \le  \E \Big[   \Big\{     \sum_{l  = 1}^\infty    \sum_{b^{l-1} \le s < b^{l}}  T_l^{q/2} \Big\}^{p/q}\Big] \le  \E \Big[   \Big\{    \sum_{l  = 1}^\infty    b^l  \cdot T_l^{q/2} \Big\}^{p/q}\Big].
\end{align*}
Since $p/q<1$ and $p/2<1$, by applying twice the inequalities \eqref{2-ele-ineq}, we obtain 
\begin{align*}
\chi_1 & \le \E \Big[  \sum_{l\ge 1}    b^{lp/q}  \cdot T_l^{p/2} \Big]  =  \E \Big[  \sum_{l  \ge 1}    b^{lp/q}  \cdot  \Big(\sum_{n\ge 1} T_l(n)\Big)^{p/2} \Big] 
\\
& \le \E \Big[  \sum_{l  \ge 1}    b^{lp/q}  \cdot  \sum_{n\ge 1} T_l(n)^{p/2}  \Big] =  \sum_{l  \ge 1}   b^{lp/q}  \cdot  \sum_{n\ge 1}   \E[T_l(n)^{p/2}]. 
\end{align*}
Now 
\begin{align*}
 \E[T_l(n)^{p/2}]  &   = \E \Big[   \Big(Y_n
\cdot   b^{-2(l-1)(1 - \alpha)}  \cdot \indi(l  \ge n)\Big)^{p/2}\Big] = b^{-p (l-1)(1-\alpha)}  \cdot  \E\big [  Y_n^{p/2}\big] \cdot \indi( l  \ge n). 
\end{align*}
It follows that 
\begin{align*}
\chi_1& \le  \sum_{l  \ge 1}   b^{lp/q}  \cdot  b^{-p (l-1)(1-\alpha)}  \sum_{n\ge 1} \E \big[ Y_n^{p/2}\big] \cdot \indi( l \ge n)
\\
& \le  b^{p(1-\alpha)} \sum_{n\ge 1}    \E \big[  Y_n^{p/2}\big] \sum_{l\ge 1}  b^{-pl (1-\alpha- \frac{1}{q})}   \cdot \indi( l  \ge n)
\\
 &\le \frac{b^{p(1-\alpha)}}{1- b^{-p (1-\alpha- \frac{1}{q})} }\sum_{n\ge 1}    \E \big[   Y_n^{p/2}]  b^{-p n (1 - \alpha -  \frac{1}{q})}. 
\end{align*}

Using exactly the same argument as above, we have 
\[
\chi_2\le  \sum_{l  \ge 1}   b^{lp/q}  \cdot  \sum_{n\ge 1}   \E[\widehat{T}_l(n)^{p/2}],
\]
where
\[
\widehat{T}_l(n): =   Y_n \cdot b^{2(l\alpha-n)}  \cdot   \indi(l-1 \le  n).
\]
Hence
\begin{align*}
\chi_2\le  &  \sum_{n\ge 1}   \E\big[Y_n^{p/2}  \big]\cdot    \sum_{l  \ge 1}    b^{lp/q}  b^{p(l\alpha-n)}        \indi(l-1\le n)
 \\
\le & \sum_{n\ge 1}   \E\big[Y_n^{p/2}  \big]     \cdot   \frac{b^{p(\alpha + \frac{1}{q})}}{1 - b^{-p(\alpha + \frac{1}{q})}}  \cdot b^{-pn (1 - \alpha - \frac{1}{q})}.
\end{align*}

Finally, by substituting \eqref{Yn-Mn} and \eqref{def-Rn} into the above final inequalities for $\chi_1$ and $\chi_2$, we  obtain the desired upper estimate. 
\end{proof}

\subsection{Lower estimate for $\chi_2$}

\begin{lemma}\label{lem-low-chi2}
Let $0<\alpha<1$.  Assume that $1< p\le 2 \le q <\infty$ and $q> \frac{1}{1 - \alpha}$.  Then there exists a constant $c'(b, \alpha, p, q)>0$ such that 
\[
\chi_2 \ge \frac{c'(b, \alpha, p, q)}{(\E[W^2])^{p/2}} \cdot \| \mathcal{R}\|_{\ell^{q/p}}. 
\]
\end{lemma}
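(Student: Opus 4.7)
The strategy is a single–term restriction inside the inner sum, followed by the reverse Minkowski inequality \eqref{Min-pq}. Using the shorthand $X(u)=\prod_{j=0}^{|u|-1}W(u|_j)^2$ and $Y_n=\sum_{|u|=n}X(u)$ from \eqref{def-Xu}, we rewrite
\[
\chi_2=\E\Big[\Big\{\sum_{s\ge 1}\Big(\sum_{n\ge 1} Y_n\, s^{2\alpha}\,b^{-2n}\,\indi(1\le s\le b^n)\Big)^{q/2}\Big\}^{p/q}\Big].
\]
For $s$ in the $b$-adic block $b^{l-1}\le s<b^l$, the indicator $\indi(s\le b^n)$ holds for every $n\ge l$, so in particular the $n=l$ term alone gives the pointwise bound
\[
\sum_{n\ge 1}Y_n\,s^{2\alpha}\,b^{-2n}\,\indi(s\le b^n)\ \ge\ Y_l\,s^{2\alpha}\,b^{-2l}.
\]

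Next I split $\sum_{s\ge 1}$ into $b$-adic blocks. On the $l$-th block, the bound $s^{q\alpha}\ge b^{(l-1)q\alpha}$ together with cardinality $b^l-b^{l-1}$ yields $\sum_{b^{l-1}\le s<b^l}s^{q\alpha}\gtrsim_{b,\alpha,q} b^{l(q\alpha+1)}$. Combining this with the previous step and using $Y_l^{q/2}$ as a common factor, I arrive at
\[
\chi_2\ \gtrsim_{b,\alpha,q}\ \E\Big[\Big\{\sum_{l\ge 1}Y_l^{q/2}\,b^{\,l(q\alpha+1-q)}\Big\}^{p/q}\Big].
\]

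Since $p/q<1$, I apply the reverse Minkowski inequality \eqref{Min-pq} with $A_l:=Y_l^{q/2}\,b^{\,l(q\alpha+1-q)}\ge 0$ to pull the expectation inside:
\[
\E\Big[\Big\{\sum_{l\ge 1}A_l\Big\}^{p/q}\Big]\ \ge\ \Big\{\sum_{l\ge 1}\bigl(\E[A_l^{p/q}]\bigr)^{q/p}\Big\}^{p/q}.
\]
The key identification is now a direct computation: using $W(\emptyset)\equiv 1$,
\[
Y_l=\sum_{|u|=l}\prod_{j=1}^{l-1}W(u|_j)^2=b\sum_{|v|=l-1}\prod_{j=1}^{l-1}W(v|_j)^2=b^l(\E[W^2])^{l-1}\,\MAA\big|_{n=l},
\]
so $\E[Y_l^{p/2}]=b^{lp/2}(\E[W^2])^{(l-1)p/2}\E[\mathscr{M}_{l-1}(W^{(2)})^{p/2}]$. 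Substituting this into $\E[A_l^{p/q}]=\E[Y_l^{p/2}]\,b^{\,lp(\alpha+1/q-1)}$ and rearranging the exponents of $b$ to $lp(\alpha+1/q-1/2)$, the comparison with \eqref{def-Rn} shows
\[
\E[A_l^{p/q}]\ =\ \frac{\mathcal{R}_l(W,b,\alpha,p,q)}{(\E[W^2])^{p/2}}.
\]
Plugging into the Minkowski bound,
\[
\chi_2\ \gtrsim_{b,\alpha,p,q}\ \Big\{\sum_{l\ge 1}\bigl(\mathcal{R}_l/(\E[W^2])^{p/2}\bigr)^{q/p}\Big\}^{p/q}=\ (\E[W^2])^{-p/2}\,\|\mathcal{R}\|_{\ell^{q/p}},
\]
which is the desired inequality.

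\textbf{Where the difficulty lies.} The calculation itself is essentially routine once one sees that (i) keeping \emph{just} the single index $n=l$ in the inner sum over $n$ is already sharp in the $\ell^{q/p}$-scale (any attempt to keep more terms yields the same $\mathcal{R}_l$ up to a geometric factor, since $\E[Y_n^{p/2}]b^{-pn(1-\alpha-1/q)}$ is the summand in $\|\mathcal{R}\|_{\ell^1}$), and (ii) one must use the reverse-Minkowski step \eqref{Min-pq} rather than Jensen, which is what produces $\ell^{q/p}$ rather than $\ell^1$ in the lower bound and thereby matches the upper bound of Lemma \ref{lem-chi-I} up to an $\ell^{q/p}$-vs-$\ell^1$ gap. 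The main bookkeeping to watch is the discrepancy between $\prod_{j=0}^{n-1}$ and $\prod_{j=1}^{n}$ appearing in the definitions of $X(u)$ and of $\MAA$; this is what produces the factor $(\E[W^2])^{-p/2}$ (rather than $1$) in the final estimate.
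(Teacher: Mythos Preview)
Your proof is correct and follows essentially the same approach as the paper: $b$-adic block decomposition in $s$, reduction to $\sum_l Y_l^{q/2}\,b^{-ql(1-\alpha-1/q)}$, then the reverse Minkowski inequality \eqref{Min-pq} and the identification with $\mathcal{R}_l$ via \eqref{Yn-Mn}. The only (minor) difference is that the paper keeps all $n\ge l$ and applies the super-additivity $(\sum_n a_n)^{q/2}\ge\sum_n a_n^{q/2}$ before extracting the diagonal term $l=n$, whereas you pick $n=l$ directly from the start; this is a harmless streamlining.
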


\begin{proof}
Recall the notation $X(u)$ defined in  \eqref{def-Xu} and write 
\[
H(s,u): = X(u)      s^{2 \alpha}   b^{-2|u|}  \indi(1\le s\le b^{|u|}).
\]
Note that, on the $b$-adic interval $b^{l-1}\le s<b^l$, we have
\begin{align*}
H(s,u) &\ge   X(u) b^{2(l-1)\alpha } b^{-2|u|} \indi(1 \le s \le b^{|u|})   \ge  \underbrace{X(u) b^{2(l-1)\alpha } b^{-2|u|} \indi(l\le |u|)}_{\text{denoted $\widehat{H}(l,u)$}}.
\end{align*}
Hence  
\begin{align*}
\chi_2 &= \E \Big[   \Big\{     \sum_{s \ge 1}       \Big(   \sum_{|u|\ge 1}    H(s,u)\Big)^{q/2}  \Big\}^{p/q}\Big]
\\
&  =  \E \Big[   \Big\{    \sum_{l= 1}^\infty \sum_{b^{l-1}\le s < b^l}       \Big(  \sum_{n=1}^\infty   \sum_{|u| =  n}    H(s,u) \Big)^{q/2}  \Big\}^{p/q}\Big]
\\
& \ge \E \Big[   \Big\{    \sum_{l= 1}^\infty \sum_{b^{l-1}\le s < b^l}       \Big(  \sum_{n=1}^\infty   \sum_{|u|  = n}    \widehat{H}(l,u) \Big)^{q/2}  \Big\}^{p/q}\Big]
\\
&  =   \E \Big[   \Big\{     \sum_{l= 1}^\infty  (b-1) b^{l-1} \Big(  \sum_{n=1}^\infty   \sum_{|u| = n}    \widehat{H}(l,u) \Big)^{q/2}  \Big\}^{p/q}\Big].
\end{align*}
Recall the notation $Y_n$ defined in \eqref{def-Xu}.  Since $q/2\ge 1$, by applying the second inequality in \eqref{2-ele-ineq}, we have
\begin{align*}
\Big(  \sum_{n=1}^\infty   \sum_{|u| = n }    \widehat{H}(l,u) \Big)^{q/2}  & \ge \sum_{n=1}^\infty \Big(\sum_{|u| = n }    \widehat{H}(l,u) \Big)^{q/2}
\\
& = \sum_{n=1}^\infty  \Big(  \sum_{|u|=n} X(u) b^{2(l-1)\alpha } b^{-2n} \indi(l\le n)\Big)^{q/2}
\\
& = \sum_{n=1}^\infty Y_n^{q/2} b^{q(l-1)\alpha } b^{-qn} \indi(l\le n). 
\end{align*}
It follows that 
\begin{align*}
\chi_2 & \gtrsim_{b, \alpha, p, q}  \E \Big[   \Big\{     \sum_{l= 1}^\infty   b^{l}   \sum_{n=1}^\infty  Y_n^{q/2} b^{ql\alpha } b^{-qn} \indi(l\le n)  \Big\}^{p/q}\Big].
\end{align*}
Then since 
\begin{align*}
\sum_{l= 1}^\infty   b^{l}   \sum_{n=1}^\infty  Y_n^{q/2} b^{ql\alpha } b^{-qn} \indi(l\le n)   &  =       \sum_{n=1}^\infty  Y_n^{q/2}   b^{-qn} \sum_{l= 1}^n  b^{l}  b^{ql\alpha }
\\
&   \ge   \sum_{n=1}^\infty  Y_n^{q/2}   b^{-qn}   b^{n}  b^{qn\alpha }   = \sum_{n=1}^\infty  Y_n^{q/2}   b^{-qn(1-\alpha - \frac{1}{q})},
\end{align*}
we have 
\begin{align*}
\chi_2\gtrsim_{b,\alpha, p, q} \E\Big[ \Big\{ \sum_{n=1}^\infty  Y_n^{q/2}   b^{-nq (1 - \alpha - \frac{1}{q})}  \Big\}^{p/q}\Big].
\end{align*}
Therefore, by applying the Minkowski inequality \eqref{Min-pq}, we obtain 
\[
\chi_2 \gtrsim_{b, \alpha, p, q}   \Big\{ \sum_{n=1}^\infty  \Big(   \E[Y_n^{p/2}]  b^{-np (1-\alpha- \frac{1}{q})}    \Big)^{q/p} \Big\}^{p/q}.
\]
By substituting \eqref{Yn-Mn} and \eqref{def-Rn} into the above inequality, we  obtain the desired lower estimate of $\chi_2$. 
\end{proof}

\subsection{Final step of the proof of Theorem \ref{thm-CCM}}
The estimate \eqref{2-side-zero} and \eqref{chi-2-12}, together with Lemma \ref{lem-chi-I} and Lemma \ref{lem-low-chi2} yield the desired upper and lower estimate of  Theorem~\ref{thm-CCM}.

\section{Elementary analysis of the initial random weight $W$}\label{sec-W}

\subsection{The quantity $D_F$ satisfies $D_F\in (0,1)$}\label{sec-pos}

Recall the definition  \eqref{def-DF} of the quantity 
\[
D_F= D_F(W,b).
\]
\begin{lemma}\label{lem-DF}
If $\E[W\log W]<\log b$ and $\E[W^t]<\infty$ for all $t>0$, then 
 $
 D_F\in (0,1).
 $ 
\end{lemma}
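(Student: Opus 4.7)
\textbf{Proof plan for Lemma~\ref{lem-DF}.}

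The plan is to work with the convex function $f(t) := \log \E[W^t]$, which is finite on $[0,\infty)$ by assumption and satisfies $f(1)=0$ and $f'(1) = \E[W\log W] < \log b$ by non-degeneracy. Also, since $W$ is non-constant and $\E[W]=1$, strict Jensen gives $\E[W^2] > 1$, so $f(2) > 0$. I will analyze the two regimes separately.

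In the squared sub-critical or critical regime, $D_F = 1 - f(2)/\log b$, so I must show $0 < f(2) < \log b$. The left inequality is just $f(2)>0$ noted above. For the right inequality, I will first rewrite the hypothesis: a direct computation gives
\[
\E[W^{(2)}\log W^{(2)}] \;=\; \frac{\E[W^2\log W^2]}{\E[W^2]} - \log\E[W^2] \;=\; 2f'(2) - f(2),
\]
so the regime condition becomes $2f'(2) - f(2) \le \log b$. On the other hand, convexity of $f$ combined with $f(1)=0$ yields the tangent inequality $0 = f(1) \ge f(2) + f'(2)(1-2)$, i.e.\ $f'(2) \ge f(2)$. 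Plugging this into $2f'(2)\le f(2)+\log b$ gives $f(2)\le\log b$. For strict inequality: if $f(2)=\log b$, then both inequalities above are equalities, forcing $f'(2)=f(2)=\log b$ and saturating the tangent bound, so $f$ must be affine on $[1,2]$. Then $f'(1)=\log b$, contradicting $f'(1)<\log b$. Hence $f(2) < \log b$ strictly, giving $D_F\in(0,1)$.

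In the squared super-critical regime, $D_F = 1 - \inf_{t\in[1/2,1]} g(t)$, where
\[
g(t) := \frac{(1-t)\log b + f(2t)}{t\log b}.
\]
I will show $\inf g \in (0,1)$. Using $f(1)=0$, one computes $g(1/2)=1$ and
\[
g'(1/2) \;=\; \frac{(\log b)\,\bigl(f'(1)-\log b\bigr)}{(\tfrac12\log b)^2} \;<\;0
\]
since $f'(1)<\log b$. Hence $g(t)<1$ for $t$ slightly larger than $1/2$, so $\inf_{[1/2,1]} g < 1$, giving $D_F>0$. For $\inf g > 0$: on $[1/2,1)$ the first summand $(1-t)\log b$ is strictly positive and the second $f(2t)\ge 0$ (by Jensen on $W^{2t}$ with $2t\ge 1$), so $g(t)>0$; at $t=1$, $g(1) = f(2)/\log b > 0$ by the initial remark. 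Since $g$ is continuous on the compact interval $[1/2,1]$, the infimum is attained and hence strictly positive, giving $D_F<1$.

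The main technical point is the strict inequality in the squared sub-/critical regime, which requires the equality-case analysis via the affine-on-$[1,2]$ argument; the super-critical case is essentially a boundary computation plus compactness. No heavy machinery from later sections is needed — only convexity of $f$, non-degeneracy, and non-constancy of $W$.
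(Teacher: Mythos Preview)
Your proof is correct and follows essentially the same approach as the paper's own proof. The paper works with $\varphi_W(t)=f(t)-(t-1)\log b$ instead of $f$, but the convexity argument, the rewriting of the regime condition as $2f'(2)-f(2)\le\log b$, and the equality-case analysis via affinity on $[1,2]$ are all the same; in the super-critical case your direct ``$g>0$ pointwise on a compact interval'' argument is marginally cleaner than the paper's proof by contradiction, but the content is identical.
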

\begin{proof}
Recall the definition \eqref{def-phi-W} for the function $\varphi_W$: 
\[
\varphi_W(t) = \log \E[W^t] - (t-1) \log b, \quad t>0
\]
Clearly,  $\varphi_W$ is a convex function on $(0, \infty)$. Indeed, 
\[
\varphi_W'(t)=\frac{\E[W^{t}\log W]}{\E[W^{t}]} - \log b
\]
and, by Cauchy-Schwarz's inequality, 
\[
 \varphi_W''(t)=\frac{\E[W^{t}(\log W)^2]\E[W^{t}]-(\E[W^{t}\log W])^2}{(\E[W^{t}])^2}\ge 0.  
\]

(i).  If  $\E\big[ \frac{W^2}{\E[W^2]} \log \frac{W^2}{\E[W^2]}\big] \le \log b$, then $D_F = 1- \log \E[W^2]/\log b$. Hence we need to show  $\E[W^2]> 1$ and  $\varphi_W(2) = \log \E[W^2] - \log b <0$.  The first inequality follows from the assumption that $\E[W]=1$ and $W\ge 0$ is non-constant. We  now prove the second inequality.  Indeed, in this case,  since $\varphi_W'$ is increasing,  we have 
\begin{align}\label{ineq-f-d}
\varphi_W(2)=  \varphi_W(2)-\varphi_W(1) = \int_1^2 \varphi_W'(s)ds \le    \varphi_W'(2).
\end{align}
Now the condition $\E[ \frac{W^2}{\E[W^2]} \log \frac{W^2}{\E[W^2]}] \le \log b$ can be re-written as  
$
2 \varphi_W'(2)  \le \varphi_W(2).
$
Consequently,  $\varphi_W(2)\le \varphi_W'(2) \le \varphi_W(2)/2$ and hence $\varphi_W(2)\le 0$.  Moreover, if the equality $\varphi_W(2)=0$ holds, then  $\varphi'_W(2) = \varphi_W(2) =0$ and the inequality \eqref{ineq-f-d} becomes an equality. This would  imply that $\varphi_W'$ is a constant function on the interval $[1, 2]$ and hence
\[
0 =  \varphi_W'(2)= \varphi_W'(1)= \E[W\log W]-\log b.
\]
But this contradicts to the assumption $\E[W\log W]<\log b$. Hence we must have the desired inequality $\varphi_W(2)<0$.

(ii).  If  $\E\big[ \frac{W^2}{\E[W^2]} \log \frac{W^2}{\E[W^2]}\big] >  \log b$, then since $\E[W^{2t}]  = b^{2t-1} e^{\varphi_W(2t)}$, we need to show 
\begin{align}\label{F-pos-proof}
D_F= 1  -   \inf_{1/2\le t\le 1} \frac{\log  \E[b^{1-t}W^{2t}]}{ t \log b}  =  - \inf_{1/2\le t\le 1}   \frac{ \varphi_W(2t)}{t\log b} \in (0,1). 
\end{align}
The inequality $D_F>0$ follows immediately since $\varphi_W(1) =0$ and by the Mandelbrot-Kahane's non-degeneracy condition, 
$
\varphi_W'(1)= \E[W\log W]-\log b <0. 
$ Finally,  the inequality $D_F<1$ holds if and only if 
\[
\inf_{1/2\le t\le 1 } \log \E[b^{1-t} W^{2t}]> 0. 
\]
Assume by contradiction that $\inf_{1/2\le t\le 1 } \log \E[b^{1-t} W^{2t}]=0$. Then by continuity of the function $t\mapsto \log \E[b^{1-t} W^{2t}]$, there exists $t_0\in [1/2, 1]$ such that 
$
 \E[b^{1-t_0} W^{2t_0}] = 1$. Clearly, $t_0\ne 1$ since $\E[W^2]>1$. But if $t_0\in [1/2, 1)$, then  $2t_0\ge 1$, we have $\E[W^{2t_0}] >1$ and $b^{1-t_0}>1$, contradicts to the assumption  $\E[b^{1-t_0} W^{2t_0}]=1$. This completes the proof of $D_F<1$. 
\end{proof}

\subsection{Biggins-Kyprianou's boundary case}\label{sec-BK-transformation}
Following the terminology in  \cite{Bigins-EJP}, 
 we say that  the random weight $W$  is in the  {\it  boundary case} or more precisely  in the {\it Biggins-Kyprianou's boundary case},  if it has a {\it Biggins-Kyprianou transform} as follows:
\begin{align}\label{bdary-reg}
W= \frac{e^{-\beta \xi}}{\E[e^{-\beta\xi}]},
\end{align}
where $\beta>0$ is a positive number and $\xi$ is a random variable  taking values in $(-\infty, +\infty]$ such that (we use the convention $(+\infty)\cdot e^{-(+\infty)} = 0$)
\begin{align}\label{xi-cond}
\E[\xi e^{-\xi}] = 0, \quad \E[e^{-\xi}] = \frac{1}{b}
\end{align}
and 
\begin{align}\label{xi-cond-bis}
\PP(\xi\in (-\infty, 0))> 0. 
\end{align}
In the boundary case,  the Biggins-Kyprianou transform of $W$ is unique, that is,  the pair $(\beta, \xi)$ is uniquely determined by $W$. See  \S \ref{sec-xi} in the Appendix for the details.

\begin{remark*}
Assume that  $\xi$ satisfies the condition \eqref{xi-cond}. Then $\PP(\xi\in (-\infty, 0)) = 0$ if and only if  
\[
\xi = \left\{
\begin{array}{cl}
 0 & \text{with probability $b^{-1}$}
\\
 +\infty &  \text{with probability $1-b^{-1}$}
\end{array}
\right..
\]
\end{remark*}

Clearly, not all random weights have Biggins-Kyprianou transforms. On the other hand, if $W$ is assumed to have moments of all positive  orders, then the lemma below provides a useful criterion for $W$ to be in the boundary case.  This result is  due to Jaffuel (see \cite[the ArXiv version, Proposition A.2]{Jaffuel}\footnote{There is a typo in the statement of \cite[the ArXiv version, Proposition A.2]{Jaffuel}: $x_{min}>-\infty$ should be replaced by $x_{min} = -\infty$.}). Here  we state it in a version that is convenient for our purpose. 

\begin{lemma}\label{lem-bdd}
 Assume that $\E[W^t]<\infty$ for all $t>0$. Then $W$ is not in the boundary case if and only if $W$ is bounded and 
$
b \cdot \PP(W= \|W\|_\infty) \ge 1. 
$
\end{lemma}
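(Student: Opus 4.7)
The plan is to reduce the Biggins--Kyprianou conditions to a single scalar equation on the moment generating function $\psi(s):=\log\E[W^s]$, and then to read off the conclusion from the geometry of that equation. Since $e^{-\beta\xi}=W\cdot\E[e^{-\beta\xi}]$, any Biggins--Kyprianou transform must satisfy $\xi=C-\beta^{-1}\log W$ for some constant $C\in\R$ (with $\xi=+\infty$ on $\{W=0\}$). Setting $s:=1/\beta>0$, the normalization $\E[e^{-\xi}]=1/b$ forces $C=\log(b\E[W^s])$, while the centering $\E[\xi e^{-\xi}]=0$ forces $C=\beta^{-1}\psi'(s)$. Equating these yields the single scalar equation
\[
f(s):=s\psi'(s)-\psi(s)=\log b,\qquad s>0,
\]
while the sign condition $\PP(\xi\in(-\infty,0))>0$ becomes $\PP(W>e^{\psi'(s)})>0$. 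So $W$ is in the boundary case if and only if some $s_0\in(0,\infty)$ simultaneously solves $f(s_0)=\log b$ and satisfies $\PP(W>e^{\psi'(s_0)})>0$.

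Next, I would establish the qualitative shape of $f$ on $(0,\infty)$. A direct computation gives $f'(s)=s\psi''(s)\ge 0$, where $\psi''(s)$ is the variance of $\log W$ under the tilted probability $W^sd\PP/\E[W^s]$; thus $f$ is non-decreasing, and strictly so whenever $W$ is non-constant on $\{W>0\}$. When $W$ is bounded with $M:=\|W\|_\infty<\infty$, setting $Y:=\log(M/W)\ge 0$ (with $Y=+\infty$ on $\{W=0\}$) and decomposing
\[
f(s)=-\,s\,\frac{\E[Ye^{-sY}]}{\E[e^{-sY}]}-\log\E[e^{-sY}],
\]
one obtains $f(\infty)=-\log\PP(W=M)$ by dominated convergence: the uniform bound $sye^{-sy}\le e^{-1}$ and the pointwise vanishing $sye^{-sy}\to 0$ for $y>0$ force $s\E[Ye^{-sY}]\to 0$, while $\E[e^{-sY}]\to\PP(Y=0)=\PP(W=M)$. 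When $W$ is unbounded, a Gr\"onwall-type argument applied to the differential inequality implied by a hypothetical finite limit of $f$ gives $f(\infty)=+\infty$.

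Finally, the Mandelbrot--Kahane non-degeneracy $\E[W\log W]<\log b$ (standing throughout the paper) evaluated at $s=1$ gives $f(1)=\psi'(1)-\psi(1)=\E[W\log W]<\log b$. Combined with the previous step, the equation $f(s_0)=\log b$ admits a (necessarily unique) solution $s_0\in(1,\infty)$ if and only if $f(\infty)>\log b$, equivalently $W$ is unbounded or $b\cdot\PP(W=\|W\|_\infty)<1$ strictly; the borderline case $b\cdot\PP(W=\|W\|_\infty)=1$ fails because $f$ is strictly increasing past $s=1$ with supremum $\log b$ attained only in the limit. Moreover, at any such $s_0$, $\psi'(s_0)$ is a weighted mean of $\log W$ over $\{W>0\}$ with $W<\|W\|_\infty$ on a set of positive weight, so $\psi'(s_0)<\log\|W\|_\infty$ and hence $\PP(W>e^{\psi'(s_0)})\ge\PP(W=\|W\|_\infty)>0$ holds for free. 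Combining, $W$ fails to be in the boundary case if and only if $W$ is bounded and $b\cdot\PP(W=\|W\|_\infty)\ge 1$.

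The main technical obstacle is the limit $f(\infty)=-\log\PP(W=\|W\|_\infty)$ in the bounded case: the cross term $s\E[Ye^{-sY}]/\E[e^{-sY}]$ must be shown to vanish, which requires precisely the combination of the uniform bound $sye^{-sy}\le e^{-1}$ with the pointwise limit in order to apply dominated convergence; once this is in hand, the rest of the argument is routine convexity together with the non-degeneracy assumption evaluated at $s=1$.
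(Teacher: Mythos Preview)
The paper does not give its own proof; it cites Jaffuel [Jaf12, arXiv version, Proposition~A.2]. Your reduction to the scalar equation $f(s):=s\psi'(s)-\psi(s)=\log b$ with $\psi(s)=\log\E[W^s]$ and $s=1/\beta$, together with the sign condition $\PP(W>e^{\psi'(s)})>0$, is the standard route and is essentially correct.

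Two remarks. First, you invoke $\E[W\log W]<\log b$ to obtain $f(1)<\log b$, calling it a standing assumption; this is not literally the case in \S\ref{sec-W} (each lemma there states its own hypotheses), but you are right that it is needed: without it the lemma as stated is actually false, as the two-point weight $W=c\cdot\indi(W>0)$ with $\PP(W>0)=1/c$ and $c>b$ shows (then $b\PP(W=\|W\|_\infty)=b/c<1$, yet $f\equiv\log c>\log b$ has no root, so $W$ is not in the boundary case). Since the paper only applies the lemma under non-degeneracy, this is an imprecision in the statement rather than in your argument. Second, your dominated-convergence computation of $f(\infty)$ in the bounded case is clean only when $\PP(W=M)>0$; when $\PP(W=M)=0$ the ratio $s\E[Ye^{-sY}]/\E[e^{-sY}]$ is indeterminate, and the ``Gr\"onwall-type'' argument for unbounded $W$ is left vague. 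Both gaps are easily filled: convexity together with $\psi(1)=0$ gives $\psi(s)\le (s-1)\psi'(s)$ and hence $f(s)\ge\psi'(s)$, so $f(\infty)=+\infty$ is immediate when $W$ is unbounded; for bounded $W$ with $\PP(W=M)=0$, a hypothetical finite limit $f(\infty)=L$ yields $g'(s)\ge(g(s)-L)/s$ for $g(s):=-\log\E[(W/M)^s]$, forcing $g(s)\gtrsim s$, which contradicts $g(s)=o(s)$ (the latter following from concavity of $g$ together with $g'(s)\to 0$).
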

Assume that $W$ has finite moments of all orders and is in the boundary case, then we can construct the pair $(\beta, \xi)$ as follows.  Recall first the definition \eqref{def-phi-W} of the function 
\[
 \varphi_W(t)  = \log \E[W^t]-(t-1)\log b, \quad t > 0.
\]
Then   the equation 
\[
  \varphi_W'(t)  = \frac{ \varphi_W(t) }{t}
\]
 has a unique  positive solution $t^*\in(0,\infty)$. By taking 
 \begin{align}\label{def-beta}
 \beta = \frac{1}{t^*} \an \xi = -  t^* \log W + \log \E[ b W^{t^*}] ,
 \end{align}
  we obtain the Biggins-Kyprianou transform of $W$.

\begin{example}[Binomial distributions]\label{ex-bad-boy}
For any $t \in (0,1)$,  consider the random variable 
\[
W_t = \left\{
\begin{array}{cl}
t^{-1}& \text{with probability $t$}
\\
0 & \text{with probability $1- t$}
\end{array}
\right. .
\]
One can check directly that all these random variables are not in the  boundary case. 
However, there is a unique $t = b^{-1}$, such that the random variable $W_{1/b}$ can be written as  $W_{1/b}= e^{-\beta\xi}/\E[e^{-\beta\xi}]$ (with non-unique $\beta>0$), where
$
\xi = 0 \cdot \indi(W_{1/b}>0) + (+\infty)  \cdot \indi(W_{1/b}=0).
$
The above $\xi$ satisfies the condition  \eqref{xi-cond}, but violates the condition  \eqref{xi-cond-bis}. Hence by definition, $W_{1/b}$ is not in the boundary case. 
\end{example}

Define a function $\psi = \psi_\xi:  [0, +\infty) \rightarrow \R \cup \{+\infty\}$ by  
\begin{align}\label{def-psi}
\psi(t) = \psi_\xi(t):=\log \E[be^{-t\xi}]. 
\end{align}
Some basic properties of the Biggins-Kyprianou transform and  the function $\psi$ will be useful for us. For future reference, we give these properties in the following Lemma~\ref{lem-psi} and include its routine proof in  \S \ref{sec-psi-fn} of the  Appendix  for the reader's convenience.

\begin{lemma}\label{lem-psi}
If $W$ is in the boundary case and  $\E[W^{1+\delta}]<\infty$  for some $\delta > 0$, then 
\begin{itemize}
\item $\psi$ is $C^\infty$ on $(0, 1+\delta)$; 
\item $\psi(1) = \psi'(1) = 0$; 
\item $\psi$ is strictly decreasing on $[0, 1]$ and strictly increasing on $[1, 1+\delta]$.  In particular, $\psi$ is non-negative on $[0, 1 + \delta]$;
\item $\psi$ is strictly convex;
\item  the following equivalences hold: 
\begin{align}\label{beta-less-1}
\begin{cases}
  \E [ W \log  W]<\log b \Longleftrightarrow 0< \beta<1; 
  \vspace{2mm}
  \\
  \E [ W \log  W] = \log b \Longleftrightarrow \beta = 1;
  \vspace{2mm}
\\
  \E [ W \log  W]>\log b \Longleftrightarrow \beta>1.
\end{cases}
\end{align}
\end{itemize}
\end{lemma}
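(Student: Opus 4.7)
The plan is to pass everything through the Laplace transform $M(t):=\E[e^{-t\xi}]$ of $-\xi$, which is related to $\psi$ by $\psi(t)=\log b+\log M(t)$. The boundary-case identities $M(1)=1/b$ and $M'(1)=-\E[\xi e^{-\xi}]=0$ immediately yield $\psi(1)=0$ and $\psi'(1)=0$. The moment hypothesis $\E[W^{1+\delta}]<\infty$ rewrites, via $W=e^{-\beta\xi}/M(\beta)$, as $M\big((1+\delta)\beta\big)<\infty$. Combining this with $M(0)=1$ through the convexity of $\log M$, one obtains finiteness of $M$ on an open interval containing $(0,1+\delta)$ (after, if necessary, shrinking $\delta$ in the subcase $\beta<1$ so that $(1+\delta)\beta$ exceeds the endpoint of the desired interval; the subcase $\beta\geq 1$ is automatic since $(1+\delta)\beta\geq 1+\delta$). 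On this open interval $\psi$ is then real-analytic by the standard differentiation-under-the-integral theorem for Laplace transforms, which also justifies differentiating term by term below.

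For strict convexity, straightforward differentiation gives $\psi''(t)=\mathrm{Var}_{\PP_t}(\xi)$, where $d\PP_t:=e^{-t\xi}M(t)^{-1}\,d\PP$ is the exponentially tilted probability measure, and this variance is strictly positive provided $\xi$ is non-constant under $\PP$. Non-constancy of $\xi$ is forced by $\PP(\xi<0)>0$ together with $\E[\xi e^{-\xi}]=0$: since $\xi e^{-\xi}<0$ on $\{\xi<0\}$, the mean-zero identity requires $\PP(\xi>0)>0$ as well. With $\psi''>0$ and $\psi'(1)=0$ in hand, strict convexity places $\psi'<0$ on $(0,1)$ and $\psi'>0$ on $(1,1+\delta)$, which gives the strict monotonicity of $\psi$ on $[0,1]$ and on $[1,1+\delta]$; in particular $\psi$ attains its minimum value $0$ at $t=1$, so $\psi\geq 0$ on the whole domain.

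The three equivalences in \eqref{beta-less-1} reduce to an explicit closed-form identity. Expanding $\log W=-\beta\xi-\log M(\beta)$ and using $\E[W]=1$ together with $\psi'(t)=-\E[\xi e^{-t\xi}]/M(t)$, one computes
\[
\E[W\log W]-\log b \;=\; -\beta\,\frac{\E[\xi e^{-\beta\xi}]}{M(\beta)}-\log M(\beta)-\log b \;=\; \beta\psi'(\beta)-\psi(\beta).
\]
Setting $g(\beta):=\beta\psi'(\beta)-\psi(\beta)$, we get $g(1)=0$ and $g'(\beta)=\beta\psi''(\beta)>0$, so $g$ is strictly increasing through $0$ at $\beta=1$; reading off the sign of $g(\beta)$ against that of $\E[W\log W]-\log b$ yields the three equivalences. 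The main obstacle is really only the first step, namely pinning down the precise interval of finiteness of $M$ from the two-point data $M(1)=1/b$ and $M((1+\delta)\beta)<\infty$ and correctly handling the subcase $\beta<1$; once finiteness and smoothness of $\psi$ are in place, every remaining assertion is a routine calculus exercise on a smooth, strictly convex function with $\psi(1)=\psi'(1)=0$.
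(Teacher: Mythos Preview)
Your proof is correct and follows essentially the same route as the paper's: both identify $\psi''(t)$ with the variance of $\xi$ under the tilted law $d\PP_t=e^{-t\xi}\E[e^{-t\xi}]^{-1}\,d\PP$ to obtain strict convexity, and both reduce the three equivalences in \eqref{beta-less-1} to the sign of $\beta\psi'(\beta)-\psi(\beta)$. Two minor differences are worth recording. First, your monotonicity argument via $g(\beta)=\beta\psi'(\beta)-\psi(\beta)$, $g(1)=0$, $g'(\beta)=\beta\psi''(\beta)>0$ is a touch cleaner than the paper's geometric chord-versus-tangent reasoning (Figure~\ref{psi-graph}); both rely on the same strict convexity. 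Second, you are more careful than the paper (which simply declares the first two bullets ``easy'') in tracking the domain of finiteness of $M(t)=\E[e^{-t\xi}]$: from $M(1)=1/b$ and $M((1+\delta)\beta)<\infty$ one only gets finiteness on $[0,\max(1,(1+\delta)\beta)]$, which in the subcase $\beta<1$ need not cover all of $(0,1+\delta)$, so a shrinking of $\delta$ is indeed required for the $C^\infty$ claim as stated---a subtlety the paper does not address but which is harmless for its applications (where all moments of $W$ are assumed finite).
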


\subsection{The squared random weight}

We are going to use the obvious fact: if  $W$ is in the boundary case and  $W\in L^2(\PP)$, then  so is 
\[
W^{(2)}: =   \frac{W^2}{\E[W^2]}=\frac{e^{-2\beta\xi}}{\E[e^{-2\beta\xi}]}.
\]
Indeed,  the Biggins-Kyprianou's transform of $W^{(2)}$ will be  crucial in our study of  the small moments asymptotics of the martingale defined in \eqref{eqn-def-mnab} in the squared super-critical regime,  see \S\ref{sec-super}. 

Recall that, as in the introduction,  according to Mandelbrot-Kahane's non-degeneracy condition, we say that  $W$ is 
\begin{itemize}
\item in the squared sub-critical regime if 
\begin{align}\label{sq-sub}
\E[  W^{(2)} \log W^{(2)}] <\log b;
\end{align}
\item in the  squared critical regime if 
\begin{align}\label{sq-critical}
\E[  W^{(2)} \log W^{(2)}]   = \log b;
\end{align}
\item  in the squared super-critical regime if 
\begin{align}\label{sq-super}
\E[  W^{(2)} \log W^{(2)}] > \log b.
\end{align}
\end{itemize}

\begin{lemma}\label{lem-non-sub}
Assume that $\E[W^t]<\infty$ for all $t>0$. If $W$ is either  in the squared critical or the squared super-critical regime, then $W$ is in the boundary case.
\end{lemma}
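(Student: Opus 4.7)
The plan is to prove the contrapositive: assuming that $W$ is not in the boundary case, I will show $\E[W^{(2)}\log W^{(2)}] < \log b$, which rules out both the squared critical and the squared super-critical regimes. The starting point is the identity, obtained by a direct computation from $W^{(2)} = W^2/\E[W^2]$ and the structure function $\varphi_W$ of~\eqref{def-phi-W},
\[
\E\bigl[W^{(2)}\log W^{(2)}\bigr] - \log b \;=\; 2\varphi_W'(2) - \varphi_W(2) \;=:\; g(2),
\]
where $g(t) := t\,\varphi_W'(t) - \varphi_W(t)$. The target therefore reduces to showing $g(2) < 0$ whenever $W$ is not in the boundary case.

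First, I invoke Jaffuel's criterion (Lemma~\ref{lem-bdd}): the failure of the boundary case forces $W$ to be essentially bounded, and writing $M := \|W\|_\infty$ and $p := \PP(W = M)$, one has $bp \ge 1$. Second, convexity of $\varphi_W$ gives $g'(t) = t\,\varphi_W''(t) \ge 0$, so $g$ is non-decreasing on $(0,\infty)$. Third, since $M$ is an atom of positive mass $p$, the asymptotic $\E[W^t] = p M^t (1+o(1))$ as $t \to \infty$ yields
\[
\varphi_W(t) = t(\log M - \log b) + \log(bp) + o(1), \qquad \varphi_W'(t) \to \log M - \log b,
\]
and combining these produces the limit $g(\infty) = -\log(bp) \le 0$. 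Monotonicity of $g$ then delivers $g(2) \le g(\infty) \le 0$.

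The main obstacle is ruling out the borderline case $g(2) = 0$. If this held, then since $g$ is non-decreasing with $g(2) = 0$ and $g(\infty) \le 0$, one would have $g \equiv 0$ on $[2,\infty)$, forcing $g' \equiv 0$ and hence $\varphi_W'' \equiv 0$ there. But $\varphi_W''(t)$ equals the variance of $\log W$ under the exponentially tilted probability with density $W^t/\E[W^t]$, a measure supported on $\{W > 0\}$; vanishing variance forces $W$ to take only the values $\{0, c\}$ for some $c>0$. The constraint $\E[W]=1$ gives $pc = 1$, and the limit equality $g(\infty) = 0$ forces $bp = 1$, hence $c = 1/p = b$. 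This is precisely the degenerate $\beta$-model weight $W_{1/b}$ of Example~\ref{ex-bad-boy}, for which $\E[W\log W] = \log b$, contradicting the standing Mandelbrot--Kahane non-degeneracy hypothesis $\E[W\log W] < \log b$ under which the paper operates. Therefore $g(2) < 0$ strictly, completing the argument.
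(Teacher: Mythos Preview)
Your contrapositive strategy is correct and genuinely different from the paper's. The paper proves the stronger statement $\E[(W^{(2)})^2] < b$ directly: from $W$ bounded with $M := \|W\|_\infty$ and $p := \PP(W=M) \ge 1/b$, one has $\E[W^4] \le M^2\E[W^2]$ and $\E[W^2] \ge M^2 p \ge M^2/b$, whence $\E[(W^{(2)})^2] = \E[W^4]/(\E[W^2])^2 \le M^2/\E[W^2] \le b$; strictness is then argued. This is quicker and yields Kahane's $L^2$-condition for $W^{(2)}$, not merely the non-degeneracy inequality. Your approach via the structure function $\varphi_W$ is more systematic and ties in nicely with the convexity analysis used elsewhere in the paper; it also makes the role of the edge case $W_{1/b}$ completely transparent.

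There is, however, one genuine gap in your limit step. From $\varphi_W(t) = t(\log M - \log b) + \log(bp) + o(1)$ and $\varphi_W'(t) \to \log M - \log b$ you cannot simply ``combine'' to get $g(\infty) = -\log(bp)$: writing $\varphi_W'(t) = (\log M - \log b) + \epsilon(t)$ with $\epsilon(t)\to 0$, you obtain $g(t) = t\epsilon(t) - \log(bp) + o(1)$, and $t\epsilon(t)$ is not $o(1)$ from what you have stated. The cleanest repair avoids the limit altogether. Set $F(t) := \log\E[(W/M)^t] = L(t) - t\log M$; since $W/M \le 1$ with $\PP(W=M)=p>0$, the function $F$ is convex, non-increasing, and satisfies $F(t) \ge \log p$ for all $t>0$ (indeed $\E[(W/M)^t] \ge p$). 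Then $F'(t) \le 0$, and
\[
g(t) = tF'(t) - F(t) - \log b \;\le\; 0 - \log p - \log b \;=\; -\log(bp) \;\le\; 0,
\]
which gives $g(2) \le -\log(bp) \le 0$ directly, with equality forcing $F'(2)=0$ and $F(2)=\log p$; the latter means $\E[W^2\indi(0<W<M)]=0$, i.e.\ $W\in\{0,M\}$, and you can then proceed with your endgame unchanged. Your invocation of the Mandelbrot--Kahane non-degeneracy condition to exclude $W_{1/b}$ is exactly right; note that the paper's own short argument also implicitly needs this edge case handled.
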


\begin{proof}
 Suppose that $W$ is not in the boundary case,  we  need to show that $\E[  W^{(2)} \log W^{(2)}] <\log b$. Indeed, we show a stronger inequality  (see  Remark~\ref{rem-Lp-non-deg}): 
\begin{align}\label{ineq-non-sub}
\E[(W^{(2)})^2] = \E\Big[ \Big(\frac{W^2}{\E[W^2]}\Big)^2\Big]<b.
\end{align}
By Lemma \ref{lem-bdd},  under the assumption that  $\E[W^t]<\infty$ for all $t>0$,  the random weight $W$ is not in the boundary case if and only if it is bounded and 
$
b \cdot \PP(W= \|W\|_\infty) \ge 1.
$
If $W$ is a constant random variable, then  $W \equiv 1$ and the assertion is trivial. If $W$ is not a constant, then $\PP(W<\|W\|_\infty)>0$ and 
$
\E[W^4] < \|W\|_\infty^2  \cdot \E[W^2]$. Hence 
\[
\E\Big[ \Big(\frac{W^2}{\E[W^2]}\Big)^2\Big] < \frac{\|W\|_\infty^2}{\E[W^2]} \an 
\E[W^2]  \ge  \|W\|_\infty^2\cdot \PP(W=\|W\|_\infty) \ge  \frac{\|W\|_\infty^2}{b}.
\]
The desired inequality \eqref{ineq-non-sub} follows immediately. 
\end{proof}

\section{Lower bound of the critical exponent $\alpha_c$}\label{sec-squared}

In this section, we always assume that the initial random weight $W$ satisfies
\begin{align}\label{all-m}
\text{$\E[W\log W]<\log b$ and $\E[W^t]<\infty$ for all $t>0$.}
\end{align}

For the MCCM $\mu_\infty$, recall the definitions \eqref{eq-npq} and \eqref{def-a-c} of $\mathcal{N}^{(\alpha, p, q)}(\mu_\infty)$ and $\alpha_c$ respectively:
\begin{align*}
	& \mathcal{N}^{(\alpha, p, q)}(\mu_\infty) =   \Big(\E\Big[  \Big\{ \sum_{s\ge 1}    \big| s^{\alpha} \cdot \widehat{\mu}_\infty(s)\big|^q  \Big\}^{\frac{p}{q}}\Big] \Big)^{\frac{1}{p}}  \in [0, +\infty],
\\
& \alpha_c =  \sup \Big\{\alpha \in \R:  \mathcal{N}^{(\alpha, p, q)}(\mu_\infty) <\infty \text{\, for some $1<p<2<q<\infty$} \Big\}.
\end{align*}

The main result of this section is the following

\begin{proposition}\label{prop-sq-3case} 
We have
$
\alpha_c \ge \frac{ D_F}{2}.
$
\end{proposition}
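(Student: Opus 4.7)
The plan is to use the upper estimate of Theorem~\ref{thm-CCM}: for $0\le\alpha<1$ and $1<p<2<q<\infty$ with $q>1/(1-\alpha)$, one has $\NQ(\mu_\infty)\lesssim_{b,\alpha,p,q}\|\mathring W\|_q\cdot\|\mathcal{R}\|_{\ell^1}^{1/p}$, where $\mathcal{R}_n=\E[\MAA^{p/2}]\cdot(\E[W^2]/b^{1-2\alpha-2/q})^{np/2}$. Since assumption~\eqref{all-m} ensures $\|\mathring W\|_q<\infty$ for every $q$, the task reduces to showing that, for each $\alpha<D_F/2$, one can choose admissible $(p,q)$ so that $\|\mathcal{R}\|_{\ell^1}<\infty$. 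The asymptotics of $\E[\MAA^{p/2}]$ differ qualitatively across the three squared regimes of $W^{(2)}$, so the argument splits accordingly and is the content of Lemmas~\ref{prop-sub}, \ref{prop-sq-crit} and \ref{prop-super}.

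\textbf{Squared sub-critical and critical regimes.} By Lemma~\ref{lem-DF}(i), in both cases $\E[W^2]<b$ and $D_F=1-\log_b\E[W^2]$. Since $(\MAB)_{n\ge 0}$ is a nonnegative martingale with mean $1$, Jensen's inequality (with $p/2<1$) yields the simple estimate $\E[\MAA^{p/2}]\le 1$; in the critical case the stronger Seneta--Heyde norming of \cite{Aideon-Shi} is available but unnecessary for the lower bound. Consequently $\mathcal{R}_n\le(\E[W^2]/b^{1-2\alpha-2/q})^{np/2}$, which is geometrically summable whenever $\E[W^2]<b^{1-2\alpha-2/q}$. Given $\alpha<D_F/2$, choose $q$ so large that $2\alpha+2/q<D_F$; the geometric ratio is then $<1$ and $\|\mathcal{R}\|_{\ell^1}<\infty$, which gives $\NQ(\mu_\infty)<\infty$ and thus $\alpha_c\ge D_F/2$ in these two regimes.

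\textbf{Squared super-critical regime and main obstacle.} Here $\E[W^{(2)}\log W^{(2)}]>\log b$, so by Lemma~\ref{lem-non-sub} $W$ lies in the boundary case; moreover one may have $\E[W^2]\ge b$, so the Jensen bound is no longer enough. The plan is to use the crude subadditivity bound valid for $s=p/2\in(0,1)$,
\[
\E[\MAA^{p/2}]\le b^{(n-1)(1-p/2)}\,\frac{(\E[W^p])^{n-1}}{(\E[W^2])^{(n-1)p/2}},
\]
and substitute into $\mathcal{R}_n$; the per-step exponential rate then simplifies to $\varphi_W(p)+p\alpha\log b+(p/q)\log b$, with $\varphi_W$ as in~\eqref{def-phi-W}. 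Summability thus requires $\alpha<-\varphi_W(p)/(p\log b)-1/q$; letting $q\to\infty$ and writing $p=2t$ with $t\in(1/2,1)$, a direct computation using the super-critical branch of~\eqref{def-DF} identifies
\[
\sup_{t\in(1/2,1)}\left[1-\frac{1}{2t}-\frac{\log\E[W^{2t}]}{2t\log b}\right]=\frac{D_F}{2}.
\]
The main subtlety is that this crude subadditivity estimate must be sharp enough to reach the full threshold $D_F/2$. The matching works because, in the super-critical regime, the sign of the derivative at $t=1$ (governed by $\E[W^{(2)}\log W^{(2)}]-\log b>0$) forces the maximizer $t^{*}$ strictly inside $(1/2,1)$ --- precisely where the infimum defining $D_F$ in~\eqref{def-DF} is attained --- so that no finer BRW small-moment asymptotic is required on the lower-bound side.
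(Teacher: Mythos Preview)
Your proposal is correct. In the squared sub-critical and critical regimes your argument is identical to the paper's (Lemmas~\ref{prop-sub} and~\ref{prop-sq-crit}): both use nothing more than the super-martingale bound $\E[\MAA^{p/2}]\le 1$ and then let $q\to\infty$.

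In the squared super-critical regime your route genuinely differs from the paper's, and it is more elementary. The paper (Lemma~\ref{prop-super}) invokes the precise BRW small-moment asymptotics of Madaule and A\"{\i}d\'{e}kon, namely $\E[\mathscr{M}_n(2\beta)^{p/2}]\asymp e^{-np\psi(2\beta)/2}\,n^{-3p\beta/2}$ for $1<p<1/\beta$, to get a two-sided criterion for $\NQ(\mu_\infty)<\infty$. You instead use only the crude subadditivity bound $(\sum x_i)^{p/2}\le\sum x_i^{p/2}$, which yields $\E[\MAA^{p/2}]\le \exp\big((n-1)[\psi(p\beta)-\tfrac{p}{2}\psi(2\beta)]\big)$; this overshoots the true exponential rate by the nonnegative factor $e^{(n-1)\psi(p\beta)}$. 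The key observation---which you correctly identify---is that at the optimal choice $p=1/\beta\in(1,2)$ one has $\psi(p\beta)=\psi(1)=0$, so the crude exponential rate coincides with the sharp one and your bound reaches the full threshold $-\varphi_W(p)/(p\log b)=\psi(\beta)/\log b=D_F/2$. Thus for the \emph{lower bound} $\alpha_c\ge D_F/2$ the deep input \eqref{s-m-asymp} is not needed. What the paper's sharper asymptotics buy is the two-sided equivalence (A1)$\Leftrightarrow$(A4) in Lemma~\ref{prop-super} (including the borderline case $\alpha+1/q=\psi(\beta)/\log b$ via the polynomial factor $n^{-3p\beta/2}$), which fits the philosophy of the sharp two-sided estimate in Theorem~\ref{thm-CCM}; your argument gives only the one direction needed for Proposition~\ref{prop-sq-3case}.
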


\begin{comment}
In this section, we shall always assume that the random variable $W$ has finite moments of all orders: 
\begin{align}\label{all-m}
\E[W^t]<\infty \quad \text{for all $t\ge 0$}.
\end{align}
and Mandelbrot-Kahane's non-degeneracy condition \eqref{K-cond}
\begin{align*}
\E[W\log W]<\log b
\end{align*}
 is  satisfied so that  the random measures $\mu_\infty$ is non-degenerate. 
\subsection{Three cases}
Recall the definition  \eqref{eqn-def-mnab} of $\MAB$.  Let $1<p\le 2$.  We are going to  study the small moments (since $p/2\le 1$):
\[
\E[\MAB^{\frac{p}{2}}].
\]
\end{comment}

%%%%%%%%%%%%%%%%%%%%%%%%%%%%%%%%%%%%%%%%%%%%%%%%%%%%%%%%%%

%%%%%%%%%%%%%%%%%%%%%%%%%%%%%%%%%%%%%%%%%%%%%%%%%%%%%%%%%

\begin{comment}
\begin{align*}
\dim_{F}(\mu_\infty)  \ge 
\left\{
\begin{array}{lc}
1 - \frac{\log \E[W^2]}{\log b}  & \text{if\,\,} \E\Big[ \frac{W^2}{\E[W^2]} \log \frac{W^2}{\E[W^2]}\Big] \le \log b;
\vspace{2mm}
\\
 \displaystyle 1 -   \inf_{1/2\le t\le 1} \frac{\log  \E[b^{1-t}W^{2t}]}{ t \log b} & \text{if \,\,} \E\Big[ \frac{W^2}{\E[W^2]} \log \frac{W^2}{\E[W^2]}\Big] > \log b. 
\end{array}
\right.
\end{align*}
\end{comment}

\subsection{Squared sub-critical regime}\label{sec-sq-sub}

If $W$ is in the squared sub-critical regime:
\[
\E[W^{(2)}\log W^{(2)}]<\log b,
\]
then, by Kahane's work, the martingale $(\MAB)_{n\ge 1}$ is uniformly integrable and has a non-degenerate limit $\mathscr{M}_\infty(W^{(2)})$.  Hence, by using the super-martingale property (since $p/2\le 1$) and the H\"older's inequality, we have 
\begin{align*}
0<  \E[\mathscr{M}_\infty(W^{(2)})^{\frac{p}{2}}]  \le  \E[\MAB^{\frac{p}{2}}]\le 1 \quad \text{for all $n\ge 1$}. 
\end{align*}
This combined with Theorem \ref{thm-CCM} immediately yields 

\begin{lemma}\label{prop-sub}
Suppose that $W$ is in the  squared sub-critical regime. 
Let $0\le \alpha<1$, $1< p\le 2$ and $q> \max\{2, \frac{1}{1 - \alpha}\}$.   Then the following equivalence holds:
\[
\NQ(\mu_\infty)<\infty \Longleftrightarrow 
 \E[W^2] < b^{1- 2\alpha- \frac{2}{q}}. 
\]
In particular,  $\NQ(\mu_\infty)<\infty$ implies $\alpha<\frac{1}{2}$ and  $q> \frac{2}{1- 2\alpha}$.
\end{lemma}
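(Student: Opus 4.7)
The plan is to invoke the two-sided sharp bound of Theorem~\ref{thm-CCM} and show that in the squared sub-critical regime the sequence $\mathcal{R} = (\mathcal{R}_n(W,b,\alpha,p,q))_{n\ge 1}$ behaves, up to multiplicative constants that do not depend on $n$, like a pure geometric sequence with ratio $r := \E[W^2]/b^{1-2\alpha-2/q}$; both $\|\mathcal{R}\|_{\ell^1}$ and $\|\mathcal{R}\|_{\ell^{p/q}}$ are then finite if and only if $r<1$.

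First I would record the key consequence of the squared sub-critical hypothesis \eqref{sq-sub}: by Kahane's theorem applied to the weight $W^{(2)}$, the squared martingale $(\mathscr{M}_n(W^{(2)}))_{n\ge 0}$ is uniformly integrable with non-degenerate limit $\mathscr{M}_\infty(W^{(2)})$. As observed immediately before the statement, combining the positive-supermartingale property (since $p/2\le 1$) with H\"older's inequality yields
\[
0<\E[\mathscr{M}_\infty(W^{(2)})^{p/2}]\le \E[\mathscr{M}_{n-1}(W^{(2)})^{p/2}]\le 1\qquad\text{for all }n\ge 1.
\]
Recalling the definition \eqref{def-Rn}, this gives
\[
c\cdot r^{np/2}\le \mathcal{R}_n\le r^{np/2},\qquad r:=\frac{\E[W^2]}{b^{1-2\alpha-2/q}},
\]
where $c=\E[\mathscr{M}_\infty(W^{(2)})^{p/2}]>0$ is independent of $n$.

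Next I would apply the two-sided bound of Theorem~\ref{thm-CCM}. Because $W$ is non-constant with $\E[W]=1$, both $\|\mathring W\|_1$ and $\|\mathring W\|_q$ are strictly positive and finite (the latter by the assumption $\E[W^t]<\infty$ for all $t>0$). Hence $\NQ(\mu_\infty)<\infty$ is equivalent to the conjunction of $\|\mathcal{R}\|_{\ell^1}<\infty$ and $\|\mathcal{R}\|_{\ell^{p/q}}<\infty$. By the geometric control above,
\[
\|\mathcal{R}\|_{\ell^1}=\sum_{n\ge 1}\mathcal{R}_n\asymp\sum_{n\ge 1}r^{np/2},\qquad \|\mathcal{R}\|_{\ell^{p/q}}^{q/p}=\sum_{n\ge 1}\mathcal{R}_n^{p/q}\asymp\sum_{n\ge 1}r^{np^2/(2q)},
\]
and each of the two geometric series converges if and only if $r<1$, that is, $\E[W^2]<b^{1-2\alpha-2/q}$. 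This establishes the claimed equivalence.

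Finally I would deduce the ``in particular'' clause. Since $W\ge 0$ is non-constant with $\E[W]=1$, Jensen's inequality gives $\E[W^2]>1$, so the condition $\E[W^2]<b^{1-2\alpha-2/q}$ forces $b^{1-2\alpha-2/q}>1$, hence $1-2\alpha-\tfrac{2}{q}>0$. This yields $\alpha<\tfrac12-\tfrac1q<\tfrac12$ and, rearranging, $q>\tfrac{2}{1-2\alpha}$. The only mild subtlety is keeping track of the parameter ranges imposed in Theorem~\ref{thm-CCM}, namely $1<p\le 2\le q<\infty$ and $q>\tfrac{1}{1-\alpha}$; both hypotheses are guaranteed by the assumptions of the lemma, so no additional work is needed. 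There is no real obstacle in this argument: once the sub-critical regime is used to extract uniform upper and lower bounds on $\E[\mathscr{M}_{n-1}(W^{(2)})^{p/2}]$, the remainder is just a geometric-series computation applied to the sharp estimate of Theorem~\ref{thm-CCM}.
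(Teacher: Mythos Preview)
Your proof is correct and follows essentially the same approach as the paper: the paper also uses the uniform positive bounds $0<\E[\mathscr{M}_\infty(W^{(2)})^{p/2}]\le \E[\mathscr{M}_{n-1}(W^{(2)})^{p/2}]\le 1$ (from Kahane's non-degeneracy and the supermartingale property) and then says the lemma ``immediately follows'' from Theorem~\ref{thm-CCM}, leaving the geometric-series verification and the ``in particular'' clause implicit. Your only imprecision is the phrase ``equivalent to the conjunction'': Theorem~\ref{thm-CCM} gives the sandwich $\|\mathcal{R}\|_{\ell^{p/q}}^{1/p}\lesssim \NQ(\mu_\infty)\lesssim \|\mathcal{R}\|_{\ell^1}^{1/p}$, so strictly speaking $\NQ<\infty$ implies one bound and is implied by the other---but since you then show both norms are finite iff $r<1$, the equivalence goes through exactly as you write.
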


By Lemma \ref{prop-sub}, we have
\begin{corollary}\label{cor-sqsub}
Suppose that $W$ is in the  squared sub-critical regime. Then
\[
\alpha_{c}\geq \frac{D_F}{2}=\frac{1}{2}\Big(1 - \frac{\log \E[W^2]}{\log b}\Big).
\]
\end{corollary}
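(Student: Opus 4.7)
The plan is to extract this directly from Lemma \ref{prop-sub} by optimizing over the auxiliary parameter $q$. Since $W$ is in the squared sub-critical regime, the definition \eqref{def-DF} reduces to $D_F = 1 - \log \E[W^2]/\log b$, which lies in $(0,1)$ by Lemma \ref{lem-DF}. In particular $\log \E[W^2] < \log b$, so there is room to play with the exponent.

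Fix an arbitrary $\alpha \in [0, D_F/2)$. Taking logarithms, the Lemma \ref{prop-sub} condition $\E[W^2] < b^{1 - 2\alpha - 2/q}$ is equivalent to
\[
\alpha < \tfrac{1}{2}\Big(1 - \tfrac{\log \E[W^2]}{\log b}\Big) - \tfrac{1}{q} = \tfrac{D_F}{2} - \tfrac{1}{q}.
\]
Since $\alpha < D_F/2 < 1/2$, we have $1/(1-\alpha) < 2$, so the admissibility constraint in Lemma \ref{prop-sub} reduces to $q > 2$. Choose $q = q(\alpha) > 2$ large enough that $1/q < D_F/2 - \alpha$, and pick any $p \in (1,2)$. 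Lemma \ref{prop-sub} then yields $\mathcal{N}^{(\alpha,p,q)}(\mu_\infty) < \infty$, and the definition \eqref{def-a-c} of $\alpha_c$ gives $\alpha \le \alpha_c$. Letting $\alpha \uparrow D_F/2$ completes the argument.

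There is essentially no obstacle here; the result is a routine specialization of the quasi-criterion in Lemma \ref{prop-sub}, whose real content (the equivalence of $\NQ(\mu_\infty) < \infty$ with an explicit exponential inequality involving $\E[W^2]$) has already been established via Theorem \ref{thm-CCM} together with the trivial uniform bound $\E[\mathscr{M}_n(W^{(2)})^{p/2}] \in (0,1]$ available in the squared sub-critical regime. The only thing to notice is that the $-1/q$ correction in the exponent of $b$ disappears in the limit $q \to \infty$, which is precisely what makes the lower bound attain the full value $D_F/2$.
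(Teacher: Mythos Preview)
Your proof is correct and follows exactly the same route as the paper, which simply writes ``By Lemma~\ref{prop-sub}'' and leaves the optimization over $q$ implicit. You have supplied the details the paper omits: rewriting the condition $\E[W^2]<b^{1-2\alpha-2/q}$ as $\alpha<D_F/2-1/q$, noting that $\alpha<1/2$ makes the constraint $q>1/(1-\alpha)$ redundant, and then letting $q\to\infty$.
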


\subsection{Squared critical regime}
Suppose that $W$ is in  the squared critical regime:
\[
\E[W^{(2)}\log W^{(2)}]=\log b.
\]
Then, for $p/2\le 1$, by the super-martingale property, 
\begin{align*}
\sup_{n\ge 1}\E[\MAB^{\frac{p}{2}}]  \le  \sup_{n\ge 1}(\E[\MAB])^{\frac{p}{2}} = 1. 
\end{align*}
This combined with Theorem \ref{thm-CCM} immediately yields 

\begin{lemma}\label{prop-sq-crit}
Suppose that $W$ is in  the squared critical regime. Let $0\le \alpha<\frac{1}{2}$. Then in the range $1< p\le 2 < q <\infty$ and $q> \frac{2}{1 - 2 \alpha}$, the condition $ \E[W^2] < b^{1- 2\alpha- \frac{2}{q}}$ implies that $\NQ(\mu_\infty)<\infty$.  
\end{lemma}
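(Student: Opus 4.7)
The plan is to read the conclusion directly from the upper estimate of Theorem~\ref{thm-CCM}, which gives
\[
\mathcal{N}^{(\alpha, p, q)}(\mu_\infty) \le c_2 \, \|\mathring{W}\|_q \, \|\mathcal{R}\|_{\ell^1}^{1/p}.
\]
Under the standing assumption $\E[W^t]<\infty$ for all $t>0$, the prefactor $\|\mathring{W}\|_q$ is finite, so the whole argument reduces to verifying $\|\mathcal{R}\|_{\ell^1}<\infty$.

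Recalling from \eqref{def-Rn} that
\[
\mathcal{R}_n = \E\bigl[\mathscr{M}_{n-1}(W^{(2)})^{p/2}\bigr] \cdot \Big(\frac{\E[W^2]}{b^{1-2\alpha-\frac{2}{q}}}\Big)^{np/2},
\]
the key input, already recorded just before the lemma, is that in the squared critical regime the positive martingale $(\mathscr{M}_n(W^{(2)}))_{n\ge 0}$ has mean one, so Jensen's inequality applied to the concave function $x\mapsto x^{p/2}$ (valid since $p/2\le 1$) yields
\[
\sup_{n\ge 1} \E\bigl[\mathscr{M}_n(W^{(2)})^{p/2}\bigr] \le 1.
\]
Consequently $\mathcal{R}_n \le \bigl(\E[W^2]/b^{1-2\alpha-2/q}\bigr)^{np/2}$. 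The hypothesis $\E[W^2]<b^{1-2\alpha-\frac{2}{q}}$, which is meaningful because $q>2/(1-2\alpha)$ forces $1-2\alpha-2/q>0$, makes this a convergent geometric series; hence $\|\mathcal{R}\|_{\ell^1}<\infty$ and the lemma follows.

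I do not foresee a real obstacle here. The squared critical regime is in this respect no harder than the squared sub-critical regime treated in Lemma~\ref{prop-sub}: the trivial bound $\E[\mathscr{M}_n(W^{(2)})^{p/2}]\le 1$ is already strong enough to absorb the geometric factor inside $\mathcal{R}_n$. The genuinely delicate behaviour --- where $\mathscr{M}_n(W^{(2)})\to 0$ and one must track the precise polynomial rate of decay of its small moments, using BRW additive/derivative martingale inputs --- arises only in the squared super-critical regime and does not enter the present argument.
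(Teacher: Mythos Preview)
Your proof is correct and follows essentially the same route as the paper: the paper records, immediately before the lemma, the bound $\sup_{n\ge 1}\E[\mathscr{M}_n(W^{(2)})^{p/2}]\le 1$ (phrased via the super-martingale property, which is your Jensen argument) and then states that this combined with Theorem~\ref{thm-CCM} immediately yields the result. Your write-up simply makes the geometric-series verification of $\|\mathcal{R}\|_{\ell^1}<\infty$ explicit.
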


\begin{remark*}
Combining our proof of Lemma \ref{prop-sq-crit} with  a result of Hu and Shi \cite[Theorem 1.5]{Shi} in BRW,  one  can show that under the assumption of Lemma \ref{prop-sq-crit} and an additional assumption that $\E[W^{-\varepsilon}]<\infty$ for some $\varepsilon$, then 
\[
\NQ(\mu_\infty)<\infty \Longleftrightarrow  \E[W^2]\le b^{1- 2\alpha - \frac{2}{q}}.
\]
\end{remark*}

\begin{corollary}\label{cor-cri}
	Suppose that $W$ is in the  squared critical regime. Then
	\[
	\alpha_{c}\geq \frac{D_F}{2}=\frac{1}{2}\Big(1 - \frac{\log \E[W^2]}{\log b}\Big).
	\]
\end{corollary}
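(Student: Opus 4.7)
\textbf{Proof plan for Corollary \ref{cor-cri}.} The strategy is to reduce the claim to a direct application of Lemma~\ref{prop-sq-crit}, exactly in the same spirit as the squared sub-critical case handled in Corollary~\ref{cor-sqsub}. Since $W$ is in the squared critical regime, Lemma~\ref{lem-DF} gives $D_F = 1 - \log \E[W^2]/\log b \in (0,1)$, so in particular $D_F/2 < 1/2$, and any $\alpha < D_F/2$ automatically satisfies $\alpha < 1/2$.

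Fix an arbitrary $\alpha$ with $0 \le \alpha < D_F/2$. The inequality $\alpha < D_F/2$ is equivalent to $1 - 2\alpha > \log \E[W^2]/\log b$, i.e.\ $b^{1-2\alpha} > \E[W^2]$. Hence one can choose $q$ so large that simultaneously
\[
q > \max\Bigl(2,\, \tfrac{2}{1-2\alpha}\Bigr) \quad \text{and} \quad b^{1-2\alpha - \frac{2}{q}} > \E[W^2],
\]
because the left-hand side of the second inequality tends to $b^{1-2\alpha}$ as $q \to \infty$. Choose any $p \in (1,2)$. By Lemma~\ref{prop-sq-crit}, these choices of $(\alpha, p, q)$ force $\mathcal{N}^{(\alpha,p,q)}(\mu_\infty) < \infty$.

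By the definition \eqref{def-a-c} of the critical exponent $\alpha_c$, this yields $\alpha_c \ge \alpha$. Since $\alpha$ was arbitrary in $[0, D_F/2)$, taking the supremum gives $\alpha_c \ge D_F/2$, which is the desired conclusion.

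There is no real obstacle here: the work has been done in establishing Lemma~\ref{prop-sq-crit}, itself relying on the super-martingale bound $\sup_n \E[\mathscr{M}_n(W^{(2)})^{p/2}] \le 1$ in the squared critical regime (where the martingale is no longer uniformly integrable but still admits a trivial super-martingale small-moment estimate) combined with the two-sided bound of Theorem~\ref{thm-CCM}. The only subtlety worth flagging is that Lemma~\ref{prop-sq-crit} only provides the sufficient direction ($\E[W^2] < b^{1-2\alpha - 2/q}$ implies finiteness), not a criterion as in the sub-critical case; but the sufficient direction is all that is needed for a lower bound on $\alpha_c$.
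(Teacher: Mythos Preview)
Your proposal is correct and follows exactly the approach the paper intends: the paper states Corollary~\ref{cor-cri} immediately after Lemma~\ref{prop-sq-crit} without writing out a proof, leaving it as the obvious deduction you have spelled out. Your observation that only the sufficient direction of Lemma~\ref{prop-sq-crit} is needed here is accurate and matches the paper's setup.
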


\subsection{Squared super-critical regime}\label{sec-super}

Suppose that $W$ is in the squared super-critical regime:
\[
\E[W^{(2)}\log W^{(2)}]>\log b.
\]
By Lemma \ref{lem-non-sub}, $W$ is in the boundary case with the Biggins-Kyprianou transform: 
\[
W=\frac{e^{-\beta \xi}}{\E[e^{-\beta\xi}]},\quad \E[\xi e^{-\xi}]=0,\quad \E[b e^{-\xi}]=1,
\]
with $\xi$ a random variable taking values in $\R\cup\{+\infty\}$ and  $\beta>0$. 
In particular, 
\begin{align}\label{W2-2beta}
W^{(2)}=\frac{W^2}{\E[W^2]} = \frac{e^{-2\beta \xi}}{\E[e^{-2\beta\xi}]}. 
\end{align}
Since $W$ is assumed to satisfy Mandelbrot-Kahane's non-degeneracy condition, we must have $\beta<1$. Moreover, by Lemma \ref{lem-psi}, $W$ is in the squared super-critical regime if and only if $2\beta > 1$. Therefore, we have 
\[
\frac{1}{2}< \beta <1. 
\]

Recall the definition \eqref{def-psi}: 
\[
\psi(t) = \psi_\xi(t):=\log \E[be^{-t\xi}]. 
\]

\begin{lemma}\label{lem-psi-beta}
For $1/2<\beta<1$, we have 
\[
 D_F=1 -   \inf_{1/2\le t\le 1} \frac{\log  \E[b^{1-t}W^{2t}]}{ t \log b} = \frac{ 2\psi(\beta)}{\log b}. 
\]
\end{lemma}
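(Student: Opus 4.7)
The plan is a direct computation that rewrites the infimum in the definition of $D_F$ in terms of the function $\psi$, and then uses the elementary properties of $\psi$ recorded in Lemma \ref{lem-psi} to identify the optimizer.

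\textbf{Step 1: Rewrite $\log\E[b^{1-t}W^{2t}]$ in terms of $\psi$.} Since $W=e^{-\beta\xi}/\E[e^{-\beta\xi}]$, we have
\[
\log\E[b^{1-t}W^{2t}] = (1-t)\log b + \log\E[e^{-2t\beta\xi}] - 2t\log\E[e^{-\beta\xi}].
\]
The defining relation $\psi(s)=\log(b\E[e^{-s\xi}])$ gives $\log\E[e^{-s\xi}]=\psi(s)-\log b$, so after cancellation
\[
\log\E[b^{1-t}W^{2t}] = t\log b + \psi(2t\beta) - 2t\psi(\beta).
\]
Dividing by $t\log b$ and subtracting from $1$, we obtain the clean identity
\[
1 - \frac{\log\E[b^{1-t}W^{2t}]}{t\log b} = \frac{2\psi(\beta)}{\log b} - \frac{\psi(2t\beta)}{t\log b}.
\]
Hence
\[
D_F = \sup_{1/2\le t\le 1}\Bigl(\frac{2\psi(\beta)}{\log b} - \frac{\psi(2t\beta)}{t\log b}\Bigr) = \frac{2\psi(\beta)}{\log b} - \inf_{1/2\le t\le 1}\frac{\psi(2t\beta)}{t\log b}.
\]

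\textbf{Step 2: Identify the infimum.} By Lemma \ref{lem-psi}, $\psi$ is non-negative on $[0,1+\delta]$ and vanishes at $t=1$. Because $1/2<\beta<1$, the point
\[
t_\star := \frac{1}{2\beta} \in \bigl(\tfrac{1}{2},1\bigr)
\]
lies strictly inside the admissible interval $[1/2,1]$, and at $t=t_\star$ we have $2t\beta=1$, so $\psi(2t_\star\beta)=\psi(1)=0$. Combined with non-negativity of $\psi$ on $[\beta,2\beta]\subset(0,2)$ (which is the range covered by $2t\beta$ for $t\in[1/2,1]$, and which is contained in $[0,1+\delta]$ once one notes that the assumption $\E[W^t]<\infty$ for all $t>0$ gives $\psi$ non-negative on all of $[0,\infty)$ by the same convexity argument as in Lemma \ref{lem-psi}), this shows the infimum equals $0$ and is attained at $t_\star$. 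Therefore
\[
D_F = \frac{2\psi(\beta)}{\log b},
\]
as claimed.

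\textbf{Remark on the main obstacle.} The computation itself is routine; the one point to verify carefully is that $\psi$ is non-negative on the full range $[\beta,2\beta]$ traced out by $2t\beta$ as $t$ ranges over $[1/2,1]$. Since $2\beta<2$, this range is contained in a neighbourhood of $1$ where $\psi\ge 0$; the argument in Lemma \ref{lem-psi} (strict convexity plus $\psi(1)=\psi'(1)=0$) in fact yields $\psi\ge 0$ globally on $(0,\infty)$ whenever $\psi$ is finite there, which is ensured by the standing hypothesis $\E[W^t]<\infty$ for all $t>0$.
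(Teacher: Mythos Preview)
Your proof is correct and follows essentially the same route as the paper: both rewrite $\log\E[b^{1-t}W^{2t}]=t\log b+\psi(2t\beta)-2t\psi(\beta)$, reduce to showing $\inf_{1/2\le t\le 1}\psi(2t\beta)/t=0$, and then use that $\psi\ge 0$ with $\psi(1)=0$ attained at $t_\star=1/(2\beta)\in(1/2,1)$. Your remark on the range $[\beta,2\beta]$ is a slight over-elaboration but entirely correct under the standing hypothesis $\E[W^t]<\infty$ for all $t>0$.
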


\begin{proof}
 We can  write 
$
W = b e^{-\beta \xi  - \psi(\beta)},
$
then 
\[
\log  \E[b^{1-t}W^{2t}]=t\log b+\psi(2t\beta)-2t\psi(\beta)
\]
and
\[
D_F=\frac{2\psi(\beta)}{\log b}-\inf_{1/2\leq t\leq 1}\frac{\psi(2t\beta)}{t\log b} .
\]
Hence, we need to show
\[
  0 =  \inf_{1/2\le t\le 1}  \frac{\psi(2t\beta)}{ t}. 
\]
This equality follows from Lemma~\ref{lem-psi}. Indeed,
by Lemma~\ref{lem-psi},  $\psi$ is non-negative and   $\psi(2 t_0 \beta) = \psi(1)=0$ with
$
1/2 < t_0 = \frac{1}{2\beta} <1$. 
\end{proof}

Now write 
\[
\mathscr{M}_n(2\beta): = \MAB.
\]
Using the equality \eqref{W2-2beta} and the definition \eqref{def-psi} of the function $\psi$, we have
\begin{align}\label{Mn-beta-Z}
\mathscr{M}_n(2\beta) = \frac{1}{b^n} \sum_{|u|=n}  \prod_{j=1}^n \Big(    \frac{W(u|_j)^2}{\E[W^2]}  \Big) =  \frac{1}{b^n} \sum_{|u|=n}  \prod_{j=1}^n \Big(    \frac{e^{-2\beta \xi(u|_j)} }{\E[e^{-2\beta\xi}]}  \Big) =    e^{-n \psi(2\beta)} \cdot Z_{n, 2\beta},
\end{align}
where $Z_{n, 2\beta}$ is the partition function given by
\[
Z_{n,2\beta} = \sum_{|u|=n} e^{- 2\beta  \sum_{j=1}^n \xi(u|_j)} = \sum_{|u|=n} e^{- 2 \beta V(u)}
\]
with $V(u)$ defined by
 \begin{align}\label{eqn-BRW-VU}
 V(u) := \sum_{j=1}^{|u|}  \xi(u|_j). 
 \end{align}

The almost sure, distributional and small moment asymptotics  of the above partition function have been investigated by Hu and Shi \cite{Shi},   A\"{\i}d\'{e}kon \cite{Elie-AOP} and Madaule \cite{Mad} in the framework of BRW.     To suit our setting and for the convenience of the readers, a particular version of their works is formulated as follows:

Assume that  
\begin{align}\label{xi-ass}
\E[\xi^2 e^{-\xi}]<\infty. 
\end{align}
By Madaule \cite[Proposition 2.1]{Mad}, for any $\gamma>1$,  there exists $c>0$ such that 
\[
\sup_{n\ge 1}\PP\big( n^{\frac{3\gamma}{2}} Z_{n, \gamma} \ge e^{\gamma x}\big) \le c (1+x) e^{-x} \quad \text{for all $x\ge1$}. 
\]
Then,   for any $0< r<\frac{1}{\gamma}$, 
\[
\sup_{n\ge 1}\E[(n^{\frac{3\gamma}{2}} Z_{n, \gamma})^r]<\infty. 
\]
On the other hand,  by \cite[Lemma~3.6]{Elie-AOP}, there is another constant $c'>0$ such that for any integer $n\geq 1$ and any real number $x\in [0,\frac{3}{2}\log n-1]$, 
\begin{align*}
\PP\big(n^{\frac{3}{2}\gamma}Z_{n,\gamma}\ge e^{\gamma x}\big)\ge  c'e^{-x}. 
\end{align*}
Hence, for any $0< r<\frac{1}{\gamma}$, we have
\begin{align*}
\E\big[(n^{\frac{3\gamma}{2}} Z_{n, \gamma})^r\big]&=\int_{\R}r e^{\gamma x(r-1)}\PP\big(n^{\frac{3}{2}\gamma}Z_{n,\gamma}\ge e^{\gamma x}\big)\gamma e^{\gamma x}dx\\
&\ge c'r\gamma \int_{0}^{\frac{3}{2}\log n-1}  e^{-(1-\gamma r)t} dt\\
&=\frac{c'r\gamma}{1- r\gamma}\big(1-e^{-(1-r\gamma)(\frac{3}{2}\log n-1)}\big).
\end{align*}
Thus,  for any $0< r<\frac{1}{\gamma}$, there exist two constants $c_1 = c_1(r, \gamma), c_2 = c_2(r, \gamma)>0$ such that for any integer $n \ge 1$
\begin{align}\label{s-m-asymp}
	c_1 n^{-\frac{3r\gamma}{2}}\le  \E[(Z_{n, \gamma})^r]  \le c_2 n^{-\frac{3r\gamma}{2}}.
\end{align}

\begin{lemma}\label{prop-super}
Let $0\le \alpha<\frac{1}{2}$ and $q>\frac{2}{1-2\alpha}$. Assume that $\frac{1}{2}<\beta<1$.   Then the following assertions are equivalent:
\begin{itemize}
\item[(A1).]$\NQ(\mu_\infty)<\infty$ holds for all $p$ with $1< p< \frac{1}{\beta}$; 
\item[(A2).] $\NQ(\mu_\infty)<\infty$ holds for all $p$ with $\frac{2}{3\beta}< p< \frac{1}{\beta}$; 
\item[(A3).]  $\NQ(\mu_\infty)<\infty$ holds for some $p$ with $\frac{2}{3\beta}<p< \frac{1}{\beta}$; 
\item[(A4).] $\alpha +q^{-1}  \le    \frac{\psi(\beta)}{\log b}$.
\end{itemize}
\end{lemma}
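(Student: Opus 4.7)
The main tool is the two-sided equivalent $L^p(\ell^q)$-estimate of Theorem~\ref{thm-CCM}, which reduces the finiteness of $\mathcal{N}^{(\alpha,p,q)}(\mu_\infty)$ to the scalar summability of the sequence $\mathcal{R}$ defined in \eqref{def-Rn}. Since $W$ lies in the squared super-critical regime, Lemma~\ref{lem-non-sub} provides the Biggins--Kyprianou transform $W = e^{-\beta\xi}/\E[e^{-\beta\xi}]$ with $1/2 < \beta < 1$, and the representation \eqref{Mn-beta-Z} gives $\mathscr{M}_n(W^{(2)}) = e^{-n\psi(2\beta)} Z_{n,2\beta}$, where $Z_{n,2\beta}$ is a BRW partition function at inverse temperature $2\beta > 1$.

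Next, the restriction $p < 1/\beta$ ensures $p/2 < 1/(2\beta)$, so the small-moment asymptotics \eqref{s-m-asymp} apply and yield $\E[\mathscr{M}_{n-1}(W^{(2)})^{p/2}] \asymp n^{-3p\beta/2} e^{-(n-1)(p/2)\psi(2\beta)}$. Combining this with the identity $\log \E[W^2] = \log b + \psi(2\beta) - 2\psi(\beta)$, which is immediate from the Biggins--Kyprianou form of $W$, and the definition \eqref{def-Rn} of $\mathcal{R}_n$, one obtains
\[
\mathcal{R}_n \asymp n^{-3p\beta/2} \exp\bigl(n p\bigl[(\alpha + q^{-1})\log b - \psi(\beta)\bigr]\bigr).
\]
Setting $\kappa := (\alpha + q^{-1})\log b - \psi(\beta)$, the series $\|\mathcal{R}\|_{\ell^1}$ converges if and only if $\kappa < 0$, or $\kappa = 0$ and $p > 2/(3\beta)$; similarly $\sum_n \mathcal{R}_n^{p/q}$ converges if and only if $\kappa < 0$, or $\kappa = 0$ and $p^2 > 2q/(3\beta)$. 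If $\kappa > 0$, both series diverge exponentially for every admissible~$p$.

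The equivalences then assemble via Theorem~\ref{thm-CCM}. For (A4) $\Rightarrow$ (A2): in the strict case $\kappa < 0$, the upper bound gives $\mathcal{N}^{(\alpha,p,q)}(\mu_\infty) < \infty$ for every $p \in (1, 1/\beta)$; in the equality case $\kappa = 0$, the same upper bound applies for every $p \in (2/(3\beta), 1/\beta)$, which is nonempty since $\beta < 1$. The implication (A2) $\Rightarrow$ (A3) is immediate, and (A1) $\Rightarrow$ (A3) follows from the nonemptiness of $(1, 1/\beta) \cap (2/(3\beta), 1/\beta)$. For (A3) $\Rightarrow$ (A4), the lower bound of Theorem~\ref{thm-CCM} forces $\sum_n \mathcal{R}_n^{p/q} < \infty$, which rules out $\kappa > 0$ and yields (A4). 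The remaining direction (A4) $\Rightarrow$ (A1) is obtained by applying the same case analysis across the range $p \in (1, 1/\beta)$.

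\textbf{Main obstacle.} The analysis at the critical boundary $\kappa = 0$ depends on the precise polynomial decay rate $n^{-3p\beta/2}$ from \eqref{s-m-asymp}, which is the sharp second-order asymptotics coming from the branching random walk literature (Hu--Shi, A\"id\'ekon, Madaule); without matching two-sided constants in that estimate, the thresholds $2/(3\beta)$ and $\sqrt{2q/(3\beta)}$ would not be sharp, and the equivalence would break down at the boundary. The required second-moment hypothesis \eqref{xi-ass} on $\xi$ is automatic from the standing assumption $\E[W^t] < \infty$ for all $t > 0$, so this technical input is available for free.
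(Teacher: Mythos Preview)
Your approach is essentially identical to the paper's: both reduce everything to Theorem~\ref{thm-CCM}, rewrite $\mathcal{R}_n$ via the Biggins--Kyprianou transform and the small-moment estimate \eqref{s-m-asymp} to obtain $\mathcal{R}_n \asymp n^{-3p\beta/2}\exp(np\kappa)$ with $\kappa=(\alpha+q^{-1})\log b-\psi(\beta)$, and then split on the sign of~$\kappa$. The implications (A4)$\Rightarrow$(A2), (A2)$\Rightarrow$(A3), and (A3)$\Rightarrow$(A4) are argued in the same way as the paper.

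There is one small gap. Your closing line ``(A4)$\Rightarrow$(A1) is obtained by applying the same case analysis across the range $p\in(1,1/\beta)$'' does not go through at the boundary $\kappa=0$ when $1/2<\beta<2/3$: for $p\in(1,\,2/(3\beta)]$ the polynomial factor $n^{-3p\beta/2}$ is not summable, so $\|\mathcal{R}\|_{\ell^1}=\infty$ and the upper bound of Theorem~\ref{thm-CCM} yields nothing for those~$p$. The paper closes this loop differently: it notes at the outset that $p\mapsto \mathcal{N}^{(\alpha,p,q)}(\mu_\infty)=\|\,\cdot\,\|_{L^p(\PP;\ell^q)}$ is non-decreasing in~$p$, whence (A2)$\Rightarrow$(A1) is automatic (finiteness for some $p_0$ close to $1/\beta$ propagates to all smaller~$p$). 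Adding that one-line monotonicity observation completes your argument and brings it in line with the paper's.
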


\begin{proof}
The following implications are obvious  (since the $L^p (\PP, \ell^q)$ norm is non-decreasing in $p$):
\[
\text{(A1)$\Longleftrightarrow$ (A2) $\Longrightarrow$ (A3)}.
\]
Therefore, it remains to show that (A3) $\Longrightarrow$ (A4) and (A4) $\Longrightarrow$ (A2). 

By the definition of the function $\psi$ in \eqref{def-psi} and the relation \eqref{bdary-reg}, we have 
\begin{align*}
e^{-\psi(2\beta) }   \cdot   \frac{\E[W^2]}{b^{1- 2\alpha- \frac{2}{q}}}  = \underbrace{\exp\Big(- 2 \Big[\frac{\psi(\beta)}{\log b} - (\alpha + q^{-1}) \Big] \log b \Big)}_{\text{denoted $T = T(\alpha, \beta, \xi,  q, b)$}}. 
\end{align*}
Thus (A4) holds if and only if $T\le 1$.

Note that the condition \eqref{all-m} clearly implies the condition \eqref{xi-ass}. Therefore, by \eqref{Mn-beta-Z} and \eqref{s-m-asymp}, if $1/2<\beta<1$ and  $1<p<\frac{1}{\beta}$, then there exist $c_1(p, \beta), c_2(p, \beta)>0$, such that  for all $n\ge 1$, 
\begin{align}\label{big-O}
  c_1(p, \beta) e^{-np\psi(2\beta)/2} \cdot n^{-3p\beta/2} \le  \E[\mathscr{M}_n(2\beta)^{\frac{p}{2}}]  \le c_2(p, \beta) e^{-np\psi(2\beta)/2} \cdot n^{-3p\beta/2}. 
\end{align}

(A4) $\Longrightarrow$ (A2):  Assume that (A4)  holds ($T\le 1$) and fix any $p$ with $\frac{2}{3\beta}<p< \frac{1}{\beta}$. Recall the definition of the sequence  $\mathcal{R}= (\mathcal{R}_n(W, b, \alpha, p, q))_{n\ge 1}$ defined in \eqref{def-Rn}:
\[
\mathcal{R}_n  (W, b, \alpha, p, q)  =  \E \big[  \mathscr{M}_{n-1}(2\beta)^{\frac{p}{2}}\big] \cdot \Big(  \frac{\E[W^2]}{b^{1- 2\alpha- \frac{2}{q}}}\Big)^{\frac{np}{2}}. 
\]
 By \eqref{big-O},  there exists $C= C(p, \beta)>0$, such that 
\[
\|\mathcal{R}\|_{\ell^1} \le  C  \sum_{n\ge 1}^\infty T^{np/2} \cdot n^{-3p\beta/2} \le    C \sum_{n\ge 1}^\infty n^{-3p\beta/2}<\infty.
\]
Therefore, by  Theorem~\ref{thm-CCM},  $\NQ(\mu_\infty)<\infty$.  That is, (A2) holds. 

(A3) $\Longrightarrow$ (A4): 
Now assume that (A3) holds.  By Theorem~\ref{thm-CCM},  $\| \mathcal{R}\|_{\ell^{q/p}}<\infty$. Also, by \eqref{big-O},  there exists $c= c(p, \beta)>0$, such that 
\begin{align*}
\| \mathcal{R}\|_{\ell^{q/p}}^{q/p} \ge  c \sum_{n=1}^\infty T^{nq/2} \cdot  n^{-3q\beta/2}.
\end{align*}
It follows that $T\le 1$.  That is, (A4) holds. 
\end{proof}

Note that,  by Lemma \ref{lem-psi}, the function $\psi$ is strictly convex and decreasing  on the interval $[0,1]$,  for $1/2 < \beta <1$, we have 
\[
0< \psi(\beta) < \psi(1/2) < \frac{\psi(0)+\psi(1)}{2}=\frac{\log b}{2} .
\]

\begin{corollary}\label{cor-super-alpha}
Let $1/2 < \beta <1$ and $1<p<\frac{1}{\beta}$. Then  for any $q>  \frac{\log b}{\psi(\beta)}>2$, 
\begin{align*}
\mathcal{N}^{(\alpha, p, q)}(\mu_\infty)<\infty \quad \text{for all 
 $\alpha\le \frac{\psi(\beta)}{\log b} - q^{-1}$}.
 \end{align*}
\end{corollary}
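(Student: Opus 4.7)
The plan is to reduce Corollary~\ref{cor-super-alpha} immediately to the equivalence \text{(A4)}$\Longleftrightarrow$\text{(A1)} in Lemma~\ref{prop-super}, after checking the mild technical ranges. First, I would verify the hypotheses of Lemma~\ref{prop-super}. Fix $\alpha$ with $0 \le \alpha \le \frac{\psi(\beta)}{\log b} - q^{-1}$. Since $\tfrac{1}{2}<\beta<1$, Lemma~\ref{lem-psi} gives that $\psi$ is strictly convex on $[0,1]$ with $\psi(0)=\log b$ and $\psi(1)=0$, so $\psi(\beta)<\psi(1/2)<\tfrac{1}{2}\log b$; hence $\alpha<\frac{\psi(\beta)}{\log b}<\tfrac{1}{2}$, as required. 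The assumption $q>\frac{\log b}{\psi(\beta)}$ forces $q>2$.

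Next I would check the remaining constraint $q>\frac{2}{1-2\alpha}$. Using $\alpha\le\frac{\psi(\beta)}{\log b}-q^{-1}$,
\begin{equation*}
1-2\alpha \;\ge\; 1-\frac{2\psi(\beta)}{\log b}+\frac{2}{q},
\end{equation*}
so multiplying by $q$ yields $q(1-2\alpha)\ge q\bigl(1-\tfrac{2\psi(\beta)}{\log b}\bigr)+2>2$, because $\psi(\beta)<\tfrac{1}{2}\log b$. Thus $q>\frac{2}{1-2\alpha}$. Finally, since $\beta>\tfrac{1}{2}$ we have $\frac{1}{\beta}<2$, so any $p\in(1,\tfrac{1}{\beta})$ satisfies $1<p\le 2$, and Lemma~\ref{prop-super} applies to $(\alpha,p,q)$.

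With all hypotheses in place, the condition $\alpha+q^{-1}\le\frac{\psi(\beta)}{\log b}$ is precisely statement (A4) of Lemma~\ref{prop-super}. The equivalence (A4)$\Longleftrightarrow$(A1) in that lemma then yields $\mathcal{N}^{(\alpha,p,q)}(\mu_\infty)<\infty$ for every $p\in(1,\tfrac{1}{\beta})$, which is exactly the conclusion of Corollary~\ref{cor-super-alpha}. No genuine obstacle is present: the whole content has been packaged into Lemma~\ref{prop-super} via the sharp two-sided bound of Theorem~\ref{thm-CCM} together with the small-moment asymptotics~\eqref{big-O} for the squared martingale $\mathscr{M}_n(2\beta)$. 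The only ``work'' is the routine bookkeeping above that places $(\alpha,p,q)$ within the admissible parameter range.
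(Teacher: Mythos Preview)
Your proof is correct and follows essentially the same approach as the paper: both reduce the claim to the equivalence (A4)$\Longleftrightarrow$(A1) of Lemma~\ref{prop-super} after verifying the parameter constraints $0\le\alpha<\tfrac12$ and $q>\tfrac{2}{1-2\alpha}$. The only cosmetic difference is that the paper checks these constraints at the single endpoint $\alpha_0=\tfrac{\psi(\beta)}{\log b}-q^{-1}$ and then invokes the monotonicity of $\mathcal{N}^{(\alpha,p,q)}$ in $\alpha$, whereas you verify them directly for each admissible $\alpha\ge 0$; negative $\alpha$ are covered by that same monotonicity in either case.
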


\begin{proof}
Assume that $q>  \frac{\log b}{\psi(\beta)}>2$.  Set $\alpha_0 = \frac{\psi(\beta)}{\log b} - q^{-1}\ge 0$. 
Then,  $0< \alpha_0<\frac{1}{2}$ and 
\[
 \frac{2}{1-2\alpha_0} = \frac{1}{\frac{1}{2}-  \frac{\psi(\beta)}{\log b}  + q^{-1}} < q. 
\] 
Hence, we may apply the equivalence  (A1) $\Longleftrightarrow$ (A4) in Lemma \ref{prop-super}  and obtain 
\[
\text{ $\mathcal{N}^{(\alpha_0, p, q)}(\mu_\infty)<\infty$ for all $1< p< \frac{1}{\beta}$.} 
\]
 This completes the proof since $\mathcal{N}^{(\alpha, p, q)}(\mu_\infty)$ is increasing in $\alpha$. 
\end{proof}
\begin{corollary}\label{cor-sqsup}
If  $W$ is in the squared super-critical regime, then 
\[
\alpha_c\geq  \frac{D_F}{2}=\frac{\psi(\beta)}{\log b}.
\]
\end{corollary}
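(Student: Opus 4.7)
\textbf{Proof proposal for Corollary \ref{cor-sqsup}.}

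The plan is to deduce the corollary directly from Corollary \ref{cor-super-alpha} by a suitable choice of the parameters $(p,q)$ in the definition \eqref{def-a-c} of $\alpha_c$. First I would record the setup: since $W$ is in the squared super-critical regime and $\E[W^t]<\infty$ for all $t>0$, Lemma \ref{lem-non-sub} implies that $W$ is in the Biggins-Kyprianou boundary case with transform $W=e^{-\beta\xi}/\E[e^{-\beta\xi}]$. The Mandelbrot-Kahane non-degeneracy $\E[W\log W]<\log b$ combined with Lemma \ref{lem-psi}, equation \eqref{beta-less-1}, gives $\beta<1$, while Lemma \ref{lem-psi-beta} together with the squared super-criticality condition forces $\beta>1/2$. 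Hence $\tfrac12<\beta<1$, and Lemma \ref{lem-psi-beta} identifies $D_F/2=\psi(\beta)/\log b$.

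Next I would fix any real number $\alpha$ with $\alpha<\psi(\beta)/\log b$ and show that $\mathcal{N}^{(\alpha,p,q)}(\mu_\infty)<\infty$ for some admissible pair $(p,q)$ with $1<p<2<q<\infty$. Since $\tfrac12<\beta<1$ implies $1<1/\beta<2$, I can pick for instance
\[
p=\frac{1+1/\beta}{2}\in(1,1/\beta)\subset(1,2).
\]
For $q$, I would require simultaneously the two conditions appearing in the hypothesis and conclusion of Corollary \ref{cor-super-alpha}, namely
\[
q>\max\!\left\{2,\,\frac{\log b}{\psi(\beta)}\right\} \quad \text{and}\quad q\geq \frac{1}{\psi(\beta)/\log b-\alpha}.
\]
The second condition is meaningful precisely because $\alpha<\psi(\beta)/\log b$, and both conditions are satisfied by all sufficiently large $q>2$. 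For such a choice, one has $\alpha\leq \psi(\beta)/\log b-q^{-1}$, so Corollary \ref{cor-super-alpha} yields $\mathcal{N}^{(\alpha,p,q)}(\mu_\infty)<\infty$. By the definition \eqref{def-a-c} of $\alpha_c$, this gives $\alpha\leq\alpha_c$.

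Finally, letting $\alpha$ tend to $\psi(\beta)/\log b$ from below, I conclude
\[
\alpha_c\geq\frac{\psi(\beta)}{\log b}=\frac{D_F}{2},
\]
which is the desired inequality. There is essentially no obstacle here: the whole technical content (namely the $L^p(\ell^q)$-upper bound from Theorem \ref{thm-CCM} combined with the sharp small-moment asymptotic \eqref{s-m-asymp} for the squared BRW partition function) has already been absorbed into Lemma \ref{prop-super} and Corollary \ref{cor-super-alpha}, so this final step reduces to an elementary bookkeeping of parameter ranges.
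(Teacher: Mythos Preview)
Your proof is correct and follows essentially the same route as the paper: both deduce the inequality from Corollary \ref{cor-super-alpha} by taking $q\to\infty$ so that $\psi(\beta)/\log b - q^{-1}\to\psi(\beta)/\log b$, with $D_F/2=\psi(\beta)/\log b$ coming from Lemma \ref{lem-psi-beta}. The only difference is cosmetic: you fix $\alpha$ first and then choose $(p,q)$, while the paper writes the one-line supremum $\alpha_c\ge\sup_{1<p<2<q<\infty}(\psi(\beta)/\log b - q^{-1})$; note also that the fact $\beta>1/2$ is not given by Lemma \ref{lem-psi-beta} itself but by applying \eqref{beta-less-1} to $W^{(2)}$ (with parameter $2\beta$), though this is a minor attribution issue.
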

\begin{proof}
 By Lemma~\ref{lem-psi-beta}, Corollary \ref{cor-super-alpha}  and the definition \eqref{def-a-c} of $\alpha_c$, one gets
	\[
	\alpha_c \ge  \sup_{1< p< 2< q<\infty}  \Big(\frac{\psi(\beta)}{\log b} - q^{-1}\Big)=  \frac{\psi(\beta)}{\log b}=\frac{D_F}{2}.
	\]

\end{proof}

\subsection{Proof of Proposition \ref{prop-sq-3case}}

The desired inequality 
$
\alpha_c\ge \frac{D_F}{2}
$
 follows from 
Corollary~\ref{cor-sqsub}, Corollary~\ref{cor-cri} and Corollary~\ref{cor-sqsup}.

\section{CLT and Stable limit}\label{sec-clt}
This section is devoted to the proofs of Proposition \ref{prop-to-infty} and Proposition \ref{prop-clt}. 

Proposition \ref{prop-clt} is inspired by the work of     Iksanov-Kolesko-Meiners \cite{Iksanov} and  will follow from  Propositions~\ref{thm-sq-sub-critical}, \ref{thm-sq-critical} and \ref{thm-non-cri-super-critical}, where the weak convergences are established under the original probability $\PP$,  and an elementary  Lemma \ref{lemma-cov-law-con-two}, which connects the weak convergences under $\PP$ and under the conditional $\PP^* = \PP(\cdot |\mu_\infty\ne 0)$.

 \subsection{The weak convergences under the original probability $\PP$}

Recall the definition \eqref{def-DF} of $D_F$.  By Lemma~\ref{lem-DF}, 
\[
D_F \in (0,1).
\]
In particular,   if  $W$ is in the squared sub-critical or squared critical regime, then  $1- \frac{\log \E[W^2]}{\log b}>0$ and hence $\E[W^2]<b$. Therefore,  we can define
 \begin{align}\label{eqn-c-c-c-c}
 \varrho=\varrho(W,b):=\frac{\E[\mathring{W}^2]}{4\pi^2\E[W^2]}\sum_{k=1}^\infty \Big(\frac{\E[W^2]}{b}\Big)^{k}\Big|\frac{e^{i2\pi b^{-k}}-1}{2\pi  b^{-k}}\Big|^2\in (0,\infty).
 \end{align}
In the case $b = 2$, we shall need to use the notation 
\begin{align}\label{def-pi}
\varpi: = 
- \frac{2 \E[\mathring{W}^2]}{\pi^2}. 
\end{align}
We are going to see in Lemma \ref{lemma-jihi} and Lemma \ref{coro-97} below that, $\varrho= \E[|\widehat{\mu}_\infty(1)|^2]$ for any $b \ge 2$, while  $\varpi = \E[(\widehat{\mu}_\infty(1))^2]$ for $b = 2$ and $\E[(\widehat{\mu}_\infty(1))^2] =0$ for all $b \ge 3$.  And, in particular, we have $\varrho \pm \varpi>0$.

If  the random weight $W$ is in the squared sub-critical regime (the condition \eqref{sq-sub} is satisfied), then Kahane's result  \cite[Theorem 2]{Kahane-Peyriere-advance}  implies that the martingale $\MAB$ defined in \eqref{eqn-def-mnab}
converges to a non-trivial limit $\mathscr{M}_\infty(W^{(2)})$ almost surely and also  in $L^1$.

\begin{proposition}[Squared sub-critical regime]\label{thm-sq-sub-critical}
If  $W$  is in the squared sub-critical regime, 
then for $b \ge 3$, 
$$
b^{\frac{nD_F}{2}}  \cdot \widehat{\mu}_{\infty}(b^n)\xrightarrow[n\to\infty]{d} \sqrt{\varrho\cdot \mathscr{M}_\infty(W^{(2)})}\mathcal{N}_{\C}(0,1),
$$
where $\mathcal{N}_{\C}(0,1)$ is independent of $\mathscr{M}_\infty(W^{(2)})$. While for $b = 2$, 
\[
2^{\frac{nD_F}{2}}  \cdot  \Big[   \sqrt{ \frac{\varrho}{\varrho+\varpi}} \mathrm{Re} \big(\widehat{\mu}_{\infty}(2^n)\big) + i  \sqrt{ \frac{\varrho}{\varrho-\varpi}} \mathrm{Im} \big(\widehat{\mu}_{\infty}(2^n)\big) \Big]   \xrightarrow[n\to\infty]{d} \sqrt{\varrho\cdot \mathscr{M}_\infty(W^{(2)})}\mathcal{N}_{\C}(0,1).
\]
\end{proposition}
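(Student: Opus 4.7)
The strategy is to exploit the self-similarity of the cascade at the resonant frequencies $s=b^n$, producing a conditional sum to which a Lindeberg-Feller central limit theorem applies. By the canonical branching construction, for each $u\in\Ab^n$ the restriction of $\mu_\infty$ to $I_u$ agrees in law with $b^{-n}\prod_{j=1}^n W(u|_j)$ times the push-forward under $T_u(x)=a_u+b^{-n}x$ of an i.i.d.\ copy $\mu_\infty^{(u)}$ of $\mu_\infty$, jointly independent of $\mathscr{F}_n$. Since $b^n a_u\in\Z$, the change of variables $t=T_u(x)$ yields $e^{-2\pi i b^n t}=e^{-2\pi i x}$, so
\[
b^{nD_F/2}\,\widehat{\mu}_\infty(b^n) = (b\,\E[W^2])^{-n/2}\sum_{|u|=n}\Big(\prod_{j=1}^n W(u|_j)\Big)\,\zeta^{(u)},\qquad \zeta^{(u)}:=\widehat{\mu}_\infty^{(u)}(1),
\]
using $b^{D_F}=b/\E[W^2]$ in the squared sub-critical regime. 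Conditionally on $\mathscr{F}_n$ this is a sum of $b^n$ independent, mean-zero, complex random variables, the $\zeta^{(u)}$ being i.i.d.\ copies of $\zeta:=\widehat{\mu}_\infty(1)$ with $\E[|\zeta|^2]=\varrho<\infty$ (by the computation of $\varrho$ recorded in the excerpt).

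Writing $c_u(n):=(b\E[W^2])^{-n/2}\prod_{j=1}^n W(u|_j)$, the conditional second moment equals $\varrho\sum_{|u|=n}c_u(n)^2=\varrho\,\mathscr{M}_n(W^{(2)})$, which by Kahane's non-degeneracy theorem applied to the squared cascade (valid because $\E[W^{(2)}\log W^{(2)}]<\log b$) converges a.s.\ and in $L^1$ to $\varrho\,\mathscr{M}_\infty(W^{(2)})$. The Lindeberg condition reduces to showing $\max_{|u|=n}c_u(n)\to 0$ in probability, which I plan to obtain via the $p$-th moment identity
\[
\sum_{|u|=n}\E\big[c_u(n)^p\big] = b^{n(1-p/2)}\,\E[W^p]^n\,\E[W^2]^{-np/2}.
\]
Since the derivative at $p=2$ of $\log\!\E[W^p]-(p/2-1)\log b-(p/2)\log\E[W^2]$ equals $\frac12\E[W^{(2)}\log W^{(2)}]-\frac12\log b<0$ in the squared sub-critical regime, this sum decays exponentially for some $p>2$, giving exponential convergence of the maximum; the moment hypothesis $\E[W^t]<\infty$ for all $t>0$ is precisely what is needed to select such a $p$.

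Having both the conditional variance convergence and the Lindeberg condition, I apply the conditional (stable) Lindeberg-Feller CLT recorded in the Appendix to the two-dimensional vector $(\mathrm{Re}\,\zeta^{(u)},\mathrm{Im}\,\zeta^{(u)})$. The conditional joint limit is a centred two-dimensional Gaussian whose covariance is $\mathscr{M}_\infty(W^{(2)})$ times the real covariance matrix of $(\mathrm{Re}\,\zeta,\mathrm{Im}\,\zeta)$, and which is independent of $\mathscr{F}_\infty$. For $b\geq 3$, the pseudo-variance $\E[\zeta^2]$ vanishes (per the excerpt's statement that $\varpi$ appears only for $b=2$), forcing $\E[(\mathrm{Re}\,\zeta)^2]=\E[(\mathrm{Im}\,\zeta)^2]=\varrho/2$ and $\E[(\mathrm{Re}\,\zeta)(\mathrm{Im}\,\zeta)]=0$, giving the isotropic limit $\sqrt{\varrho\,\mathscr{M}_\infty(W^{(2)})}\,\mathcal{N}_\C(0,1)$. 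For $b=2$, the pseudo-variance $\E[\zeta^2]=\varpi$ is real and non-zero, so $\E[(\mathrm{Re}\,\zeta)^2]=(\varrho+\varpi)/2$ and $\E[(\mathrm{Im}\,\zeta)^2]=(\varrho-\varpi)/2$ with zero cross-correlation; the prescribed rescaling (multiplying $\mathrm{Re}$ and $\mathrm{Im}$ by $\sqrt{\varrho/(\varrho\pm\varpi)}$ respectively) precisely equalises the two variances and produces the same isotropic complex Gaussian limit.

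The main obstacle is the Lindeberg step: in the squared sub-critical regime the martingale $\mathscr{M}_n(W^{(2)})$ is merely $L^1$-bounded, so the max-weight smallness cannot come from a uniform $L^2$-bound on the cascade but must be extracted from the slope of $p\mapsto\E[W^p]$ at $p=2$, which is why the full moment hypothesis $\E[W^t]<\infty$ for all $t>0$ enters. A subsidiary difficulty is ensuring that the CLT is stable (so that the Gaussian factor is genuinely independent of the $\mathscr{F}_\infty$-measurable random variance); this is handled by the $\mathscr{F}_\infty$-conditional version of Lindeberg-Feller stated in the Appendix, combined with the almost sure convergence of the conditional second moment.
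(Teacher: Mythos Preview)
Your proposal is correct and follows essentially the same route as the paper: the self-similar decomposition $b^{nD_F/2}\widehat{\mu}_\infty(b^n)=\sum_{|u|=n}c_u(n)\,\widehat{\mu}_\infty^{(u)}(1)$ (Lemma~\ref{lemma-eqn-function-equality}), the conditional covariance identity via Lemmas~\ref{lemma-mean-zero-s}--\ref{lem-b2}, and the conditional Lindeberg--Feller CLT of Proposition~\ref{Prop-c-clt}. The one technical difference is in the Lindeberg step: the paper establishes $\max_{|u|=n}c_u(n)^2\to 0$ a.s.\ by invoking the BRW large-deviation result \cite[Theorem~1.3]{shi-zhan-book} for the almost sure growth rate of $\sup_{|u|=n}\log Y(u)$ (Lemma~\ref{lemma-Yu-zero-s}), whereas you obtain the same conclusion by the more elementary route of bounding $\sum_{|u|=n}\E[c_u(n)^p]$ for some $p>2$ and applying Markov plus Borel--Cantelli; both arguments hinge on the same derivative computation $\frac{d}{dp}\big|_{p=2}\log(b^{1-p/2}\E[W^p]\E[W^2]^{-p/2})=\tfrac12(\E[W^{(2)}\log W^{(2)}]-\log b)<0$, so the difference is purely in packaging.
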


 %%%%%%%%%%%%%%%%%%%%%%%%%%%%%%%%%%%%%%%%%%%%%%%%%%%%%%%%%%%%%%%%%%%%%%%%%%%%%%%%%%%%%%%%%%%%%%%

If  $W$  is in the squared critical regime (the  condition \eqref{sq-critical} is satisfied), then  Kahane's result \cite[Theorem 2]{Kahane-Peyriere-advance} implies that the martingale $\MAB $ defined in \eqref{eqn-def-mnab}
converges to $0$ almost surely. Moreover,  by Lemma~\ref{lem-non-sub}, $W$ can be written as ($\beta =\frac12 $): 
\[
W= \frac{e^{-\frac{1}{2}\xi}}{\E[e^{-\frac{1}{2}\xi}]} 
\]
with $\xi$ satisfying the conditions \eqref{xi-cond} and \eqref{xi-cond-bis}.  Recall the definition 
 \eqref{eqn-BRW-VU} of $V(u)$:
 \[
  V(u) = \sum_{j=1}^{|u|}  \xi(u|_j). 
 \] Then $(V(u))_{u\in\mathcal{A}^*}$ is a BRW in the boundary case. Therefore, by  \cite[Theorem 1.1]{Aideon-Shi} (our assumptions  ensure that $W$ satisfies all the conditions in \cite{Aideon-Shi}),  
\begin{align}\label{root-n-M}
n^{1/2} \cdot \MAB\xrightarrow[n\rightarrow\infty]{\text{in probability}} \mathscr{D}_\infty(W^{(2)})\sqrt{\frac{2}{\pi \sigma^2}},
\end{align}
where $\mathscr{D}_\infty(W^{(2)})$ is the almost sure limit of the derivative martingale $\sum_{|u|=n}V(u)e^{-V(u)}$ and 
\begin{equation}\label{def-BRW-var}
\sigma^2 :=b\E[\xi^2 e^{-\xi }] =\E\Big[ \Big(\log \frac{W^2 }{ \E[ b W^{2}]} \Big)^2 \frac{W^2}{\E[W^2]}\Big ] \in (0,\infty).
\end{equation}

\begin{proposition}[Squared critical regime]\label{thm-sq-critical}
If $W$  is in the squared critical regime, 
 then for $b \ge 3$, 
$$
n^{\frac{1}{4}} b^{\frac{n D_F}{2}} \widehat{\mu}_{\infty}(b^n)\xrightarrow[n\to\infty]{d} \sqrt{\varrho\cdot \sqrt{\frac{2}{\pi \sigma^2}}\mathscr{D}_\infty(W^{(2)})}\mathcal{N}_\C(0,1),
$$
where  $\mathcal{N}_\C(0,1)$ is independent of $\mathscr{D}_\infty(W^{(2)})$. While for $b=2$,
\[ 
n^{\frac{1}{4}} 2^{\frac{nD_F}{2}}  \cdot  \Big[   \sqrt{ \frac{\varrho}{\varrho+\varpi}} \mathrm{Re} \big(\widehat{\mu}_{\infty}(2^n)\big) + i  \sqrt{ \frac{\varrho}{\varrho-\varpi}} \mathrm{Im} \big(\widehat{\mu}_{\infty}(2^n)\big) \Big]   \xrightarrow[n\to\infty]{d}  \sqrt{\varrho\cdot \sqrt{\frac{2}{\pi \sigma^2}}\mathscr{D}_\infty(W^{(2)})}\mathcal{N}_\C(0,1).
\]
 \end{proposition}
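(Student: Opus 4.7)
\textbf{Proof proposal for Proposition \ref{thm-sq-critical}.}

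The plan is to exploit the branching self-similarity of $\mu_\infty$ to reduce $\widehat{\mu}_\infty(b^n)$ to a conditionally independent sum and then apply a conditional Lindeberg--Feller central limit theorem, with conditional variance controlled by the Seneta--Heyde normalization \eqref{root-n-M}. The key identity is that since $b^n a_u \in \Z$ for every $u$ with $|u|=n$ (where $a_u = \sum_{j=1}^n u_j b^{-j}$ is the left endpoint of $I_u$), the scaling property of $\mu_\infty$ yields
\begin{equation*}
\widehat{\mu}_\infty(b^n) \;=\; b^{-n}\sum_{|u|=n}\Big(\prod_{j=1}^n W(u|_j)\Big)\,\widehat{\mu}_\infty^{(u)}(1),
\end{equation*}
where $\{\widehat{\mu}_\infty^{(u)}(1):|u|=n\}$ are i.i.d.\ copies of $\widehat{\mu}_\infty(1)$, independent of $\mathscr{F}_n$. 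Writing $W^{(2)}=be^{-\xi}$ and using the Biggins--Kyprianou transform with $\beta=\tfrac12$, one checks that $\prod_{j=1}^n W(u|_j)^2/(b\E[W^2])^n = e^{-V(u)}$, so after multiplying by $b^{nD_F/2} = b^{n/2}\E[W^2]^{-n/2}$ we get the compact form
\begin{equation*}
S_n \;:=\; b^{n D_F/2}\widehat{\mu}_\infty(b^n) \;=\; \sum_{|u|=n} Z_u Y_u, \qquad Z_u := e^{-V(u)/2},\quad Y_u := \widehat{\mu}_\infty^{(u)}(1).
\end{equation*}

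The next step is to compute, for each $b$, the two conditional covariance quantities $\E[|n^{1/4}S_n|^2\mid\mathscr{F}_n]$ and $\E[(n^{1/4}S_n)^2\mid\mathscr{F}_n]$. By the i.i.d.\ structure and Lemmas \ref{lemma-jihi} and \ref{coro-97},
\begin{equation*}
\E[|n^{1/4}S_n|^2\mid\mathscr{F}_n] \;=\; n^{1/2}\varrho\,\mathscr{M}_n(W^{(2)}) \;\xrightarrow[n\to\infty]{\;\PP\;}\; \varrho\sqrt{\tfrac{2}{\pi\sigma^2}}\,\mathscr{D}_\infty(W^{(2)}),
\end{equation*}
using \eqref{root-n-M}, while $\E[(n^{1/4}S_n)^2\mid\mathscr{F}_n]=n^{1/2}\varpi\,\mathscr{M}_n(W^{(2)})\cdot\indi_{\{b=2\}}$ (as $\E[(\widehat{\mu}_\infty(1))^2]=0$ for $b\ge 3$). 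For $b\ge 3$ this already identifies the limit as a centred complex Gaussian with pseudovariance zero and variance $\varrho\sqrt{2/(\pi\sigma^2)}\mathscr{D}_\infty(W^{(2)})$, independent of $\mathscr{D}_\infty(W^{(2)})$; for $b=2$ the linear rescaling of real and imaginary parts in the statement is designed precisely to diagonalize the covariance and reduce to the same limit.

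The main technical obstacle is verifying the conditional Lindeberg condition
\begin{equation*}
L_n(\varepsilon)\;:=\;n^{1/2}\sum_{|u|=n}|Z_u|^2\,\E\big[|Y|^2\indi_{\{|Y|>\varepsilon/(n^{1/4}|Z_u|)\}}\big]\;\xrightarrow[n\to\infty]{\;\PP\;}\;0.
\end{equation*}
By choosing $p>2$ slightly, the continuity of $t\mapsto \E[W^t]/b^{t-1}$ at $t=2$ (where it is $<1$ by $D_F>0$ from Lemma \ref{lem-DF}) together with Kahane's $L^p$ condition gives $\E[|Y|^p]<\infty$. A standard truncation then bounds
\begin{equation*}
L_n(\varepsilon)\;\le\;\varepsilon^{2-p}\,\E[|Y|^p]\,n^{p/4}\!\!\sum_{|u|=n}e^{-pV(u)/2}\;=\;\varepsilon^{2-p}\E[|Y|^p]\,n^{p/4}\,Z_{n,p/2},
\end{equation*}
and by the small-moment asymptotics \eqref{s-m-asymp} applied with $\gamma=p/2>1$ and $r<2/p$, we have $Z_{n,p/2}=O_{\PP}(n^{-3/2+\delta})$ for any $\delta>0$; choosing $p$ close enough to $2$ makes $n^{p/4}\,Z_{n,p/2}\to 0$ in probability. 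Applying the conditional Lindeberg--Feller CLT from the Appendix with limiting (random) variance $\varrho\sqrt{2/(\pi\sigma^2)}\mathscr{D}_\infty(W^{(2)})$ and, for $b\ge 3$, vanishing pseudovariance, yields the mixed complex-Gaussian convergence in the stated form. The $b=2$ case follows by applying the same argument to the two real components $\mathrm{Re}(S_n)$ and $\mathrm{Im}(S_n)$ after the diagonalizing rescaling, noting $\varrho\pm\varpi>0$ so both rescalings are well defined.
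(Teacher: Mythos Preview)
Your argument is correct and follows the same overall architecture as the paper: the self-similarity identity of Lemma~\ref{lemma-eqn-function-equality}, the conditional covariance computation via Lemma~\ref{coro-97}, the Seneta--Heyde normalization \eqref{root-n-M}, and the conditional Lindeberg--Feller CLT of Proposition~\ref{Prop-c-clt}. The one substantive difference is in how the Lindeberg condition is verified. The paper (Lemma~\ref{lemma-CLT-critical}) uses only that $\E[|\widehat{\mu}_\infty(1)|^2]<\infty$ together with the BRW minimum asymptotics $\tfrac{1}{\log n}\min_{|u|=n}V(u)\to \tfrac{3}{2}$ in probability (from \cite[Theorem~5.12]{shi-zhan-book}), which forces $n^{1/2}\max_{|u|=n}e^{-V(u)}\to 0$ and allows the bound $L_n(\varepsilon)\le n^{1/2}\mathscr{M}_n(W^{(2)})\,\sigma\big(\varepsilon/(n^{1/2}\max_{|u|=n}e^{-V(u)})\big)$. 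You instead exploit the strict inequality $\E[W^2]<b$ (from $D_F>0$) to get a genuine $L^p$ moment with $p>2$, and then control the full sum $\sum_{|u|=n}e^{-pV(u)/2}=Z_{n,p/2}$ via Madaule's partition-function bound. Your route trades a slightly stronger moment input on $\widehat{\mu}_\infty(1)$ for a more direct tail estimate; note that the reference you want is really the tail bound just above \eqref{s-m-asymp} (or Markov applied to \eqref{s-m-asymp}), which gives the sharper $Z_{n,p/2}=O_\PP(n^{-3p/4})$ and hence $n^{p/4}Z_{n,p/2}=O_\PP(n^{-p/2})\to 0$ for \emph{any} $p>2$, not only $p$ close to $2$.
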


 %%%%%%%%%%%%%%%%%%%%%%%%%%%%%%%%%%%%%%%%%%%%%%%%%%%%%%%%%%%%%%%%%%%%%%%%%%%%%%%%%%%%%%%%%%%%%%%

 Now suppose that $W$ is in the squared super-critical regime (the condition \eqref{sq-super} is satisfied).   Recall that, in the squared super-critical regime, we always assume that   $\log W$ is non-lattice, that is, $\log W$ is not supported on any arithmetic progression.

By Lemma~\ref{lem-non-sub},  $W$ %{\color{blue} I would say $W^{(\frac1\beta)}=e^{-\xi}$}
 is in the boundary case: 
\begin{align}\label{bdd-case}
W= \frac{e^{-\beta \xi}}{\E[e^{-\beta \xi}]}   \quad  \text{with} \quad    \frac{1}{2} < \beta< 1
\end{align}
and $\xi$ satisfies the conditions \eqref{xi-cond} and \eqref{xi-cond-bis}. 

Recall the definition 
 \eqref{eqn-BRW-VU} of $V(u)$.  Then $
 (V(u))_{u\in\Ab^*}$   is a BRW in the boundary case  (see  \cite[Chapter 1]{shi-zhan-book}).
 % \textcolor{red}{Under the non-lattice assumption}, we have the following convergence of point processes 
 Under the non-lattice assumption, by \cite[Theorem 1.1]{Mad}),  the family of the following point processes 
\begin{align*}
\Big\{\sum_{|u|=n}\delta_{V(u)-\frac{3}{2}\log n}: n\in \N\Big\}
\end{align*}
converges in law in the vague sense. More precisely, there exists some point process $\mathcal{Z}_\infty$ such that
\begin{align}\label{eqn-z-infinity-s}
\sum_{|u|=n}\delta_{V(u)-\frac{3}{2}\log n}\xrightarrow[n \rightarrow\infty]{d} \mathcal{Z}_\infty=\sum_{i\in\N}\delta_{x_i}. 
\end{align}
Here the convergence means that,  for any compactly supported continuous function $f\in C_c(\R)$, we have 
\[
  \sum_{|u|=n}  f\Big( V(u)-\frac{3}{2}\log n\Big) \xrightarrow[ n \rightarrow\infty]{d}   \int_\R f  d\mathcal{Z}_\infty =  \sum_{i\in\N} f(x_i). 
\]
By \cite[Theorem 1.1]{Mad} and \cite[Proposition 2.4]{Iksanov}, the point process $\mathcal{Z}_\infty$ in \eqref{eqn-z-infinity-s}  regarded as a counting measure on $\mathbb{R}$ satisfies that almost surely there is only finite number of points on $(-\infty,0)$. Hence we may rearrange the random points $(x_j)_{j\in\N}$ in $\mathcal{Z}_\infty$ such that they are lying in a non-decreasing order on $\mathbb{R}$: 
\[
x_1 \le x_2\le \cdots \le x_n\le \cdots. 
\]
We shall need the following result (see \cite[formula (5.4)]{Iksanov}):  for any $\gamma>1$, we have 
\begin{align}\label{Z-finite}
\sum_{i=1}^\infty e^{-\gamma x_i}<\infty   \quad   a.s. 
\end{align}

Recall that $\E[\widehat{\mu}_\infty(1)]=0$. Using Burkholder's inequalities, we have 
\begin{lemma}\label{lem-wel-def}
Assume that  $W$  is in the squared super-critical regime such that $\log W$ is non-lattice.    Let  $\{\widehat{\mu}^{(i)}_\infty(1)\}_{i\in \N}$ be i.i.d. copies of $\widehat{\mu}_\infty(1)$, which are all independent of the point process $\mathcal{Z}_\infty$. 
 Then there exists  $p\in (1/\beta,2]$ such that, conditioned on $\mathcal{Z}_\infty$, we have 
\[
\sup_{F}  \E \Big[ \Big|   \sum_{i \in F}  e^{-\beta x_i}   \widehat{\mu}^{(i)}_\infty(1) \Big|^p  \Big| \mathcal{Z}_\infty\Big]   \lesssim_p \sum_{i \in \N}  e^{- p \beta x_i}   \cdot  \E[ |\widehat{\mu}_\infty(1)|^p]  <\infty \quad a.s. ,
\]
where the supremum ranges over all finite subsets  $F\subset \N$.  In particular,  given $\mathcal{Z}_\infty$,  the limit 
\[
\sum_{i\in \N} e^{-\beta x_i}\widehat{\mu}^{(i)}_\infty(1) = \lim_{N\to\infty} \sum_{i=1}^N e^{-\beta x_i}\widehat{\mu}^{(i)}_\infty(1)
\]
is well-defined as a  $L^p$-bounded martingale limit. 
\end{lemma}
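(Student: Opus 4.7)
\textbf{Proof proposal for Lemma~\ref{lem-wel-def}.}
The plan is to select an exponent $p$ with three simultaneous properties: (i) $p \in (1/\beta, 2]$ so that $p\beta > 1$; (ii) $\widehat{\mu}_\infty(1) \in L^p(\PP)$; and (iii) scalar (i.e.\ $\C$-valued) Burkholder type inequalities for independent mean-zero summands are available at exponent $p$. Property (iii) is automatic for any $p \in (1,2]$ since $\C$ has martingale type $p$. For property (ii), since $|\widehat{\mu}_\infty(1)| \le \mu_\infty([0,1])$, it suffices by Kahane's $L^p$-boundedness condition \eqref{Kahane-Lp-bdd} to have $\E[W^p] < b^{p-1}$, equivalently $\varphi_W(p) < 0$.

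First I would verify that such a $p$ exists. Using the Biggins-Kyprianou representation $W = b e^{-\beta\xi - \psi(\beta)}$ in \eqref{bdd-case}, a direct computation gives
\[
\varphi_W(t) = \psi(t\beta) - t\psi(\beta), \qquad t>0.
\]
At $t = 1/\beta$ this yields $\varphi_W(1/\beta) = \psi(1) - \psi(\beta)/\beta = -\psi(\beta)/\beta$, which by Lemma~\ref{lem-psi} is strictly negative, since $\beta\in(1/2,1)$ forces $\psi(\beta)>0$. Because $\varphi_W$ is continuous on a right-neighborhood of $1/\beta$ and $1/\beta < 2$ (as $\beta > 1/2$), we may choose $p \in (1/\beta, 2]$ with $\varphi_W(p) < 0$, so that (i) and (ii) hold; moreover the moment assumption $\E[W^t]<\infty$ for all $t>0$ ensures everything is well-defined.

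With $p$ fixed, I would condition on the sigma-algebra generated by $\mathcal{Z}_\infty$: the coefficients $(e^{-\beta x_i})_{i\in\N}$ become deterministic while $(\widehat{\mu}^{(i)}_\infty(1))_{i\in\N}$ remain i.i.d., mean-zero, and in $L^p$ with conditional $p$-th moment $\E[|\widehat{\mu}_\infty(1)|^p]$. For any finite $F\subset \N$, the partial sum $\sum_{i\in F} e^{-\beta x_i}\widehat{\mu}^{(i)}_\infty(1)$ is a sum of independent mean-zero complex random variables, so by the scalar martingale-type-$p$ (equivalently von Bahr--Esseen / Rosenthal) inequality
\[
\E\Big[\Big|\sum_{i\in F} e^{-\beta x_i}\widehat{\mu}^{(i)}_\infty(1)\Big|^p\,\Big|\,\mathcal{Z}_\infty\Big]
\;\lesssim_p\; \sum_{i\in F} e^{-p\beta x_i}\cdot\E[|\widehat{\mu}_\infty(1)|^p].
\]
Since $p\beta > 1$, applying \eqref{Z-finite} with $\gamma = p\beta$ yields $\sum_{i\in\N} e^{-p\beta x_i} < \infty$ almost surely. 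Taking the supremum over finite $F$ on the left-hand side produces the claimed uniform $L^p$-bound conditionally on $\mathcal{Z}_\infty$; the existence of the martingale limit $\sum_i e^{-\beta x_i}\widehat{\mu}^{(i)}_\infty(1)$ then follows from standard scalar $L^p$-martingale convergence applied, for each realization of $\mathcal{Z}_\infty$, to the filtration generated by $\{\widehat{\mu}^{(i)}_\infty(1) : i \le N\}$.

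The only non-routine step is the existence of an admissible exponent $p \in (1/\beta, 2]$ with $\varphi_W(p)<0$; this is the place where the squared super-criticality $\beta \in (1/2,1)$ and the boundary-case identity $\varphi_W(t) = \psi(t\beta) - t\psi(\beta)$ intervene. Everything else reduces to a scalar Burkholder estimate combined with the known summability \eqref{Z-finite}, so I do not anticipate any serious obstacle beyond that verification.
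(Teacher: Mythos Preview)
Your proposal is correct and follows essentially the same approach as the paper: the paper takes $p\in(1/\beta,2]$ from Lemma~\ref{lemma-tightness} (whose proof is exactly your continuity argument that $\varphi_W(p)<0$ near $p=1/\beta$, rewritten as $\psi(p\beta)/(p\beta)<\psi(\beta)/\beta$), then applies Burkholder's inequality conditionally on $\mathcal{Z}_\infty$ and invokes \eqref{Z-finite} with $\gamma=p\beta>1$. Your write-up is in fact slightly more explicit about the identity $\varphi_W(t)=\psi(t\beta)-t\psi(\beta)$ and about why $\varphi_W(1/\beta)=-\psi(\beta)/\beta<0$.
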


\begin{remark*}
Note that in general, we do not have 
\[
\sum_{i\in \N}  e^{-\beta x_i} |\widehat{\mu}^{(i)}_\infty(1)|<\infty  \quad a.s. 
\]
\end{remark*}

  \begin{proposition}[Squared super-critical regime]\label{thm-non-cri-super-critical}
If $W$  is in the squared super-critical regime such that $\log W$ is non-lattice, then 
 \begin{align*}
%original version   b^{\frac{ n \cdot D_F}{2} }n^{\frac{3}{2}\beta} \widehat{\mu}_{\infty}\left(b^{n}\right)\xrightarrow[n\rightarrow\infty]{d} \mathscr{D}_\infty(W^{(2)})^\beta\sum_{i\in \N} e^{-\beta x_i}\widehat{\mu}^{(i)}_\infty(1),
   b^{\frac{ n \cdot D_F}{2} }n^{\frac{3}{2}\beta} \widehat{\mu}_{\infty}\left(b^{n}\right)\xrightarrow[n\rightarrow\infty]{d} \sum_{i\in \N} e^{-\beta x_i}\widehat{\mu}^{(i)}_\infty(1),
 \end{align*}
 where $(x_i)_{i\in\N}$ is the  point process  $\mathcal{Z}_\infty$ in \eqref{eqn-z-infinity-s},  $\{\widehat{\mu}^{(i)}_\infty(1)\}_{i\in \N}$ are i.i.d. copies of $\widehat{\mu}_\infty(1)$  independent of $\mathcal{Z}_\infty$.
 %original version, and the random variable   $\mathscr{D}_\infty(W^{(2)})$ is independent of   $(\mathcal{Z}_\infty, \{\widehat{\mu}^{(i)}_\infty(1)\}_{i\in \N})$. 
Moreover, the limit is non trivial and  the random variable 
\[
\sum_{i\in \N} e^{-\beta x_i} \mathrm{Re} (\widehat{\mu}^{(i)}_\infty(1)) 
\]
can be written as $ \mathscr{D}_\infty(W^{(\frac1\beta)})^\beta \cdot \widetilde{X} $ where $\mathscr{D}_\infty(W^{(\frac1\beta)})>0$ $\PP^*$-a.s., and $\widetilde{X}$ is independent of $\mathscr{D}_\infty(W^{(\frac1\beta)})$ and has a $\frac{1}{\beta}$-stable law. In particular, 
\begin{align}\label{zero-prob}
%original version \PP^*  \Big( \mathscr{D}_\infty(W^{(2)})^\beta\Big| \sum_{i\in \N} e^{-\beta x_i}  \widehat{\mu}^{(i)}_\infty(1)\Big|=0\Big) =0. 
\PP^*  \Big( \Big| \sum_{i\in \N} e^{-\beta x_i}  \widehat{\mu}^{(i)}_\infty(1)\Big|=0\Big) =0.  
\end{align}
 \end{proposition}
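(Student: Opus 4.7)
The plan is to exploit the stochastic self-similarity of $\widehat{\mu}_\infty(b^n)$ at $b$-adic frequencies, combine it with Madaule's weak convergence \eqref{eqn-z-infinity-s} of the centered branching random walk, and then extend to mean-zero marks by a truncation argument. First, decomposing $[0,1]=\bigsqcup_{|u|=n}I_u$ and using the cascade identity $\mu_\infty|_{I_u} = b^{-n}\prod_{j=0}^{n}W(u|_j)\cdot(\phi_u)_*\mu_\infty^{(u)}$, where $\phi_u(y)=\sum_{j=1}^{n}u_j b^{-j}+b^{-n}y$ and $\mu_\infty^{(u)}$ are i.i.d.\ copies of $\mu_\infty$ independent of $\mathscr{F}_n$, the change of variables $t=\phi_u(y)$ combined with $b^n\phi_u(y)-y\in\Z$ yields
\[
\widehat{\mu}_\infty(b^n)=b^{-n}\sum_{|u|=n}\Bigl(\prod_{j=1}^{n}W(u|_j)\Bigr)\widehat{\mu}_\infty^{(u)}(1).
\]
Writing $W=b e^{-\beta\xi-\psi(\beta)}$, recalling $V(u)=\sum_{j=1}^{|u|}\xi(u|_j)$ and $D_F=2\psi(\beta)/\log b$ by Lemma~\ref{lem-psi-beta}, this translates to the fundamental identity
\[
n^{\frac{3\beta}{2}}b^{\frac{nD_F}{2}}\widehat{\mu}_\infty(b^n)=\sum_{|u|=n}e^{-\beta(V(u)-\frac{3}{2}\log n)}\widehat{\mu}_\infty^{(u)}(1),
\]
where the marks $\widehat{\mu}_\infty^{(u)}(1)$ are i.i.d., centered, and independent of the BRW $(V(u))$.

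Next, for fixed $K>0$ I would split the sum according to $V(u)-\frac{3}{2}\log n\le K$ or $>K$. For the truncated part, applying \eqref{eqn-z-infinity-s} together with a standard i.i.d.\ marking argument (continuous-mapping theorem for marked point processes on the Polish space of decorations) delivers
\[
\sum_{\substack{|u|=n\\ V(u)-\frac{3}{2}\log n\le K}}e^{-\beta(V(u)-\frac{3}{2}\log n)}\widehat{\mu}_\infty^{(u)}(1)\xrightarrow[n\to\infty]{d}\sum_{x_i\le K}e^{-\beta x_i}\widehat{\mu}_\infty^{(i)}(1).
\]
For the tail $V(u)-\frac{3}{2}\log n>K$, since the marks are centered and admit a finite $p$-th moment for some $p\in(1/\beta,2]$ (cf.\ Lemma~\ref{lem-wel-def}), a conditional Burkholder--Rosenthal estimate combined with the small-moment asymptotic \eqref{s-m-asymp} for the partition function $Z_{n,p\beta}$ gives, uniformly in $n$, an $L^p$-bound vanishing as $K\to\infty$ (using also \eqref{Z-finite} for the limit object). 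A Slutsky-type argument then yields the stated convergence toward the well-defined sum $\sum_{i\in\N}e^{-\beta x_i}\widehat{\mu}_\infty^{(i)}(1)$.

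For the stable-law identification, I would invoke the shifted decorated Poisson point process (SDPPP) representation of $\mathcal{Z}_\infty$ due to Madaule, under which, conditionally on the critical derivative martingale limit $\mathscr{D}_\infty(W^{(\frac{1}{\beta})})$ (note the identification $W^{(\frac{1}{\beta})}=b e^{-\xi}$, so $\mathscr{M}_n(W^{(\frac{1}{\beta})})=\sum_{|u|=n}e^{-V(u)}$ is the critical additive martingale), the atoms $(x_i)$ form a Cox process with intensity proportional to $\mathscr{D}_\infty(W^{(\frac{1}{\beta})})\,e^{-x}\,dx$ superposed with i.i.d.\ shift decorations. Conditioning on $\mathscr{D}_\infty(W^{(\frac{1}{\beta})})$ and the decoration, the L\'evy--Khinchine formula applied to the Poisson shot noise $\sum_i e^{-\beta x_i}\mathrm{Re}(\widehat{\mu}_\infty^{(i)}(1))$, together with the substitution $u=e^{-\beta x}$ converting $e^{-x}dx$ into $\beta^{-1}u^{1/\beta-1}du$, produces a conditional characteristic function of the form $\exp(-c\,\mathscr{D}_\infty(W^{(\frac{1}{\beta})})\,|t|^{1/\beta})$; since stable of index $\alpha$ scales as $c^{1/\alpha}$, this exhibits the real part as $\mathscr{D}_\infty(W^{(\frac{1}{\beta})})^{\beta}\cdot\widetilde{X}$ with $\widetilde{X}$ a $\frac{1}{\beta}$-stable random variable independent of $\mathscr{D}_\infty(W^{(\frac{1}{\beta})})$. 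Strict positivity $\mathscr{D}_\infty(W^{(\frac{1}{\beta})})>0$ $\PP^*$-a.s.\ is the classical fact about the derivative martingale in critical boundary-case BRW, whence \eqref{zero-prob}.

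The hard part will be justifying the exchange of limits in the truncation step: the convergence \eqref{eqn-z-infinity-s} only yields test functions that are compactly supported continuous, whereas the desired observable is unbounded above in the $e^{-\beta x}$-scale and, crucially, involves \emph{centered} (rather than non-negative) random marks so that cancellation must be controlled quantitatively. This is the reason for the combined use of $L^p$-type moment bounds via Burkholder--Rosenthal and the exponential-decay estimate \eqref{Z-finite}. The second delicate point is extracting the $\frac{1}{\beta}$-stable scaling from the SDPPP structure without explicit tail information on $\widehat{\mu}_\infty(1)$: here the heavy-tailed behavior of the limit is generated entirely by the Poisson shot noise intensity $e^{-x}dx$ (equivalently the exponential density of the tree tips in the Madaule regime), not by heavy tails of the marks.
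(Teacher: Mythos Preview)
Your approach mirrors the paper's proof closely: the self-similarity identity for $\widehat{\mu}_\infty(b^n)$, the truncation at level $K$ combined with Madaule's point-process convergence \eqref{eqn-z-infinity-s}, the $L^p$-control of the tail via Burkholder and the small-moment bound \eqref{s-m-asymp}, and the SDPPP representation of $\mathcal{Z}_\infty$ for the stable-law identification are exactly the paper's Steps~1--3. Two corrections are needed, however.

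First, in Madaule's description the underlying Poisson point process has intensity $\lambda e^{y}\,dy$, not $e^{-x}\,dx$: the extremal process has only finitely many atoms on any half-line $(-\infty,K]$, so the intensity must blow up as $y\to+\infty$. Your substitution $u=e^{-\beta x}$ applied to $e^{-x}dx$ gives $\beta^{-1}u^{1/\beta-1}\,du$, which is integrable near $0$ and divergent at infinity---the wrong behavior for a L\'evy measure. With the correct intensity $e^{y}dy$ the same substitution yields $\beta^{-1}u^{-1-1/\beta}\,du$, which is precisely the L\'evy measure of a $\frac{1}{\beta}$-stable law.

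Second, the conditional characteristic function you write as $\exp(-c\,\mathscr{D}_\infty|t|^{1/\beta})$ is too restrictive: the paper obtains $\exp\bigl((c_1+i\,\mathrm{sgn}(t)\,c_2)|t|^{1/\beta}\bigr)$ with $c_1<0$ and a possibly nonzero skewness term $c_2$, since there is no a priori symmetry of the decoration-weighted mark $R=\sum_j e^{-\beta z_j}\mathrm{Re}(\widehat{\mu}_\infty^{(j)}(1))$. The statement only claims a $\frac{1}{\beta}$-stable law, not a symmetric one. (A minor related point: the paper uses a continuous cutoff $f_K$ rather than an indicator, which is convenient because vague convergence of point processes is tested against $C_c$ functions.)
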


\subsection{Elementary properties of $\widehat{\mu}_\infty(b^n)$}

\begin{lemma}\label{lemma-eqn-function-equality}
Fix $n\in\N$, we have  
\begin{align}\label{eqn-function-equality} 
 \widehat{\mu}_{\infty}\left(b^n\right)\stackrel{d}{=}\frac{1}{b^n}\sum_{|u|=n}  \prod_{j=0}^{n} W(u|_j)  \cdot \widehat{\mu}_{\infty}^{(u)}(1),
\end{align}
where $\{\widehat{\mu}_{\infty}^{(u)}(1):|u|=n\}$ are i.i.d. copies of $\widehat{\mu}_{\infty}(1)$ which are all independent of the sigma-algebra $\mathscr{F}_{n}=\sigma\{W(u): 1\leq |u|\leq n\}$.
\end{lemma}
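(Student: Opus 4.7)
The statement is a manifestation of the \emph{stochastic self-similarity} of the Mandelbrot cascade under the $b$-adic structure. My plan is to prove the stronger almost sure identity (for a canonical choice of the copies $\widehat{\mu}_\infty^{(u)}(1)$), from which the equality in distribution is immediate.

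First, for each $u\in\Ab^*$ with $|u|=n$, let $\phi_u:[0,1]\to I_u$ be the affine map $\phi_u(s)=b^{-n}s+\sum_{k=1}^{n}u_k b^{-k}$, and let $\mu_\infty^{(u)}$ denote the MCCM built from the subtree rooted at $u$, i.e.\ from the independent family of weights $\{W(uv):v\in\Ab^*\setminus\{\emptyset\}\}$. These subtree weight families are mutually independent for distinct $u$ at level $n$ and are all independent of $\mathscr{F}_n$; hence the $\mu_\infty^{(u)}$ are i.i.d.\ copies of $\mu_\infty$ and independent of $\mathscr{F}_n$.

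Next, I would establish the scaling identity
\begin{equation}\label{eq-scale-planned}
\mu_\infty\big|_{I_u}\circ\phi_u \;=\; b^{-n}\prod_{j=0}^{n} W(u|_j)\cdot\mu_\infty^{(u)} \qquad\text{a.s.}
\end{equation}
This is verified at every finite level $n+m$ by the explicit formula \eqref{def-Lebn}: for $|v|=m$ one has $I_{uv}=\phi_u(I_v)$, the weight product factorizes as $\prod_{j=0}^{n}W(u|_j)\cdot\prod_{k=1}^{m}W(uv|_{n+k})$, and $dt=b^{-n}ds$ under $t=\phi_u(s)$. Hence $\mu_{n+m}|_{I_u}\circ\phi_u=b^{-n}\prod_{j=0}^{n}W(u|_j)\cdot\mu_m^{(u)}$. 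Passing to the weak limit as $m\to\infty$ (the martingale convergence guarantees this on both sides) yields \eqref{eq-scale-planned}.

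Finally, split the Fourier integral and change variables:
\begin{align*}
\widehat{\mu}_\infty(b^n)
&=\sum_{|u|=n}\int_{I_u}e^{-2\pi i b^n t}\,d\mu_\infty(t)\\
&=\sum_{|u|=n}\int_0^1 e^{-2\pi i b^n\phi_u(s)}\,d\big(\mu_\infty|_{I_u}\circ\phi_u\big)(s)\\
&=\frac{1}{b^n}\sum_{|u|=n}\Big(\prod_{j=0}^{n}W(u|_j)\Big)\,e^{-2\pi i\sum_{k=1}^{n}u_k b^{n-k}}\int_0^1 e^{-2\pi i s}\,d\mu_\infty^{(u)}(s).
\end{align*}
The crucial arithmetic observation is that $\sum_{k=1}^{n}u_k b^{n-k}\in\Z$, so the middle exponential equals $1$. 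Setting $\widehat{\mu}_\infty^{(u)}(1):=\int_0^1 e^{-2\pi i s}\,d\mu_\infty^{(u)}(s)$ gives exactly the right-hand side of \eqref{eqn-function-equality}.

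The main (minor) obstacle is the justification of \eqref{eq-scale-planned}, which requires care because $\mu_\infty$ is only a weak limit: one must check that restriction to the half-open $b$-adic interval $I_u$ commutes with the $n\to\infty$ limit. This is handled by noting that $I_u$ has an $\mathscr{F}_n$-measurable indicator, and that $\mu_n\to\mu_\infty$ holds as random measures in a sense strong enough to allow multiplication by $\indi_{I_u}$ (for instance via the uniformly integrable martingale of total masses on the fixed $b$-adic cell $I_u$). All remaining steps are elementary.
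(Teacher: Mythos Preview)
Your argument is correct and yields the same almost-sure identity as the paper, but the route is genuinely different. The paper expands $\widehat{\mu}_\infty(b^n)$ as a telescoping sum of martingale differences $\widehat{\mu}_m(b^n)-\widehat{\mu}_{m-1}(b^n)$ using \eqref{eqn-m-differ-s}, observes that the terms with $m\le n$ vanish because $\int_{I_u}e^{-i2\pi b^n x}\,dx=0$ for $|u|\le n$, and then regroups the remaining sum over $m>n$ by writing each $u=vw$ with $|v|=n$; the inner sum over $w$ is identified as $\widehat{\mu}_\infty^{(v)}(1)$ via the change of variables $\int_{I_{vw}}e^{-i2\pi b^n x}\,dx=b^{-n}\int_{I_w}e^{-i2\pi x}\,dx$. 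You instead invoke the measure-level self-similarity $\mu_\infty|_{I_u}\circ\phi_u=b^{-n}\prod_{j=0}^n W(u|_j)\,\mu_\infty^{(u)}$ directly and kill the phase by the integer observation $\sum_k u_k b^{n-k}\in\Z$. Both arguments exploit the same arithmetic fact, packaged differently: yours is more geometric and arguably more transparent, while the paper's martingale-difference expansion dovetails with the vector-valued martingale framework used throughout and avoids the (admittedly minor) issue you flag about commuting the weak limit with restriction to half-open $b$-adic cells.
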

\begin{proof}
For  $s\in \N$,  by definition one has $\widehat{\mu}_0(s)=0$ and 
\begin{align}\label{eqn-m-differ-s}
\widehat{\mu}_{m}(s)-\widehat{\mu}_{m-1}(s)=\sum_{|u|=m}  \Big(\prod_{j=0}^{m-1} W(u|_j)\Big) \mathring{W}(u) \cdot \int_{I_u} e^{-i2\pi s x}dx , \quad \forall m \in \N,
\end{align}
Notice that if $|u|\leq n$, one has $ \int_{I_u} e^{i2\pi b^n x}dx=0$ and hence 
\[
\widehat{\mu}_{n}\left(b^n\right) =\sum_{m=1}^{n}\left(\widehat{\mu}_{m}\left(b^n\right)-\widehat{\mu}_{m-1}\left(b^n\right)\right) =0 .
\]
It follows that 
\begin{align*}
\widehat{\mu}_{\infty}\left(b^n\right) & =\widehat{\mu}_{\infty}\left(b^n\right)-\widehat{\mu}_{n}\left(b^n\right) 
 =\sum_{m=n+1}^{\infty}\left(\widehat{\mu}_{m}\left(b^n\right)-\widehat{\mu}_{m-1}\left(b^n\right)\right) \\
&=\sum_{m=n+1}^{\infty} \,\,\sum_{|u|=m} \Big(\prod_{j=0}^{m-1} W(u|_j)\Big) \mathring{W}(u) \cdot \int_{I_u} e^{-i 2 \pi b^n x} d x. 
\end{align*}
Note that each $u$ with $|u|=m\ge n+1$ can be uniquely written as  $u= vw$ such that  $|v|=n$ and $|w|=m-n$.  For any $n+1\le j \le m-1$, we have 
\[
(vw)|_j = v (w|_{j-n}) = v  \cdot w|_{j-n}. 
\]
Therefore, by using the elementary identity 
\[
\sum_{|u|=m}   f(u) =     \sum_{|v|=n} \sum_{|w|= m-n} f(vw)
\]
and writing  
\[
\prod_{j=0}^{m-1} W(u|_j)    = \prod_{j=0}^{m-1} W((vw)|_j)  = \Big(\prod_{j=0}^{n} W(v|_j)\Big) \cdot \Big(\prod_{l=1}^{m-n-1} W(v \cdot w|_l)\Big),
\]
we obtain 
\begin{align*}
\widehat{\mu}_{\infty}\left(b^n\right)  &= \sum_{m=n+1}^{\infty} \,\,\sum_{|v|=n} \sum_{|w|= m-n}    \Big(\prod_{j=0}^{n} W(v|_j)\Big) \cdot \Big(\prod_{l=1}^{m-n-1} W(v\cdot w|_l)\Big)      \mathring{W}(vw) \cdot \int_{I_{vw}} e^{-i 2 \pi b^n x} d x\\
&=\sum_{|v|=n}\Big(\prod_{j=0}^{n} W(v|_j)\Big) \sum_{k=1}^\infty \sum_{|w|=k} \Big( \prod_{l=1}^{k-1} W( v\cdot w|_l)  \Big) \mathring{W}(vw) \int_{I_{vw}} e^{-i 2 \pi b^n x} d x,
\end{align*}
where we used the convention (for $k=1$)
\[
 \prod_{l=1}^{k-1} W( v\cdot w|_l)  = 1. 
\]
By a direct computation,  for $|v|=n$,  we have 
\[
\int_{I_{vw}} e^{i 2 \pi b^n x} d x = b^{-n} \int_{I_w} e^{-i 2\pi x} dx. 
\]
Hence
\begin{align*}
\widehat{\mu}_{\infty}\left(b^n\right) 
&= \frac{1}{b^n}\sum_{|v|=n}\Big(\prod_{j=0}^{n} W(v|_j)\Big) \cdot  \underbrace{\sum_{k=1}^\infty \sum_{|w|=k} \Big( \prod_{l=1}^{k-1} W( v\cdot w|_l)  \Big) \mathring{W}(vw) \int_{I_{w}} e^{-i 2 \pi x} d x}_{\text{denoted as $\widehat{\mu}_\infty^{(v)}(1)$}}.
\end{align*}
Clearly,  $\{\widehat{\mu}_{\infty}^{(v)}(1): |v|=n\}$  are i.i.d. copies of $\widehat{\mu}_{\infty}(1)$ and are independent of the sigma-algebra $\mathscr{F}_{n}=\sigma\{W(u)| 1\leq |u|\leq n\}$. This completes the proof.
\end{proof}

\begin{lemma}\label{lemma-jihi}
Assume that $W$ is in the squared sub-critical or squared critical regime.  Then for any $s\in\N$, one has
\begin{align}\label{eqn-mu-s-square}
\E[|\widehat{\mu}_\infty(s)|^2]=\frac{\E[\mathring{W}^2]}{b}\sum_{m=1}^\infty \Big(\frac{\E[W^2]}{b}\Big)^{m-1}\Big|\frac{e^{i2\pi sb^{-m}}-1}{2\pi s b^{-m}}\Big|^2
\end{align}
and 
\begin{align}\label{eqn-mu-s-square-mei-mo}
\mathbb{E}\left[\widehat{\mu}_{\infty}(s)^2\right] = \frac{\E[\mathring{W}^2]}{b}\sum_{m=1}^{+\infty} \Big(\frac{ \mathbb{E}[W^2]}{b}\Big)^{m-1}  \cdot  \frac{1}{b^m}\sum_{|u|=m}\Big(\frac{e^{i 2 \pi s \cdot b^{-m}}-1}{i 2 \pi s  b^{-m}}\Big)^2 e^{i 4\pi s\, t_u},
\end{align}
where  $t_u:=\sum_{j=1}^{|u|} \frac{u_j}{b^j}$ is  the left endpoint of the interval $I_u$. 
\end{lemma}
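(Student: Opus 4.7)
The plan is to exploit the explicit martingale decomposition $\widehat{\mu}_n(s)=\sum_{m=1}^{n} d_m(s)$, where by \eqref{eqn-m-differ-s}
\[
d_m(s):=\sum_{|u|=m}\Big(\prod_{j=0}^{m-1}W(u|_j)\Big)\mathring{W}(u)\int_{I_u}e^{-i2\pi sx}\,dx.
\]
First I would establish $L^2$-convergence of the series to $\widehat{\mu}_\infty(s)$. In both the squared sub-critical and squared critical regimes the proof of Lemma~\ref{lem-DF} gives the strict inequality $\varphi_W(2)<0$, i.e.\ $\E[W^2]<b$. Combined with the uniform bound $|\int_{I_u}e^{-i2\pi sx}dx|\le \min(b^{-m},(\pi s)^{-1})$ and the subsequent computation of $\E[|d_m|^2]$ below, this produces a geometric bound $\E[|d_m(s)|^2]\lesssim_s(\E[W^2]/b)^{m}$, which is summable.

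For \eqref{eqn-mu-s-square}, the standard orthogonality $\E[d_m\overline{d_{m'}}]=0$ for $m\neq m'$ follows by conditioning on the earlier layer and using that the later layer carries a mean-zero, independent factor $\mathring{W}(u)$. Within a single level, for distinct $u\neq u'$ with $|u|=|u'|=m$, the variables $\mathring{W}(u)$ and $\mathring{W}(u')$ are independent, centered, and jointly independent of all $W(v)$ with $|v|<m$, so all cross-terms vanish. Using $W(\emptyset)=1$ and independence of the $W$'s along a branch, the diagonal reduces to
\[
\E[|d_m(s)|^2]=\E[\mathring{W}^2]\cdot(\E[W^2])^{m-1}\cdot\sum_{|u|=m}\Big|\int_{I_u}e^{-i2\pi sx}\,dx\Big|^2.
\]
Since $|\int_{I_u}e^{-i2\pi sx}dx|$ depends only on $|I_u|=b^{-m}$, the inner sum equals $b^m\cdot b^{-2m}\cdot\big|\tfrac{e^{i2\pi sb^{-m}}-1}{2\pi sb^{-m}}\big|^2$. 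Collecting the factors and summing in $m$ yields \eqref{eqn-mu-s-square}.

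The second identity \eqref{eqn-mu-s-square-mei-mo} is handled in the same manner: $\E[d_m\,d_{m'}]=0$ for $m\neq m'$ by the same mean-zero argument, and the off-diagonal terms in $\E[d_m^2]$ vanish by the independence of $\mathring{W}(u),\mathring{W}(u')$. The only difference is that here the integral itself (not its modulus) is squared, so the computation
\[
\Big(\int_{I_u}e^{-i2\pi sx}dx\Big)^2=b^{-2m}\cdot\Big(\tfrac{e^{i2\pi sb^{-m}}-1}{i2\pi sb^{-m}}\Big)^2\cdot e^{\pm i4\pi st_u}
\]
(after a translation $x=t_u+y$) produces the additional phase factor $e^{i4\pi st_u}$ attached to each $u$; the exponential sign is immaterial because $\sum_{|u|=m}e^{\pm i4\pi st_u}$ is a $b$-adic root-of-unity sum taking the same real value for either sign. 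Combining with the $(\E[W^2])^{m-1}b^m$ factor from the tree and summing over $m$ gives the claimed formula.

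The only delicate point is justifying the $L^2$-convergence throughout the squared critical regime, since the cascade martingale itself is only $L^1$-bounded there; this hinges precisely on the observation $\E[W^2]<b$ extracted from Lemma~\ref{lem-DF}(i), which holds with strict inequality even when $\E[W^{(2)}\log W^{(2)}]=\log b$. Beyond that, the proof is a direct orthogonal second-moment computation organized around the tree structure of the cascade.
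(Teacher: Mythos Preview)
Your proposal is correct and follows essentially the same approach as the paper: decompose $\widehat{\mu}_\infty(s)$ via the martingale differences \eqref{eqn-m-differ-s}, use orthogonality across levels and within levels (from independence and mean-zero of the $\mathring{W}(u)$'s), and sum the resulting diagonal terms. You are in fact more explicit than the paper on two points it leaves implicit: the $L^2$-convergence of $\widehat{\mu}_n(s)$ to $\widehat{\mu}_\infty(s)$ (which you correctly ground in $\E[W^2]<b$, valid even in the squared critical regime by Lemma~\ref{lem-DF}), and the phase-factor bookkeeping in the second identity, including the harmless sign ambiguity resolved by the fact that $\sum_{|u|=m}e^{\pm i4\pi s t_u}$ is a complete $b^m$-th root-of-unity sum and hence real (indeed, either $0$ or $b^m$).
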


\begin{proof}
First note that by Lemma \ref{lem-DF}, we have 
$
\E[W^2]<b. 
$
Hence both terms on the right hand sides of \eqref{eqn-mu-s-square} and \eqref{eqn-mu-s-square-mei-mo} are well-defined. 
Fix any integer $s\ge 1$.  Using  orthogonality of the martingale differences, we have 
\begin{align*}
\E[|\widehat{\mu}_\infty(s)|^2] =\sum_{m=1}^\infty\E\big[|\widehat{\mu}_{m}(s)-\widehat{\mu}_{m-1}(s)|^2\big]. 
\end{align*} 
Now by  \eqref{eqn-m-differ-s}  and the orthogonality $\E[\mathring{W} (u) \mathring{W}(v)]=0$ for all $u\ne v$, we get   
\begin{align*}
\E[|\widehat{\mu}_\infty(s)|^2]
&=\sum_{m=1}^\infty (\E[W^2])^{m-1}\E[\mathring{W}^2]b^{m}\Big|\frac{e^{i2\pi sb^{-m}}-1}{2\pi s}\Big|^2\\
&=\frac{\E[\mathring{W}^2]}{b}\sum_{m=1}^\infty \Big(\frac{\E[W^2]}{b}\Big)^{m-1}\Big|\frac{e^{i2\pi sb^{-m}}-1}{2\pi s b^{-m}}\Big|^2.
\end{align*}

The equality  \eqref{eqn-mu-s-square-mei-mo} can be obtained by the same method.
\end{proof}

Recall the definition \eqref{eqn-c-c-c-c} of the constant $\varrho = \varrho(W,b)$  for all $b \ge 2$ and the definition \eqref{def-pi} of the constant $\varpi$ defined only for $b=2$. . 

\begin{lemma}\label{coro-97}
Assume that $W$ is in the squared sub-critical or squared critical regime. Then 
\begin{align}\label{eqn-mu-s-square-s=1}
\E[|\widehat{\mu}_\infty(1)|^2]=\varrho. 
\end{align}
Moreover, for  $b \ge 3$, 
\begin{align}\label{eqn-mu-s-square-mei-mo-1}
\mathbb{E}\left[(\widehat{\mu}_{\infty}(1))^2\right] =0
\end{align}
and if $b = 2$, then 
\begin{align}\label{b2-square}
\mathbb{E}\left[(\widehat{\mu}_{\infty}(1))^2\right] =  \varpi = - \frac{2 \E[\mathring{W}^2]}{\pi^2}. 
\end{align}
\end{lemma}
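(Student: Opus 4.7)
The plan is to substitute $s=1$ directly into the two formulas obtained in Lemma \ref{lemma-jihi} and reduce everything to elementary character sums over the $b$-adic tree at level $m$.

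First, for \eqref{eqn-mu-s-square-s=1}, setting $s=1$ in \eqref{eqn-mu-s-square} produces a series in $(\E[W^2]/b)^{m-1}$, and after the index shift $k=m$ (pulling out a factor of $b/\E[W^2]$) one recognizes exactly the series appearing in the definition \eqref{eqn-c-c-c-c} of $\varrho$. This step is purely algebraic.

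The substantive step is \eqref{eqn-mu-s-square-mei-mo-1} and \eqref{b2-square}. Setting $s=1$ in \eqref{eqn-mu-s-square-mei-mo} turns the inner sum into
\[
S_m := \sum_{|u|=m} e^{i4\pi t_u} = \prod_{j=1}^{m}\Big(\sum_{u_j=0}^{b-1} e^{i4\pi u_j/b^j}\Big),
\]
since $t_u=\sum_{j=1}^m u_j b^{-j}$ factorizes the exponential. Each factor is a finite geometric sum and vanishes unless $e^{i4\pi/b^j}=1$, i.e.\ unless $2/b^{j-1}\in b\Z$. For $b\ge 3$, the factor at $j=1$ is $\sum_{u=0}^{b-1} e^{i4\pi u/b}=\frac{e^{i4\pi}-1}{e^{i4\pi/b}-1}=0$, so $S_m=0$ for every $m\ge 1$, giving \eqref{eqn-mu-s-square-mei-mo-1}. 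For $b=2$ the factor at $j=1$ equals $2$ (both exponentials are $1$), but the factor at $j=2$ is $1+e^{i\pi}=0$, so $S_m=0$ for $m\ge 2$ and $S_1=2$. Hence only the $m=1$ term survives in \eqref{eqn-mu-s-square-mei-mo}.

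Plugging $m=1$, $b=2$ into the surviving term and computing $\big(\tfrac{e^{i\pi}-1}{i\pi}\big)^2 = \big(\tfrac{-2}{i\pi}\big)^2 = -\tfrac{4}{\pi^2}$ yields $\E[(\widehat{\mu}_\infty(1))^2] = \tfrac{\E[\mathring W^2]}{2}\cdot\tfrac12\cdot 2\cdot\big(-\tfrac{4}{\pi^2}\big) = -\tfrac{2\E[\mathring W^2]}{\pi^2}=\varpi$, which is \eqref{b2-square}. I do not expect any real obstacle here; the only point requiring mild care is the case distinction on $b$ when evaluating $S_m$, since this is precisely where the exceptional behavior at $b=2$ emerges.
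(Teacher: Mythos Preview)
Your proof is correct and follows essentially the same route as the paper: substitute $s=1$ into Lemma~\ref{lemma-jihi} and evaluate the character sum $S_m=\sum_{|u|=m}e^{i4\pi t_u}$. The paper computes $S_m$ by noting that $\{b^m t_u:|u|=m\}=\{0,1,\dots,b^m-1\}$, giving $S_m=\sum_{k=0}^{b^m-1}e^{i4\pi k/b^m}$ directly as a single geometric sum, whereas you factorize $S_m$ as a product over the digits $u_j$; both are equivalent. One small slip: your sentence ``each factor \dots\ vanishes unless $e^{i4\pi/b^j}=1$'' is not literally true for general $j$ (e.g.\ $b=2$, $j=3$ gives $1+i\neq 0$), but this is harmless since your actual argument only uses the $j=1$ factor for $b\ge 3$ and the $j=1,2$ factors for $b=2$, where your case analysis is correct.
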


\begin{proof}
By \eqref{eqn-mu-s-square} and the definition \eqref{eqn-c-c-c-c} of $\varrho$, we get 
the equality \eqref{eqn-mu-s-square-s=1} . 
For $b \ne 2$, the equality \eqref{eqn-mu-s-square-mei-mo-1} follows from  \eqref{eqn-mu-s-square-mei-mo} and the following identity: 
\[
\sum_{|u|=m}  e^{i 4\pi \, t_u}=     \sum_{u_1, \cdots, u_m = 0}^{b-1} \exp\Big( i 4 \pi   \sum_{j=1}^{m} \frac{u_j}{b^j}\Big)  =  \sum_{k = 0}^{b^{m}-1} \exp\Big( \frac{i 4 \pi k}{b^m}\Big) = 0 \quad \text{for all $m\ge 1$}.
\]
While for $b =2$, we have 
\[
\sum_{|u|=m}  e^{i 4\pi \, t_u} = 
\left\{
\begin{array}{cl}
0 & \text{if $m\ge 2$}
 \\
2 & \text{if $m =1$}
\end{array}
\right..
\]
Then the equality \eqref{b2-square} follows from \eqref{eqn-mu-s-square-mei-mo}. 
\end{proof}

In what follows, for a complex random variable $Z=X+iY$,  we denote by 
\[
\mathrm{Cov}(Z): =      \begin{pmatrix}
\Var(X) & \Cov(X, Y)\\
\Cov(X, Y) &\Var(Y) \\
\end{pmatrix}, 
\]
that is, $\mathrm{Cov}(Z)$ denotes the covariance matrix of the real random vector $(X, Y)$.  In particular, the standard complex normal random variable $\mathcal{N}_\C(0,1)$ defined as \eqref{def-cg} satisfies 
\[
\mathrm{Cov}(\mathcal{N}_\C(0,1))  =  \begin{pmatrix}
1/2 & 0 \\
0 & 1/2\\
\end{pmatrix}. 
\]

\begin{lemma}\label{lemma-mean-zero-s}
Assume that $b \ge 3$. If  $W$ is in the squared sub-critical regime or squared critical regime, then 
\begin{align}\label{eqn-A-equal-0}
\E\Big[ b^{n D_F}|\widehat{\mu}_{\infty}(b^n)|^2 \big| \mathscr{F}_{n}\Big] =\varrho\MAB, 
\end{align}
where $\MAB$ is  defined as in \eqref{eqn-def-mnab}.  Moreover,  
\begin{align}\label{eqn-A-mo-equal-0}
\E\Big[b^{n D_F}(\widehat{\mu}_{\infty}(b^n))^2 \big| \mathscr{F}_{n}\Big] =0.
\end{align}
 Hence, conditioned on $\mathscr{F}_{n}$,   the complex random variable $b^{\frac{n D_F}{2}}\cdot \widehat{\mu}_{\infty}(b^n)$ has a covariance matrix given by 
\begin{align}\label{eqn-cov-conditional}
\mathrm{Cov}\Big[b^{\frac{n D_F}{2}}\cdot \widehat{\mu}_{\infty}(b^n)\big| \mathscr{F}_{n}\Big]=\varrho\MAB
\begin{pmatrix}
1/2&0\\
0&1/2\\
\end{pmatrix}.
\end{align}
In particular, 
\begin{align}\label{eqn-b-power-n-rho}
\E\big[b^{ n D_F}|\widehat{\mu}_{\infty}(b^n)|^2 \big] =\varrho \an \E\big[ b^{n D_F}(\widehat{\mu}_{\infty}(b^n))^2 \big] =0
\end{align}
and
\[
\mathrm{Cov}\big[b^{\frac{n D_F}{2}}\cdot \widehat{\mu}_{\infty}(b^n)\big]= \varrho \begin{pmatrix}
1/2&0\\
0&1/2\\
\end{pmatrix}.
\]
\end{lemma}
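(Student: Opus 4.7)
The plan is to deduce everything from the stochastic self-similarity identity in Lemma~\ref{lemma-eqn-function-equality}, combined with the two second-moment formulas for $\widehat{\mu}_\infty(1)$ established in Lemma~\ref{coro-97}. First I would invoke Lemma~\ref{lemma-eqn-function-equality} and realize the identity there as a genuine almost-sure equality on a common probability space: namely
\[
\widehat{\mu}_\infty(b^n)=\frac{1}{b^n}\sum_{|u|=n}\Big(\prod_{j=0}^{n}W(u|_j)\Big)\cdot\widehat{\mu}_\infty^{(u)}(1),
\]
where the $\widehat{\mu}_\infty^{(u)}(1)$ are i.i.d.\ copies of $\widehat{\mu}_\infty(1)$, mutually independent and independent of $\mathscr{F}_n$. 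Since $\E[\widehat{\mu}_\infty(1)]=\widehat{\indi_{[0,1]}}(1)-\text{const}=0$, expanding $|\widehat{\mu}_\infty(b^n)|^2$ as a double sum and taking $\E[\,\cdot\,|\mathscr{F}_n]$ kills every off-diagonal cross term. One is left with
\[
\E\big[|\widehat{\mu}_\infty(b^n)|^2\,\big|\,\mathscr{F}_n\big]=\frac{\E[|\widehat{\mu}_\infty(1)|^2]}{b^{2n}}\sum_{|u|=n}\prod_{j=0}^{n}W(u|_j)^2.
\]

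Next I would substitute $W(\emptyset)=1$ and recognize that
\[
\sum_{|u|=n}\prod_{j=1}^{n}W(u|_j)^2=b^n(\E[W^2])^n\,\mathscr{M}_n(W^{(2)}),
\]
directly from the definition \eqref{eqn-def-mnab}. In the squared sub-critical or squared critical regime we have $D_F=1-\log_b\E[W^2]$, hence $b^{nD_F}=b^n/(\E[W^2])^n$, and multiplying by $b^{nD_F}$ and using $\E[|\widehat{\mu}_\infty(1)|^2]=\varrho$ from \eqref{eqn-mu-s-square-s=1} yields \eqref{eqn-A-equal-0} after a clean cancellation. The same double-sum expansion, now applied to $(\widehat{\mu}_\infty(b^n))^2$ (not modulus squared), produces a diagonal term proportional to $\E[(\widehat{\mu}_\infty(1))^2]$, which vanishes for $b\ge 3$ by \eqref{eqn-mu-s-square-mei-mo-1}; this delivers \eqref{eqn-A-mo-equal-0}.

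To obtain the conditional covariance matrix \eqref{eqn-cov-conditional}, I would write $Z_n:=b^{nD_F/2}\widehat{\mu}_\infty(b^n)=X_n+iY_n$; the identity $\E[Z_n|\mathscr{F}_n]=0$ follows from $\E[\widehat{\mu}_\infty(1)]=0$ and the representation above. The standard identities
\[
\E[|Z_n|^2\,|\,\mathscr{F}_n]=\Var(X_n|\mathscr{F}_n)+\Var(Y_n|\mathscr{F}_n),\qquad \E[Z_n^2\,|\,\mathscr{F}_n]=\Var(X_n|\mathscr{F}_n)-\Var(Y_n|\mathscr{F}_n)+2i\Cov(X_n,Y_n|\mathscr{F}_n),
\]
combined with \eqref{eqn-A-equal-0} and \eqref{eqn-A-mo-equal-0}, force $\Var(X_n|\mathscr{F}_n)=\Var(Y_n|\mathscr{F}_n)=\tfrac12\varrho\,\mathscr{M}_n(W^{(2)})$ and $\Cov(X_n,Y_n|\mathscr{F}_n)=0$, which is \eqref{eqn-cov-conditional}. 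Finally, taking unconditional expectations in \eqref{eqn-A-equal-0} and using the martingale property $\E[\mathscr{M}_n(W^{(2)})]=\mathscr{M}_0(W^{(2)})=1$ gives the first identity of \eqref{eqn-b-power-n-rho}, while \eqref{eqn-A-mo-equal-0} yields the second.

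There is no genuine obstacle here, as every step is an algebraic consequence of preceding lemmas; the only point requiring care is the constant bookkeeping. The most delicate bit will be verifying that the product $\prod_{j=0}^{n}$ (with $W(\emptyset)\equiv 1$) matches the normalization in $\mathscr{M}_n(W^{(2)})$ (which uses $\prod_{j=1}^{n}$ together with the extra $b^{-n}$ and $\E[W^2]^{-n}$ factors), and that $b^{nD_F}$ exactly absorbs the surplus $(\E[W^2])^n/b^n$. This is essentially a one-line check, but it is where a miscount would ruin the identification of the prefactor $\varrho\,\mathscr{M}_n(W^{(2)})$.
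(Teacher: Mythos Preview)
Your proposal is correct and follows essentially the same route as the paper: invoke the self-similarity representation of Lemma~\ref{lemma-eqn-function-equality}, expand the (conditional) second moments, kill the off-diagonal terms via $\E[\widehat{\mu}_\infty(1)]=0$, and identify the diagonal with $\varrho\,\mathscr{M}_n(W^{(2)})$ using $b^{D_F}=b/\E[W^2]$ and Lemma~\ref{coro-97}. The paper is slightly terser about the covariance matrix step (it simply asserts \eqref{eqn-cov-conditional} from \eqref{eqn-A-equal-0} and \eqref{eqn-A-mo-equal-0}), whereas you spell out the real/imaginary-part identities, but the content is identical.
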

\begin{proof}
By definition, one has $b^{D_F}=\frac{b}{\E[W^2]}$.
Hence by \eqref{eqn-function-equality}, we have
\begin{align}\label{eqn-alpha-equiv-s-s}
b^{\frac{n D_F}{2}} \widehat{\mu}_{\infty}(b^n)=\frac{1}{(b\E[W^2])^{\frac{n}{2}}}\sum_{|u|=n}  \prod_{j=1}^{n} W(u|_j)  \cdot \widehat{\mu}_{\infty}^{(u)}(1) .
\end{align}
By the independence among $(\widehat{\mu}_\infty^{(u)}(1): |u|=n)$ and $\E[\widehat{\mu}_\infty(1)]=0$, we have 
\begin{align*}
\E\Big[ b^{n D_F}|\widehat{\mu}_{\infty}(b^n)|^2 \big| \mathscr{F}_{n}\Big]
= \E\big[|\widehat{\mu}_{\infty}(1)|^2\big]\frac{1}{b^n}\sum_{|u|=n} \prod_{j=1}^{n} \Big(\frac{W(u|_j)^2}{\E[W^2]}\Big). 
\end{align*}
The desired equality \eqref{eqn-A-equal-0} then follows from  \eqref{eqn-mu-s-square-s=1} and the definition \eqref{eqn-def-mnab} of $\MAB$. 

Similarly,  since 
\[
\E [\widehat{\mu}_{\infty}^{(u)}(1) \widehat{\mu}_{\infty}^{(v)}(1)| \mathscr{F}_n] = 0 \quad \text{for any  $u\ne v$ with $|u|=|v|=n$,}
\]
by \eqref{eqn-alpha-equiv-s-s}, 
 we have 
\begin{align*}
\E\Big[ b^{n D_F}(\widehat{\mu}_{\infty}(b^n))^2 \big| \mathscr{F}_{n}\Big]
 = \frac{ \E[(\widehat{\mu}_{\infty} (1))^2]}{(b\E[W^2])^{n}}\sum_{|u|=n}  \prod_{j=1}^{n} W(u|_j)^2.  
\end{align*}
The desired equality \eqref{eqn-A-mo-equal-0} then follows from \eqref{eqn-mu-s-square-mei-mo-1}. 
\end{proof}

\begin{lemma}\label{lem-b2}
Assume $b  = 2$. If  $W$ is in the squared sub-critical regime or squared critical regime, then  conditioned on $\mathscr{F}_{n}$,   the complex random variable $2^{\frac{n D_F}{2}}\cdot \widehat{\mu}_{\infty}(2^n)$ has a covariance matrix given by 
\[
\mathrm{Cov}\Big[2^{\frac{n D_F}{2}}\cdot \widehat{\mu}_{\infty}(2^n)\big| \mathscr{F}_{n}\Big]=\frac{\MAB}{2}
\begin{pmatrix}
\varrho+\varpi&0\\
0&\varrho-\varpi\\
\end{pmatrix}.
\]
In other words, 
\[
\mathrm{Cov}\Bigg[2^{\frac{nD_F}{2}}  \cdot  \Big[   \sqrt{ \frac{\varrho}{\varrho+\varpi}} \mathrm{Re} \big(\widehat{\mu}_{\infty}(2^n)\big) + i  \sqrt{ \frac{\varrho}{\varrho-\varpi}} \mathrm{Im} \big(\widehat{\mu}_{\infty}(2^n)\big) \Big] \Big| \mathscr{F}_n  \Bigg] = \varrho\MAB
\begin{pmatrix}
1/2&0\\
0&1/2\\
\end{pmatrix}.
\]
\end{lemma}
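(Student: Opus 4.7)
The plan is to follow the same conditional-moment computation carried out for $b \ge 3$ in Lemma \ref{lemma-mean-zero-s}, but this time keep track of the non-vanishing second complex moment $\E[(\widehat{\mu}_\infty(1))^2] = \varpi$ that arises in the base-$2$ case via \eqref{b2-square}. Concretely, starting from the distributional identity \eqref{eqn-alpha-equiv-s-s} applied with $b = 2$,
\[
2^{\frac{nD_F}{2}}\widehat{\mu}_\infty(2^n) \stackrel{d}{=} \frac{1}{(2\E[W^2])^{n/2}}\sum_{|u|=n}\Big(\prod_{j=1}^n W(u|_j)\Big)\widehat{\mu}_\infty^{(u)}(1),
\]
where $\{\widehat{\mu}_\infty^{(u)}(1):|u|=n\}$ are i.i.d.\ copies of $\widehat{\mu}_\infty(1)$ independent of $\mathscr{F}_n$, and using that $\E[\widehat{\mu}_\infty(1)] = 0$, only the diagonal terms $u = v$ survive in both the conditional $|\cdot|^2$ and $(\cdot)^2$ expansions.

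First I would compute the conditional second absolute moment exactly as in the proof of \eqref{eqn-A-equal-0}: independence gives
\[
\E\big[2^{nD_F}|\widehat{\mu}_\infty(2^n)|^2\,\big|\,\mathscr{F}_n\big] = \E[|\widehat{\mu}_\infty(1)|^2]\cdot\frac{1}{2^n}\sum_{|u|=n}\prod_{j=1}^n\frac{W(u|_j)^2}{\E[W^2]} = \varrho\,\mathscr{M}_n(W^{(2)}),
\]
the last equality by \eqref{eqn-mu-s-square-s=1} and the definition \eqref{eqn-def-mnab}. Next, performing the analogous computation for $(\cdot)^2$ and invoking \eqref{b2-square} in place of \eqref{eqn-mu-s-square-mei-mo-1}, I would obtain
\[
\E\big[2^{nD_F}(\widehat{\mu}_\infty(2^n))^2\,\big|\,\mathscr{F}_n\big] = \E[(\widehat{\mu}_\infty(1))^2]\cdot\mathscr{M}_n(W^{(2)}) = \varpi\,\mathscr{M}_n(W^{(2)}).
\]
Here the crucial difference with the $b \ge 3$ case is that $\varpi \ne 0$, which is precisely what produces a non-trivial real-imaginary asymmetry.

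Setting $Z = 2^{nD_F/2}\widehat{\mu}_\infty(2^n) = X + iY$ with $X = \mathrm{Re}\,Z$, $Y = \mathrm{Im}\,Z$, and noting that $\E[Z\,|\,\mathscr{F}_n] = 0$, I would then invoke the standard polarization identities
\[
\Var(X\,|\,\mathscr{F}_n) = \tfrac{1}{2}\big(\E[|Z|^2\,|\,\mathscr{F}_n] + \mathrm{Re}\,\E[Z^2\,|\,\mathscr{F}_n]\big),\quad
\Var(Y\,|\,\mathscr{F}_n) = \tfrac{1}{2}\big(\E[|Z|^2\,|\,\mathscr{F}_n] - \mathrm{Re}\,\E[Z^2\,|\,\mathscr{F}_n]\big),
\]
together with $\Cov(X,Y\,|\,\mathscr{F}_n) = \tfrac{1}{2}\mathrm{Im}\,\E[Z^2\,|\,\mathscr{F}_n]$. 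Since $\varpi$ is real, the off-diagonal term vanishes and the two diagonal entries become $\tfrac{1}{2}(\varrho \pm \varpi)\mathscr{M}_n(W^{(2)})$, yielding the first claimed matrix. The second claimed identity then follows by the elementary observation that rescaling $X \mapsto \sqrt{\varrho/(\varrho+\varpi)}\,X$ and $Y \mapsto \sqrt{\varrho/(\varrho-\varpi)}\,Y$ (both rescalings being well-defined thanks to $\varrho \pm \varpi > 0$, noted just after \eqref{def-pi}) equalizes the diagonal entries to $\tfrac{\varrho}{2}\mathscr{M}_n(W^{(2)})$.

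There is no essential obstacle here beyond bookkeeping: the argument is a direct adaptation of Lemma \ref{lemma-mean-zero-s}, the only subtle input being the non-trivial value of $\E[(\widehat{\mu}_\infty(1))^2]$ in base two (already identified in Lemma \ref{coro-97}) and the positivity $\varrho \pm \varpi > 0$ required to make the normalization in the second matrix identity legitimate.
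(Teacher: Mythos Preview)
Your proposal is correct and follows exactly the approach the paper indicates: the paper's proof simply states that it is ``similar to that of Lemma~\ref{lemma-mean-zero-s}, where we shall use \eqref{b2-square} instead of using \eqref{eqn-mu-s-square-mei-mo-1},'' and you have faithfully filled in those details, including the polarization identities needed to read off the diagonal covariance matrix from $\E[|Z|^2\,|\,\mathscr{F}_n]$ and $\E[Z^2\,|\,\mathscr{F}_n]$.
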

\begin{proof}
The proof is similar to that of Lemma~\ref{lemma-mean-zero-s}, where we shall use \eqref{b2-square} instead of using \eqref{eqn-mu-s-square-mei-mo-1}.  
\end{proof}

\subsection{Squared sub-critical regime: Proof of Proposition \ref{thm-sq-sub-critical}}

Now suppose that we are in the squared sub-critical regime.
We shall use the conditional  Linderberg-Feller central limit theorem   (see Proposition~\ref{Prop-c-clt} in the Appendix) to prove Proposition~\ref{thm-sq-sub-critical}.

For $|u|=n$,  we write
\begin{align}\label{eqn-yu-uuu}
\VU=b^{-n}\prod_{j=1}^{n} \frac{W(u|_j)^2}{\E[W^2]}.
\end{align}
Notice that $\{\VU: |u|=n\}$ are $\mathscr{F}_n$-measurable and
\[
\MAB=\sum_{|u|=n}\VU.
\]

\begin{proof}[Proof of Proposition~\ref{thm-sq-sub-critical}]
We only give the proof of Proposition~\ref{thm-sq-sub-critical} for $b \ge 3$.  The proof in the case $b =2$ is similar (where we shall use Lemma \ref{lem-b2} instead of using Lemma \ref{lemma-mean-zero-s}). 

By \eqref{eqn-alpha-equiv-s-s}, one has
\begin{align*}
 b^{\frac{n D_F}{2}} \widehat{\mu}_{\infty}(b^n)&=\frac{1}{(b\E[W^2])^{\frac{n}{2}}}\sum_{|u|=n}  \prod_{j=1}^{n} W(u|_j)  \cdot \widehat{\mu}_{\infty}^{(u)}(1)
%&=\sum_{|u|=n}  \prod_{j=1}^{n} W(u|_j) (b\E[W^2])^{-\frac{n}{2}} \cdot \hat{\mu}_{\infty}^{(u)}(1)\\
=\sum_{|u|=n}\sqrt{\VU}\widehat{\mu}_{\infty}^{(u)}(1). 
\end{align*}
Hence, combining with \eqref{eqn-A-equal-0} and \eqref{eqn-A-mo-equal-0},  conditioned on $\mathscr{F}_n$,  we can decompose $b^{\frac{n D_F}{2}} \widehat{\mu}_{\infty}(b^n)$ as the sum of  centered independent random variables 
\[
 \big\{\sqrt{\VU}  \cdot \widehat{\mu}_{\infty}^{(u)}(1)\big\}_{|u|=n}
\]
with conditional covariance matrix given by \eqref{eqn-cov-conditional}. Apparently, 
\[
\mathrm{Cov}\Big[b^{\frac{n D_F}{2}}\cdot \widehat{\mu}_{\infty}(b^n)\big| \mathscr{F}_{n}\Big]\xrightarrow[n\to\infty]{a.s.}    \varrho \mathscr{M}_\infty(W^{(2)}) 
\begin{pmatrix}
1/2 &0\\
0&1/2 \\
\end{pmatrix}.
\]

We want to show that this sequence of conditional complex random variables converges in distribution to a  complex  Gaussian random variable  $\sqrt{\varrho\cdot \mathscr{M}_\infty(W^{(2)})}\mathcal{N}_{\C}(0,1)$.

We shall check the conditional  Lindeberg-Feller condition in Proposition~\ref{Prop-c-clt}. 
In other words, we need to show that,  for any $\varepsilon>0$, the following holds:
 \begin{align}\label{eqn-CLT-feller}
\lim_{n\rightarrow \infty} \sum_{|u|=n} \E[\VU |\widehat{\mu}_{\infty}^{(u)}(1)|^2\mathds{1}(\VU |\widehat{\mu}_{\infty}^{(u)}(1)|^2>\varepsilon)|\mathscr{F}_n]=0.
 \end{align}
Set 
 \begin{align}\label{eqn-CLT-sigma}
\sigma(x): =\E\big[|\widehat{\mu}_{\infty}(1)|^2 \mathds{1}(|\widehat{\mu}_{\infty}(1)|^2>x)\big], \quad \forall x \geqslant 0.
\end{align}
It is known from \eqref{eqn-b-power-n-rho}  that $\E[|\widehat{\mu}_{\infty}(1)|^2]=\varrho<+\infty$. Thus, by  Dominated Convergence Theorem,  
\begin{align}\label{sigma-to-zero}
\sigma(x)\downarrow 0 \quad \text{as $x\uparrow \infty $ }.
\end{align}
Hence  Lemma~\ref{lemma-Yu-zero-s} below  implies that, almost surely, 
\[
\lim_{n\rightarrow \infty} \sum_{|u|=n} \E\Big[\VU |\widehat{\mu}_{\infty}^{(u)}(1)|^2\mathds{1}(\VU |\widehat{\mu}_{\infty}^{(u)}(1)|^2>\varepsilon)\Big| \mathscr{F}_n\Big]=\lim_{n\rightarrow \infty} \sum_{|u|=n} \VU\sigma\Big(\frac{\varepsilon}{\VU}\Big)=0.
\]
By the conditional  Lindeberg-Feller's central limit theorem (Proposition~\ref{Prop-c-clt} in the Appendix),  we have
\[
b^{\frac{n D_F}{2}} \widehat{\mu}_{\infty}(b^n)\xrightarrow[n\to\infty]{d} \sqrt{\varrho\mathscr{M}_\infty(W^{(2)})}\mathcal{N}_\C(0,1). 
\]
This completes the proof.
\end{proof}

\begin{lemma}\label{lemma-Yu-zero-s}
 For any $\varepsilon>0$,  we have 
\[
\lim_{n\rightarrow \infty} \sum_{|u|=n} \VU\sigma\Big(\frac{\varepsilon}{Y_u}\Big)=0 \quad \text{a.s}.
\]
\end{lemma}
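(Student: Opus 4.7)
The strategy is to reduce the claim to the single almost sure convergence $\max_{|u|=n} Y_u \to 0$ as $n \to \infty$. Observe first that $\sigma$ is non-increasing with $\sigma(x) \le \varrho$ for all $x \ge 0$ and $\sigma(x)\downarrow 0$ as $x\uparrow\infty$ (see \eqref{sigma-to-zero}). Fix an arbitrary $\delta > 0$ and choose $M > 0$ so that $\sigma(M) < \delta$. Once we know that $\max_{|u|=n} Y_u < \varepsilon/M$ for all sufficiently large $n$, the bound $\varepsilon/Y_u > M$ holds simultaneously for every $u$ with $|u|=n$, and hence $\sigma(\varepsilon/Y_u) \le \sigma(M) < \delta$. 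Consequently,
\[
\sum_{|u|=n} Y_u \,\sigma\Big(\frac{\varepsilon}{Y_u}\Big) \le \delta \sum_{|u|=n} Y_u = \delta \,\mathscr{M}_n(W^{(2)}) \xrightarrow[n\to\infty]{a.s.} \delta\, \mathscr{M}_\infty(W^{(2)}),
\]
which can be made arbitrarily small since $\mathscr{M}_\infty(W^{(2)})$ is finite a.s. in the squared sub-critical regime and $\delta$ is arbitrary.

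The main remaining task, and the heart of the argument, is to prove $\max_{|u|=n} Y_u \to 0$ a.s. I will do this by a simple moment estimate. Consider the function $h(\eta) := \log \E[(W^{(2)})^{1+\eta}]$; by the standing assumption $\E[W^t]<\infty$ for all $t>0$, $h$ is smooth in a neighborhood of $0$ with $h(0)=0$ and derivative $h'(0) = \E[W^{(2)}\log W^{(2)}]$, which is strictly less than $\log b$ by the squared sub-critical hypothesis. Hence there exists $\eta_0 > 0$ with $h(\eta_0) < \eta_0 \log b$, that is, $\E[(W^{(2)})^{1+\eta_0}] < b^{\eta_0}$. A direct computation using the independence of $\{W(u): |u|=n\}$ then yields
\[
\E\Big[\sum_{|u|=n} Y_u^{1+\eta_0}\Big] = b^n \cdot b^{-n(1+\eta_0)}\cdot \big(\E[(W^{(2)})^{1+\eta_0}]\big)^n = \Big(\frac{\E[(W^{(2)})^{1+\eta_0}]}{b^{\eta_0}}\Big)^n,
\]
which decays geometrically. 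Summing over $n$ and invoking Fubini--Tonelli gives $\sum_{n\ge 1}\sum_{|u|=n} Y_u^{1+\eta_0} < \infty$ almost surely, so in particular $\sum_{|u|=n} Y_u^{1+\eta_0} \to 0$ a.s. Since $Y_u\ge 0$, we have the pointwise estimate
\[
\max_{|u|=n} Y_u \le \Big(\sum_{|u|=n} Y_u^{1+\eta_0}\Big)^{\frac{1}{1+\eta_0}},
\]
and thus $\max_{|u|=n} Y_u \to 0$ a.s., as required.

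The only delicate point is the moment estimate for the sum $\sum_{|u|=n} Y_u^{1+\eta_0}$, which rests entirely on the \emph{strict} inequality $\E[W^{(2)}\log W^{(2)}] < \log b$: this leaves room, by first-order Taylor expansion of $h$ at $0$, to pass from the trivial equality $\E[W^{(2)}]=1=b^0$ to a genuine exponential gain $\E[(W^{(2)})^{1+\eta_0}]<b^{\eta_0}$ for some $\eta_0>0$. Combining this geometric decay with the two-step reduction above completes the proof.
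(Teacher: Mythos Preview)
Your proof is correct. The reduction step---bounding $\sum_{|u|=n} Y_u\,\sigma(\varepsilon/Y_u)$ by $\sigma(\varepsilon/\max_{|u|=n}Y_u)\cdot\mathscr{M}_n(W^{(2)})$ and using $\mathscr{M}_n(W^{(2)})\to\mathscr{M}_\infty(W^{(2)})<\infty$---matches the paper exactly. The genuine difference lies in how you establish $\max_{|u|=n}Y_u\to 0$ a.s. The paper invokes the BRW large-deviation result \cite[Theorem~1.3]{shi-zhan-book} to compute the precise exponential rate
\[
\lim_{n\to\infty}\frac{1}{n}\sup_{|u|=n}\log Y(u)=2\inf_{s>0}\Big(\frac{\varphi_W(s)}{s}-\frac{\varphi_W(2)}{2}\Big)<0,
\]
the strict negativity coming from the squared sub-critical hypothesis. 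Your argument bypasses this machinery entirely: you observe that the sub-critical condition $\E[W^{(2)}\log W^{(2)}]<\log b$ is exactly the statement that the derivative at $\eta=0$ of $\eta\mapsto\log\E[(W^{(2)})^{1+\eta}]-\eta\log b$ is negative, so Kahane's $L^p$-condition $\E[(W^{(2)})^{1+\eta_0}]<b^{\eta_0}$ holds for some small $\eta_0>0$, and then a direct first-moment computation plus Borel--Cantelli finishes the job. Your route is more elementary and fully self-contained; the paper's route yields the sharp exponential decay rate of $\max_{|u|=n}Y_u$, which is not needed here but is informative.
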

\begin{proof}
For $|u|=n$, write 
\begin{align}\label{def-Vu}
 - \log \VU=   \sum_{j=1}^{|u|}    \big( -2 \log W(u|_j) + \log \E[bW^2]\big). 
\end{align}
Then by \cite[Theorem~1.3]{shi-zhan-book}, almost surely, one has
\begin{align*}
\lim_{n\to\infty} \frac{1}{n} \inf_{|u|=n}  ( - \log \VU) & =   - \inf_{t>0} \frac{1}{t} \log \E \Big[ b \exp \Big(t (2 \log W - \log \E[bW^2])    \Big) \Big]  
\\
 & =  - \inf_{t>0}   \frac{1}{t} \log   \Big( \frac{\E[b W^{2t}]}{(\E[bW^2])^t}  \Big). 
\end{align*}
Hence, by recalling  the definition \eqref{def-phi-W} of the function 
$
\varphi_W(t)= \log \E[W^t] - (t-1) \log b
$,  we obtain 
\[
\lim_{n\to\infty}  \frac{1}{n} \sup_{|u|=n}   \log \VU  =  \inf_{t>0}   \frac{ \varphi_W(2t) -     t \varphi_W(2) }{t}  = 2   \inf_{s>0}  \Phi(s)   \quad a.s. ,
\]
where \[
\Phi(s)  = \frac{\varphi_W(s)}{s}- \frac{\varphi_W(2)}{2}.
\]
 Since $\Phi(2)= 0$ and  by the squared sub-critical assumption on the random weight $W$, 
\[
\Phi'(2)=   \frac{1}{4} \Big(   \E\Big[ \frac{W^2}{\E[W^2]} \log \frac{W^2}{\E[W^2]}\Big] -  \log b\Big) < 0,
\]
we have  
\[
\inf_{s>0}  \Phi(s)<0
\] 
and thus 
\[
\lim_{n\rightarrow \infty} \sup_{|u|=n} \VU=0 \quad a.s. 
\]
Therefore, combining with the almost sure convergence of the martingale  $(\MAB)_{n\ge 1}$,  for fixed $\varepsilon>0$,  we obtain that, almost surely,
\begin{align*}
\lim_{n\rightarrow \infty} \sum_{|u|=n} \VU\sigma\Big(\frac{\varepsilon}{\VU}\Big) 
\le \lim_{n\rightarrow \infty}\MAB\sigma\Big(\frac{\varepsilon}{\sup_{|u|=n}\VU}\Big) = 0.
\end{align*}
This completes the whole proof.
\end{proof}

\subsection{Squared critical regime: Proof of Proposition~\ref{thm-sq-critical}}
Now suppose that $W$ is in the squared critical regime.  We shall  use  again the conditional   Lindeberg-Feller central limit theorem.

\begin{proof}[Proof of Proposition~\ref{thm-sq-critical}]
We only give the proof of Proposition~\ref{thm-sq-critical} for $b \ge 3$. The proof in the case $b =2$ is similar (where we shall use Lemma \ref{lem-b2} instead of using Lemma \ref{lemma-mean-zero-s}). 

 Recall the definition $\VU$ in  \eqref{eqn-yu-uuu}. 
As in  the squared sub-critical regime,   conditioned on $\mathscr{F}_n$,  we  can decompose  $n^{\frac{1}{4}} \cdot b^{\frac{n D_F}{2}}  \cdot \widehat{\mu}_{\infty}(b^n)$ as the sum of $b^{n}$ centered independent random variables 
\[
\big\{n^{\frac{1}{4}}\sqrt{\VU}  \cdot \widehat{\mu}_{\infty}^{(u)}(1)\big\}_{|u|=n}, 
\]
and the conditional covariance matrix of each of them  satisfies  (in view of \eqref{root-n-M}) 
\begin{align*}
\mathrm{Cov}\big[n^{\frac{1}{4}}\sqrt{\VU}  \cdot \widehat{\mu}_{\infty}^{(u)}(1) \mid \mathscr{F}_{n}\big] 
\xrightarrow{\textrm{in probability}}   \varrho \sqrt{\frac{2}{ \pi\sigma^2}} \mathscr{D}_\infty(W^{(2)})   \begin{pmatrix}
1/2 &0\\
0&1/2\\
\end{pmatrix}.
\end{align*}
By Lemma~\ref{lemma-CLT-critical} below,  we get the   Lindeberg-Feller condition:
 \begin{align*}
\lim_{n\rightarrow \infty} \sum_{|u|=n} \E\Big[n^{1/2}\VU |\widehat{\mu}_{\infty}^{(u)}(1)|^2\mathds{1}(n^{1/2}\VU |\widehat{\mu}_{\infty}^{(u)}(1)|^2>\varepsilon)\Big|\mathscr{F}_n\Big]=0 \quad \text{in probability}.
 \end{align*}
We complete the proof by the conditional Lindeberg-Feller central limit theorem  (Proposition~\ref{Prop-c-clt}).
\end{proof}
\begin{lemma}\label{lemma-CLT-critical}
For any $\varepsilon> 0$, we have 
\[
\lim_{n\rightarrow\infty} \sum_{|u|=n} n^{1/2}\VU\sigma\Big(\frac{\varepsilon}{n^{1/2}\VU}\Big)=0 \quad \text{in probability},
\]
where $\sigma(x)$ is defined in \eqref{eqn-CLT-sigma}. 
\end{lemma}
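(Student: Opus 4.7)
The strategy is the same as in the sub-critical regime (Lemma~\ref{lemma-Yu-zero-s}), but now the a.s.\ convergence of $\mathscr{M}_n(W^{(2)})$ is replaced by the Seneta--Heyde convergence in probability \eqref{root-n-M}, so the argument has to be carried out in probability. The key point is to control $\max_{|u|=n} n^{1/2} \VU$ from above.

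\textbf{Step 1: Rewrite $\VU$ in terms of the boundary BRW.} In the squared critical regime we have $\E[W^{(2)}\log W^{(2)}] = \log b$, so by Lemma~\ref{lem-non-sub} the random weight $W^{(2)}$ is in the boundary case with parameter $\beta=1$; equivalently, there exists $\xi$ satisfying $\E[\xi e^{-\xi}]=0$ and $\E[e^{-\xi}]=1/b$ such that $W^{(2)} = e^{-\xi}/\E[e^{-\xi}]$. With $V(u) = \sum_{j=1}^{|u|}\xi(u|_j)$ the associated BRW in the boundary case, a direct computation from \eqref{eqn-yu-uuu} gives $\VU = e^{-V(u)}$ for every $|u|=n$, and consequently $\mathscr{M}_n(W^{(2)}) = \sum_{|u|=n} e^{-V(u)}$.

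\textbf{Step 2: Control of $\max_{|u|=n} \VU$.} The classical results of Hu--Shi and A\"{\i}d\'{e}kon on BRW in the boundary case (see \cite{shi-zhan-book}) assert that $\min_{|u|=n} V(u) - \frac{3}{2}\log n$ is tight as $n\to\infty$. Equivalently,
\[
n^{3/2} \cdot \max_{|u|=n} \VU  = \exp\Big(\tfrac{3}{2}\log n - \min_{|u|=n} V(u)\Big)
\]
is tight, so
\[
n^{1/2}\max_{|u|=n}\VU = O_{\PP}(n^{-1}) \xrightarrow[n\to\infty]{\PP} 0.
\]

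\textbf{Step 3: Factorization and conclusion.} Since $\sigma$ is non-increasing and $\sigma(x)\downarrow 0$ as $x\to\infty$ (see \eqref{sigma-to-zero}),
\[
\sigma\Big(\frac{\varepsilon}{n^{1/2}\VU}\Big) \le \sigma\Big(\frac{\varepsilon}{n^{1/2}\max_{|u|=n}\VU}\Big)=:\sigma_n
\]
for every $|u|=n$, with $\sigma_n \to 0$ in probability by Step~2. Bounding $\sigma$ by $\sigma_n$ uniformly in $u$ and pulling it out of the sum yields
\[
0\le \sum_{|u|=n} n^{1/2}\VU\,\sigma\Big(\frac{\varepsilon}{n^{1/2}\VU}\Big) \le \sigma_n \cdot n^{1/2}\mathscr{M}_n(W^{(2)}).
\]
By \eqref{root-n-M}, the sequence $n^{1/2}\mathscr{M}_n(W^{(2)})$ converges in probability to $\sqrt{2/(\pi\sigma^2)}\,\mathscr{D}_\infty(W^{(2)})$, which is a.s.\ finite. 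Thus the right-hand side is the product of a sequence tending to $0$ in probability and a sequence bounded in probability, and so tends to $0$ in probability, which is the desired conclusion.

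\textbf{Main obstacle.} The only non-elementary input is the tightness of $\min_{|u|=n}V(u)-\frac{3}{2}\log n$ used in Step~2; this is a deep BRW fact, but it is standard and already applied (through \eqref{root-n-M} and \eqref{eqn-z-infinity-s}) elsewhere in the paper, so it may simply be cited. Once this is granted, the remainder of the proof is a direct adaptation of Lemma~\ref{lemma-Yu-zero-s}, with a.s.\ convergence replaced by convergence in probability.
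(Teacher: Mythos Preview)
Your proof is correct and follows essentially the same approach as the paper. The paper also writes $-\log\VU$ as a boundary BRW and uses the fact that $n^{1/2}\sup_{|u|=n}\VU\to 0$ in probability (citing \cite[Theorem 5.12]{shi-zhan-book}, i.e.\ $\min_{|u|=n}V(u)/\log n\to 3/2$, which is slightly weaker than the tightness you invoke but yields the same conclusion), and then bounds exactly as in your Step~3.
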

\begin{proof}
As in  the proof of Lemma~\ref{lemma-Yu-zero-s}, for $|u|=n$, we have
\[
- \log \VU= \sum_{j=1}^n \big  (-2 \log W(u|_j) + \log \E[bW^2]\big). 
\]
By \cite[Theorem 5.12]{shi-zhan-book}, it is known that 
\[
\frac{1}{\log n} \inf_{|u|=n} ( - \log \VU) \xrightarrow[n\rightarrow\infty]{\text{in probability}}\frac{3}{2}.
\]
Thus 
\[
\lim_{n\rightarrow\infty} n^{1/2}\sup_{|u|=n} \VU= 0 \quad \text{in probability}.
\]
And therefore, by \eqref{root-n-M} and  \eqref{sigma-to-zero}, 
\[
\sum_{|u|=n} n^{1/2}\VU\sigma\Big(\frac{\varepsilon}{n^{1/2}\VU}\Big)\leq   n^{1/2}\MAB\sigma\Big(\frac{\varepsilon}{n^{1/2}\sup_{|u|=n}\VU}\Big)\xrightarrow[n\rightarrow\infty]{\text{in probability}} 0.
\]
This completes the proof. 
\end{proof}

\subsection{Squared super-critical regime: Proof of Proposition~\ref{thm-non-cri-super-critical}}

Suppose that $W$ is in the squared super-critical regime and $\log W$ is non-lattice. 

Recall the Biggins-Kyprianou transform \eqref{bdd-case} of $W$ and   the definition \eqref{def-psi} for $\psi = \psi_\xi$.  By Lemma~\ref{lem-psi-beta}, we have 
$
 D_F = \frac{2 \psi(\beta)}{\log b}
$
and hence 
\[
b^{\frac{D_F}{2}}=\exp(\psi(\beta)) =  \E[be^{-\beta \xi}]. 
\]
 By \eqref{eqn-function-equality} and the definition of the BRW $(V(u))_{u\in \Ab^*}$ in  \eqref{eqn-BRW-VU}, we have 
\[
 \widehat{\mu}_{\infty}\left(b^n\right)\stackrel{d}{=}  \sum_{|u|=n}  \prod_{j=1}^{n} \frac{e^{-\beta \xi(u|_j)}}{\E[b e^{-\beta\xi}]}  \cdot \widehat{\mu}_{\infty}^{(u)}(1)  = b^{- \frac{nD_F}{2}}  \sum_{|u| = n } e^{- \beta V(u)} \cdot \widehat{\mu}_{\infty}^{(u)}(1). 
\]
It follows that 
\begin{align}\label{eqn-psi-t-star}
  b^{\frac{n D_F}{2}}   n^{\frac{3}{2}\beta} \widehat{\mu}_{\infty}\left(b^n\right) = \sum_{|u|=n} e^{-\beta ( V(u)-\frac{3}{2}\log n )}\widehat{\mu}_{\infty}^{(u)}(1).
\end{align}

 \begin{lemma}\label{lemma-tightness}
There exists  $p\in (1/\beta,2]$ such that  for all $\gamma \in (0, \frac{1}{\beta p})$, we have
\[
0< \E[|\widehat{\mu}_\infty(1)|^p]<\infty \an 
\sup_{n\in\N^+}\E\Big[\big| b^{\frac{n D_F}{2}}   n^{\frac{3}{2}\beta} \widehat{\mu}_{\infty}\left(b^n\right)  \big|^{p\gamma}\Big]<\infty. 
\]
\end{lemma}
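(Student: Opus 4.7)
The plan is to combine an $L^p$-version of Kahane's non-degeneracy condition with a conditional von Bahr--Esseen inequality applied to the identity \eqref{eqn-psi-t-star}, and to close the argument using the small-moment estimate \eqref{s-m-asymp} for the BRW partition function $Z_{n,p\beta}$.

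First I would fix $p \in (1/\beta, 2]$ for which $\E[W^p] < b^{p-1}$. Since $W = e^{-\beta\xi}/\E[e^{-\beta\xi}]$, this inequality is equivalent to $\psi(p\beta) < p\,\psi(\beta)$. At the endpoint $p = 1/\beta$, the left side minus the right equals $\psi(1) - \psi(\beta)/\beta = -\psi(\beta)/\beta$, which is strictly negative because $\psi(\beta) > 0$ by Lemma~\ref{lem-psi} (using $\beta \in (1/2, 1)$). Continuity of $\psi$ then yields some $p > 1/\beta$, automatically $\le 2$ since $1/\beta < 2$, for which $\E[W^p] < b^{p-1}$. Kahane's $L^p$-boundedness criterion \eqref{Kahane-Lp-bdd} then gives $\E[\mu_\infty([0,1])^p] < \infty$, and the pointwise bound $|\widehat{\mu}_\infty(1)| \le \mu_\infty([0,1])$ yields $\E[|\widehat{\mu}_\infty(1)|^p] < \infty$; positivity is automatic since $\widehat{\mu}_\infty(1)$ is not almost surely zero.

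Next, invoking \eqref{eqn-psi-t-star} and conditioning on $\mathcal{G} := \sigma(V(u) : u \in \Ab^*)$, the summands $\{e^{-\beta(V(u)-\frac{3}{2}\log n)}\widehat{\mu}_\infty^{(u)}(1)\}_{|u|=n}$ are independent and centered in $L^p$. The complex-valued von Bahr--Esseen inequality, valid for $1 \le p \le 2$, yields a constant $C_p$ such that
\[
\E\Bigl[\bigl|b^{\frac{nD_F}{2}} n^{\frac{3}{2}\beta}\widehat{\mu}_{\infty}(b^n)\bigr|^{p} \,\Big|\, \mathcal{G}\Bigr] \;\le\; C_p \cdot \E[|\widehat{\mu}_\infty(1)|^p] \cdot n^{\frac{3}{2}p\beta} Z_{n,p\beta},
\]
where $Z_{n,p\beta} := \sum_{|u|=n} e^{-p\beta V(u)}$. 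Since $p\beta > 1$, every $\gamma \in (0, 1/(p\beta))$ satisfies $\gamma < 1$, so the conditional Jensen inequality followed by taking unconditional expectations produces
\[
\E\Bigl[\bigl|b^{\frac{nD_F}{2}} n^{\frac{3}{2}\beta}\widehat{\mu}_{\infty}(b^n)\bigr|^{p\gamma}\Bigr] \;\le\; \bigl(C_p\, \E[|\widehat{\mu}_\infty(1)|^p]\bigr)^\gamma \cdot \E\bigl[\bigl(n^{\frac{3}{2}p\beta} Z_{n,p\beta}\bigr)^{\gamma}\bigr].
\]
Applying \eqref{s-m-asymp} with parameter $\gamma' = p\beta > 1$ and exponent $r = \gamma \in (0, 1/\gamma')$ then gives $\sup_n \E[(n^{\frac{3}{2}p\beta} Z_{n,p\beta})^\gamma] < \infty$, which finishes the uniform bound.

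The three ingredients (von Bahr--Esseen, conditional Jensen, and \eqref{s-m-asymp}) are either classical or already recorded in the paper, so I do not anticipate any serious obstacle; the only mildly delicate point is the soft verification that $p$ slightly above $1/\beta$ can be chosen to satisfy Kahane's $L^p$-condition, which hinges on the strict positivity $\psi(\beta) > 0$ furnished by Lemma~\ref{lem-psi}.
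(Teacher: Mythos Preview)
Your proposal follows essentially the same route as the paper: choose $p\in(1/\beta,2]$ with $\E[W^p]<b^{p-1}$ via the continuity of $\psi$ at $1$, bound $\E[|\widehat{\mu}_\infty(1)|^p]$ by $\E[\mu_\infty([0,1])^p]$, condition on the level-$n$ weights in the identity \eqref{eqn-psi-t-star}, apply a type-$p$ inequality for sums of independent centered variables, then Jensen, and close with \eqref{s-m-asymp}. The paper uses Burkholder's inequality followed by the subadditivity bound \eqref{2-ele-ineq} (since $p/2\le 1$) where you invoke von Bahr--Esseen directly; these are equivalent here and your formulation is marginally more compact. A cosmetic point: the conditioning should be on $\mathscr{F}_n$ (equivalently $\sigma(V(u):|u|\le n)$), not on the full $\sigma(V(u):u\in\Ab^*)$, since the copies $\widehat{\mu}_\infty^{(u)}(1)$ are only asserted to be independent of $\mathscr{F}_n$ in Lemma~\ref{lemma-eqn-function-equality}.

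The one genuine gap is the positivity $\E[|\widehat{\mu}_\infty(1)|^p]>0$. You write that this is ``automatic since $\widehat{\mu}_\infty(1)$ is not almost surely zero'', but that last fact is not obvious: $\E[\widehat{\mu}_\infty(1)]=0$, we are in the squared super-critical regime so the second-moment computation of Lemma~\ref{lemma-jihi} is unavailable, and there is no soft symmetry argument excluding $\widehat{\mu}_\infty(1)=0$ a.s. The paper fills this by specializing the lower bound of Theorem~\ref{thm-precise} to the scalar vector measure $\mathcal{V}(I_u)=\int_{I_u}e^{-i2\pi x}\,dx$, which yields
\[
\E[|\widehat{\mu}_\infty(1)|^p]=\sup_n \E[|\widehat{\mu}_n(1)|^p]\;\gtrsim_p\;\|\mathring{W}\|_1^p\cdot \chi(\mathcal{V},W,p)>0,
\]
the last strict inequality holding because $\chi(\mathcal{V},W,p)$ is the expectation of a strictly positive random variable. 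You should either reproduce this argument or cite it.
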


\begin{proof}
Recall the Kahane's $L^p$-criterion: If $1<p<\infty$, then  $\E[\mu_\infty([0,1])^p]<\infty$ if and only if $\E[W^p]<b^{p-1}$. Note that, in the boundary case,  
\[
\E[W^t]<b^{t-1} \Longleftrightarrow \frac{\psi(t \beta)}{t\beta} < \frac{\psi(\beta)}{\beta}. 
\]
Since $\psi(1)= 0, \psi(\beta)> 0$ and  $\psi$ is continuous,  one can find $p\in (\frac{1}{\beta}, 2]$ with
$
\frac{\psi(p \beta)}{p\beta} < \frac{\psi(\beta)}{\beta}
$
and hence 
$
\E[W^p]<b^{p-1}. 
$
Now we fix this exponent $p\in (1/\beta, 2]$.  First of all, we have 
\begin{align}\label{eqn-finite-p-moment}
\E[|\widehat{\mu}_\infty(1)|^p]  \le \E[\mu_\infty ([0,1])^p ]<\infty.
\end{align}
To prove $\E[|\widehat{\mu}_\infty(1)|^p]> 0$, we may apply Bourgain-Stein inequality as in the proof of Theorem~\ref{thm-precise}.   For avoiding the repetition of the same lines of arguments, here,  we derive $\E[|\widehat{\mu}_\infty(1)|^p]> 0$ from  Theorem~\ref{thm-precise}. More precisely,  let $\mathcal{V}: \mathfrak{B}^b \rightarrow \C$ be the scalar complex measure defined by 
\[
\mathcal{V}(I_u):=\int_{I_u} e^{-i 2\pi x}dx. 
\]
Recall the definition   \eqref{def-mart} of $Q_n\mathcal{V}([0,1])$: 
\[
Q_n\mathcal{V} ([0,1]) = \sum_{|u|=n}  \Big(\prod_{j=0}^n W(u|_j)\Big) \cdot \mathcal{V}(I_u)=\widehat{\mu}_n(1), \quad n\ge 1. 
\] 
Set
\begin{align*}
\chi(\mathcal{V}, W, p): =   \E\Big[ \Big| \sum_{|u|\ge 1}    \Big(  \prod_{j=0}^{|u|-1} W(u|_j)^2 \Big)   \cdot  \big| \mathcal{V}(I_u)  \big|^2       \Big|^{p/2} \Big]>0.
\end{align*}
Then Theorem~\ref{thm-precise} implies that 
\begin{align*}
\E[|\widehat{\mu}_\infty(1)|^p]=\lim_{n \rightarrow\infty}\E[|\widehat{\mu}_n(1)|^p] =\limsup_{n\in \N}\E[|\widehat{\mu}_n(1)|^p] \geq C_p   \|\mathring{W}\|_1^p\cdot    \chi(\mathcal{V}, W, p)>0.   
\end{align*}

We now proceed to the proof of the second part of the lemma. Using \eqref{eqn-psi-t-star}, we obtain
\[
\E\Big[\big| b^{\frac{n D_F}{2}}   n^{\frac{3}{2}\beta} \widehat{\mu}_{\infty}\left(b^n\right)\big|^{p\gamma}\Big] =   \E\Big[ \Big|     \sum_{|u|=n} e^{-\beta ( V(u)-\frac{3}{2}\log n )}\widehat{\mu}_{\infty}^{(u)}(1)  \Big|^{p\gamma}   \Big]. 
\]
Since $0< \gamma< \frac{1}{p\beta}< 1$,  by Jensen's inequality 
\[
\E\Big[ \Big|     \sum_{|u|=n} e^{-\beta ( V(u)-\frac{3}{2}\log n )}\widehat{\mu}_{\infty}^{(u)}(1)  \Big|^{p\gamma}   \Big|\mathscr{F}_n\Big]  \le   \Big( \E\Big[ \Big|     \sum_{|u|=n} e^{-\beta ( V(u)-\frac{3}{2}\log n )}\widehat{\mu}_{\infty}^{(u)}(1)  \Big|^{p}   \Big|\mathscr{F}_n\Big] \Big)^\gamma.
\]
Now since  $1< \frac{1}{\beta}< p\le 2$, we can apply Burkholder's martingale  inequality  (since, conditioned on $\mathscr{F}_n$, the sequence  $(\widehat{\mu}_{\infty}^{(u)}(1))_{|u|=n}$ are i.i.d. and mean zero) and obtain 
\begin{align*}
\E\Big[ \Big|     \sum_{|u|=n} e^{-\beta ( V(u)-\frac{3}{2}\log n )}\widehat{\mu}_{\infty}^{(u)}(1)  \Big|^{p}   \Big|\mathscr{F}_n\Big]   \lesssim_p   \E\Big[ \Big|     \sum_{|u|=n} e^{-2 \beta ( V(u)-\frac{3}{2}\log n )}|\widehat{\mu}_{\infty}^{(u)}(1)|^2  \Big|^{\frac{p}{2}}   \Big|\mathscr{F}_n\Big]. 
\end{align*}
Thus, since $p/2\le 1$,  by applying \eqref{2-ele-ineq}, we obtain 
\begin{align*}
\E\Big[ \Big|     \sum_{|u|=n} e^{-\beta ( V(u)-\frac{3}{2}\log n )}\widehat{\mu}_{\infty}^{(u)}(1)  \Big|^{p}   \Big|\mathscr{F}_n\Big]   & \lesssim_p   \E\Big[ \Big|     \sum_{|u|=n} e^{- p \beta ( V(u)-\frac{3}{2}\log n )}|\widehat{\mu}_{\infty}^{(u)}(1)|^{p}  \Big|\mathscr{F}_n\Big] 
\\
& =        \sum_{|u|=n} e^{- p \beta ( V(u)-\frac{3}{2}\log n )} \E[ |\widehat{\mu}_{\infty}^{(u)}(1)|^{p}].  
\end{align*}
Therefore, we arrive at 
\begin{align*}
\E\Big[\big| b^{\frac{n D_F}{2}}   n^{\frac{3}{2}\beta} \widehat{\mu}_{\infty}\left(b^n\right)\big|^{p\gamma}\Big]\lesssim_p  \E\Big[\Big(\sum_{|u|=n} e^{-\beta p ( V(u)-\frac{3}{2}\log n )}\Big)^{\gamma}\Big]\,\,\cdot(\E[|\widehat{\mu}_{\infty}(1)|^p])^\gamma.
\end{align*}
Since   $\beta p>1$ and $\gamma<\frac{1}{\beta p}$, by  \eqref{s-m-asymp} and  \eqref{eqn-finite-p-moment}, 
\[
\sup_{n\in\N^+}\E\Big[\big| b^{\frac{n D_F}{2}}   n^{\frac{3}{2}\beta} \widehat{\mu}_{\infty}\left(b^n\right) \big|^{p\gamma}\Big]<\infty.
\]
This completes the proof.
\end{proof}

\begin{proof}[Proof of Lemma \ref{lem-wel-def}]
Let $p\in (1/\beta, 2]$ be as in Lemma \ref{lemma-tightness}. Then $\E[ |\widehat{\mu}_\infty(1)|^p] <\infty$.   By Burkholder's inequality, for any finite subset $F\subset \N$, we have 
\[
  \E \Big[ \Big|   \sum_{i \in F}  e^{-\beta x_i}   \mathrm{Re} (\widehat{\mu}^{(i)}_\infty(1)) \Big|^p  \Big| \mathcal{Z}_\infty\Big]   \lesssim_p     
    \sum_{i  \in F}  e^{- p \beta x_i}   \cdot  \E[ |\widehat{\mu}_\infty(1)|^p] \le  \sum_{i  \in \N}  e^{- p \beta x_i}   \cdot  \E[ |\widehat{\mu}_\infty(1)|^p]. 
\]
By \eqref{Z-finite}, we complete the whole proof. 
\end{proof}

\begin{proof}[Proof of Proposition~\ref{thm-non-cri-super-critical}]
The proof of Proposition~\ref{thm-non-cri-super-critical} is divided into three steps.

{\flushleft \it Step 1: Tightness and limit.}
Lemma~\ref{lemma-tightness} implies  the tightness of the family 
\[
\Big\{ b^{\frac{n D_F}{2}}   n^{\frac{3}{2}\beta} \widehat{\mu}_{\infty}(b^{n})  : n\in \N\Big\}. 
\]
Recall \eqref{eqn-psi-t-star}, 
\begin{align*}
&b^{\frac{n D_F}{2}}   n^{\frac{3}{2}\beta} \widehat{\mu}_{\infty}(b^{n})  = \sum_{|u|=n} e^{-\beta (V(u) -\frac32\log n)} \widehat{\mu}_{\infty}^{(u)}(1) \\
= &  \sum_{|u|=n} e^{-\beta (V(u) -\frac32\log n)} f_K(V(u) - \frac32\log n)\widehat{\mu}_{\infty}^{(u)}(1) +  \sum_{|u|=n} e^{-\beta (V(u) -\frac32\log n)} \overline{f}_K(V(u)-\frac32\log n)\widehat{\mu}_{\infty}^{(u)}(1)  \\
=: & Y_{n,K} +R_{n,K},
\end{align*}
where $f_K(x) = 1_{(-\infty, K]}(x)+ (K+1-x)1_{(K, K+1]}(x)$ and $\overline{f}_K=1-f_K$. By using the similar arguments as \cite[Theorem 2.5]{Iksanov}, we could check that
\begin{itemize}
\item $R_{n,K}$ converges in probability to zero as $n\to\infty$ and then $K\to\infty$.
\item $Y_{n,K}$ converges in law to $Y_K:=  \sum_{i\in \N} e^{-\beta x_i}f_K(x_i) \widehat{\mu}^{(i)}_\infty(1) $ as $n\to\infty$.
\item $Y_K$ converges in law to $\Upsilon:=  \sum_{i\in \N} e^{-\beta x_i}\widehat{\mu}^{(i)}_\infty(1)$ as $K\uparrow\infty$.
\item $\Upsilon$ is well-defined r.v. as it is the limit of $L^p$-bounded martingale (conditionally on $\sum_i \delta_{x_i}$) $\{\sum_{i=1}^m e^{-\beta x_i}\widehat{\mu}^{(i)}_\infty(1) \}_{m\ge1}$. See Lemma \ref{lem-wel-def}.
\end{itemize}
We thus obtain that 
\begin{align}\label{eqn-limit-exists-s}
   b^{\frac{n D_F}{2}}   n^{\frac{3}{2}\beta} \widehat{\mu}_{\infty}(b^{n})     \xrightarrow[n\to\infty]{d}        \Upsilon = \sum_{i\in \N} e^{-\beta x_i}\widehat{\mu}^{(i)}_\infty(1),
\end{align}
where $(x_i)_{i\in\N}$ is the  point process  $\mathcal{Z}_\infty$ in \eqref{eqn-z-infinity-s},  $\{\widehat{\mu}^{(i)}_\infty(1)\}_{i\in \N}$ are i.i.d. copies of $\widehat{\mu}_\infty(1)$  independent of $\mathcal{Z}_\infty$. 
%Therefore,  by using the similar arguments as \cite[Theorem 2.5]{Iksanov} and recalling the random variable $\mathscr{D}_\infty(W^{(2)})$ given by \eqref{root-n-M},  we have the convergence in distribution: 
%\begin{align}\label{eqn-limit-exists-s}
%   b^{\frac{n D_F}{2}}   n^{\frac{3}{2}\beta} \widehat{\mu}_{\infty}(b^{n})     \xrightarrow[n\to\infty]{d}       \mathscr{D}_\infty(W^{(2)})^\beta   \sum_{i\in \N} e^{-\beta x_i}\widehat{\mu}^{(i)}_\infty(1),
%\end{align}
%where $(x_i)_{i\in\N}$ is the  point process  $\mathcal{Z}_\infty$ in \eqref{eqn-z-infinity-s},  $\{\widehat{\mu}^{(i)}_\infty(1)\}_{i\in \N}$ are i.i.d. copies of $\widehat{\mu}_\infty(1)$  independent of $\mathcal{Z}_\infty$ and    $\mathscr{D}_\infty(W^{(2)})$ is independent of   $(\mathcal{Z}_\infty, \{\widehat{\mu}^{(i)}_\infty(1)\}_{i\in \N})$. 

{\flushleft \it Step 2: Non-trivial limit.} We shall show that the limit in \eqref{eqn-limit-exists-s} is non-trivial (not identically zero).  Indeed,  let  $p\in (1/\beta,2]$ be as in Lemma~\ref{lemma-tightness}. Since the point process $\mathcal{Z}_\infty=(x_i)_{i\in\N}$ and the sequence of random variables $(\widehat{\mu}_{\infty}^{(i)} (1) )_{i\in\N}$ are independent,  Burkholder's  inequality implies that 
\begin{align*}
\E\Big[\big|\sum_{i\in \N} e^{-\beta x_i}\widehat{\mu}_{\infty}^{(i)} (1) \big|^p\mid \mathcal{Z}_\infty \Big]& \gtrsim_p \E\Big[\big(\sum_{i\in \N} e^{-2\beta x_i}|\widehat{\mu}_{\infty}^{(i)} (1) |^2\big)^{\frac{p}{2}}\mid \mathcal{Z}_\infty \Big]\geq e^{-p\beta x_1}\E[|\widehat{\mu}_\infty(1)|^p].
\end{align*}
Since  the point process   $\mathcal{Z}_\infty$ is non-trivial, 
we have 
\[
\E\Big[\big|\sum_{i\in \N} e^{-\beta x_i}\widehat{\mu}_{\infty}^{(i)} (1) \big|^p\Big]>0.
\]

{\flushleft \it Step 3: Stable law.}    
By \cite[Theorem~1.1]{Mad}, under $\PP^*$, the point process $\mathcal{Z}_\infty$ in \eqref{eqn-z-infinity-s} has the following description:  Take the $\PP^*$-a.s. limit of derivative martingale 
\begin{equation}\label{eqn-dmartingale-beta}
%\mathscr{D}_\infty(W^{(2)})=\lim_{n\to\infty}\sum_{|u|=n} V(u) e^{-V(u)}
\mathscr{D}_\infty(W^{(\frac1\beta)})=\lim_{n\to\infty}\sum_{|u|=n} V(u) e^{-V(u)}
\end{equation}
with 
\[
% \PP^*(\mathscr{D}_\infty(W^{(2)})\in(0,\infty))=1.
 \PP^*(\mathscr{D}_\infty(W^{(\frac1\beta)})\in(0,\infty))=1.
\]
Let $\mathcal{P}$ be an independent Poisson point process on the whole line $\R$ whose intensity measure is given by  $\lambda\mathrm{e}^y \mathrm{~d} y$ for some $\lambda>0$. For each atom $y$ of $\mathcal{P}$, we attach a point process $\mathcal{Q}^{(y)}$, where $\mathcal{Q}^{(y)}$ are independent copies  (given $\mathcal{P}$) of a certain point process $\mathcal{Q}$.  Denote $\mathcal{P}=\{y_i: i\in\N\}$ and $\mathcal{Q}^{(y_i)}=\{z_j^{(i)}: i\in \N\}$, which are all assumed to be independent of $\mathscr{D}_\infty(W^{(\frac1\beta)})$.   Then 
\[
\mathcal{Z}_\infty \stackrel{d}{=}\{y_i+z_j^{(i)} - \log \mathscr{D}_\infty(W^{(\frac1\beta)}): i, j\in\N\}.
\]
It follows that 
\[
X:=       \sum_{i\in \N} e^{-\beta x_i}   \mathrm{Re}( \widehat{\mu}^{(i)}_\infty(1) )  \stackrel{d}{=}  \mathscr{D}_\infty(W^{(\frac1\beta)})^\beta  \underbrace{ \sum_{i, j\in\N}e^{-\beta (y_i+z_j^{(i)})}\mathrm{Re}(\widehat{\mu}^{(i, j)}_\infty(1))}_{\text{denoted as $\widetilde{X}$}}=\mathscr{D}_\infty(W^{(\frac1\beta)})^\beta \sum_{i\in \N}e^{-\beta y_i} R_i
\]
 with 
 \[
 R_i=\sum_{j\in\N} e^{-\beta z_j^{(i)}}\mathrm{Re}(\widehat{\mu}^{(i, j)}_\infty(1)). 
\]
Note that, given $\mathcal{Z}_\infty$,  both summations for $\widetilde{X}$ and   $R_i$ are  well-defined (see Lemma \ref{lem-partition}  below and are understood as in \eqref{def-Spk}). 

The transformed  point process  $\widehat{\mathcal{P}}=\{\widehat{y}_i=e^{-\beta y_i}: i\in \N\}$ of $\mathcal{P}$  is  a Poisson point process on $\R_+$ with new intensity measure given by 
\[
\rho(dy)=\frac{\lambda}{\beta}y^{-1-\frac{1}{\beta}}  \mathds{1}(y>0) dy.
\]
Thus we can write 
  \[
  \widetilde{X}= \sum_{\widehat{y}_i\in \widehat{\mathcal{P}}}\widehat{y}_i R_i  \an X \overset{d}{=} \mathscr{D}_\infty(W^{(\frac1\beta)})^\beta\widetilde{X},
  \]
   where $\mathscr{D}_\infty(W^{(\frac1\beta)})$ and $\widetilde{X}$ are independent, and $(R_i)_{i\in\N}$ are i.i.d. copies of a random variable $R$ and the sum is understood as a   martingale limit (given $\widehat{\mathcal{P}}$). 

For any $\varepsilon>0$, consider the truncated sum
\[
\widetilde{X}_\varepsilon=\sum_{\widehat{y}_i\in \widehat{\mathcal{P}}}\mathds{1}(\widehat{y}_i\ge \varepsilon)\widehat{y}_i R_i.
\]
Almost surely, $\widetilde{X}_\varepsilon$ is a  finite sum. 
 Under $\PP^*$,  the Fourier transform of $\widetilde{X}_\varepsilon$  is given by 
\begin{align*}
\E^*[\exp( it \widetilde{X}_\varepsilon)]&=\E^*\Big[\prod_{\widehat{y}_i \in \widehat{\mathcal{P}} \cap [\varepsilon, \infty)}   \exp (i t  \widehat{y}_i R_i)   \Big] =\E^*\Big [   \E^* \Big[ \prod_{\widehat{y}_i \in \widehat{\mathcal{P}} \cap [\varepsilon, \infty)}   \exp (i t \widehat{y}_i R_i)    \Big| \widehat{\mathcal{P}} \Big] \Big]. 
\end{align*}
Let  $f_R(t): = \E^*[\exp(itR)]$ be the Fourier transform of the random variable $R$. Then by the  independence between  $(R_i)_{i\in\N}$ and $\widehat{\mathcal{P}}$,
\[
 \E^* \Big[ \prod_{\widehat{y}_i \in \widehat{\mathcal{P}} \cap [\varepsilon, \infty)}   \exp (i t \widehat{y}_i R_i)    \Big| \widehat{\mathcal{P}} \Big] =  \prod_{\widehat{y}_i \in \widehat{\mathcal{P}} \cap [\varepsilon, \infty)}      f_R (t\widehat{y}_i). 
\]
Therefore, by the Bogoliubov functional (or the Laplace transform) of the Poisson point process (see \cite[Section 3.2]{Kingman}), we have 
\begin{align*}
\E^*[\exp( it\widetilde{X}_\varepsilon)] & =  \E^*\Big[  \prod_{\widehat{y}_i \in \widehat{\mathcal{P}} \cap [\varepsilon, \infty)}      f_R (t\widehat{y}_i)\Big]
\\
&  =\exp\Big(\int_{\varepsilon}^\infty \big(f_R(ty)-1\big)  \frac{\lambda dy}{ \beta y^{1+1/\beta}}\Big) 
\\
 &  = \exp\Big({|t|^{\frac{1}{\beta}}} \int_{ |t| \varepsilon}^\infty  (f_R( \sgn(t) \cdot y)-1) \frac{\lambda dy}{\beta y^{1+1/\beta}}\Big),
\end{align*}
where in the last step, we used the  change of variable $|t|y \to y$.  
By Dominated Convergence Theorem, 
\[
\lim_{\varepsilon\to 0}  \E^*[\exp( it \widetilde{X}_\varepsilon)] =  \E^*[\exp( it\widetilde{X})]. 
\]
It follows that the following limit exists 
\[
\lim_{\varepsilon\to 0} \int_{ |t| \varepsilon}^\infty  (f_R( \sgn(t)\cdot y)-1) \frac{\lambda dy}{\beta y^{1+1/\beta}} =  \lim_{\varepsilon\to 0} \int_{\varepsilon}^\infty  (f_R( \sgn(t) \cdot y)-1) \frac{\lambda dy}{\beta y^{1+1/\beta}}. 
\]
Consequently, 
\[
\E^*[\exp( it\widetilde{X})] = \exp\big( (c_1  + i \sgn(t) c_2)  |t|^{\frac{1}{\beta}}\big)
\]
and 
\[
 \E^*[\exp( it X)] =
\E^*\big[ \exp\big( (c_1  + i \sgn(t) c_2)  |t|^{\frac{1}{\beta}}\mathscr{D}_\infty(W^{(\frac1\beta)}) \big)\big],
\]
where 
\begin{align}\label{c-1}
c_1  =  \int_{0}^\infty  ( \E^*[\cos (Ry) ]  -1) \frac{\lambda dy}{\beta y^{1+1/\beta}} <0
\end{align}
and 
\begin{align}\label{c-2}
c_2  =   \lim_{\varepsilon\to 0}   \int_{\varepsilon}^\infty \E^*[\sin (Ry) ]   \frac{\lambda dy}{\beta y^{1+1/\beta}}. 
\end{align}
In other words, 
\[
\E^*[\exp( it\widetilde{X})] = \exp\big( -c  (1 + i \sgn(t) c')  |t|^{\frac{1}{\beta}}\big) \quad \text{with $c> 0$}.  
\]
Hence, under $\PP^*$, the random variable  $\widetilde{X}$ has a $\frac{1}{\beta}$-stable law.  The desired equality \eqref{zero-prob} follows since  any stable law is  continuous. 
 \end{proof}

\begin{remark*}
Since $c_1$  in \eqref{c-1} is a finite real number and $\cos(Ry) -1\le 0$ does not change signs, by Fubini Theorem and change of variable $|R|y \to y$, we obtain 
\[
c_1 =     -  \E^*[|R|^{1/\beta}]  \cdot   \int_{0}^\infty  (1- \cos y) \frac{\lambda dy}{\beta y^{1+1/\beta}}    =   - \frac{ \lambda \pi  \E^*[|R|^{1/\beta}] } {  2  \sin (\pi /2 \beta)   \Gamma(1/\beta)   } . 
\]
Hence, we must have 
$
\E^*[|R|^{1/\beta}]<\infty.
$  Similarly, since $\E^*(R)=0$, the quantity  $c_2$ in \eqref{c-2} has the following form:
\begin{align*}
c_2 = \E^*[\sgn(R) |R|^{1/\beta}]  \cdot  \int_{0}^\infty  (\sin y - y)   \frac{\lambda dy}{\beta y^{1+1/\beta}}
=          \frac{\lambda \pi \E^*[\sgn(R) |R|^{1/\beta}]  }{ 2 \cos(\pi /2\beta)   \Gamma(1/\beta)}. 
\end{align*}
\end{remark*}

The following Lemma \ref{lem-partition} follows immediately from   Lemma \ref{lem-wel-def} and we omit its routine proof. 

\begin{lemma}\label{lem-partition}
Let $p\in (1/\beta, 2]$ be taken as in Lemma \ref{lemma-tightness}. For an arbitrary partition of $\N$: 
\[
\N = \bigsqcup_{k=1}^\infty P_k \quad \text{with $P_k$ not necessarily finite}, 
\]
we may define the  martingale limits  (in the sense of $L^p (\PP(\cdot | \mathcal{Z}_\infty))$-convergence)
\begin{align}\label{def-Spk}
S(P_k): = \lim_{N\to\infty}\sum_{ i \in P_k \cap [1, N]} e^{-\beta x_i}   \mathrm{Re} (\widehat{\mu}^{(i)}_\infty(1))
\end{align}
and obtain the almost sure equality under $\PP(\cdot |\mathcal{Z}_\infty)$ 
\[
\lim_{N\to\infty}\sum_{i=1}^N e^{-\beta x_i}   \mathrm{Re} (\widehat{\mu}^{(i)}_\infty(1))   =  \lim_{L\to\infty} \sum_{k=1}^L  S(P_k). 
\]
\end{lemma}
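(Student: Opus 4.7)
The plan is to work conditionally on the point process $\mathcal{Z}_\infty$ and exploit the fundamental $L^p$-type estimate from Lemma \ref{lem-wel-def} together with the almost sure finiteness $\sum_{i\in\N} e^{-p\beta x_i}<\infty$, which holds since $p\beta>1$, by \eqref{Z-finite}. It is convenient to write $\eta_i := e^{-\beta x_i}\mathrm{Re}(\widehat{\mu}_\infty^{(i)}(1))$; conditioned on $\mathcal{Z}_\infty$ the $\eta_i$ are independent, centered, with $\|\eta_i\|_{L^p(\PP(\cdot|\mathcal{Z}_\infty))}^p = e^{-p\beta x_i}\E[|\mathrm{Re}(\widehat{\mu}_\infty(1))|^p]$.

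First I would establish that each $S(P_k)$ is well-defined as an $L^p$-limit. Enumerating $P_k=\{i_{k,1}<i_{k,2}<\cdots\}$, the partial sums $\sum_{j\le M}\eta_{i_{k,j}}$ form a sequence of conditionally independent centered terms. Applying Burkholder's martingale inequality exactly as in the proof of Lemma \ref{lem-wel-def} gives, for any $M<M'$,
\begin{equation*}
\Big\|\sum_{j=M+1}^{M'}\eta_{i_{k,j}}\Big\|_{L^p(\PP(\cdot|\mathcal{Z}_\infty))}^p \lesssim_p \Big(\sum_{j>M} e^{-p\beta x_{i_{k,j}}}\Big)\,\E[|\mathrm{Re}(\widehat{\mu}_\infty(1))|^p],
\end{equation*}
which is a tail of a summable series a.s.\ on $\mathcal{Z}_\infty$. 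Hence the partial sums are $L^p$-Cauchy and $S(P_k)$ exists. The identical bound shows that $T_N := \sum_{i=1}^N \eta_i$ is $L^p$-Cauchy too and converges to some $T_\infty$ in $L^p(\PP(\cdot|\mathcal{Z}_\infty))$.

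Next, to prove the identity $T_\infty = \sum_{k\ge 1} S(P_k)$, set $A_L := \bigsqcup_{k=1}^L P_k$ and $V_L := \sum_{k=1}^L S(P_k)$. Linearity of $L^p$-limits together with the $L^p$-existence of each $S(P_k)$ yields $V_L = \lim_{M\to\infty}\sum_{i\in A_L,\, i\le M}\eta_i$ in $L^p(\PP(\cdot|\mathcal{Z}_\infty))$. Applying Burkholder once more to $T_\infty - V_L = \sum_{i\notin A_L}\eta_i$ gives
\begin{equation*}
\|T_\infty - V_L\|_{L^p(\PP(\cdot|\mathcal{Z}_\infty))}^p \lesssim_p \Big(\sum_{i\notin A_L} e^{-p\beta x_i}\Big)\,\E[|\mathrm{Re}(\widehat{\mu}_\infty(1))|^p].
\end{equation*}
Since $A_L \uparrow \N$ and $\sum_{i\in\N} e^{-p\beta x_i}<\infty$ a.s.\ on $\mathcal{Z}_\infty$, the right-hand side tends to $0$ as $L\to\infty$. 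Thus $V_L \to T_\infty$ in $L^p(\PP(\cdot|\mathcal{Z}_\infty))$, and extracting an a.s.\ convergent subsequence yields the desired almost sure equality under $\PP(\cdot|\mathcal{Z}_\infty)$, and hence under $\PP^*$ after integrating in $\mathcal{Z}_\infty$.

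The only delicate point — and the place I would check most carefully — is that the $P_k$'s may be infinite, so that the two limits $\lim_N$ and $\sum_k$ do not obviously commute; absolute convergence $\sum_i e^{-\beta x_i}|\widehat{\mu}_\infty^{(i)}(1)|<\infty$ is not available (see the remark after Lemma \ref{lem-wel-def}). The Burkholder bound controlled by the summable deterministic-looking series $\sum_i e^{-p\beta x_i}$ is precisely what makes the rearrangement legitimate, replacing absolute summability by conditional $p$-th moment summability.
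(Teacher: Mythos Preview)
Your proof is correct and is exactly the routine argument the paper has in mind: the authors simply state that the lemma ``follows immediately from Lemma~\ref{lem-wel-def} and we omit its routine proof,'' and your write-up supplies precisely those details via the Burkholder bound of Lemma~\ref{lem-wel-def} together with the a.s.\ summability $\sum_i e^{-p\beta x_i}<\infty$ from \eqref{Z-finite}. One tiny polishing point: in the final step, subsequence extraction alone only gives equality along a subsequence; to get $\lim_{L\to\infty}V_L=T_\infty$ a.s.\ for the full sequence, just note that $(V_L)_L$ is a conditionally $L^p$-bounded martingale (the $S(P_k)$ being conditionally independent and centered), hence converges a.s., and the a.s.\ limit must agree with the $L^p$-limit $T_\infty$.
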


 \subsection{Proof of Proposition \ref{prop-clt}}

\begin{lemma}\label{lemma-cov-law-con-two}
Let $(X_n)_{n\geq 1}$  and $X_\infty$ be  complex random variables on some probability space $(\Omega,\mathcal{F},\PP)$. Assume that  $E\in\mathcal{F} $ satisfies $\PP(E)=\PP(X_\infty\neq 0)\in (0,1)$ and $X_n=0$ on $E^c$ for any $n\geq 1$ . 
If 
\[
X_n\xrightarrow[n\rightarrow\infty]{d} X_\infty,
\]
then the following conditioned convergence in distribution holds:
\[
\mathcal{L}(X_n\mid E)\xrightarrow[]{n\rightarrow\infty} \mathcal{L}(X_\infty|X_\infty\neq 0).
\]
\end{lemma}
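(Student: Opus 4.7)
The plan is to verify conditional weak convergence directly from the definition, by testing against arbitrary bounded continuous functions $f:\C\to\R$ and reducing everything to the unconditional convergence $X_n\xrightarrow{d}X_\infty$ via a clean indicator-function bookkeeping.

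First, I would exploit the hypothesis $X_n=0$ on $E^c$ to split
\[
\E[f(X_n)] \;=\; \E[f(X_n)\mathbf{1}_E] + f(0)\,\PP(E^c),
\]
so that
\[
\E[f(X_n)\mathbf{1}_E] \;=\; \E[f(X_n)] - f(0)\,\PP(E^c).
\]
By the unconditional weak convergence $X_n\xrightarrow{d}X_\infty$, the right-hand side tends to $\E[f(X_\infty)] - f(0)\,\PP(E^c)$.

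Next, I would rewrite the putative limit in the same form: since $\{X_\infty=0\}$ and $\{X_\infty\ne 0\}$ partition $\Omega$,
\[
\E[f(X_\infty)]\;=\;\E[f(X_\infty)\mathbf{1}_{X_\infty\ne 0}] + f(0)\,\PP(X_\infty=0).
\]
The key matching step is the assumption $\PP(E)=\PP(X_\infty\ne 0)$, which gives $\PP(E^c)=\PP(X_\infty=0)$; substituting eliminates the $f(0)$ terms and yields
\[
\lim_{n\to\infty}\E[f(X_n)\mathbf{1}_E]\;=\;\E[f(X_\infty)\mathbf{1}_{X_\infty\ne 0}].
\]
Finally, dividing both sides by $\PP(E)=\PP(X_\infty\ne 0)\in(0,1)$ gives
\[
\E[f(X_n)\mid E]\;\xrightarrow[n\to\infty]{}\;\E[f(X_\infty)\mid X_\infty\ne 0],
\]
for every bounded continuous $f$, which is exactly the stated conditional convergence in distribution.

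There is no genuine obstacle here; the statement is essentially an indicator-function tautology once one notices that $X_n=0$ off $E$ forces the entire conditional expectation $\E[f(X_n)\mid E^c]$ to equal $f(0)$, and that the hypothesis $\PP(E)=\PP(X_\infty\ne 0)$ is precisely what is needed to cancel the two spurious $f(0)$ contributions. The only mild point to note is that no continuity of $f$ at $0$ beyond the standard assumption is required, and that the test class of bounded continuous $f$ is enough by the Portmanteau theorem, so no additional uniform integrability argument is needed.
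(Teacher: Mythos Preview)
Your proposal is correct and follows essentially the same approach as the paper: both split $\E[f(X_n)]$ using $X_n=0$ on $E^c$, pass to the limit via $X_n\xrightarrow{d}X_\infty$, and then use $\PP(E^c)=\PP(X_\infty=0)$ to cancel the $f(0)$ terms before dividing by $\PP(E)$. The only cosmetic difference is that the paper divides by $\PP(E)$ first and then takes the limit, while you take the limit of the numerator first and divide at the end.
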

\begin{proof}
For any bounded continuous function $f$, we have
\begin{align*}
\E[f(X_n)\mid E]&=\frac{\E[\mathds{1}_Ef(X_n)]}{\PP(E)}=\frac{\E[f(X_n)]-\E[f(X_n)\mathds{1}_{E^c}]}{\PP(E)} =\frac{\E[f(X_n)]-f(0)\PP(E^c)}{\PP(E)}.
\end{align*}
Therefore, by the  assumption that  $X_n$ converges in distribution to $X_\infty$ and $\PP(E)= \PP(X_\infty \ne 0)$, 
\begin{align*}
\lim_{n\rightarrow\infty}\E[f(X_n)\mid E]=\frac{\E[f(X_\infty)]-f(0) \PP(X_\infty=0)}{\PP(X_\infty\neq 0)} =\E[f(X_\infty)\mid X_\infty\neq 0].
\end{align*}
This completes the proof.
\end{proof}

 \begin{proof}[Proof of Proposition \ref{prop-clt}]
Recall the notation in Proposition \ref{prop-clt}.  By \cite[\S 3.2]{shi-zhan-book},  one has 
\begin{itemize}
\item 
in the  squared sub-critical regime  (see Remark \ref{rk-w-w2} below)
\begin{align}\label{w-w2}
\PP(\mathscr{M}_\infty(W)\neq 0)=\PP(\mathscr{M}_\infty(W^{(2)})\neq 0)=\PP(\mu_\infty\neq 0); 
\end{align}
\item  in the  squared critical regime 
\[
\PP(\mathscr{D}_\infty(W^{(2)})\neq 0)=\PP(\mu_\infty\neq 0); 
\] 
\item in the 
 squared super-critical regime 
\[
\PP(\mathcal{Z}_\infty\neq 0)=\PP(\mu_\infty\neq 0). 
\]
\end{itemize}
Therefore, Proposition \ref{prop-clt} follows from Propositions~\ref{thm-sq-sub-critical}, \ref{thm-sq-critical} and \ref{thm-non-cri-super-critical} and Lemma \ref{lemma-cov-law-con-two}. 
\end{proof}

\begin{remark}\label{rk-w-w2}
In the squared sub-critical regime, using the self-similar structure,  we have (see for instance the proof of  \cite[Proposition 3.3]{Heu}) 
\[
\PP(\mathscr{M}_\infty(W) = 0)=\PP(\mu_\infty = 0)  = x_0 \in [0,1)
\]
with $x_0$ the  unique non-trivial fixed point (the point $x=1$ is a trivial fixed point)  of the map 
\[
[0,1]\ni x\mapsto \big(\PP(W=0)+\PP(W>0)x\big)^b \in [0,1].
\] Then using  $\PP(W=0) = \PP(W^{(2)}=0)$, we obtain  
\[
\PP(\mathscr{M}_\infty(W^{(2)}) =0)   = \PP(\mathscr{M}_\infty(W) = 0). 
\]
\end{remark}

\subsection{Proof of Proposition \ref{prop-to-infty}}

\begin{lemma}\label{lemma-cov-law-con-one}
Let $(X_n)_{n\geq 1}$ be a sequence of complex random variables  converging in distribution to a  random variable $X_\infty$ with $\PP(X_\infty\neq 0)=1$, then for any positive increasing sequence $(a_n)_{n\in\N}$ with $\lim_{n\rightarrow\infty} a_n=\infty$, we have
\[
a_n|X_n|\xrightarrow[n\rightarrow\infty ]{\text{in probability}}  \infty.
\]
That is, for any $M>0$, 
\[
\lim_{n\rightarrow\infty} \PP(a_n|X_n|>M)=1.
\]
\end{lemma}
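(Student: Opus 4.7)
The plan is to reduce the claim to the continuity of the measure of $X_\infty$ near $0$ and an application of the Portmanteau theorem to the closed set $\{|z|\le\delta\}\subset\C$.

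First, since $\PP(X_\infty\ne 0)=1$, the monotone convergence of the events $\{|X_\infty|\le\delta\}\downarrow \{X_\infty=0\}$ as $\delta\downarrow 0$ gives $\PP(|X_\infty|\le\delta)\to 0$. Hence for any prescribed $\varepsilon>0$, we may fix $\delta=\delta(\varepsilon)>0$ such that
\[
\PP(|X_\infty|\le\delta)<\varepsilon.
\]

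Next, the set $\{z\in\C:|z|\le\delta\}$ is closed, so by the Portmanteau theorem applied to $X_n\xrightarrow{d} X_\infty$,
\[
\limsup_{n\to\infty}\PP(|X_n|\le\delta)\le\PP(|X_\infty|\le\delta)<\varepsilon.
\]

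Finally, given $M>0$, since $a_n\uparrow\infty$, there exists $n_0=n_0(M,\delta)$ such that $M/a_n\le\delta$ for all $n\ge n_0$. Then
\[
\PP(a_n|X_n|\le M)=\PP(|X_n|\le M/a_n)\le\PP(|X_n|\le\delta)
\]
for $n\ge n_0$, whence $\limsup_{n\to\infty}\PP(a_n|X_n|\le M)<\varepsilon$. As $\varepsilon>0$ was arbitrary, $\lim_{n\to\infty}\PP(a_n|X_n|\le M)=0$, i.e.\ $\lim_{n\to\infty}\PP(a_n|X_n|>M)=1$, which is the desired statement.

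There is no serious obstacle here; the only mild subtlety is the use of the Portmanteau theorem on the closed set $\{|z|\le\delta\}$ (rather than the open set $\{|z|<\delta\}$), which yields the clean one-sided bound $\limsup\PP(|X_n|\le\delta)\le\PP(|X_\infty|\le\delta)$ needed to absorb the threshold $M/a_n$ that decreases to $0$.
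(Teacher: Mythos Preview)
Your proof is correct and essentially identical to the paper's: both fix a small threshold, use $a_n\to\infty$ to bring $M/a_n$ below it, and apply Portmanteau. The only cosmetic difference is that the paper works with the open set $\{|z|>\varepsilon\}$ and the \emph{liminf} inequality, whereas you use the complementary closed set $\{|z|\le\delta\}$ and the \emph{limsup} inequality.
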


\begin{proof}
For  any $M>0$  and $\varepsilon>0$, there exists $N>0$ such that $\frac{M}{a_n}\leq \varepsilon$ for any $n\geq N$. Then 
\[
\PP(a_n|X_n|>M)\geq \PP(|X_n|>\varepsilon) \quad \text{for $n\geq N$}.
\]
Hence, by the convergence in distribution assumption, 
\[
\liminf_{n\rightarrow\infty} \PP(a_n|X_n|>M)\geq \liminf_{n\rightarrow\infty} \PP(|X_n|>\varepsilon) \ge \PP(|X_\infty|>\varepsilon).
\]
Since $\varepsilon>0$ is arbitrary, we have
\[
\liminf_{n\rightarrow\infty} \PP(a_n|X_n|>M)\geq \PP(|X_\infty|>0)=1.
\]
This completes the whole proof. 
\end{proof}

\begin{proof}[Proof of Proposition \ref{prop-to-infty}]  
By Proposition \ref{prop-clt} and  Lemma \ref{lemma-cov-law-con-two},  under $\PP^* = \PP(\cdot |\mu_\infty\ne 0)$,   one has 
\[
r(n) \cdot  b^{\frac{n D_F}{2}}  \cdot \widehat{\mu}_\infty(b^{n})  \xrightarrow[n\to\infty]{d}   \mathcal{X}_\infty, 
\]
with  $\PP^* ( \mathcal{X}_\infty \ne 0)   = 1$  almost surely  and 
$
r(n) = n^\theta
$ with $\theta$ takes values in  $\{0, \frac{1}{4}, \frac{3\beta}{2}\}$  according to the squared sub-critical, squared critical and squared super-critical regimes respectively.   

Therefore,  for any $\varepsilon>0$,   by applying Lemma \ref{lemma-cov-law-con-one} and  $r(n) = o(b^{n\varepsilon})$, we have 
\[
b^{\frac{n (D_F+\varepsilon)}{2}} |\widehat{\mu}_\infty(b^{n})|   \xrightarrow[n\to\infty]{\text{in probability $\PP^*$}} \infty. 
\]
It follows that, along a subsequence $(n_k)$, the above convergence holds $\PP^*$-almost surely and  we complete the proof of Proposition \ref{prop-to-infty}. 
\end{proof}

\section{Proofs of Theorem \ref{thm-fourier} and Corollaries  \ref{cor-salem}, \ref{cor-holder}}\label{sec-con-1}

\subsection{Proof of Theorem \ref{thm-fourier}}

We start with the proof of Lemma \ref{lem-trans-coef} and Corollary \ref{cor-series-dim}.

\begin{proposition}\label{lem-Fdecay}
Let $\nu$ be a  Borel probability measure on $[0,1]$.  Assume that there exist $\kappa\in (0,1]$ and $c>0$ such that 
\begin{align}\label{ass-poly-decay}
|\widehat{\nu}(l)|\le  \frac{c}{|l|^{\kappa}} \quad \text{for all $l\in \Z^* = \Z\setminus \{0\}$}.
\end{align}
Then there exists $c'>0$ such that
\begin{align}\label{rest-fourier}
\sup_{\text{$I \subset[0,1]$: sub-intervals}}\Big|\int_{I}e^{-i 2\pi l t}\nu(dt)\Big| \le c' \cdot \frac{\log l}{l^{\kappa}} \quad \text{for all $l>1$}. 
\end{align}
\end{proposition}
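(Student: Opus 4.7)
The plan is to expand $\mathds{1}_I$ as a Fourier series on the torus and to exploit the hypothesis $|\widehat{\nu}(l-k)|\le c|l-k|^{-\kappa}$ through the resulting convolution. Writing $I=[a,b]$, the Fourier coefficients of $\mathds{1}_I$ on $\T\simeq[0,1]$ are $c_0=b-a$ and $c_k=(e^{-i2\pi ka}-e^{-i2\pi kb})/(i2\pi k)$ for $k\ne 0$, giving the uniform pointwise bound $|c_k|\le\min(1,1/(\pi|k|))$, and one has $|\widehat{\nu}(0)|=1$.

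A first step would be to reduce to the case $\nu(\{a,b\})=0$. Since a probability measure has at most countably many atoms, one may perturb the endpoints of $I$ by an arbitrarily small amount to avoid them, and the general case is recovered by a routine limit argument using the monotone family $\mathds{1}_{[a-\varepsilon,b+\varepsilon]}\downarrow\mathds{1}_{[a,b]}$ combined with the dominated convergence theorem. Under the atom-free assumption, the Fej\'er sums
\[
\sigma_N\mathds{1}_I=\sum_{|k|\le N}\Bigl(1-\tfrac{|k|}{N+1}\Bigr)c_k\, e^{i2\pi k\,\cdot}
\]
converge pointwise to $\mathds{1}_I$ off the two-point set $\{a,b\}$, are uniformly bounded by $1$, and so dominated convergence yields
\[
\int_I e^{-i2\pi lt}\,d\nu(t)=\lim_{N\to\infty}\sum_{|k|\le N}\Bigl(1-\tfrac{|k|}{N+1}\Bigr) c_k\,\widehat{\nu}(l-k).
\]
It therefore suffices to bound the absolutely-convergent majorant $\Sigma(l):=\sum_{k\in\Z}|c_k|\,|\widehat{\nu}(l-k)|$ by $O(\log l/l^\kappa)$, with a constant independent of the endpoints of $I$.

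To estimate $\Sigma(l)$ I would split $\Z$ into regions according to the relative sizes of $|k|$ and $|l-k|$. The term $k=0$ contributes $|c_0||\widehat{\nu}(l)|\le c/l^\kappa$. For $0<|k|\le l/2$ the bound $|l-k|\ge l/2$ reduces matters to $l^{-\kappa}\sum_{0<|k|\le l/2}|k|^{-1}\lesssim \log l\cdot l^{-\kappa}$. For $l/2\le k\le 2l$ with $k\ne l$, one factors out $|c_k|\lesssim l^{-1}$ and bounds $\sum_{j=1}^{l}j^{-\kappa}$ by $l^{1-\kappa}/(1-\kappa)$ when $\kappa<1$ and by $\log l$ when $\kappa=1$, producing $O(l^{-\kappa})$ or $O(\log l/l)$, respectively. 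The diagonal $k=l$ contributes $|c_l|\le(\pi l)^{-1}\le l^{-\kappa}$. For $k\ge 2l$ the inequality $|l-k|\ge k/2$ gives the summable tail $\sum_{k\ge 2l}k^{-1-\kappa}\lesssim l^{-\kappa}$. Finally for $k<0$ one has $|l-k|=l+|k|$, and splitting the sum $\sum_{j\ge 1}1/(j(l+j)^\kappa)$ at $j=l$ yields $O(\log l/l^\kappa)$ from $j\le l$ and $O(l^{-\kappa})$ from $j>l$. Adding the pieces gives $\Sigma(l)\lesssim\log l/l^\kappa$, as required.

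The main technical obstacle is the arithmetic of the near-diagonal region $|k|\asymp l$: a careless estimate there would pick up an extra logarithm from both $\sum 1/|k|$ and $\sum 1/|l-k|^\kappa$ simultaneously, yielding the weaker bound $(\log l)^2/l^\kappa$. The region-splitting above is designed precisely so that in each block at most one of the two factors contributes a logarithmic sum while the other is uniformly of size $l^{-1}$ or $l^{-\kappa}$; this is what permits the single $\log l$ loss asserted in \eqref{rest-fourier}. All the bounds are uniform in $a,b\in[0,1]$, so after undoing the initial atom-avoidance reduction the supremum over sub-intervals satisfies the stated estimate.
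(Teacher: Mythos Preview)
Your proof is correct and follows essentially the same route as the paper: both establish the convolution identity $\int_I e^{-i2\pi lt}\,d\nu = \sum_k \widehat{\mathds{1}_I}(k)\,\widehat{\nu}(l-k)$ (the paper via Poisson approximation in its Lemma~\ref{lem-convol}, you via Fej\'er sums) and then bound the absolutely convergent sum by splitting into regions according to the relative sizes of $|k|$ and $|l-k|$. One small simplification: the decay hypothesis \eqref{ass-poly-decay} already forces $\nu$ to be atomless (e.g.\ by Wiener's lemma), so your endpoint-perturbation step is unnecessary --- this is exactly how the paper sidesteps the issue.
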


\begin{remark*}
Proposition \ref{lem-Fdecay} can not be extended to $\kappa>1$ since the restriction of the Lebesgue measure onto a proper sub-interval has a Fourier coefficient decay as $l^{-1}$. Indeed, one can see that  the upper estimate \eqref{small-part}  in the proof below of Proposition~\ref{lem-Fdecay} is not valid for $\kappa >1$ since 
\[
\sum_{1\le l \le s/2} \frac{2}{s l^\kappa} \ge \frac{2}{s},
\]
which decays slower than $s^{-\kappa}$.
\end{remark*}

\begin{lemma}\label{lem-convol}
Assume that $\nu$ is a Borel probability measure on $[0,1]$ satisfying the Fourier coefficient estimate \eqref{ass-poly-decay}.  Then for any sub-interval $I\subset [0,1]$ and any $s\in \N$, 
\[
\int_I e^{-i 2 \pi s t} \nu(dt)  =  (\widehat{\indi_I}* \widehat{\nu})(s),
\]
where $\widehat{\indi_I}* \widehat{\nu}$ is understood as the discrete convolution of two sequences in $\ell^{q'}$ and $\ell^q$ respectively (here $q>\kappa^{-1}$ and $q'$ is the conjugate exponent of $q$): 
\[
(\widehat{\indi_I}* \widehat{\nu})(s) = \sum_{l\in \Z} \widehat{\indi_I} (s-l) \widehat{\nu}(l).
\]
\end{lemma}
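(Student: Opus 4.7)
The plan is to establish the identity by a regularization of the Dirichlet series of $\indi_I$ using Fej\'er means, combined with H\"older's inequality on $\ell^q\times\ell^{q'}$.

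\textbf{Step 1: Absolute convergence of the convolution.} Writing $I=[a,b]$, one computes directly that for $l\neq 0$,
\[
\widehat{\indi_I}(l) = \int_a^b e^{-i 2\pi l t} dt = \frac{e^{-i 2\pi l a}-e^{-i 2\pi l b}}{i 2\pi l},
\]
so $|\widehat{\indi_I}(l)|\le 1/(\pi|l|)$, which places $\widehat{\indi_I}$ in $\ell^{q'}(\Z)$ for every $q'>1$. On the other hand, by hypothesis \eqref{ass-poly-decay}, $\widehat{\nu}\in\ell^q(\Z)$ for any $q>\kappa^{-1}$. Choosing $q>\kappa^{-1}$ and its conjugate exponent $q'\in(1,\infty)$, H\"older's inequality yields
\[
\sum_{l\in\Z} |\widehat{\indi_I}(s-l)||\widehat{\nu}(l)| \le \|\widehat{\indi_I}\|_{\ell^{q'}}\,\|\widehat{\nu}\|_{\ell^q}<\infty,
\]
so $(\widehat{\indi_I}*\widehat{\nu})(s)$ is well-defined as an absolutely convergent series.

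\textbf{Step 2: $\nu$ has no atoms, so Fej\'er means converge $\nu$-a.e.} By Wiener's lemma,
\[
\sum_{t\in[0,1]}|\nu(\{t\})|^2 = \lim_{N\to\infty}\frac{1}{2N+1}\sum_{|l|\le N}|\widehat{\nu}(l)|^2,
\]
and the right-hand side equals $0$ since $|\widehat{\nu}(l)|^2 = O(|l|^{-2\kappa})$. Therefore $\nu$ has no atoms and in particular $\nu(\partial I)=0$. Now let $\sigma_N(t):=\sum_{|l|\le N}\bigl(1-\tfrac{|l|}{N+1}\bigr)\widehat{\indi_I}(l)e^{i 2\pi l t}$ denote the Fej\'er means of $\indi_I$. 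By Fej\'er's theorem they satisfy $\|\sigma_N\|_\infty\le\|\indi_I\|_\infty=1$ and $\sigma_N(t)\to\indi_I(t)$ at every continuity point of $\indi_I$, i.e.\ for $t\notin\partial I$; by the previous observation the convergence holds $\nu$-almost everywhere.

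\textbf{Step 3: Passage to the limit.} Since $|\sigma_N(t)e^{-i2\pi s t}|\le 1$ and $\nu$ is finite, dominated convergence gives
\[
\int_{[0,1]}\sigma_N(t)e^{-i 2\pi s t}\nu(dt)\xrightarrow[N\to\infty]{}\int_I e^{-i 2\pi s t}\nu(dt).
\]
On the other hand, interchanging the finite sum with the integral,
\[
\int_{[0,1]}\sigma_N(t)e^{-i 2\pi s t}\nu(dt) = \sum_{|l|\le N}\Big(1-\frac{|l|}{N+1}\Big)\widehat{\indi_I}(l)\widehat{\nu}(s-l).
\]
By Step 1, the counting-measure dominated convergence theorem applies to the last sum: the integrand $(1-|l|/(N+1))\indi_{|l|\le N}\widehat{\indi_I}(l)\widehat{\nu}(s-l)$ is bounded in absolute value by $|\widehat{\indi_I}(l)\widehat{\nu}(s-l)|$ which is summable, and converges pointwise in $l$ to $\widehat{\indi_I}(l)\widehat{\nu}(s-l)$. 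Hence
\[
\sum_{|l|\le N}\Big(1-\frac{|l|}{N+1}\Big)\widehat{\indi_I}(l)\widehat{\nu}(s-l)\xrightarrow[N\to\infty]{}\sum_{l\in\Z}\widehat{\indi_I}(l)\widehat{\nu}(s-l),
\]
and a re-indexing $m=s-l$ identifies this with $(\widehat{\indi_I}*\widehat{\nu})(s)$, proving the claim.

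The only substantive point is the absolute convergence of Step 1; everything else is standard Fej\'er/Wiener machinery. No genuine obstacle appears: had the convolution only converged conditionally, one would have needed Abel summation and more delicate Tauberian arguments, but the H\"older bound via the hypothesis $\kappa>0$ makes the sum absolutely convergent for the specified range of $q$.
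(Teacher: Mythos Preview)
Your proof is correct. Both your argument and the paper's rest on the same two ingredients: the H\"older bound showing $\sum_l |\widehat{\indi_I}(s-l)\widehat\nu(l)|<\infty$, and the fact that $\nu$ has no atoms (so the endpoints of $I$ are $\nu$-null). The difference is in the regularization: the paper mollifies the \emph{measure} via Poisson convolution, writing $\nu_r(t)=\sum_l r^{|l|}\widehat\nu(l)e^{i2\pi lt}$, arguing $\nu_r|_I\to\nu|_I$ weakly (here atom-freeness is used so that mass does not escape through $\partial I$), computing $\int_I e^{-i2\pi st}\nu_r(dt)=\sum_l\widehat{\indi_I}(s-l)r^{|l|}\widehat\nu(l)$, and letting $r\to1^-$ by dominated convergence. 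You instead mollify the \emph{multiplier} $\indi_I$ via Fej\'er means, using pointwise convergence $\nu$-a.e.\ and the uniform bound $\|\sigma_N\|_\infty\le1$. The two approaches are dual (Abel versus Ces\`aro summation, applied to the two factors of the product $\indi_I\cdot\nu$) and of identical difficulty; neither buys anything over the other here.
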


\begin{proof}
First of all, by the polynomial decay assumption \eqref{ass-poly-decay} and H\"older's inequality,  we have 
\begin{align}\label{holder-ineq}
\sum_{l\in \Z} |\widehat{\indi_I} (s-l) \widehat{\nu}(l)|<\infty \quad \text{for all $s\in \N$}. 
\end{align}
Now for any $r\in (0,1)$, consider the Poisson convolutions $\nu_r$  of $\nu$: 
\[
\nu_r (t): =  \sum_{l\in \Z} r^{|l|} \widehat{\nu}(l) e^{i 2\pi l t}, \quad  t \in [0,1].
\]
It is a standard fact that  the sequence of the probability measures $\nu_r(dt)= \nu_r(t)dt$ converges weakly to $\nu$. Moreover, since $\nu$ satisfies the polynomial decay \eqref{ass-poly-decay}, it has no atoms. Therefore, by restricting onto any sub-interval $I$,  the sequence of measures $\nu_r|_I$ converges weakly to $\nu|_I$. Hence for any $s \in \Z$, we have 
\[
\int_I e^{-i 2 \pi  s t} \nu(dt) = \lim_{r\to 1^{-}} \int_I e^{-i 2 \pi s  t} \nu_r(dt).
\]
Note that for any $r\in (0,1)$, we have 
\[
\int_I e^{-i 2 \pi s  t} \nu_r(dt) = \widehat{\indi_I \cdot \nu_r} (s) = ( \widehat{\indi_I} * \widehat{\nu_r})(s) = \sum_{l\in \Z} \widehat{\indi_I} (s-l)   r^{|l|}\widehat{\nu}(l). 
\]
Therefore, by \eqref{holder-ineq}, we may apply the Dominated Convergence Theorem to get 
\[
\lim_{r\to 1^{-}}  \int_I e^{-i 2 \pi s  t} \nu_r(dt)  = \sum_{l\in \Z} \widehat{\indi_I} (s-l)  \widehat{\nu}(l). 
\]
This ends the proof. 
\end{proof}

\begin{proof}[Proof of Proposition \ref{lem-Fdecay}]
Note that 
\[
 \sup_{\text{$I \subset[0,1]$: sub-intervals}}|\widehat{\indi_I} (l)|  \le \pi^{-1} l^{-1}\quad \text{for all $l\in \Z^*$}. 
\]
Now 
\begin{align*}
(\widehat{\indi_I}* \widehat{\nu})(s) = \sum_{l\in \Z} \widehat{\indi_I} (s-l) \widehat{\nu}(l)  =       \widehat{\indi_I} (0) \widehat{\nu}(s) +    \widehat{\indi_I} (s) \widehat{\nu}(0)    +\sum_{l\in \Z\setminus \{s, 0\}} \widehat{\indi_I} (s-l) \widehat{\nu}(l).
\end{align*}
One has
\begin{align*}
\Big| \sum_{l\in \Z\setminus \{s,0\}} \widehat{\indi_I} (s-l) \widehat{\nu}(l)\Big|  & \le  \sum_{l\in \Z\setminus \{s,0\}} |\widehat{\indi_I} (s-l) \widehat{\nu}(l)|   \le  c\pi^{-1} \sum_{l\in \Z\setminus \{s,0\}}      \frac{1}{|s-l|} \frac{1}{|l|^\kappa}
\\
& \le c\pi^{-1} \frac{1}{2s} \cdot \frac{1}{s^\kappa} + c\pi^{-1} \sum_{l\in \Z\setminus \{\pm s,0\}}      \frac{1}{|s-l|} \frac{1}{|l|^\kappa}
\\
& \le c\pi^{-1} \frac{1}{2s} \cdot \frac{1}{s^\kappa} + c\pi^{-1} \sum_{l\in \Z\setminus \{\pm s,0\}}      \frac{1}{|s-|l||} \frac{1}{|l|^\kappa}
\\
& \le c\pi^{-1} \frac{1}{2s} \cdot \frac{1}{s^\kappa} + 2  c\pi^{-1} \sum_{l\in \N\setminus \{s\}}      \frac{1}{|s-l|} \frac{1}{l^\kappa}.  
\end{align*}
Write 
\[
\sum_{l\in \N\setminus \{s\}}      \frac{1}{|s-l|} \frac{1}{l^\kappa}= \sum_{l=1}^{s-1}   \frac{1}{(s-l) l^\kappa}  + \sum_{l = s+1}^\infty  \frac{1}{(l-s) l^\kappa}.
\]

First, we assume that $\kappa \in (0,1)$.  Then 
\begin{align}\label{small-part}
\begin{split}
\sum_{l=1}^{s-1}      \frac{1}{(s-l) l^\kappa}  &  \le \sum_{1\le l \le s/2} \frac{1}{(s-l) l^\kappa}  + \sum_{s/2 \le l \le s-1} \frac{1}{(s-l) l^\kappa}  
\\
&\le \sum_{1\le l \le s/2} \frac{2}{s l^\kappa} + \sum_{s/2 \le l \le s-1} \frac{1}{s-l} \cdot     \frac{2^\kappa}{s^{\kappa}} \lesssim s^{-\kappa} + s^{-\kappa} \log s \lesssim s^{-\kappa} \log s.
\end{split}
\end{align}
Similarly, 
\begin{align*}
\sum_{l = s+1}^\infty  \frac{1}{(l-s) l^\kappa}  & = \sum_{l = s+1}^{2s}  \frac{1}{(l-s) l^\kappa}  + \sum_{l = 2s+1}^\infty  \frac{1}{(l-s) l^\kappa}  
\\
& \le \sum_{l = s+1}^{2s}  \frac{1}{(l-s) s^\kappa} + \sum_{l = 2s+1}^{\infty}  \frac{1}{ \frac{l}{2} \cdot l^\kappa}  \lesssim s^{-\kappa} \log s + s^{-\kappa}  \lesssim s^{-\kappa} \log s. 
\end{align*}

For $\kappa = 1$, we have 
\begin{align*}
\sum_{l=1}^{s-1}      \frac{1}{(s-l) l}  &  \le \sum_{1\le l \le s/2} \frac{1}{(s-l) l}  + \sum_{s/2 \le l \le s-1} \frac{1}{(s-l) l}  
\\
&\le \sum_{1\le l \le s/2} \frac{2}{s l} + \sum_{s/2 \le l \le s-1} \frac{1}{s-l} \cdot     \frac{2}{s}  \lesssim  \frac{\log s}{s} + \frac{\log s}{s} \lesssim \frac{\log s}{s}.
\end{align*}
Similarly, 
\begin{align*}
\sum_{l = s+1}^\infty  \frac{1}{(l-s) l}  & = \sum_{l = s+1}^{2s}  \frac{1}{(l-s) l}  + \sum_{l = 2s+1}^\infty  \frac{1}{(l-s) l}  
\\
& \le \sum_{l = s+1}^{2s}  \frac{1}{(l-s) s} + \sum_{l = 2s+1}^{\infty}  \frac{1}{ \frac{l}{2} \cdot l}  \lesssim  \frac{ \log s}{s}+ \frac{1}{s} \lesssim  \frac{ \log s}{s}. 
\end{align*}
This ends the proof. 
\end{proof}

\begin{proof}[Proof of Lemma \ref{lem-trans-coef}]
By Proposition~\ref{lem-Fdecay},  if $|\widehat{\mu}(n)|  = O(n^{-\kappa})$ with $\kappa \in (0, 1]$ as $\N\ni n\to\infty$, then 
\[
|\widehat{\mu}_0(n)|  = O(n^{-\kappa} \log n) \an  |\widehat{\mu}_1(n)| = O(n^{-\kappa} \log n),
\]
where $\mu_0 = \mu|_{[0,1/2)}$ and $\mu_1= \mu|_{[1/2, 1]}$ are the restrictions of $\mu$ on two sub-intervals. Now since both $\supp(\mu_0)$ and $\supp(\mu_1)$ are contained in a sub-interval of length strictly less than $1$,  we can apply \cite[Chapter 17, Lemma 1]{Kahane-book} to $\mu_0$ and $\mu_1$ respectively and obtain 
\[
\text{$|\widehat{\mu}(\zeta)| \le    |\widehat{\mu}_0(\zeta)|  + |\widehat{\mu}_1(\zeta)|   = O  \big( |\zeta|^{-\kappa} \log (|\zeta|)\big)$ as $\R\ni \zeta \to \infty$.}
\]
The above argument shows that 
\[
 \sup\{D \in[0,1]:   |\widehat{\mu}_\infty(\zeta)|^2 = O(|\zeta|^{-D}) \}     \le    \sup \{ D \in [0,1):     |\widehat{\mu}(n)|^2  = O(n^{-D})   \}.
\]
The converse inequality is trivial. Hence we complete the proof of the lemma. 
\end{proof}

\begin{proof}[Proof of Corollary \ref{cor-series-dim}]
Let $\mu$ be a finite positive Borel  measure on $[0,1]$.  By Lemma \ref{lem-trans-coef}, we have 
\[
\dim_F(\mu) =  \sup \{ D \in [0,1):     |\widehat{\mu}(n)|^2  = O(n^{-D})   \}.
\]
Set
 \[
D'(\mu) := \sup \Big\{D \in [0,1):    \text{$  \sum_{n\ge 1 } | n^{\frac{D}{2}}  \cdot \widehat{\mu}(n)|^q <\infty$ for some $q>2$}    \Big\}. 
\]
If $0\le D<1$ is such that  
$
 |\widehat{\mu}(n)|^2  = O(n^{-D}). $ Then for any $\varepsilon\in (0,1)$ and $q>2/\varepsilon>2$, we have  
 \[
  \sum_{n\ge 1 } | n^{\frac{D-\varepsilon}{2}}  \cdot \widehat{\mu}(n)|^q    \le \sup_{n\in \N} (n^D |\widehat{\mu}(n)|^2)^{q/2}    \cdot \sum_{n\ge 1} n^{- \frac{\varepsilon q}{2}}<\infty. 
   \]
Hence  $D-\varepsilon\le D'(\mu)$. Since $D\le \dim_F(\mu)$ and $\varepsilon>0$ are arbitrary, we obtain $\dim_F(\mu)\le D'(\mu)$.  The converse inequality is proved similarly. 
\end{proof}

\begin{proof}[Proof of Theorem \ref{thm-fourier}]
Recall the definition \eqref{def-tau-c} of $\tau_c(\mu_\infty)$:
\[
\tau_c(\mu_\infty): =  \sup \Big\{\tau \in \R: \sum_{n\ge 1} | n^{\tau}  \cdot \widehat{\mu}_\infty(n)|^q<\infty \text{\, for some $q>2$} \Big\}. 
\]
Recall also the definition   \eqref{def-a-c}  $\alpha_c$ :
\[
\alpha_c =  \sup \Big\{\alpha \in \R:    \Big(\E\Big[  \Big\{ \sum_{n \ge 1}    \big| n^{\alpha} \cdot \widehat{\mu}_\infty(n)\big|^q  \Big\}^{\frac{p}{q}}\Big] \Big)^{\frac{1}{p}}  <\infty \text{\, for some $1<p<2<q<\infty$} \Big\}.
\]
Clearly, 
\[
\tau_c(\mu_\infty) \ge \alpha_c \quad a.s. 
\]
On the other hand, as a consequence of Corollary \ref{cor-series-dim},   almost surely on $\{\mu_\infty \ne 0\}$, we have  the equality 
\[
\dim_F(\mu_\infty) = \min( 2 \tau_c(\mu_\infty), 1).
\]
By Proposition \ref{prop-sq-3case}, under the assumptions of Theorem \ref{thm-fourier}, we have 
\[
\alpha_c\ge D_F/2. 
\]
Moreover, by Lemma \ref{lem-DF}, $D_F\in (0,1)$. Consequently,  almost surely on $\{\mu_\infty \ne 0\}$,
\[
\dim_F(\mu_\infty)   = \min(2 \tau_c(\mu_\infty), 1)  \ge    \min(2 \alpha_c, 1) \ge \min( D_F, 1) = D_F \quad a.s. 
\]

Finally, by Proposition \ref{prop-to-infty},  almost surely on $\{\mu_\infty \ne 0\}$, 
\[
\dim_F(\mu_\infty) \le D_F + \frac{1}{k}
\]
 for any $k\in \N$ and hence  
$
\dim_F(\mu_\infty) \le D_F. 
$
This completes the proof of the desired exact Fourier dimension formula $\dim_F(\mu_\infty) = D_F$. 
\end{proof}

\subsection{Proof of Corollary \ref{cor-salem}}
Recall that,  almost surely on $\{\mu_\infty \ne 0\}$, we have 
\begin{align}\label{def-Dh}
\dim_H(\mu_\infty)=D_H=1-\frac{\log \E[W\log W]}{\log b}.
\end{align}

 If $W$ is in the squared sub-critical or squared critical regime,  then by Theorem~\ref{thm-fourier}, on $\{\mu_\infty \ne 0\}$, almost surely, 
\[
\dim_F(\mu_\infty)=D_F=1-\frac{\log \E[W^2]}{\log b}.
\]
Since $\E[W]=1$, Jensen's inequality applied  to the probability measure $\Q:  = W\PP$ gives
\[
 \E(W\log W)= \E_\Q(\log W)\leq   \log \E_\Q(W) = \log \E[W^2 ],
\]
the equality holds if and only if $W$ is $\Q$-almost surely a constant, that is, there exists $x$ such that 
\[
1 = \Q(W= x) = \E[W \mathds{1}(W=x)] = x \PP(W=x). 
\]
It follows that $0< x \le 1$ and $\PP(W=x) =  x^{-1}$. On the other hand, since  $W\ge 0$ is non-constant  and $\E[W]=1$, we must have  $x<1$ and 
$\PP(W =0) =  1 - x^{-1}$. 
Therefore, in the squared sub-critical or squared critical  regime, on $\{\mu_\infty\ne 0\}$, almost surely, $\mu_\infty$ is  a Salem measure if and only if $W$ has a two-point distribution: 
$
\PP(W=x) = 1- \PP(W=0) = x^{-1}. 
$

Now suppose that $W$ is in the squared super-critical regime:
$
\E[W^{(2)}\log W^{(2)}]>\log b.
$
Then  by Lemma~\ref{lem-non-sub}, $W$ is in the boundary case.  Recall the Biggins-Kyprianou transform \eqref{bdd-case} of $W$: 
\[
W= \frac{e^{-\beta \xi}}{\E[e^{-\beta \xi}]}   \quad  \text{with} \quad    \frac{1}{2} < \beta< 1
\]
and $\xi$ satisfies the conditions \eqref{xi-cond} and \eqref{xi-cond-bis}. 
Recall also the definition \eqref{def-psi} for  $\psi(t) =\log \E[be^{-t\xi}]$.  By Lemma \ref{lem-psi-beta}, we have 
\[
 D_F=1 -   \inf_{1/2\le t\le 1} \frac{\log  \E[b^{1-t}W^{2t}]}{ t \log b} = \frac{ 2\psi(\beta)}{\log b}. 
\]
On the other hand,  by \eqref{def-Dh}, we have 
\begin{align*}
D_H=   1 - \frac{d}{dt}\Big|_{t = 1} \Big(  \frac{\E[W^t]}{\log b}\Big)  
 =  1- \frac{d}{dt}\Big|_{t=1}\Big( \frac{ \exp(\psi(\beta t) - t \psi(\beta) + (t-1) \log b)}{\log b}  \Big)  =   \frac{  \psi(\beta) - \beta \psi'(\beta)}{\log b}. 
 \end{align*}
 
\vspace{2mm}
{\flushleft \bf Claim A: } For $1/2 < \beta<1$, we have $\psi(\beta) < - \beta \psi'(\beta)$. 
\vspace{2mm}

Therefore, in the squared super-critical regime,  we always have $D_F< D_H$. Hence, almost surely on $\{\mu_\infty\ne 0\}$, we have  
$
\dim_F(\mu_\infty)< \dim_H(\mu_\infty)
$ and $\mu_\infty$ is not a Salem measure. 

It remains to prove Claim A.  Indeed, by Lemma \ref{lem-psi},  the function $\psi \in C^\infty$   is non-negative,  strictly convex,  strictly decreasing on $[0,1]$ and   $\psi(1) = 0$.    Since $1/2< \beta<1$, 
\begin{align*}
\frac{\psi(\beta)}{\beta} < \frac{\psi(\beta)}{1-\beta}   =  - \frac{\psi(\beta) - \psi(1)}{ \beta -1} = - \psi'(\beta_0) \quad \text{with $\beta_0\in (\beta, 1)$}.  
\end{align*}
The strict convexity of $\psi$ implies that $\psi'(\beta_0) > \psi'(\beta)$. This completes the proof of Claim A.

\subsection{Proof of Corollary \ref{cor-holder}}\label{sec-cor-holder}

Given any non-negative finite Borel measure $\nu$ on $[0,1]$,   we consider a  function   $F_\nu: [0,1]\rightarrow \R$ defined by  
\[
F_\nu(t): = \nu([0,t]) -   t \cdot \nu([0,1]). 
\]
Then, for any $n\in \Z$ with $n \ne 0$,  one has 
\begin{align}\label{F-coef}
\widehat{F}_\nu(n)=  \frac{\widehat{\nu}(n)}{ i2\pi n }. 
\end{align}

We shall use the following result  of  Boas \cite[Theorem 1]{Boa67} on H\"older continuity criterion for functions with non-negative  Fourier sine or cosine coefficients.  More precisely, consider any sequence $(\lambda_n)_{n\ge 1}$ of {\it non-negative} numbers and the formal Fourier sine and cosine series :
\[
S(\theta) = \sum_{n\ge 1} \lambda_n \sin(n 2 
\pi  \theta) \an C(\theta)= \sum_{n\ge 1} \lambda_n \cos(n 2 \pi \theta). 
\] 
Boas showed that, for any $0< \gamma< 1$, the functions $S(\theta)$ and   $C(\theta)$  belong to the class $Lip(\gamma)$ : 
\[
Lip(\gamma)= \Big\{  f: \R\rightarrow \R\Big |   \sup_{x\ne y} \frac{|f(x)-f(y)|}{|x-y|^\gamma}<\infty \Big\} 
\]
if and only if 
\[
\sum_{k\ge n}\lambda_k = O (n^{-\gamma}). 
\]
As an immediate consequence of  Boas criterion,  for a general  Fourier series with complex coefficients
\[
F(\theta) = \sum_{k \in \Z} a_k e^{i k2 \pi \theta}, 
\]
the following implication holds:  
\begin{align}\label{boas-cri}
\sum_{|k|\ge n} |a_k|= O(|n|^{-\gamma})   \Longrightarrow  F\in Lip(\gamma).  
\end{align}

Recall the definition \eqref{def-FA} of upper Frostman regularity. 
\begin{lemma}\label{lem-boas}
Let $0<\gamma<1$. 
Let $\nu$ be a non-negative finite Borel measure on $[0,1]$ whose Fourier coefficients of $\nu$ satisfies 
\begin{align}\label{nu-decay}
| \widehat{\nu}(n)| = O(n^{-\gamma}) \quad \text{as $n\to\infty$}.
\end{align}
Then $\nu$ is $\gamma$-upper Frostman regular. 
\end{lemma}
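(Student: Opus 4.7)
The plan is to reduce the $\gamma$-upper Frostman regularity of $\nu$ to a Hölder continuity statement for the auxiliary function $F_\nu(t) = \nu([0,t]) - t\nu([0,1])$ introduced just before the lemma, and then to obtain that Hölder regularity by applying Boas's criterion \eqref{boas-cri} to the Fourier series of $F_\nu$.

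First, I compute the Fourier coefficients of $F_\nu$ via \eqref{F-coef}: for every $n\in\Z\setminus\{0\}$, $\widehat{F}_\nu(n) = \widehat{\nu}(n)/(i2\pi n)$. The decay hypothesis \eqref{nu-decay} then upgrades to $|\widehat{F}_\nu(n)| = O(|n|^{-1-\gamma})$, so that
\[
\sum_{|k|\ge n}|\widehat{F}_\nu(k)| = O(n^{-\gamma}).
\]
In particular $\widehat{F}_\nu \in \ell^1(\Z)$, so the Fourier series of $F_\nu$ converges absolutely and uniformly. Since $F_\nu$ is continuous on $[0,1]$ (the hypothesis forces $\widehat{\nu}(n)\to 0$, hence $\nu$ has no atoms) and satisfies $F_\nu(0) = F_\nu(1) = 0$, it extends to a continuous $1$-periodic function represented by its Fourier series. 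Boas's criterion \eqref{boas-cri} therefore yields $F_\nu \in \mathrm{Lip}(\gamma)$, i.e., there exists $C>0$ such that
\[
|F_\nu(x)-F_\nu(y)| \le C|x-y|^{\gamma} \quad \text{for all } x,y\in [0,1].
\]

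Finally, for any sub-interval $I=[a,b]\subset[0,1]$, the identity $\nu(I)=F_\nu(b)-F_\nu(a)+(b-a)\nu([0,1])$ gives
\[
\nu(I) \le C|I|^{\gamma} + \nu([0,1])\,|I| \le (C+\nu([0,1]))\,|I|^{\gamma},
\]
using $|I|\le 1$ and $\gamma<1$. An arbitrary finite interval $I\subset\R$ is handled by restricting to $I\cap[0,1]$, since $\supp\nu\subset[0,1]$: either $|I\cap[0,1]|\le 1$ (covered by the above bound), or $I\supset[0,1]$ (trivial since $\nu(I)=\nu([0,1])\le \nu([0,1])|I|^\gamma$). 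This yields the desired $\gamma$-upper Frostman regularity in the sense of \eqref{def-FA}.

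The only delicate point is the legitimacy of applying Boas's criterion to $F_\nu$, but this is immediate from the $\ell^1$ tail bound derived above, which provides uniform convergence of the Fourier series to $F_\nu$; everything else in the argument is routine.
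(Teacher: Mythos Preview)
Your proof is correct and follows essentially the same approach as the paper: both reduce the Frostman regularity to showing $F_\nu\in\mathrm{Lip}(\gamma)$ via the tail bound $\sum_{|k|\ge n}|\widehat{F}_\nu(k)|=O(n^{-\gamma})$ and Boas's criterion \eqref{boas-cri}. You have simply been more explicit about the routine details (continuity and periodicity of $F_\nu$, the explicit interval bound, and intervals not contained in $[0,1]$) that the paper leaves implicit.
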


\begin{proof}
By \eqref{F-coef} and  \eqref{nu-decay},  the Fourier coefficients of   the function $F_\nu(t)= \nu([0,t]) -   t \cdot \nu([0,1])$ satisfies 
\[
|\widehat{F}_\nu (k)| =  O  (|k|^{-1 - \gamma}) \quad \text{as $|k|\to\infty$}.
\]
It follows that 
\[
\sum_{|k|\ge n} |\widehat{F}_\nu (k)|   = O(n^{-\gamma})  \quad \text{as $n\to\infty$}.
\]
Therefore, by \eqref{boas-cri}, we have $F_\nu \in Lip(\gamma)$ and hence  ($t\to t\cdot \nu([0,1])$ clearly belongs to $Lip(\gamma)$) 
\[
\nu([0,t]) = F_\nu(t) + t \cdot \nu([0,1])
\]
 belongs to $Lip(\gamma)$.  By the definition \eqref{def-FA}, the measure $\nu$ is $\gamma$-upper Frostman regular. 
\end{proof}

\begin{proof}[Proof of Corollary \ref{cor-holder}]
By Theorem \ref{thm-fourier},  for any $0< \gamma< D_F/2 < 1/2$, almost surely on $\{\mu_\infty \ne 0\}$, 
\[
\widehat{\mu}_\infty(n) = O (n^{- \gamma}) \quad \text{as $n\to\infty$}. 
\]
Since $\gamma  \in (0, 1/2)$, we may apply Lemma \ref{lem-boas} and conclude that $\mu_\infty$ is $\gamma$-upper Frostman regular. 
\end{proof}

%%%%%%%%%%%%%%%%%%%%%%%%%%%%%%%%%%%%%%%%%%%%%%%%%%%%%%%%%%

\section{Proof of Theorem \ref{thm-pRaj}}\label{sec-con-2}

Theorem \ref{thm-pRaj} follows  from  the following general 
\begin{theorem}\label{thm-preFdim}
Let $\nu$ be a  Borel probability measure on $[0,1]$ with 
$
\dim_F(\nu)>0.
$
Assume that $W\in L^{1+\delta}(\PP)$ for a positive $\delta>0$ and 
\begin{align}\label{W-ass-Fdim}
 \frac{\E[W\log W]}{\log b} < \lim_{p\to 1^{+}} \ldim_{p}(\nu). 
 \end{align}
 Then the random  measure  $Q\nu$ is non-degenerate and   almost surely on $\{Q\nu\ne 0\}$, we have 
 \[
 \dim_F(Q\nu) > 0.
 \]
\end{theorem}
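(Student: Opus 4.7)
The plan is to realize Theorem \ref{thm-preFdim} as an application of Proposition \ref{prop-mart-type} to a single $\ell^q$-valued vector measure encoding the Fourier coefficients of $\nu$. By the analogue of Corollary \ref{cor-series-dim} for $Q\nu$, it suffices to produce $\tau>0$ and $q>2$ with $\sum_{s\ge 1}|s^\tau\widehat{Q\nu}(s)|^q<\infty$ almost surely on $\{Q\nu\neq 0\}$. To this end, I will introduce the vector measure
\[
\mathcal{V}_{\tau,q}:\mathfrak{B}^b\to\ell^q,\qquad \mathcal{V}_{\tau,q}(A):=\big(s^\tau \widehat{\nu|_A}(s)\big)_{s\ge 1},
\]
so that $Q_n\mathcal{V}_{\tau,q}([0,1])(s)=s^\tau\widehat{Q_n\nu}(s)$. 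A Fatou argument then reduces the task to bounding $\sup_n \E[\|Q_n\mathcal{V}_{\tau,q}([0,1])\|_{\ell^q}^p]$ for some $1<p\le 1+\delta$; the non-degeneracy of $Q\nu$ and the convergence $\widehat{Q_n\nu}(s)\to\widehat{Q\nu}(s)$ in $L^p$ for each fixed $s$ follow from the same scheme applied to the scalar measure $\nu$ (Corollary \ref{cor-HM-comp}).

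Fix $\kappa\in(0,\dim_F(\nu)/2)$ so that $|\widehat{\nu}(l)|\lesssim l^{-\kappa}$; Proposition \ref{lem-Fdecay} then provides the uniform bound $|\widehat{\nu|_{I_u}}(s)|\lesssim s^{-\kappa}\log s$ over all $b$-adic intervals $I_u$. Interpolating this with the trivial bound $|\widehat{\nu|_{I_u}}(s)|\le \nu(I_u)$ at the threshold $s\sim \nu(I_u)^{-1/\kappa}$ yields, whenever $\tau+1/q<\kappa$,
\[
\|\mathcal{V}_{\tau,q}(I_u)\|_{\ell^q}\lesssim \nu(I_u)^{\theta}\big(1+|\log \nu(I_u)|\big),\qquad \theta:=1-\frac{\tau+1/q}{\kappa}\in(0,1).
\]
Since $\ell^q$ has martingale type $p$ for every $1<p\le 2$ (Proposition \ref{prop-type}), Proposition \ref{prop-mart-type} and the identity $\|T_n^{\mathcal{V}_{\tau,q}}\|_{L^p(\ell^q)}^p = b^{n(p-1)}\sum_{|u|=n}\|\mathcal{V}_{\tau,q}(I_u)\|_{\ell^q}^p$ bound the desired supremum by
\[
C\sum_{n\ge 0}\E[W^p]^n\,(1+n)^{p}\sum_{|u|=n}\nu(I_u)^{p\theta}.
\]
With $r:=p\theta>1$, the definition of the $L^r$-dimension gives $\sum_{|u|=n}\nu(I_u)^r\le b^{-n(r-1)(\ldim_r(\nu)-\varepsilon)}$ for every $\varepsilon>0$ and all large $n$, so the series converges provided $\log\E[W^p]<(r-1)(\ldim_r(\nu)-\varepsilon)\log b$.

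To realize this last inequality, I will scale $\tau=\tau_0(p-1)$ and $1/q=\gamma(p-1)$ with fixed $\tau_0,\gamma>0$ satisfying $\tau_0+\gamma<\kappa$, and let $p\downarrow 1$. Then $\theta\to 1$, $r\to 1^+$, $\ldim_r(\nu)\to\lim_{p\to 1^+}\ldim_p(\nu)=:D_0$, and $\log\E[W^p]=(p-1)\E[W\log W]+o(p-1)$. After dividing by $p-1$ and letting $p\downarrow 1$ the condition reduces to
\[
\E[W\log W]<\Big(1-\tfrac{\tau_0+\gamma}{\kappa}\Big)D_0\log b,
\]
which admits a solution with $\tau_0+\gamma>0$ small precisely under hypothesis \eqref{W-ass-Fdim}. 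This produces $\tau>0$ with $\dim_F(Q\nu)\ge 2\tau$ almost surely on $\{Q\nu\neq 0\}$. The main obstacle I expect is the tight coupling in this parameter choice: the requirement $r=p\theta>1$ forces $p-1$ to be of the same order as $\tau+1/q$, so the two hypotheses of the theorem must cooperate --- $\dim_F(\nu)>0$ makes $\kappa>0$ and thereby allows $\tau_0+\gamma<\kappa$, while \eqref{W-ass-Fdim} provides the margin needed for the linearized inequality above. Some care will also be needed regarding the (semi)continuity of $p\mapsto\ldim_p(\nu)$ at $p=1^+$, where a limiting rather than pointwise argument is required.
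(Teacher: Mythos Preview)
Your approach is essentially the paper's: the same $\ell^q$-valued vector measure $\mathcal{V}_{\tau,q}$, the same application of Proposition~\ref{prop-mart-type}, the same interpolation of $|\widehat{\nu|_{I_u}}(s)|\lesssim\min(s^{-\kappa'},\nu(I_u))$ to bound $\|\mathcal{V}_{\tau,q}(I_u)\|_{\ell^q}$ by a power of $\nu(I_u)$, and the same $p\to 1^+$ limiting argument (the paper lets $\alpha\to 0$, $q\to\infty$, $p\to 1^+$ sequentially rather than simultaneously, but the two parametrizations are interchangeable). One technical point to correct: your replacement of $(1+|\log\nu(I_u)|)^p$ by $(1+n)^p$ is not justified, since nothing prevents $\nu(I_u)$ from being much smaller than $b^{-n}$; the paper sidesteps this by invoking Proposition~\ref{lem-Fdecay} at exponent $\kappa$ but then using the clean bound $|\widehat{\nu|_{I_u}}(s)|\le C s^{-\kappa'}$ with $\kappa'=\kappa-\varepsilon<\kappa$, which removes the logarithm entirely at the cost of an $\varepsilon$-loss that is harmless in the limit.
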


\begin{remark*}
Since the function $p\mapsto \ldim_p(\nu)$ is non-increasing, the limit $ \lim_{p\to 1^{+}} \ldim_{p}(\nu)$ indeed exists. 
\end{remark*}

Indeed, in what follows,  we obtain a lower estimate of $\dim_F(Q\nu)$ which can be described as follows. 
Given any Borel probability measure $\nu$ on $[0,1]$ and random weight $W\in L^{1+\delta}(\PP)$ for some positive $\delta>0$, let $\kappa=\dim_F(\nu)>0$. We define $\eta_W(\nu)$ to be the supremum of all non-negative real numbers $\alpha\ge 0$ such that the following system  of inequalities has a solution: 
\[
\begin{cases}
1<p< \min(2, 1 + \delta)\le     2\le q<\infty,
\vspace{2mm}
\\
\displaystyle \tau(p, q, \alpha) = p \frac{(\kappa-\alpha)q - 1}{q \kappa}>1,
\vspace{2mm}
\\
\displaystyle\frac{\log \E[W^p]}{\log b}  < (\tau(p, q, \alpha)- 1) \ldim_{\tau(p, q, \alpha)}(\nu).
\end{cases}
\]
Then we shall show that, under the assumption \eqref{W-ass-Fdim},   almost surely on $\{Q\nu\ne 0\}$, 
\begin{align}\label{low-F-nu}
\dim_F(Q\nu)\ge   2 \eta_W(\nu) > 0. 
\end{align}

For the log-normal weights, by a direct computation of the constant $\eta_W$, the lower bound \eqref{low-F-nu} implies the following 

\begin{corollary}\label{cor-nu}
Assume that $\nu$ is a Borel probability measure on $[0,1]$ with $\dim_F(\nu)>0$. Assume moreover that  $p\mapsto \ldim_{p}(\nu)$ is a constant function  on $p\in (1, 2)$ with 
\[
\ldim_{p}(\nu) =  D \quad  \text{for all $p \in (1,2)$.}
\]   Then for the   log-normal random weights $W=e^{\sigma N-\sigma^2/2}$ with $0< \sigma < \sqrt{2  D \log b}$,  the corresponding multiplicative chaos measure $Q\nu$ is non-degenerate and almost surely on $\{Q\nu\ne 0\}$, we have 
 \begin{align}\label{F-Qnu-nu}
 \dim_F(Q\nu)  \ge   \left\{
 \begin{array}{ll}
  \displaystyle  \Big(1-   \frac{\sigma^2}{D\log b}\Big) \cdot \dim_F(\nu)  & \text{if $ \displaystyle \frac{\sigma^2}{\log b} \le  \frac{D}{2}$}
  \vspace{2mm}
  \\  
    \displaystyle    2 \Big(  1  - \frac{\sigma}{\sqrt{2D \log b}} \Big)^2 \cdot \dim_F(\nu) & \text{if $ \displaystyle \frac{D}{2} <   \frac{\sigma^2}{\log b} <  2D$}
 \end{array}
 \right.. 
 \end{align}
\end{corollary}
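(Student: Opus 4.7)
\medskip

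The plan is to derive Corollary~\ref{cor-nu} directly from the quantitative lower bound \eqref{low-F-nu} stated just after Theorem~\ref{thm-preFdim}, namely that $\dim_F(Q\nu) \ge 2\eta_W(\nu)$ almost surely on $\{Q\nu \ne 0\}$. Since log-normal random weights $W = e^{\sigma N - \sigma^2/2}$ have finite moments of all orders, the constraint $p < \min(2, 1+\delta)$ in the definition of $\eta_W(\nu)$ reduces to $1 < p < 2$, and since the hypothesis $\underline{\dim}_p(\nu) = D$ is assumed to hold throughout $(1,2)$, the value of $\underline{\dim}_{\tau(p,q,\alpha)}(\nu)$ appearing in the third inequality will equal $D$ provided we can ensure $\tau(p,q,\alpha) \in (1,2)$. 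This reduces the whole problem to an explicit one-parameter optimization.

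The first step is to verify the hypothesis of Theorem~\ref{thm-preFdim}. A direct computation gives $\E[W \log W] = \sigma^2/2$, so the condition $\E[W \log W]/\log b < \lim_{p \to 1^+} \underline{\dim}_p(\nu) = D$ is equivalent to $\sigma^2 < 2D \log b$, precisely the assumption of the corollary. Hence $Q\nu$ is non-degenerate and \eqref{low-F-nu} applies.

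Next I would compute $\eta_W(\nu)$. Writing $\kappa = \dim_F(\nu)$ and $\rho = \sigma^2/\log b$, the log-normal identity $\log \E[W^p]/\log b = p(p-1)\rho/2$ turns the third constraint into $p(p-1)\rho/2 < (\tau - 1) D$. Sending $q \to \infty$ makes the second constraint saturate at $\tau = p(\kappa - \alpha)/\kappa$, and substituting this into the third constraint yields the upper bound
\[
\alpha < \kappa(p-1)\Big(\frac{1}{p} - \frac{\rho}{2D}\Big).
\]
Setting $f(p) = (p-1)[1/p - \rho/(2D)]$, the derivative $f'(p) = 1/p^2 - \rho/(2D)$ vanishes at $p_* = \sqrt{2D/\rho}$. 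When $\rho \le D/2$ we have $p_* \ge 2$, so $f$ is increasing on $(1,2)$ and the supremum $f(2) = (D-\rho)/(2D)$ is approached as $p \uparrow 2$, giving $2\eta_W(\nu) = \kappa(1 - \rho/D)$, which is the first case of \eqref{F-Qnu-nu}. When $D/2 < \rho < 2D$ we have $p_* \in (1,2)$ and the maximum is $f(p_*) = (1 - \sqrt{\rho/(2D)})^2$, giving $2\eta_W(\nu) = 2\kappa (1 - \sigma/\sqrt{2D\log b})^2$, which is the second case.

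A small verification I would include at each step is that $\tau(p,q,\alpha)$ stays in $(1,2)$ along the chosen optimizers, so that $\underline{\dim}_\tau(\nu) = D$ can be legitimately invoked: along the saturation $\tau = 1 + p(p-1)\rho/(2D)$, one finds $\tau = 1 + \rho/D < 2$ in the first regime (using $\rho \le D/2 < D$) and $\tau = 2 - \sqrt{\rho/(2D)} < 2$ in the second. No step involves a genuine obstacle: the entire argument is the explicit optimization carried out under a quantitative lower bound already produced by Theorem~\ref{thm-preFdim}. The only mildly delicate point is the passage $q \to \infty$, which requires taking a strict inequality in the definition of $\eta_W(\nu)$ and then letting $\alpha$ approach its supremum from below; this produces the bound $\dim_F(Q\nu) \ge 2\eta_W(\nu)$ with the closed expressions above, completing the proof.
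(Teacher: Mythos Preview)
Your proposal is correct and follows exactly the route the paper indicates: the paper states that Corollary~\ref{cor-nu} follows ``by a direct computation of the constant $\eta_W$'' from the lower bound \eqref{low-F-nu}, and you have carried out precisely that computation in detail, including the verification that the optimizing $\tau$ stays in $(1,2)$ so that the hypothesis $\underline{\dim}_\tau(\nu)=D$ applies.
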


\begin{remark*}
Comparing \eqref{log-normal-F} with \eqref{F-Qnu-nu}, by taking $\nu$ to be the Lebesgue measure,  we see that our method in this section for obtaining the lower bound of  $\dim_F(Q\nu)$  is also  sharp.  
\end{remark*}

\subsection{The initial step of the proof}\label{sec-initial-one}
We briefly explain the strategy for proving Theorem~\ref{thm-preFdim}.   Similar to the definition \eqref{eq-npq},   given any  $\nu$ satisfying the assumption of Theorem \ref{thm-preFdim}, we set 
\[
\NQ(Q\nu)  = \Big(\E\Big[\Big\{\sum_{s =  1}^\infty \big|s^{\alpha} \widehat{Q\nu}(s)\big|^q\Big\}^{p/q}\Big]\Big)^{1/p}.
\]
 Our main goal is to find a positive number $0< \alpha<\dim_F(\nu)$ such that  there exists a pair $(p, q)$ of exponents with  $1<p<2<q<\infty$ and  
\begin{align}\label{Lp-lq-finite}
\NQ(Q\nu)  <\infty. 
\end{align}
Then it follows immediately that almost surely on $\{Q\nu\ne 0\}$,  we have 
$
\dim_F(Q\nu) \ge \alpha. 
$

By Corollary \ref{cor-Qnu-fourier} below and the standard fact about uniformly integrable vector-valued martingales in a finite-dimensional Banach space, for any finite  $N\ge 1$ and any $\alpha\ge 0$, 
\[
\E\Big[\Big\{\sum_{s =  1}^N \big|s^{\alpha} \widehat{Q\nu}(s)\big|^q\Big\}^{p/q}\Big]  = \sup_{n\ge 1}  \E\Big[\Big\{\sum_{s =  1}^N \big|s^{\alpha} \widehat{Q_n\nu}(s)\big|^q\Big\}^{p/q}\Big]. 
\]
By monotone convergence theorem, one can easily see that
\begin{align}\label{sup=infty}
\big[\NQ(Q\nu)\big]^p =  \E\Big[\Big\{\sum_{s =  1}^\infty \big|s^{\alpha} \widehat{Q\nu}(s)\big|^q\Big\}^{p/q}\Big]  = \sup_{n\ge 1}  \E\Big[\Big\{\sum_{s =  1}^\infty \big|s^{\alpha} \widehat{Q_n\nu}(s)\big|^q\Big\}^{p/q}\Big] \in [0,+\infty]. 
\end{align}
Therefore,  we  can  interpret \eqref{Lp-lq-finite} as 
\begin{align}\label{goal-ineq}
 \E\Big[  \Big\| \Big( s^\alpha \widehat{Q\nu}(s)\Big)_{s\ge 1} \Big\|_{\ell^q}^p \Big]  =   \sup_{n\ge 1}\E\Big[  \big\| Q_n \mathcal{V}([0,1])\big\|_{\ell^q}^p \Big]<\infty,
\end{align}
where $\mathcal{V} = \mathcal{V}_{\alpha, q}: \mathfrak{B}^b \rightarrow \ell^q$ is the vector measure defined by 
 \begin{align}\label{def-Valpha}
\mathcal{V}(A): =   \Big( s^\alpha \int_{A}e^{-i 2\pi st}\nu(dt) \Big)_{s\ge 1} \in \ell^q \quad \text{for all $A\in \mathfrak{B}^b$}.
\end{align}
By Proposition \ref{lem-Fdecay}, 
the above vector measure $\mathcal{V}$ is well-defined for any sufficiently large $q$ with 
\[
q> \max\Big( \frac{1}{\dim_F(\nu)- \alpha}, 2\Big).
\]

Recall the definition \eqref{def-rsigma} for the growth rate function 
$H_\mathcal{V}$.   Since  for $q\ge 2$,  the Banach space $\ell^q$ has martingale type 2,  by  Proposition~\ref{prop-mart-type} and Corollary~\ref{cor-HM-comp}, the desired inequality \eqref{goal-ineq} will follow from 
\[
\varphi_W(p)+ H_\mathcal{V}(p)<0. 
\]
So  a suitable  upper estimate of $H_\mathcal{V}(p)$ will be needed and  it will be given  in Lemma \ref{lem-gen-measure} below.

\subsection{Full action by the multiplicative cascades}

The following proposition is due to Fan \cite{Fan-JMPA} (a sharper result is also established by  Barral and Jin \cite[Theorem 2.3 (a)]{JInxiong-Barral-IMRN}).  Here we give a self-contained proof using the method developped in \S \ref{sec-Mtype}. 
\begin{proposition}\label{prop-full}
Let $\nu$ be a  Borel probability measure on $[0,1]$ satisfying the condition \eqref{W-ass-Fdim}.  Assume that $W\in L^{1+\delta}(\PP)$ for a positive $\delta>0$.   Then  $Q$ acts fully on $\nu$, namely, 
\[
Q_n\nu([0,1])=  \E_n[Q\nu([0,1])] \quad a.s. \quad \text{for all $n\in \N$}.
\]
 \end{proposition}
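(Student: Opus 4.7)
The plan is to reduce the claim to the $L^p$-boundedness of the scalar non-negative martingale $M_n := Q_n\nu([0,1])$ for a well-chosen $p\in(1,1+\delta]$; indeed $L^p$-boundedness implies uniform integrability and, combined with the almost sure convergence $M_n\to Q\nu([0,1])$, yields the identity $Q_n\nu([0,1])=\E_n[Q\nu([0,1])]$ for every $n$. Since $\mathbb{R}$ has martingale type $p$ for every $p\in(1,2]$, the machinery of \S\ref{sec-Mtype} applies with $\mathcal{V}=\nu$ viewed as a scalar vector measure, so Proposition \ref{prop-mart-type} reduces the $L^p$-boundedness of $M_n$ to the convergence of
\[
\sum_{n\ge 1}\Big(\frac{\E[W^p]}{b^{p-1}}\Big)^n\,\|T_n^{\nu}\|_{L^p([0,1])}^p \;=\;\sum_{n\ge 1}(\E[W^p])^n\sum_{|u|=n}\nu(I_u)^p,
\]
where I used $\|T_n^{\nu}\|_{L^p}^p=b^{n(p-1)}\sum_{|u|=n}\nu(I_u)^p$.

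The next step is to translate the $L^p$-dimension hypothesis into an exponential decay of the spatial sum. By definition of $\ldim_p(\nu)$, the liminf/limsup conversion gives
\[
\limsup_{n\to\infty}\frac{\log\bigl(\sum_{|u|=n}\nu(I_u)^p\bigr)}{n\log b}\;=\;-(p-1)\,\ldim_p(\nu),
\]
so that for any $\varepsilon>0$ and all large $n$ one has $\sum_{|u|=n}\nu(I_u)^p\le b^{-n(p-1)(\ldim_p(\nu)-\varepsilon)}$. Therefore the above geometric-type series converges whenever
\begin{equation}\label{eq-goal-p-ineq}
\frac{\log\E[W^p]}{p-1}\;<\;\ldim_p(\nu)\,\log b.
\end{equation}
(Equivalently, $H_\nu(p)\log b+\varphi_W(p)<0$ in the language of Corollary \ref{cor-HM-comp}.)

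The heart of the argument is then to verify \eqref{eq-goal-p-ineq} for some admissible $p$. The hypothesis $W\in L^{1+\delta}(\PP)$ guarantees $\E[W^p]<\infty$ for $p\in(1,1+\delta]$, and the function $p\mapsto\tfrac{\log\E[W^p]}{p-1}$ extends continuously to $p=1$ with value $\E[W\log W]$. On the other hand, the map $p\mapsto\ldim_p(\nu)$ is non-increasing on $(1,\infty)$ and its right-limit at $1$ equals $D:=\lim_{p\to 1^{+}}\ldim_p(\nu)$. Under assumption \eqref{W-ass-Fdim}, $\E[W\log W]<D\log b$, so by continuity we may choose $p\in(1,1+\delta]$ sufficiently close to $1$ to ensure \eqref{eq-goal-p-ineq}. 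This last transition is the only point of care (strict inequality has to be preserved while $p$ moves slightly away from $1$ and $\ldim_p$ drops from $D$), but it is essentially a two-line continuity/monotonicity argument and does not constitute a real obstacle.

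With such a $p$ fixed, $\sup_n\E[M_n^p]<\infty$; since $M_n\ge 0$ converges a.s.\ to $Q\nu([0,1])$, $L^p$-boundedness upgrades this to $L^1$-convergence, which in turn gives $\E_n[Q\nu([0,1])]=M_n=Q_n\nu([0,1])$ almost surely for every $n\ge 1$, as claimed. The only genuinely technical ingredient is the passage from the boundary hypothesis \eqref{W-ass-Fdim} to the strict inequality \eqref{eq-goal-p-ineq} at a perturbed exponent $p>1$; everything else is a direct application of the vector-valued machinery already set up in \S\ref{sec-Mtype}.
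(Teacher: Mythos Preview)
Your argument is correct and follows essentially the same route as the paper: apply Proposition~\ref{prop-mart-type} with the scalar measure $\mathcal{V}=\nu$, reduce to the convergence of $\sum_n (\E[W^p])^n\sum_{|u|=n}\nu(I_u)^p$, and use continuity at $p=1^+$ together with assumption~\eqref{W-ass-Fdim} to find an admissible exponent. Your parenthetical reformulation $H_\nu(p)\log b+\varphi_W(p)<0$ is in fact the correct form of the criterion (the paper's displayed formula in the proof of Proposition~\ref{prop-full} and the statement of Corollary~\ref{cor-HM-comp} omit the factor $\log b$), so your direct appeal to Proposition~\ref{prop-mart-type} is slightly cleaner.
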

 
 \begin{corollary}\label{cor-Qnu-fourier}
 Under the assumption of Proposition \ref{prop-full}, we have  
 \[
\widehat{ Q_n\nu}(s) =  \E_n[\widehat{Q\nu}(s)] \quad a.s. \quad \text{for all $n, s \in \N$}.
 \]
 \end{corollary}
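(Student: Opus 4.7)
The plan is to regard $(\widehat{Q_n\nu}(s))_{n\ge 0}$ as a complex-valued martingale with respect to the natural filtration $(\mathscr{F}_n)_{n\ge 0}$ and apply the uniformly integrable martingale convergence theorem, reducing the required uniform integrability to Proposition~\ref{prop-full}.

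First I would verify the martingale property directly from the explicit formula
\[
\widehat{Q_n\nu}(s) \;=\; \sum_{|u|=n} \Big(\prod_{j=0}^n W(u|_j)\Big) \int_{I_u} e^{-i 2\pi s t}\, d\nu(t).
\]
Splitting each word $v$ of length $n+1$ as $v=uk$ with $|u|=n$ and $k\in \Ab$, one has $\prod_{j=0}^{n+1} W(v|_j) = \prod_{j=0}^{n}W(u|_j) \cdot W(v)$; the independence of $\{W(v):|v|=n+1\}$ from $\mathscr{F}_n$ together with $\E[W]=1$ and the partition $I_u = \bigsqcup_{k\in \Ab} I_{uk}$ yields
\[
\E\big[\widehat{Q_{n+1}\nu}(s) \,\big|\, \mathscr{F}_n\big] \;=\; \widehat{Q_n\nu}(s).
\]

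Next, the pointwise modulus bound
\[
\big|\widehat{Q_n\nu}(s)\big| \;\le\; Q_n\nu([0,1])
\]
dominates our complex martingale by the non-negative martingale $(Q_n\nu([0,1]))_{n\ge 0}$. Proposition~\ref{prop-full} asserts that this dominating martingale is uniformly integrable (in fact it is closed by $Q\nu([0,1])$), so $(\widehat{Q_n\nu}(s))_{n\ge 0}$ is uniformly integrable as well.

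Finally, since $Q_n\nu$ converges weakly to $Q\nu$ almost surely (by Kahane's $T$-martingale theory recalled in \S\ref{sec-g-m}) and $t\mapsto e^{-i 2\pi s t}$ is bounded and continuous on $[0,1]$, we obtain $\widehat{Q_n\nu}(s) \to \widehat{Q\nu}(s)$ a.s. The uniformly integrable martingale convergence theorem then forces $L^1$-convergence, hence $\widehat{Q_n\nu}(s) = \E_n[\widehat{Q\nu}(s)]$ a.s. The argument is essentially routine once Proposition~\ref{prop-full} is in place; the only point worth stressing is the domination by $Q_n\nu([0,1])$, which transfers full action on the total mass to full action on every Fourier coefficient.
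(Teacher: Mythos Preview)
Your proof is correct and follows essentially the same approach as the paper: the key observation is the domination $|\widehat{Q_n\nu}(s)| \le Q_n\nu([0,1])$, which transfers the uniform integrability from Proposition~\ref{prop-full} to each Fourier coefficient martingale. The paper's proof states this in one sentence, while you spell out the martingale property and the identification of the limit via weak convergence, but the substance is identical.
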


 \begin{proof}[Proof of Proposition \ref{prop-full}]
 Take the vector measure $\mathcal{V}$ in Corollary \ref{cor-HM-comp} to be the probability measure $\nu$ on $[0,1]$ and by the equality \eqref{H=pdim}, we get 
 \begin{align*}
H_\mathcal{V}(p) +  \varphi_W(p)  =  -(p-1) \ldim_p(\nu)  + \frac{\log \E[W^p]}{\log b} \quad \text{for all $1< p< 1 + \delta$}.
 \end{align*}
 It follows that (where we use the fact $\E[W]=1$) 
 \[
\lim_{p\to1^{+}} \frac{H_\mathcal{V}(p) +  \varphi_W(p)}{p-1} = - \lim_{p\to 1^{+}} \ldim_p(\nu) +  \frac{\E[W\log W]}{\log b}<0. 
 \]
 Therefore, there exists $\varepsilon\in(0, \delta)$ such that for $p = 1 + \varepsilon$, we have  $H_\mathcal{V}(p) +  \varphi_W(p)<0$. Hence by Corollary \ref{cor-HM-comp}, the  scalar martingale  $(Q_n\nu([0,1]))_{n\ge 1}$ is $L^p$-uniformly bounded. This ends the proof of the proposition. 
 \end{proof}
 
 \begin{proof}[Proof of Corollary \ref{cor-Qnu-fourier}]
 It follows immediately from Proposition \ref{prop-full}, since for any $s\in\N$, the scalar martingale $(\widehat{Q_n\nu}(s))_{n\ge 1}$ is dominated by the non-negative martingale $
(Q_n\nu([0,1]))_{n\ge 1}$.
 \end{proof}

\subsection{An upper estimate of growth rate of vector measures}\label{sec-grv}

\begin{lemma}\label{lem-gen-measure}
Let $\nu$ be a  Borel probability measure on $[0,1]$.  Assume that there exist $\kappa\in (0,1]$ and $c>0$ such that 
\begin{align}\label{nu-poly-decay}
|\widehat{\nu}(l)|\le  \frac{c}{|l|^{\kappa}} \quad \text{for all $l\in \Z^* = \Z\setminus \{0\}$}.
\end{align}
Then for any $0< \alpha < \kappa$ and  $q>(\kappa- \alpha)^{-1}$, the growth rate function $H_{\mathcal{V}}$ of the vector measure $\mathcal{V} = \mathcal{V}_{\alpha, q}$ defined in \eqref{def-Valpha} has the upper estimate: 
\begin{align}\label{HV-upper}
H_{\mathcal{V}}(p) \le         p-1 -  (\tau(\alpha, p, q) - 1) \ldim_{\tau(\alpha, p, q)}(\nu)  \quad \text{for all $p> \frac{q\kappa}{q(\kappa - \alpha) -1}$},
\end{align}
where
\[
\tau(\alpha,p, q) = p \frac{q(\kappa-\alpha) - 1}{q \kappa}>1.
\]
\end{lemma}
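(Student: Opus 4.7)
\medskip

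\textbf{Proof proposal.} By the definition \eqref{def-rsigma} of the growth rate, it suffices to control, for each $u$ with $|u|=n$, the quantity
\[
\|\mathcal{V}(I_u)\|_{\ell^q}^q = \sum_{s\ge 1} s^{\alpha q}\,\big|\widehat{\nu_u}(s)\big|^q, \qquad \nu_u := \nu|_{I_u},
\]
and then to sum the $(p/q)$-th powers over $|u|=n$. The plan is to bound $|\widehat{\nu_u}(s)|$ using two complementary estimates and to optimize between them: the trivial bound $|\widehat{\nu_u}(s)| \le \nu(I_u)$, and the restricted-Fourier-decay estimate furnished by Proposition~\ref{lem-Fdecay},
\[
|\widehat{\nu_u}(s)| \;\le\; c'\,\frac{\log s}{s^{\kappa}}\qquad (s\ge 2),
\]
which is available precisely because $\nu$ satisfies \eqref{nu-poly-decay}.

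I would split $\sum_{s\ge 1} s^{\alpha q}|\widehat{\nu_u}(s)|^q$ at a threshold $S=S(u)$, using the trivial bound on $s\le S$ and the Fourier-decay bound on $s>S$. Since $\alpha q\ge 0$ and $(\kappa-\alpha)q>1$ by hypothesis, this yields
\[
\|\mathcal{V}(I_u)\|_{\ell^q}^q \;\lesssim_{\alpha,q}\; S^{\alpha q+1}\,\nu(I_u)^q + S^{-(\kappa-\alpha)q+1}(\log S)^q.
\]
The two terms balance (up to logarithms) at $S = \nu(I_u)^{-1/\kappa}$, and at that choice both are comparable to $\nu(I_u)^{(q(\kappa-\alpha)-1)/\kappa}$ times a factor $(1+(\log \nu(I_u)^{-1})^q)$. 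Raising to the $(p/q)$-th power gives
\[
\|\mathcal{V}(I_u)\|_{\ell^q}^p \;\lesssim_{\alpha,p,q}\; \nu(I_u)^{\tau(\alpha,p,q)}\,\bigl(1+(\log \nu(I_u)^{-1})^p\bigr),
\]
with $\tau(\alpha,p,q) = p(q(\kappa-\alpha)-1)/(q\kappa)$ exactly as stated, and $\tau>1$ under the hypothesis $p>q\kappa/(q(\kappa-\alpha)-1)$.

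To finish, for any $\varepsilon>0$ small enough that $\tau-\varepsilon>1$, the elementary inequality $x^{\tau}(\log x^{-1})^p \le C_{\varepsilon,p}\,x^{\tau-\varepsilon}$ on $(0,1]$ absorbs the logarithmic factor, giving
\[
\sum_{|u|=n}\|\mathcal{V}(I_u)\|_{\ell^q}^p \;\lesssim_{\varepsilon}\; \sum_{|u|=n}\nu(I_u)^{\tau-\varepsilon}.
\]
Applying the definition \eqref{def-D-p} of the $L^{\tau-\varepsilon}$-dimension of $\nu$,
\[
\limsup_{n\to\infty}\frac{\log\sum_{|u|=n}\|\mathcal{V}(I_u)\|_{\ell^q}^p}{n\log b} \;\le\; -(\tau-\varepsilon-1)\,\underline{\dim}_{\tau-\varepsilon}(\nu),
\]
and using the continuity of the map $p\mapsto (p-1)\underline{\dim}_p(\nu)$ on $(1,\infty)$ recalled in \S\ref{sec-def-Dp}, the right-hand side tends to $-(\tau-1)\underline{\dim}_\tau(\nu)$ as $\varepsilon\downarrow 0$. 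Plugging into \eqref{def-rsigma} yields the claimed upper bound \eqref{HV-upper}.

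The main technical nuisance, rather than a genuine obstacle, is bookkeeping the logarithmic loss coming from Proposition~\ref{lem-Fdecay}: it is crucial that this loss can be absorbed into an arbitrarily small shift of the exponent $\tau$, and that the continuity of $p\mapsto (p-1)\underline{\dim}_p(\nu)$ then allows one to send the shift to zero while keeping the optimal exponent $\tau(\alpha,p,q)$ in the final statement. One must also verify that the choice $S=\nu(I_u)^{-1/\kappa}\ge 1$ is admissible (automatic as $\nu$ is a probability measure), and that atoms $\nu(I_u)=0$ contribute trivially.
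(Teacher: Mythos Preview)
Your proposal is correct and follows essentially the same strategy as the paper: bound $|\widehat{\nu_u}(s)|$ by $\min\{\nu(I_u),\,c\,s^{-\kappa+o(1)}\}$ using Proposition~\ref{lem-Fdecay}, split the $\ell^q$-sum at the balancing threshold $S=\nu(I_u)^{-1/\kappa}$, reduce to $\sum_{|u|=n}\nu(I_u)^{\tau'}$ for some $\tau'$ slightly below $\tau(\alpha,p,q)$, and then pass to the limit using the continuity of $\tau\mapsto(\tau-1)\underline{\dim}_\tau(\nu)$. The only cosmetic difference is where the $\varepsilon$-slack is introduced: the paper absorbs the logarithmic loss from Proposition~\ref{lem-Fdecay} at the outset by replacing $\kappa$ with $\kappa'=\kappa-\varepsilon$ (so that $|\widehat{\nu_u}(s)|\le C\,s^{-\kappa'}$ cleanly), and then carries out the splitting with a clean exponent $\tau_\varepsilon=p\,(q(\kappa'-\alpha)-1)/(q\kappa')$; you instead keep $\kappa$ and the logarithm through the splitting and absorb the log at the very end via $x^\tau(\log x^{-1})^p\le C_\varepsilon x^{\tau-\varepsilon}$. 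Both routes land on the same continuity argument.
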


\begin{remark*}
For  the Lebesgue measure on the interval $[0,1]$,  the inequality \eqref{HV-upper} becomes an equality. See  Proposition \ref{lem-grf} below in the Appendix of this paper. 
\end{remark*}

 Lemma \ref{lem-lq-dim} below follows immediately  from the definition  \eqref{def-D-p} of $\ldim_p(\nu)$. 
\begin{lemma}\label{lem-lq-dim}
Let $\nu$ be a  Borel probability measure on $[0,1]$ and $p>1$. Then for any $\delta> 0$,   there exists $n_\delta$ such that 
\[
\sum_{|u|=n}  \nu(I_u)^p \le   b^{-n (p-1) (\ldim_p(\nu) - \delta)}  \,\,  \text{for all $n\ge n_\delta$.}
\]
\end{lemma}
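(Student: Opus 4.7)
The plan is to unwind the definition of $\ldim_p(\nu)$ as given in \eqref{def-D-p}, which is precisely
\[
\ldim_p(\nu) = \liminf_{n\to\infty} \frac{-\log\bigl(\sum_{|u|=n}\nu(I_u)^p\bigr)}{n(p-1)\log b}.
\]
Since the right-hand side is a $\liminf$, for every $\delta>0$ there exists an integer $n_\delta\ge 1$ such that for all $n\ge n_\delta$,
\[
\frac{-\log\bigl(\sum_{|u|=n}\nu(I_u)^p\bigr)}{n(p-1)\log b} \;\ge\; \ldim_p(\nu)-\delta.
\]
Using $p>1$ and $\log b>0$, multiplying both sides by $n(p-1)\log b>0$ gives
\[
-\log\Bigl(\sum_{|u|=n}\nu(I_u)^p\Bigr) \;\ge\; n(p-1)\bigl(\ldim_p(\nu)-\delta\bigr)\log b,
\]
and exponentiating yields the stated bound.

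The only subtlety is that $\ldim_p(\nu)$ could a priori be $+\infty$; if that occurs, the conclusion is vacuous in the sense that the bound $b^{-n(p-1)(\ldim_p(\nu)-\delta)}$ is interpreted via the same $\liminf$ definition and one simply chooses $n_\delta$ so that $\ldim_p(\nu)-\delta$ is replaced by any arbitrarily large constant. There is no real obstacle here; the lemma is a direct reformulation of the defining $\liminf$ inequality as an eventual polynomial upper bound with rate $b^{-n(p-1)(\ldim_p(\nu)-\delta)}$, and no further estimation (H\"older, dyadic decomposition, etc.) is required. I would write the proof as a two-line argument containing just the rearrangement above.
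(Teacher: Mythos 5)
Your proof is correct and is exactly the paper's argument: the paper simply notes that the lemma "follows immediately from the definition \eqref{def-D-p}", i.e.\ the standard fact that a $\liminf$ is eventually exceeded up to $\delta$, followed by the same rearrangement and exponentiation you give. (The worry about $\ldim_p(\nu)=+\infty$ is moot, since $0\le \ldim_p(\nu)\le 1$ as noted in \S 2, but your handling of it is harmless.)
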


\begin{proof}[Proof of Lemma \ref{lem-gen-measure}]
Define 
\[
S_0 : = \Big\{  (\alpha, p,q) \Big|  0< \alpha < \kappa, q>\frac{1}{\kappa-\alpha} \an p> \frac{q\kappa}{q(\kappa- \alpha) -1} \Big\}.
\]
We want to show that for all triplets $(\alpha, p, q)\in S_0$, 
\begin{align}\label{goal-HV}
H_\mathcal{V}(p)  \le     p-1 -  (\tau - 1) \ldim_{\tau}(\nu) \, \,
\text{with $\tau = \tau(\alpha, p, q) = p\frac{ q (\kappa - \alpha)  -1}{q \kappa}>1$.}
\end{align}

For any $\varepsilon\in (0, \kappa)$, define $\kappa_\varepsilon= \kappa- \varepsilon$ and set 
\[
S_\varepsilon: = \Big\{  (\alpha, p,q) \Big|  0< \alpha < \kappa_\varepsilon, q>\frac{1}{\kappa_\varepsilon-\alpha} \an p> \frac{q\kappa_\varepsilon}{q(\kappa_\varepsilon- \alpha) -1} \Big\}.
\]
It can be directly shown that if $\kappa> \varepsilon_1 >\varepsilon_2>0$, then 
\begin{align}\label{S-monotone}
S_{\varepsilon_1} \subset S_{\varepsilon_2} \an 
\bigcup_{0< \varepsilon< \kappa} S_\varepsilon = S_0.
\end{align}

Now fix any triplet $(\alpha, p, q)\in S_0$.  By the monotonicity \eqref{S-monotone},  there exists $\varepsilon_0\in (0, \kappa)$ such that for all $0< \varepsilon<\varepsilon_0<\kappa$,  this fixed triplet $(\alpha, p, q)$ belongs to $S_\varepsilon$.   Therefore, by  the continuity of  the function $\tau\mapsto (\tau-1)\ldim_\tau(\nu)$ on $(1, \infty)$, to prove the inequality \eqref{goal-HV} for this triplet $(\alpha, p, q)$, it suffices to prove that,   for any $\varepsilon \in (0, \varepsilon_0)$, 
\begin{align}\label{goal-HV-bis}
H_\mathcal{V}(p)  \le     p-1 -  (\tau_\varepsilon - 1) \ldim_{\tau_\varepsilon}(\nu) \, \,
\text{with $\tau_\varepsilon = p\frac{ q (\kappa_\varepsilon - \alpha)  -1}{q \kappa_\varepsilon}>1$.}
\end{align}

So we  take any  $\varepsilon\in (0, \varepsilon_0)$. For simplifying notation, from now on, we  write 
\[
 \kappa' = \kappa_\varepsilon= \kappa - \varepsilon>0 \an \tau'= \tau_\varepsilon = p\frac{ q (\kappa' - \alpha)  -1}{q \kappa'}> 1. 
\] 
By Proposition~\ref{lem-Fdecay},   we may  assume  that there exists  $c>0$ such that 
\begin{align*}
\sup_{u\in \Ab^*} \Big|\int_{I_u}e^{-i 2\pi st}\nu(dt)\Big| \le \frac{c}{s^{\kappa'}} \quad \text{for all $s>1$}. 
\end{align*}
In particular,  for any $u\in \Ab^*$,  taking $C = \max\{c, 1\}$, we have 
\begin{align}\label{b-adic-ass}
\Big|\int_{I_u}e^{-i 2\pi st}\nu(dt)\Big| \le  C \min\big\{ s^{-\kappa'},\, \nu(I_u) \big\}.
\end{align}
Recall the definition \eqref{def-H-mart} of $T_n^\mathcal{V}$ and \eqref{def-rsigma} of $H_\mathcal{V}(p)$.  Here we have 
\begin{align*}
 T_n^{\mathcal{V}} =   b^n \sum_{|u|=n}      \Big(    s^\alpha \int_{I_u}  e^{-i 2 \pi s t}\nu(dt)\Big)_{s \ge 1}  \indi_{I_u}.
\end{align*}
Therefore,  by \eqref{b-adic-ass}, 
\begin{align*}
\|T_n^{\mathcal{V}}\|_{L^p(\ell^q)}^p & \le  b^{n(p-1)}  C^p  \sum_{|u|=n}     \Big( \sum_{s=1}^\infty   s^{\alpha q}    \min\big\{ s^{-\kappa' q},\, \nu(I_u)^q \big\} \Big)^{p/q}. 
\end{align*}
Then, by using the following decomposition 
\begin{align*}
\sum_{s=1}^\infty   =\sum_{1\le s\le a(u)-1}  + \sum_{ a(u)-1<s<a(u)+1} + \sum_{s\ge a(u)+1}\quad \text{with $a(u)= \nu(I_u)^{-1/\kappa'}$}
\end{align*}
and the elementary inequality $(x+y +z)^t\le  3^t (x^t + y^t +z^t)$ for all $x, y, z, t \ge 0$, we obtain 
\begin{align*}
\|T_n^{\mathcal{V}}\|_{L^p(\ell^q)}^p & \le  b^{n(p-1)}  C^p  3^{p/q}  \sum_{|u|=n}     \Big[ \Big(   \sum_{1\le s\le a(u) }    s^{\alpha q}    \cdot  \nu(I_u)^q  \Big)^{p/q} +   \Big(\sum_{s> a(u)}    s^{-(\kappa'-\alpha)q}\Big)^{p/q} 
\\
& \qquad\qquad  \qquad \qquad   + \Big(\sum_{ a(u)-1<s<a(u)+1}  s^{\alpha q}    \min\big\{ s^{-\kappa' q},\, \nu(I_u)^q \big\} \Big)^{p/q}\Big]. 
\end{align*}
Clearly, we have 
\begin{align*}
\sum_{1\le s\le a(u) -1 }    s^{\alpha q} \cdot  \nu(I_u)^q  \le  \nu(I_u)^q  \cdot \int_1^{a(u)}  t^{\alpha q} dt  \le \nu(I_u)^{q-\frac{\alpha q +1}{\kappa'}}
\end{align*}
and 
\begin{align*}
\sum_{s\ge  a(u)+1}    s^{-(\kappa'-\alpha)q} \le \int_{a(u)}^\infty \frac{dt}{t^{(\kappa' - \alpha) q}} \le  \frac{1}{(\kappa' - \alpha) q -1}  \nu(I_u)^{q-\frac{\alpha q +1}{\kappa'}}.
\end{align*}
Moreover,  we have   (in each of the summations $A$ and $B$ below,   there is at most one term)
\begin{align*}
A: =& \sum_{ a(u)-1<s\le a(u)}   s^{\alpha q}    \min\big\{ s^{-\kappa' q},\, \nu(I_u)^q \big\}  \le   \nu(I_u)^q   \sum_{ a(u)-1<s\le a(u)}   s^{\alpha q}   \le  \nu(I_u)^q  \cdot a(u)^{\alpha q}
\end{align*}
and  (here we use the assumption $\alpha <\kappa'$)
\[
B: = \sum_{ a(u)\le s<a(u)+1}  s^{\alpha q}    \min\big\{ s^{-\kappa' q},\, \nu(I_u)^q \big\}  \le \sum_{ a(u)\le s<a(u)+1}  s^{(\alpha-\kappa') q} \le a(u)^{(\alpha-\kappa')q}. 
\]
Hence by noting that $a(u)= \nu(I_u)^{-1/\kappa'}$ and $\nu(I_u)\le 1$, we obtain 
\begin{align*}
 & \sum_{ a(u)-1<s<a(u)+1}  s^{\alpha q}    \min\big\{ s^{-\kappa' q},\, \nu(I_u)^q \big\}  
   =    A + B
  \\
  &\le   a(u)^{\alpha q} \nu(I_u)^q + a(u)^{(\alpha-\kappa')q}   = \nu(I_u)^{- \frac{\alpha q}{\kappa'} +q} + \nu(I_u)^{-\frac{(\alpha-\kappa')q}{\kappa'}}
\\
&  =  2\nu(I_u)^{q- \frac{\alpha q}{\kappa'}} \le  2\nu(I_u)^{q- \frac{\alpha q +1}{\kappa'}}. 
\end{align*}
Consequently,  there exists a constant $c(\kappa', \alpha, p, q)>0$ such that 
\begin{align*}
\|T_n^{\mathcal{V}}\|_{L^p(\ell^q)}^p & \le  c(\kappa', \alpha, p, q)   \cdot  b^{n(p-1)}  \sum_{|u|=n}  \nu(I_u)^{p \frac{ q(\kappa'-\alpha) -1}{q \kappa'}}
\\ 
& =  c(\kappa', \alpha, p, q)   \cdot  b^{n(p-1)}  \sum_{|u|=n}  \nu(I_u)^{\tau'}. 
\end{align*}
By Lemma \ref{lem-lq-dim}, for any small enough $\delta>0$, there exists $n(\delta, \tau') \in \N$ such that 
\[
 \sum_{|u|=n}  \nu(I_u)^{\tau'}   \le  b^{-n (\tau'-1) (\ldim_{\tau'}(\nu) - \delta)}  \,\,  \text{for all $n\ge n(\delta, \tau')$.}
\]
Therefore, for all $n\ge n(\delta, \tau')$,
\begin{align*}
\|T_n^{\mathcal{V}}\|_{L^p(\ell^q)}^p & \le  c(\kappa', \alpha, p, q)   \cdot  b^{n(p-1)}  \sum_{|u|=n}  \nu(I_u)^{\tau'}
\\
&\le   c(\kappa', \alpha, p, q)   \cdot  b^{n(p-1)} \cdot  b^{-n (\tau'-1) (\ldim_{\tau'}(\nu) - \delta)}. 
\end{align*}
It follows that 
\[
H_\mathcal{V}(p) = \limsup_{n\to\infty}  \frac{ \log \|T_n^{\mathcal{V}}\|_{L^p(\ell^q)}^p }{n\log b}   \le     p-1 - (\tau'-1) ( \ldim_{\tau'}(\nu) - \delta). 
\]
By letting $\delta \to 0^{+}$, we obtain  the desired inequality \eqref{goal-HV-bis} and  end the  whole proof. 
\end{proof}

\subsection{The proof of Theorem \ref{thm-preFdim}}
By assumption, let $\kappa = \dim_F(\nu)>0$. Recall also that $W\in L^{1+\delta}(\PP)$ for some $\delta>0$.    By Corollary~\ref{cor-HM-comp}, it suffices to find  a triplet $(\alpha, p, q)$ with  the following properties: 
\begin{itemize}
\item 
$
0< \alpha< \dim_F(\nu) = \kappa;
$
\item $1< p<1 + \delta$; 
\item $q>\max(\frac{1}{\kappa - \alpha}, 2)$; 
\item The vector measure $\mathcal{V}: \mathfrak{B}^b([0,1]) \rightarrow \ell^q$ defined by
\[
\mathcal{V}(A): =   \Big( s^\alpha \int_{A}e^{-i 2\pi st}\nu(dt) \Big)_{s\ge 1} \in \ell^q \quad \text{for all $A\in \mathfrak{B}^b([0,1])$}
\]
has a growth rate function $H_\mathcal{V}$ with 
\[
\varphi_W(p) + H_\mathcal{V}(p)<0. 
\]
\end{itemize}

We now show the existence of  a triplet $(\alpha, p, q)$ satisfying  all the above  required properties.  First, fix any two constants $0< \alpha_0<\kappa \le 1$ (sufficiently small) and $q_0\ge \frac{2}{\kappa- \alpha_0}$ (sufficiently large)  such that 
\[
\frac{\kappa}{\kappa-\alpha_0} < 1 +\frac{\delta}{2} \an \frac{q_0\kappa}{q_0(\kappa-\alpha_0) -1}< 1+\frac{\delta}{2}. 
\]
Then for all $0< \alpha< \alpha_0$ and $q\ge q_0$, 
\[
\frac{\kappa}{\kappa-\alpha}  < \frac{\kappa}{\kappa-\alpha_0} < 1 + \frac{\delta}{2}
\]
and 
\[
   \frac{q\kappa}{q(\kappa-\alpha) -1}  = \frac{\kappa}{\kappa - \alpha - \frac{1}{q}} \le \frac{\kappa}{\kappa - \alpha_0 - \frac{1}{q_0}}   =   \frac{q_0\kappa}{q_0(\kappa-\alpha_0) -1} < 1 + \frac{\delta}{2}.
\]
In particular, if $0< \alpha< \alpha_0$ and $q\ge q_0$,  then 
\[
q(\kappa - \alpha) - 1 \ge q_0(\kappa- \alpha_0)-1 \ge 1 \an 
\frac{q\kappa}{q(\kappa-\alpha)-1}>1.
\]

Now define a set 
\[
S : = \Big\{(\alpha, p, q)\Big|  0< \alpha< \alpha_0,  q_0 < q < \infty \an  \frac{q\kappa}{q (\kappa-\alpha)-1}<p < 1+\frac{\delta}{2}\Big\}. 
\]
For any triplet $(\alpha, p, q)\in S$,  set 
\[
\tau(\alpha, p, q): = p \frac{q(\kappa- \alpha)-1}{q\kappa}>1
\]
and
\begin{align*}
  g(\alpha, p, q): =   \frac{\log \E[W^p]}{\log b} - (\tau(\alpha, p, q)- 1) \ldim_{\tau(\alpha, p, q)}(\nu).
\end{align*}

Note that, for any $1<p < 1 + \frac{\delta}{2}$, the triplet $(\alpha, p, q)$ belongs to the set $S$  if and only if 
\[
\alpha + \frac{1}{q} < \kappa \Big( 1 - \frac{1}{p}\Big), \quad 0< \alpha< \alpha_0 \an q  > q_0. 
\]
Thus,  by defining $\alpha_p$ and $q_p$ as 
\[
\alpha_p: = \min \Big\{\frac{\kappa}{3} \Big( 1 - \frac{1}{p}\Big), \alpha_0\Big\} \an \frac{1}{q_p} : = \min \Big\{\frac{\kappa}{3} \Big( 1 - \frac{1}{p}\Big), \frac{1}{q_0}\Big\},
\]
we have 
\[
S_p : = \Big\{ (\alpha, p, q) \Big| 0<\alpha <\alpha_p \an   q_p<q <\infty  \Big\} \subset S. 
\]
Therefore, by Lemma \ref{lem-gen-measure}, if $1< p < 1+ \frac{\delta}{2}$, then for all $0< \alpha< \alpha_p$ and $q_p < q<\infty$, we have 
\[
H_{\mathcal{V}}(p) \le         p-1 -  (\tau(\alpha, p, q)- 1) \ldim_{\tau(\alpha, p, q)}(\nu)
\]
and hence 
\begin{align}\label{H+phi}
H_\mathcal{V}(p)  + \varphi_W(p) \le     g(\alpha, p, q).
\end{align}

Using the continuity  of $q\mapsto \tau(p, q, \alpha)$ and $\tau\mapsto (\tau- 1) \ldim_{\tau}(\nu)$, we have 
\[
\lim_{\alpha\to 0^{+}}\Big( \lim_{q\to +\infty} g(\alpha, p, q) \Big) =  h(p) : = \frac{\log \E[W^p]}{\log b} - (p- 1)  \ldim_{p}(\nu). 
\]
Since $\ldim_p(\nu)$ is non-increasing and $W\in L^{1+\delta}(\PP)$ for some positive $\delta>0$,  by the assumption \eqref{W-ass-Fdim}, we have 
\[
\lim_{p\to 1^{+}} \frac{h(p)}{p-1} = \frac{\E[W\log W]}{\log b} - \lim_{p\to 1^{+}} \ldim_{p}(\nu)<0. 
\]
It follows that there exists $1< p^*< 1+\frac{\delta}{2}$ such that 
\[
h(p^*)= \lim_{\alpha\to 0^{+}}\Big( \lim_{q\to +\infty} g(\alpha, p^*, q) \Big)<0. 
\]
Consequently, there exists a pair  $(\alpha^*, q^*)$ with $0< \alpha^*< \alpha_p$ and $q^*>q_p$, such that 
\[
g(\alpha^*, p^*, q^*)<0. 
\]
By \eqref{H+phi},  the triplet $(\alpha^*, p^*, q^*)$ satisfies all the conditions required.

\appendix

\section{}

\subsection{Second moment of the Fourier coefficients } \label{S-second moment}

\begin{lemma}
If the initial random weight $W$ satisfies $\E[W^2]<b$, then 
\begin{align}\label{eqn-rela-sd}
\E[|\widehat{\mu}_\infty(k)|^2]\asymp k^{-\big(1-\frac{\log \E[W^2]}{\log b}\big)} \quad \text{as $k\to\infty$,}
\end{align}
where the constants in the asymptotic relation depend only on $b$ and $\E[W^2]$.
In particular, if $W$ is in the squared sub-critical or squared critical regime, then  $\E[|\widehat{\mu}_\infty(k)|^2]\asymp k^{-D_F}$ as $k\to\infty$.
\end{lemma}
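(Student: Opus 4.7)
\medskip

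The plan is to start from the closed-form formula for the second moment already derived in Lemma~\ref{lemma-jihi}, namely
\[
\E[|\widehat{\mu}_\infty(k)|^2]=\frac{\E[\mathring{W}^2]}{b}\sum_{m=1}^\infty r^{m-1} f(kb^{-m}),
\qquad r:=\frac{\E[W^2]}{b}\in (0,1),
\]
where $f(x):=\sin^2(\pi x)/(\pi x)^2$ satisfies $f(x)\le 1$ and $f(x)\le (\pi x)^{-2}$ for all $x>0$, and $f(x)\ge c_0>0$ uniformly for $x\in [1/b,1)$. Note also that $\E[\mathring{W}^2]>0$ since $W$ is non-constant by assumption. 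Writing $D=1-\log_b\E[W^2]=-\log_b r>0$, the goal becomes showing the scalar asymptotic $S(k):=\sum_{m\ge 1} r^{m-1} f(kb^{-m})\asymp k^{-D}$.

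For the upper bound, I would split the sum at the critical level $m_0:=\lfloor \log_b k\rfloor$, at which $kb^{-m_0}$ is of order $1$. On the range $m\le m_0$ (where $kb^{-m}\ge 1$) I use the decay estimate $f(kb^{-m})\le (\pi k b^{-m})^{-2}$, obtaining
\[
\sum_{m=1}^{m_0} r^{m-1} f(kb^{-m})\;\lesssim\; k^{-2}\sum_{m=1}^{m_0} (rb^2)^{m-1}\;\lesssim\; k^{-2}(rb^2)^{m_0}\;\asymp\; k^{-2}\cdot k^{\log_b(rb^2)}=k^{-D},
\]
where the key fact $rb^2=b\E[W^2]>1$ (valid because $\E[W^2]\ge (\E W)^2=1$ and $b\ge 2$) ensures the geometric sum is dominated by its last term. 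On the complementary range $m> m_0$ I use the trivial bound $f\le 1$ and obtain
\[
\sum_{m>m_0} r^{m-1} f(kb^{-m})\;\le\;\sum_{m>m_0} r^{m-1}\;=\;\frac{r^{m_0}}{1-r}\;\asymp\; k^{-D}.
\]
Summing the two contributions yields $S(k)\lesssim k^{-D}$ with a constant depending only on $b$ and $\E[W^2]$.

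For the matching lower bound I simply retain the single term $m^*:=\lceil \log_b k\rceil+1$, chosen so that $kb^{-m^*}\in [1/b,1)$ and hence $f(kb^{-m^*})\ge c_0$, which gives
\[
S(k)\;\ge\; r^{m^*-1} f(kb^{-m^*})\;\gtrsim\; r^{\log_b k}\;=\;k^{-D}.
\]
Combining with the positivity of $\E[\mathring{W}^2]/b$ completes the proof of \eqref{eqn-rela-sd}. The final ``in particular'' assertion is immediate: in the squared sub-critical and squared critical regimes we have $\E[W^2]<b$ by Lemma~\ref{lem-DF} (so the above analysis applies), and the first branch of the definition \eqref{def-DF} gives $D_F=1-\log\E[W^2]/\log b$, matching the exponent $D$ above.

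There is no genuine obstacle here: the argument is essentially balancing two geometric tails against each other around the crossover scale $m_0\sim \log_b k$. The only mild subtlety is recognizing that one must exploit $rb^2>1$ (equivalently $\E[W^2]\ge 1$) to sum the ``small-$m$'' part correctly; without this, the geometric series would not concentrate on its last term and the decay rate would be misidentified.
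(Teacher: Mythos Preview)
Your argument is correct and follows essentially the same route as the paper's proof: both start from the formula of Lemma~\ref{lemma-jihi}, split the sum at the crossover level $m_0\sim\log_b k$, control the two pieces by the same pair of estimates (trivial bound versus small-argument expansion of $|e^{i2\pi x}-1|$), and extract the lower bound from the tail.

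One small inaccuracy to fix: with $m^*=\lceil\log_b k\rceil+1$ you actually get $kb^{-m^*}\in(1/b^2,1/b]$, not $[1/b,1)$. This is harmless since $f(x)=\sin^2(\pi x)/(\pi x)^2$ is still bounded below by a positive constant on that interval (indeed $f(1/b)>0$ for every $b\ge 2$), so the lower bound goes through unchanged; just correct the stated range.
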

\begin{proof}
Note that  from the proof of  Lemma~\ref{lemma-jihi}, if $\E[W^2]<b$, then 
for any $k \in\N$, one has
\begin{align}\label{eqn-esdt}
\E[|\widehat{\mu}_\infty(k)|^2]=\frac{\mathrm{Var}(W)}{\E[W^2]}\sum_{n=1}^\infty \big(b\E[W^2]\big)^{n}\Big|\frac{e^{i2\pi k b^{-n}}-1}{2\pi k}\Big|^2.
\end{align}
Denote by  $T(k,n):=\frac{\big|e^{i2\pi  k b^{-n}}-1\big|^2}{4\pi k^2}$ and use $|e^{i2\pi x}-1|\asymp |x|$ for $|x|\leq 1$,
one obtains   
\begin{align*}
\sum_{n=1}^\infty(b\E[W^2])^n\,T(k,n)
=& \sum_{1\leq n\leq \lceil \log_b k \rceil}(b\E[W^2])^n\,T(k,n)+\sum_{n\geq \lceil \log_b k \rceil +1} (b\E[W^2])^n\,T(k,n)\\
\leq &\frac{1}{2\pi k ^2}\sum_{1\leq n\leq \lceil \log_b k \rceil}(b\E[W^2])^n+C_1 \sum_{n\geq \lceil \log_b k \rceil +1} \big(\frac{\E[W^2]}{b}\big)^n\\
\leq& C_2 k^{-2}(b\E[W^2])^{\lceil \log_b k \rceil}+C_3 \big(\frac{\E[W^2]}{b}\big)^{\lceil \log_b k \rceil +1}\\
\leq&  C_2 k ^{-2}(b\E[W^2])^{\log_b k }+C_3 \big(\frac{\E[W^2]}{b}\big)^{\log_b k }\\
=&  C \big(\frac{\E[W^2]}{b}\big)^{\log_b k } = C k^{-\big(1-\frac{\log \E[W^2]}{\log b}\big)},
\end{align*}
where   $C_1$ is an  absolute constant and $C_2,C_3, C$ are constants depending only on $b$ and $\E[W^2]$.  

On the other hand, we have
\begin{align*}
&\sum_{n=1}^\infty(b\E[W^2])^n\,T(k,n)
\geq  \sum_{n\geq \lceil \log_b k \rceil } \big(\frac{\E[W^2]}{b}\big)^n\geq C_4 \big(\frac{\E[W^2]}{b}\big)^{\log_b k }=C_4 k^{-\big(1-\frac{\log \E[W^2]}{\log b}\big)},
\end{align*}
where $C_4$ is a constant depending only on $\E[W^2]$ and $b$. Therefore we get the desired relation \eqref{eqn-rela-sd}. 
\end{proof}
\begin{remark*}
Indeed, by \eqref{eqn-esdt}, we have
\[
\E[|\widehat{\mu}_\infty(b^n)|^2]=\big(\frac{\E[W^2]}{b}\big)^n\E[|\widehat{\mu}_\infty(1)|^2],
\]
which also implies the asymptotic relation \eqref{eqn-rela-sd} along the subsequence of the $b$-adic integers $b^n$.
\end{remark*}

\subsection{A precise growth rate function}

\begin{proposition}\label{lem-grf}
Let  $\alpha \in [0,1)$ and  
$
q>   (1-\alpha)^{-1}.
$  Let  $\mathcal{V}: \mathfrak{B}^b \rightarrow \ell^q$ be the vector measure:
\[
\mathcal{V}(A) =   \Big( s^\alpha \int_{A}e^{-i 2\pi st} dt \Big)_{s\ge 1} \in \ell^q \quad \text{for all $A\in \mathfrak{B}^b([0,1])$}.
\]
Then the growth rate function $H_\mathcal{V}$ defined in \eqref{def-rsigma} has the following exact form: 
\[
H_{\mathcal{V}}(p) =    p (  \alpha + q^{-1}). 
\]
\end{proposition}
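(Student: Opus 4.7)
The plan is to compute $\|\mathcal{V}(I_u)\|_{\ell^q}$ exactly, observe that it depends only on $|u|=n$ and not on the particular word $u$, thereby reducing $\|T_n^{\mathcal{V}}\|_{L^p(\ell^q)}^p$ to a single scalar sum $S_n$, and then pin down the precise exponential order of $S_n$ as $n\to\infty$.

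First I would integrate explicitly: for $|u|=n$ and $s\ge 1$,
\[
\mathcal{V}(I_u)(s) = s^\alpha e^{-i2\pi s t_u}\cdot \frac{e^{-i2\pi s b^{-n}}-1}{-\,i2\pi s},
\]
where $t_u$ is the left endpoint of $I_u$. Hence $|\mathcal{V}(I_u)(s)|=s^{\alpha-1}|e^{-i2\pi sb^{-n}}-1|/(2\pi)$ depends only on $n$, so
\[
\|\mathcal{V}(I_u)\|_{\ell^q}^q = S_n := \frac{1}{(2\pi)^q}\sum_{s=1}^\infty s^{(\alpha-1)q}\bigl|e^{-i2\pi s b^{-n}}-1\bigr|^q.
\]
Since $T_n^{\mathcal{V}} = b^n\mathcal{V}(I_u)$ on each $I_u$ and there are $b^n$ intervals of length $b^{-n}$, this yields
\[
\|T_n^{\mathcal{V}}\|_{L^p(\ell^q)}^p = b^{np}\sum_{|u|=n}\int_{I_u}\|\mathcal{V}(I_u)\|_{\ell^q}^p\,dt = b^{np}\,S_n^{p/q}.
\]

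Next I would extract the asymptotic order of $S_n$ by splitting the sum at the natural threshold $s=b^n$, where the bound $|e^{-i2\pi sb^{-n}}-1|\le \min(2\pi sb^{-n},\,2)$ changes regime. For the upper bound,
\[
S_n \lesssim b^{-nq}\sum_{s=1}^{b^n}s^{\alpha q} + \sum_{s>b^n}s^{(\alpha-1)q}.
\]
The hypothesis $q>(1-\alpha)^{-1}$ makes $q(1-\alpha)-1>0$, so the tail converges and equals $\asymp b^{-n(q(1-\alpha)-1)}$; since $\alpha q\ge 0$, the finite partial sum is $\asymp b^{n(\alpha q+1)}$. Both terms are of the same order, giving $S_n \lesssim b^{n(\alpha q+1-q)}$. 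For the matching lower bound I would restrict to $s\in [b^n/4,\,b^n/2]$, where $|e^{-i2\pi sb^{-n}}-1|\ge c_0>0$; this produces $\asymp b^n$ terms each of size $\asymp b^{-nq(1-\alpha)}$, whence $S_n \gtrsim b^{n(1-q(1-\alpha))} = b^{n(\alpha q+1-q)}$.

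Combining the two steps,
\[
\|T_n^{\mathcal{V}}\|_{L^p(\ell^q)}^p \asymp b^{np}\cdot b^{(p/q)n(\alpha q+1-q)} = b^{np(\alpha+q^{-1})},
\]
and dividing $\log$ by $n\log b$ in the definition of $H_{\mathcal{V}}(p)$ yields the claim $H_{\mathcal{V}}(p)=p(\alpha+q^{-1})$. There is no genuine obstacle in this argument; the only point requiring mild care is that the two pieces in the split of $S_n$ contribute equally (rather than one dominating the other), which is precisely what produces the sharp exponent rather than a mere upper bound.
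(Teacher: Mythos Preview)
Your proof is correct and follows the same overall route as the paper: compute $|\mathcal{V}(I_u)(s)|$ explicitly (it depends only on $|u|=n$), reduce $\|T_n^{\mathcal{V}}\|_{L^p(\ell^q)}^p$ to $b^{np}\,\Sigma_n^{p/q}$ for a scalar sum $\Sigma_n$, and then show $\Sigma_n\asymp b^{n(1-(1-\alpha)q)}$. The only difference is in this last estimate: the paper regroups $\Sigma_n$ by congruence classes modulo $b^n$ and evaluates the principal block $\Sigma_n'=\sum_{k=1}^{b^n}k^{-(1-\alpha)q}|e^{i2\pi k b^{-n}}-1|^q$ via a Riemann-sum limit (even obtaining the exact constant $\int_0^1|(e^{i2\pi t}-1)/t^{1-\alpha}|^q\,dt$), whereas you split at the threshold $s=b^n$ and use the pointwise bound $|e^{-i2\pi s b^{-n}}-1|\le\min(2\pi s b^{-n},2)$ for the upper estimate and a restriction to $s\in[b^n/4,b^n/2]$ for the lower one. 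Your argument is slightly more elementary and yields only the order of $\Sigma_n$, but that is exactly what $H_{\mathcal{V}}(p)$ requires.
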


\begin{proof}
Recall the definition \eqref{def-H-mart} of $T_n^\mathcal{V}$. Here we have 
\[
T_n : =   b^n \sum_{|u|=n}      \Big(  s^\alpha   \int_{I_u}  e^{-i 2 \pi s t}dt\Big)_{s \ge 1}  \indi_{I_u}.
\]
By \eqref{V-exp}, we have 
\[
\|T_n\|^p_{L^p(\ell^q)} = \frac{b^{np}}{2^p \pi^p} \Big( \underbrace{ \sum_{s=1}^\infty     s^{-(1-\alpha)q}  | e^{i 2 \pi sb^{-n}} - 1 |^q}_{\text{denoted $\Sigma_n$}}\Big)^{p/q}. 
\]
By regrouping all $s$ according to their congruent classes in $\Z/b^n\Z$, we have 
\begin{align*}
\Sigma_n &= \sum_{k=1}^{b^n}     \sum_{l=0}^\infty (k+b^n l )^{-(1-\alpha)q} \cdot | e^{i 2 \pi (k+b^nl)b^{-n}} - 1 |^q 
\\
& = \sum_{k=1}^{b^n} | e^{i 2 \pi k b^{-n}} - 1 |^q  \sum_{l=0}^\infty (k+b^n l)^{-(1-\alpha)q}
\\
&= \underbrace{\sum_{k=1}^{b^n} | e^{i 2 \pi k b^{-n}} - 1 |^q  k^{-(1-\alpha)q} }_{\text{denoted $\Sigma_n'$}}  + \underbrace{ \sum_{k=1}^{b^n} | e^{i 2 \pi k b^{-n}} - 1 |^q  \sum_{l=1}^\infty (k+b^n l)^{-(1-\alpha)q}}_{\text{denoted $\Sigma_n''$}}.
\end{align*}
By using the limit of Riemann sum of a continuous function (here the assumption $\alpha\in [0,1)$ is used), we have 
\begin{align*}
\lim_{n\to\infty} b^{n [ -1+ (1-\alpha)q]}   \Sigma_n' 
= \int_0^1   \Big|\frac{ e^{i 2 \pi t } - 1}{t^{1-\alpha}} \Big|^q dt  = :c_1(\alpha, q)\in (0,\infty).
\end{align*}
For the term $\Sigma_n''$, note that by assumption,  we have $q(1-\alpha)>1$ and 
\begin{align*}
  \Sigma_n''& \le   \sum_{k=1}^{b^n} | e^{i 2 \pi k b^{-n}} - 1 |^q  \sum_{l=1}^\infty (b^n l)^{-(1-\alpha)q} =  \sum_{k=1}^{b^n} | e^{i 2 \pi k b^{-n}} - 1 |^q    b^{-n(1-\alpha)q} \sum_{l=1}^\infty l^{-(1-\alpha)q}.
\end{align*}
Hence, 
\begin{align*}
\limsup_{n\to\infty} b^{n [ -1+ (1-\alpha)q]}  \Sigma_n'' \le     \int_0^1 | e^{i 2 \pi t} - 1 |^q    dt \cdot  \sum_{l=1}^\infty l^{-(1-\alpha)q} =: c_2(\alpha, q)<\infty. 
\end{align*}
It follows that there exist two constants $c_1'(\alpha,q), c_2'(\alpha,q)>0$ such that   
\[
 b^{n [ 1 -(1-\alpha)q]}   c_1'(\alpha, q)\le \Sigma_n \le  b^{n [ 1 -(1-\alpha)q]}   c_2'(\alpha, q) \quad \text{for all $n\ge 1$}.
\]
Hence by the definition \eqref{def-rsigma}, we obtain  the desired equality.
\end{proof}

\subsection{The uniqueness of Biggins-Kyprianou transform}\label{sec-xi}

Suppose that  two pairs $(\beta, \xi)$ and $(\beta', \xi')$ satisfy the conditions \eqref{xi-cond} and  \eqref{xi-cond-bis} such that 
\[
\frac{e^{-\beta \xi}}{\E[e^{-\beta \xi}]}  = \frac{e^{-\beta' \xi'}}{\E[e^{-\beta' \xi'}]}. 
\]
Then there exist $x, y\in \R$ with $x>0$ such that 
\[
\xi' = x\xi + y.
\]  
By switching $\xi$ and $\xi'$ if needed, we may assume that $x\in (0,1]$. If $x=1$, then $\beta  = \beta'$ and  it is easy to see that $y = 0$, which implies $\xi= \xi'$. Now assume by contradiction that $x\in (0,1)$. Using the argument in the proof  of Lemma~\ref{lem-psi} in \S \ref{sec-BK-transformation} (in particular, here we shall use the condition \eqref{xi-cond-bis} for $\xi$),  we know that the function $\psi_\xi$ defined in \eqref{def-psi} is $C^\infty$ and strictly convex on $(0,1)$. Hence, for all $t\in (0,1)$, 
\[
\psi_\xi'(t) = -\frac{\E[\xi e^{-t\xi}]}{\E[e^{-t\xi}]}. 
\] 
Thus, by using the \eqref{xi-cond}, we have 
\begin{align*}
0= \E[\xi' e^{-\xi'}]  =  \E[(x\xi + y) e^{-x \xi -y}]  = - x \psi_\xi'(x)   \E[e^{-x\xi}] e^{-y} + \frac{y}{b}
 =&  - x \frac{\psi_\xi'(x)}{b} + \frac{y}{b} . 
\end{align*}
Hence $y = - x \psi_\xi'(x)$. By
$
1 = \E[be^{-x\xi -y}]  = \exp(\psi_\xi(x) -y), 
$
we have  $y= \psi_\xi(x)$. Therefore $\psi_\xi(x)/x= \psi_\xi'(x)$. But this is impossible since by Lemma \ref{lem-psi}, for $x\in (0,1)$, we have $\psi_\xi(x)/x >0 > \psi_\xi'(x)$.

\subsection{Proof of Lemma \ref{lem-psi}}\label{sec-psi-fn}

The first and second assertions are easy.    The third assertion follows immediately from the fourth one. So let us only verify the last two assertions (the reader is also referred to \cite[ArXiv version, Lemma A.1]{Jaffuel}).  

A direct computation yields
\[
\psi''(t)=\E[\xi^2  w_t(\xi)]-(\E[\xi w_t(\xi)])^2 \quad \text{with $w_t(\xi) = \frac{e^{-t\xi}}{\E[e^{-t\xi}]}$}.
\]
Hence by Cauchy-Schwarz inequality, we have $\psi''(t)\ge 0$ and the equality $\psi''(t)=0$ holds if and only if $\E[(\xi-   \E[\xi w_t(\xi)])^2 w_t(\xi)]  =0$. In other words, $\psi''(t) =0$ if and only if  the random variable  $\xi$ takes values in $\{\E[\xi w_t(\xi)], +\infty\}$.  However, if $\xi$ takes values in a set $\{x, +\infty\}$ with $x\in \R$, then by \eqref{xi-cond}, we must have $x= 0$ and therefore $\E[be^{-\xi}] =  b \PP(\xi = 0)  = 1$.  It follows that $\xi$ and $W$ are given in the Example~\ref{ex-bad-boy}, which violates the assumption that $W$ is in the boundary case.   Hence $\psi''(t)>0$ for all $t\in (0, 1+\delta)$. That is, $\psi$ is strictly convex. 

\begin{comment}
 \begin{figure}[H]
 \centering
\includegraphics[width=0.53\linewidth]{fig-1.pdf}
\caption{$a$}\label{X-Y-rel-fig}
\end{figure} 
\end{comment}

\begin{figure}[H]
\begin{tikzpicture}[>=stealth, scale=1.5]
 \draw[->](0,0)--(2.5,0)node[below]{$t$};
 \draw[->](0,0)--(0,2)node[left]{$y$};
 \node[scale=0.8] at(-0.2,-0.2){$0$};
 %\draw (-2,1)--(2,1);
 \draw [domain=0:1.6] plot(\x,{1.61*(\x-1)^2});
  \draw[domain=0:1.6,blue] plot(\x,{0.5796/1.6*\x});
  \draw[domain=2.5116/1.932:1.6,red] plot(\x,{1.932*\x-2.5116});
  \fill (1,0) circle (0.5pt);
  \node[scale=0.8] at (1,-0.2) {$1$};
    \fill (1.6,0) circle (0.5pt);
  \node[scale=0.8] at (1.6,-0.2) {$\beta$};
  \draw[dashed](1.6,0)--(1.6,0.5769);
    \fill (1.6,0.5769) circle (0.5pt);
     \draw[dashed](0,0.5769)--(1.6,0.5769);
      \node[scale=0.8] at (-0.3,0.5769) {$\psi(\beta)$};
      \node[scale=0.8] at (0.85,0.9){$ y=\psi(t)$};
       \fill[] (0,1.61) circle (0.5pt);
         \node[scale=0.8] at (-0.3,1.61) {$\log b$};
\end{tikzpicture}\caption{Illustration of $\frac{\psi(\beta)}{\beta}>\psi'(\beta)$.}\label{psi-graph}
\end{figure}
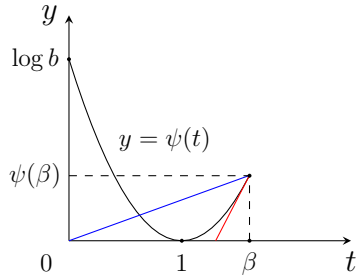

Finally,  in the boundary case, the condition $\E[W\log W]<\log b$ is equivalent to 
\begin{align}\label{beta-strictless-1}
\frac{\psi(\beta)}{\beta}> \psi'(\beta).
\end{align}
  By  the strict convexity of $\psi$,  if $\beta\in (0, 1)$, then $\psi'(\beta)< 0$, hence \eqref{beta-strictless-1} holds. For $\beta=1$, we have $\psi(1)= \psi'(1) = 0$.  So for proving the first equivalence in  \eqref{beta-less-1}, it suffices to show the inequality $\psi(\beta)/\beta \le \psi'(\beta)$ for any $\beta > 1$. But this inequality follows immediately from the convexity of $\psi$ (see Figure \ref{psi-graph} for the illustration).  The other two equivalences can be similarly obtained.

\subsection{Conditional Linderberg-Feller central limit theorem}
\begin{proposition}\label{Prop-c-clt}
Suppose that $(\Omega,\mathcal{F}, (\mathcal{F}_n)_{n\in\N};\mathbb{P})$ is  a filtered probability space. For any fixed   $n\in \N$, there are two  sequences of random variables   $\{X_{n,k}: 1\leq k \leq t_n\}$  and  $\{Y_{n,k}: 1\leq k \leq t_n\}$ satisfying the following conditions:
\begin{itemize}
\item[(1)] $\lim_{n\rightarrow\infty} t_n=\infty$;
\item[(2)]  The family of real-valued random variables $\{X_{n,k}: 1\leq k\leq t_n\}$ are $\mathcal{F}_n$-measurable;
\item[(3)] The family of complex-valued random variables $\{Y_{n,k}: 1\leq k\leq t_n\}$ are independent of  $\mathcal{F}_n$; 
\item[(4)]  $\{Y_{n,k}: 1\leq k\leq t_n\}$ are i.i.d. copies of a complex random variable $Y$ with 
\[
\mathbb{E}[Y]=0,\quad \mathbb{E}[Y^2]=0,\quad \text{and}\quad \mathbb{E}[|Y|^2]=c\in (0,\infty);
\] 
%\item[(5)] \[
%\mathbb{E}\left[\sum_{i=1}^{t_n} X_{n,k}Y_{n,k}\mid \mathcal{F}_n\right]=0;
%\]
\item[(5)]
\[
\mathbb{E}\Big[\Big|\sum_{k=1}^{t_n} X_{n,k}Y_{n,k}\Big|^2\Big]=c>0
\]
\item[(6)] There exists a  sequence of non-negative random variables $(M_n)_{n\in\N}$ such that 
\begin{align}\label{eqn-convergence}
\mathbb{E}\Big[\Big|\sum_{i=1}^{t_n} X_{n,k}Y_{n,k}\Big|^2\big|\mathcal{F}_n\Big]=c\sum_{i=1}^{t_n} X_{n,k}^2=cM_n
\end{align}
and
\[
\mathbb{E}\Big[\Big(\sum_{i=1}^{t_n} X_{n,k}Y_{n,k}\Big)^2\big| \mathcal{F}_n\Big]=0.
\]
In other words, $\mathbb{E}[M_n]=1$.
\item[(7)] $\lim_{n\rightarrow \infty}M_n=M_\infty$ almost surely (in probability), where $M_\infty\in \mathcal{F}_\infty$.
\item[(8)] For any $\varepsilon>0$, 
\begin{align}\label{eqn-hjhi}
\lim_{n\rightarrow\infty} \mathbb{E}\Big[\sum_{k=1}^{t_n} |X_{n,k}|^2|Y_{n,k}|^2\mathds{1}(|X_{n,k}Y_{n,k}|>\varepsilon) \big|  \mathcal{F}_n\Big]=0 \quad  \text{a.s. (in probability)}
\end{align}
\end{itemize}
Then for any bounded continuous function $\phi$,
\[
\lim_{n\rightarrow\infty}\mathbb{E}\Big[\phi\Big(\sum_{k=1}^{t_n} X_{n,k}Y_{n,k}\Big) \big|\mathcal{F}_n\Big]=\mathbb{E}\left[\phi(\mathcal{N}_\C(0, M_{\infty}))| \mathcal{F}_\infty\right]\quad \text{a.s. (in probability)}
\]
where $\mathcal{N}_\C(0, M_{\infty})$ is a complex normal random variable with mean zero and covariance matrix given by 
\[
M_\infty\begin{pmatrix}
1/2&0\\
0&1/2\\
\end{pmatrix}.
\]
\end{proposition}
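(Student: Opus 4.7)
My plan is to adapt the classical Fourier-analytic proof of the Lindeberg-Feller CLT to the conditional, complex-valued setting. Identifying $\C$ with $\R^2$ via the pairing $\langle \lambda, z\rangle := \mathrm{Re}(\bar\lambda z)$, the target complex Gaussian $\mathcal{N}_\C(0, M_\infty)$ has conditional characteristic function $\lambda \mapsto \exp(-c|\lambda|^2 M_\infty/4)$, so by Lévy's continuity theorem applied pathwise (in the a.s.\ case) or along subsequences (in the in-probability case) it will be enough to prove, for every fixed $\lambda \in \C$,
\[
\Phi_n(\lambda) := \E\big[\exp(i\,\mathrm{Re}(\bar\lambda S_n))\,\big|\,\mathcal{F}_n\big] \xrightarrow[n\to\infty]{} \exp\Big(-\frac{c|\lambda|^2 M_\infty}{4}\Big),
\]
where $S_n := \sum_{k=1}^{t_n} X_{n,k} Y_{n,k}$.

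First, because the $X_{n,k}$ are $\mathcal{F}_n$-measurable while the $Y_{n,k}$ are i.i.d.\ copies of $Y$ independent of $\mathcal{F}_n$, conditional independence factorizes
\[
\Phi_n(\lambda) = \prod_{k=1}^{t_n} g(\lambda X_{n,k}), \qquad g(\mu) := \E[\exp(i\,\mathrm{Re}(\bar\mu Y))].
\]
Writing $Y = U + iV$, the moment hypotheses in~(4) translate to $\E[U] = \E[V] = 0$, $\E[U^2] = \E[V^2] = c/2$ and $\E[UV] = 0$, so a Taylor expansion with integral remainder produces $g(\mu) = 1 - c|\mu|^2/4 + r(\mu)$ with $|r(\mu)| \le \E[\min(|\mu Y|^2, |\mu Y|^3)]$.

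Next I will take the logarithm of the product (legitimate once $n$ is large because $\max_k |\lambda X_{n,k}|^2 \to 0$ follows from hypothesis~\eqref{eqn-hjhi}) and run the standard Lindeberg-Feller bookkeeping: the quadratic piece aggregates via $\sum_k X_{n,k}^2 = M_n \to M_\infty$ supplied by~\eqref{eqn-convergence} and hypothesis~(7), while the remainder piece is controlled precisely by the conditional quantity appearing in~\eqref{eqn-hjhi}. Combining these and exponentiating yields $\Phi_n(\lambda) \to \exp(-c|\lambda|^2 M_\infty/4)$ a.s.\ (resp.\ in probability).

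Finally I will deduce the convergence $\E[\phi(S_n)|\mathcal{F}_n] \to \E[\phi(\mathcal{N}_\C(0, M_\infty))|\mathcal{F}_\infty]$ for every bounded continuous $\phi$: in the a.s.\ case, select a full-measure event on which $\Phi_n(\lambda)$ converges to $\exp(-c|\lambda|^2 M_\infty/4)$ simultaneously for every $\lambda$ in a fixed countable dense subset of $\C$, and apply Lévy's theorem pathwise in $\omega$; the in-probability case reduces to this by the standard subsequence principle. The main subtlety I anticipate is matching the Taylor remainder $r(\lambda X_{n,k})$ with the particular form of the conditional Lindeberg hypothesis~\eqref{eqn-hjhi}, which controls $\sum_k \E[|X_{n,k} Y_{n,k}|^2 \indi(|X_{n,k} Y_{n,k}| > \varepsilon) \mid \mathcal{F}_n]$ rather than the remainder directly; this will be handled by splitting on $\{|\lambda X_{n,k} Y_{n,k}| \le \varepsilon\}$ versus its complement and bounding each piece by $\varepsilon \cdot c \cdot M_n$ and the Lindeberg quantity respectively, in direct parallel with the unconditional proof.
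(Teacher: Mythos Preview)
Your approach is correct and essentially identical to the paper's: both reduce to pointwise convergence of conditional characteristic functions, factorize via conditional independence, Taylor-expand the single-summand characteristic function, and control the remainder by the Lindeberg splitting you describe. The only cosmetic difference is that you pass to $\log \Phi_n(\lambda)$ and sum, whereas the paper compares the product $\prod_k \E[\exp(itA_{n,k})\mid\mathcal{F}_n]$ directly to $\prod_k (1 - \tfrac{ct^2}{4}X_{n,k}^2(x^2+y^2))$ via the telescoping inequality $|\prod a_k - \prod b_k| \le \sum |a_k - b_k|$ (Durrett's Lemma~3.4.3) and then invokes a separate lemma for the latter product; both routes rely on the same fact $\max_k X_{n,k}^2 \to 0$, which the paper derives exactly as you indicate from~\eqref{eqn-hjhi}.
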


\begin{proof}
We shall use the same strategy as the proof of  \cite[Theorem 3.4.10]{Rick-Durrett}. Here we give the proof of the case of almost sure convergence in $(7)$ and $(8)$. The case of   convergence in probability is proved similarly. 

We only need to prove that the  conditioned distribution of $Z_n:=\sum_{k=1}^{t_n} X_{n,k}Y_{n,k}$ given $\mathcal{F}_n$ converges to the conditioned distribution of $\mathcal{N}_\C(0, M_\infty)$ given $\mathcal{F}_\infty$ almost surely. In other words, let $Z_n=\xi_n+i \zeta_n$, we only need to show for any $x,y, t\in\mathbb{R}$: 
\begin{align}\label{eqn-desired}
\lim_{n\rightarrow\infty}\mathbb{E}\big[\exp\{it(x\xi_n+y\zeta_n)\}\big| \mathcal{F}_n\big]=\exp\Big(-\frac{1}{2}\frac{M_\infty}{2}t^2(x^2+y^2)\Big) \quad a.s.
\end{align}
%This follows from Lemma \ref{lemma-6} and Lemma \ref{lemma-7}. 
Now 
\begin{align*}
\mathbb{E}\big[\exp\{it(x\xi_n+y\zeta_n)\}\big| \mathcal{F}_n\big]=&\mathbb{E}\Big[\exp\Big\{it\sum_{k=1}^{t_n}X_{n,k}\big(x\mathrm{Re}(Y_{n,k})+y\mathrm{Im}(Y_{n,k})\big)\Big\}\big| \mathcal{F}_n\Big]\\
=&\prod_{k=1}^{t_n}\mathbb{E} \Big[\exp \Big\{itX_{n,k}(x\mathrm{Re}(Y_{n,k})+y\mathrm{Im}(Y_{n,k})) \Big\} \big| \mathcal{F}_n\Big]\\
=&\prod_{k=1}^{t_n}\mathbb{E}[\exp( itA_{n,k}) | \mathcal{F}_n],
\end{align*}
where  $A_{n,k}: =X_{n,k} (x\mathrm{Re}(Y_{n,k})+y\mathrm{Im}(Y_{n,k}) )$ for $1\leq k\leq t_n$. 

% we have
%\begin{align*}
%\mathbb{E}[A_{n,k}\mid \mathcal{F}_n]=0,
%\end{align*} 
%and
%\begin{align*}
%\mathbb{E}[A_{n,k}^2\mid\mathcal{F}_n]&=X_{n,k}^2\big(x^2\mathbb{E}[(\mathrm{Re}(Y_{n,k}))^2]+y^2\mathbb{E}[(\mathrm{Im}(Y_{n,k}))^2]\big)
%=\frac{c}{2}(x^2+y^2)X_{n,k}^2.
%\end{align*}

{\flushleft \bf Claim B: } One has 
\begin{align}\label{eqn-last-f}
\lim_{n\rightarrow\infty }\Big|\prod_{k=1}^{t_n}\mathbb{E}[\exp(itA_{n,k})|\mathcal{F}_n]-\prod_{k=1}^{t_n}\Big(1-\frac{c t^2}{4} X_{n,k}^2 (x^2+y^2)\Big)\Big|=0 \quad a.s.
\end{align}
 
 By \cite[Exercise 3.1.1]{Rick-Durrett},
we have 
\begin{align}\label{eqn-last-e}
\lim_{n\rightarrow\infty}\prod_{k=1}^{t_n}\Big(1-\frac{c t^2}{4} X_{n,k}^2(x^2+y^2)\Big)=e^{-\frac{M_\infty}{4}t^2(x^2+y^2)}\,\,a.s.\,\,\text{
where 
$
M_\infty=\lim_{n\rightarrow\infty}\sum_{i=1}^{t_n} X_{n,k}^2.
$
}
\end{align}
Then \eqref{eqn-last-f} and \eqref{eqn-last-e} imply that 
\[
\lim_{n\rightarrow\infty }\prod_{k=1}^{t_n}\mathbb{E}[\exp(itA_{n,k})|\mathcal{F}_n]=e^{-\frac{M_\infty}{4}t^2(x^2+y^2)}\quad a.s.
\]
This is the desired equality \eqref{eqn-desired}.

Now we prove \textbf{Claim B}. By \cite[Lemma 3.3.19]{Rick-Durrett}, one has
\begin{align*}
&\Big|\mathbb{E}[\exp(itA_{n,k})|  \mathcal{F}_n]-1+\frac{ct^2}{4}(x^2+y^2)X_{n,k}^2\Big|
\\
\leq& \mathbb{E}\Big[ \min\left(|tA_{n,k}|^{3},2|tA_{n,k}|^2\right) \big|\mathcal{F}_n\Big]\\
\leq &\mathbb{E}\Big[ |tA_{n,k}|^{3}\mathds{1}(|A_{n,k}|\leq\varepsilon)\big|\mathcal{F}_n\Big]+\mathbb{E}\Big[ 2|tA_{n,k}|^{2}\mathds{1}(|A_{n,k}|>\varepsilon)\big|\mathcal{F}_n\Big]\\
\leq&\varepsilon t^3\mathbb{E}\Big[ |A_{n,k}|^{2}\mathds{1}(|A_{n,k}|\leq\varepsilon)\big|\mathcal{F}_n\Big]+2t^2\mathbb{E}\Big[ |A_{n,k}|^{2}\mathds{1}(|A_{n,k}|>\varepsilon)\big|\mathcal{F}_n\Big].
\end{align*}
Hence it follows that 
\begin{align}\label{eqn-jkjkh}
&\sum_{k=1}^{t_n}\big|\mathbb{E} [\exp(itA_{n,k})|\mathcal{F}_n]-1+\frac{ct^2}{4} X_{n,k}^2 (x^2+y^2)\big|\\\nonumber
\leq &\varepsilon t^3\sum_{k=1}^{t_n}\mathbb{E}\Big[ |A_{n,k}|^{2}\mathds{1}(|A_{n,k}|\leq\varepsilon)\mid\mathcal{F}_n\Big]+2t^2\sum_{k=1}^{t_n}\mathbb{E}\Big[ |A_{n,k}|^{2}\mathds{1}(|A_{n,k}|>\varepsilon)\big|\mathcal{F}_n\Big].
\end{align}
Now for any fixed $x,y\in\mathbb{R}$ with $x^2+y^2>0$, one has
\begin{align}\label{eqn-AN-23}
A_{n,k}^2
\leq X_{n,k}^2((\mathrm{Re}(Y_{n,k}))^2+(\mathrm{Im}(Y_{n,k}))^2)(x^2+y^2)
=X_{n,k}^2|Y_{n,k}|^2(x^2+y^2).
\end{align}
This implies that 
\begin{align}\label{eqn-A_n-F_n}
\mathbb{E}[A_{n,k}^2 | \mathcal{F}_n]\leq X_{n,k}^2(x^2+y^2)\mathbb{E}[|Y|^2]=c(x^2+y^2)\cdot X_{n,k}^2.
\end{align}
By \eqref{eqn-AN-23}, we have
\begin{align*}
\mathbb{E}[ |A_{n,k}|^{2}\mathds{1}(|A_{n,k}|>\varepsilon)|\mathcal{F}_n]
\leq (x^2+y^2)\mathbb{E}\Big[|X_{n,k} Y_{n,k}|^2\mathds{1}\Big(|X_{n,k}Y_{n,k}|>\frac{\varepsilon}{\sqrt{x^2+y^2}}\Big) \Big| \mathcal{F}_n\Big].
\end{align*}
Hence \eqref{eqn-hjhi} implies that 
\[
\lim_{n\rightarrow\infty} \sum_{k=1}^{t_n} \mathbb{E}\left[A_{n,k}^2\mathds{1}(|A_{n,k}|>\varepsilon) | \mathcal{F}_n\right]=0.
\]
Combing with \eqref{eqn-A_n-F_n} and \eqref{eqn-jkjkh}, we have
\begin{align*}
&\limsup_{n\rightarrow\infty}\sum_{k=1}^{t_n}\big|\mathbb{E}[\exp(itA_{n,k}) | \mathcal{F}_n]-1+\frac{ct^2}{4} X_{n,k}^2 (x^2+y^2)\big|\\
\leq &\limsup_{n\rightarrow\infty}\varepsilon t^3 c(x^2+y^2)\sum_{k=1}^{t_n} X_{n,k}^2
=\varepsilon t^3 c(x^2+y^2)M_\infty \quad a.s.
\end{align*}
Since $\varepsilon>0$ is arbitrary, we have 
\begin{align}\label{eqn-important}
\limsup_{n\rightarrow\infty}\sum_{k=1}^{t_n}\big|\mathbb{E}[\exp(itA_{n,k}) | \mathcal{F}_n]-1+\frac{ct^2}{4} X_{n,k}^2 (x^2+y^2)\big|=0 \quad a.s.
\end{align}
Notice that by \eqref{eqn-AN-23}, one has
\[
\mathbb{E}[A_{n,k}^2 |\mathcal{F}_n]=\frac{c}{2}(x^2+y^2)X_{n,k}^2.
\]
Hence
\[
\frac{c}{2}(x^2+y^2)X_{n,k}^2=\mathbb{E}[A_{n,k}^2| \mathcal{F}_n]\leq \varepsilon^2+\mathbb{E}\left[A_{n,k}^2\mathds{1}(A_{n,k}>\varepsilon)| \mathcal{F}_n\right]
\]
which implies that 
\[
\lim_{n\rightarrow\infty}\frac{c}{2}(x^2+y^2)X_{n,k}^2\leq \varepsilon^2+\lim_{n\rightarrow\infty}\mathbb{E}\left[A_{n,k}^2\mathds{1}(A_{n,k}>\varepsilon) | \mathcal{F}_n\right]=\varepsilon.
\]
Since $\varepsilon>0$ is arbitrary,   for all $1\leq k\leq t_n$, we have
\begin{align}\label{eqn-jijiji}
\lim_{n\rightarrow\infty}\frac{c}{2}(x^2+y^2)X_{n,k}^2=0 \quad  a.s.
\end{align}
Hence almost surely, for $n$ large enough,
\[
\Big|1-\frac{ct^2}{4} X_{n,k}^2 (x^2+y^2)\Big|\leq 1. 
\] 
By    \cite[Lemma 3.4.3]{Rick-Durrett}, almost surely, for $n$ large enough, we have 
\begin{align*}
& \Big|\prod_{k=1}^{t_n}\mathbb{E}[\exp(itA_{n,k})|\mathcal{F}_n]-\prod_{k=1}^{t_n}\Big(1-\frac{ct^2}{4} X_{n,k}^2 (x^2+y^2)\Big)\Big|
\\
\leq & \sum_{k=1}^{t_n}\Big|\mathbb{E}[\exp(itA_{n,k}) | \mathcal{F}_n]-1+\frac{ct^2}{4} X_{n,k}^2 (x^2+y^2)\Big|.
\end{align*}
Combing with  \eqref{eqn-important}, we have
\[
\lim_{n\rightarrow\infty }\Big|\prod_{k=1}^{t_n}\mathbb{E}[\exp( itA_{n,k}) |\mathcal{F}_n]-\prod_{k=1}^{t_n}\Big(1-\frac{ct^2}{4} X_{n,k}^2 (x^2+y^2)\Big)\Big|=0 \quad a.s. 
\]
This is the desired equality \eqref{eqn-last-e}.
\end{proof}

%\bibliographystyle{alpha}
%\bibliography{bib-GMC}

\begin{thebibliography}{BJM10}
	
	
	
	\bibitem[A\"i13]{Elie-AOP}
	Elie A\"{\i}d\'{e}kon.
	\newblock Convergence in law of the minimum of a branching random walk.
	\newblock {\em Ann. Probab.}, 41(3A):1362--1426, 2013.
	
	\bibitem[AS14]{Aideon-Shi}
	Elie A\"{\i}d\'{e}kon and Zhan Shi.
	\newblock The {S}eneta-{H}eyde scaling for the branching random walk.
	\newblock {\em Ann. Probab.}, 42(3):959--993, 2014.
	
	
	\bibitem[ARW21]{ARW}
	Amir Algom, Federico Rodriguez~Hertz, and Zhiren Wang.
	\newblock Pointwise normality and {F}ourier decay for self-conformal measures.
	\newblock {\em Adv. Math.}, 393:Paper No. 108096, 72, 2021.
	
	
	
\bibitem[AJKS11]{AJKS}
Kari Astala, Peter Jones, Antti Kupiainen, and Eero Saksman.
\newblock Random conformal weldings.
\newblock {\em Acta Math.}, 207(2):203--254, 2011.	
	
	\bibitem[BS11]{BS11}
	Jong-Guk Bak and Andreas Seeger.
	\newblock Extensions of the {S}tein-{T}omas theorem.
	\newblock {\em Math. Res. Lett.}, 18(4):767--781, 2011.
	
	\bibitem[BKS]{BKS}
	Simon Baker, Osama Khalil, Tuomas Sahlsten. 
	\newblock Fourier Decay from $L^2$-Flattening. 
	\newblock{\em arXiv:2407.16699.}
	
	
	\bibitem[BSS23]{BBS}
	Bal\'{a}zs B\'{a}r\'{a}ny, K\'{a}roly Simon, and Boris Solomyak.
	\newblock {\em Self-similar and self-affine sets and measures}. Volume 276 of
	{\em Mathematical Surveys and Monographs},
	\newblock AMS, 2023.
	
	
	
	\bibitem[Bar13]{Bar13}
	Martin Barlow.
	\newblock Analysis on the {S}ierpinski carpet.
	\newblock In {\em Analysis and geometry of metric measure spaces}. Volume~56 of
	{\em CRM Proc. Lecture Notes}, pages 27--53, AMS, 2013.
	
	
	\bibitem[Bar01]{Barral2001}
Julien Barral.
\newblock  Generalized vector multiplicative cascades.
\newblock {\em Adv.  Appl.
  Probab.}, vol.~33, no.~4, pp.~874--895, 2001.
	
	
	
	\bibitem[BG11]{BG11}
Julien Barral and Paulo~A. Gon\c{c}alves.
\newblock On the estimation of the large deviations spectrum.
\newblock{\em J. Stat.Phys.}, 144(6):1256–1283, 2011.
	
	
	\bibitem[BKNSW]{BKNSW14}
Julien Barral, Antti Kupiainen, Miika Nikula, Eero Saksman, and Christian Webb.
\newblock Critical  {M}andelbrot cascades.
  \newblock {\em Comm. Math. Phys.}, vol.~325, no.~2,
  pp.~685--711, 2014.
	
	
	
	
	\bibitem[BJM10]{Barral}
	Julien Barral, Xiong Jin, and Beno\^{i}t Mandelbrot.
	\newblock Convergence of complex multiplicative cascades.
	\newblock {\em Ann. Appl. Probab.}, 20(4):1219--1252, 2010.
	
	\bibitem[BJ22]{JInxiong-Barral-IMRN}
	Julien Barral and Xiong Jin.
	\newblock On the action of multiplicative cascades on measures.
	\newblock {\em Int. Math. Res. Not. IMRN}, (18):13857--13896, 2022.
	
	\bibitem[BM04a]{BM1}
	Julien Barral and Beno\^{i}t Mandelbrot.
	\newblock Introduction to infinite products of independent random functions
	({R}andom multiplicative multifractal measures. {I}).
	\newblock In {\em Fractal geometry and applications: a jubilee of {B}eno\^{i}t
		{M}andelbrot, {P}art 2}. Volume~72 of {\em Proc. Sympos. Pure Math.}, pages
	3--16, AMS, 2004.
	
	
	\bibitem[BM04b]{BM2}
	Julien Barral and Beno\^{i}t Mandelbrot.
	\newblock Non-degeneracy, moments, dimension, and multifractal analysis for
	random multiplicative measures ({R}andom multiplicative multifractal
	measures. {II}).
	\newblock In {\em Fractal geometry and applications: a jubilee of {B}eno\^{i}t
		{M}andelbrot, {P}art 2}, volume~72 of {\em Proc. Sympos. Pure Math.}, pages
	17--52, AMS, 2004.
	
	\bibitem[BGKRV]{BGK}
	Guillaume Baverez, Colin Guillarmou, Antti Kupiainen, Rémi Rhodes and Vincent Vargas, 
	\newblock The Virasoro structure and the scattering matrix for Liouville conformal field theory. 
	\newblock{\em Probab. Math. Phys.}, 5(2):269–320, 2024.
	
	\bibitem[BS09]{BS}
	\newblock Itai Benjamini and  Oded Schramm. 
	\newblock KPZ in one dimensional random geometry of multiplicative cascades. 
	\newblock{\em Comm. Math. Phys.}, 289(2):653–662, 2009
	
	
	
	
	\bibitem[Big92]{Big92}
   John~D. Biggins.
\newblock Uniform convergence of martingales in the branching random walk. 
\newblock {\em Ann. Probab.}, 20 (1), 137--151, 1992.
	
	
	
	
	
	\bibitem[Blu99]{Blu99}
	Christian Bluhm.
	\newblock Fourier asymptotics of statistically self-similar measures.
	\newblock {\em J. Fourier Anal. Appl.}, 5(4):355--362, 1999.
	
	\bibitem[Boa67]{Boa67}
	Ralph Philip Boas,
	\newblock Fourier series with positive coefficients.
	\newblock {\em J. Math. Anal. Appl.}, 17:463--483, 1967.
	
	
	
	\bibitem[BK05]{Bigins-EJP}
	John Biggins and Andreas Kyprianou.
	\newblock Fixed points of the smoothing transform: the boundary case.
	\newblock {\em Electron. J. Probab.}, 10:no.17, 609--631, 2005.
	
	\bibitem[BD17]{BD}
	Jean Bourgain and Semyon Dyatlov.
	\newblock Fourier dimension and spectral gaps for hyperbolic surfaces.
	\newblock {\em Geom. Funct. Anal.}, 27(4):744--771, 2017.
	
	\bibitem[Bre21]{Bre21}
	Julien Br\'{e}mont.
	\newblock Self-similar measures and the {R}ajchman property.
	\newblock {\em Ann. H. Lebesgue}, 4:973--1004, 2021.
	
	\bibitem[Bug12]{Bug12}
	Yann Bugeaud.
	\newblock {\em Distribution modulo one and {D}iophantine approximation}. Volume
	193 of {\em Cambridge Tracts in Mathematics}.
	\newblock Cambridge University Press, Cambridge, 2012.
	
	
	\bibitem[CFde24]{CF24}
	Marc Carnovale, Jonathan M. Fraser and Ana E. de Orellana. 
	\newblock  Obtaining the Fourier spectrum via Fourier coefficients. arXiv preprint arXiv:2403.12603.
	
	
	\bibitem[CLS24]{CLS24}
	Changhao Chen, Bing Li, and Ville Suomala.
	\newblock Fourier dimension of Mandelbrot multiplicative cascades.
	\newblock arXiv preprint. arXiv: 2409.13455, 2024.
	
	
	
	\bibitem[CHQWa]{CHQWa}
Xinxin Chen, Yong Han, Yanqi Qiu, and Zipeng Wang. 
\newblock The Mandelbrot-Kahane problem of Benoît Mandelbrot model of turbulence.
\newblock{ \em Comptes Rendus Mathématique. Académie des Sciences. Paris}, 363, 35--41.



\bibitem[CHQWb]{CHQWb}
Xinxin Chen, Yong Han, Yanqi Qiu, and Zipeng Wang. 
\newblock Micro-canonical cascades and random homeomorphisms.  
\newblock arXiv preprint.   	arXiv:2505.16405, 2025. 




	
	\bibitem[CN19]{CN19}
	Reda Chhaibi and Joseph Najnudel. 
	\newblock On the circle $GMC^\gamma=\lim\limits_{\longleftarrow} C\beta E_n$ for $\gamma=\sqrt{2/\beta}$ ($\gamma\leq 1$).
	\newblock{\em arXiv:1904.00578.}
	
	
	
	
	
	
	
	

	
	\bibitem[DEL63]{DEL}
	Harold Davenport, Paul Erd\H{o}s, and William LeVeque.
	\newblock On {W}eyl's criterion for uniform distribution.
	\newblock {\em Michigan Math. J.}, 10:311--314, 1963.
	
	\bibitem[DS87]{D-S}
	Bernard Derrida and Herbert Spohn.
	\newblock Polymers on disordered trees, spin glasses, and traveling   waves. 
	\newblock {\em J. Statist. Phys.},  51 (5-6): 817--840, 1988. 
	
	
	


	
	\bibitem[DKW23]{DKW}
	Tien-Cuong Dinh, Lucas Kaufmann, and Hao Wu.
	\newblock Decay of {F}ourier coefficients for {F}urstenberg measures.
	\newblock {\em Trans. Amer. Math. Soc.}, 376(10):6873--6926, 2023.
	
	\bibitem[DY19]{PLMS-2019}
	Sjoerd Dirksen and Ivan Yaroslavtsev.
	\newblock {\em {$L^q$}-valued {B}urkholder-{R}osenthal inequalities and sharp
		estimates for stochastic integrals}.
	\newblock {\em Proceedings of the London Mathematical Society. Third Series}, 119(6):1633-1693,2019.
	
	\bibitem[DS11]{DS11}
	Bertrand Duplantier and Scott Sheffield.
	\newblock Liouville quantum gravity and {KPZ}.
	\newblock {\em Invent. Math.}, 185(2):333--393, 2011.
	
	\bibitem[Dur19]{Rick-Durrett}
	Rick Durrett.
	\newblock {\em Probability---theory and examples}, volume~49 of {\em Cambridge
		Series in Statistical and Probabilistic Mathematics}.
	\newblock Cambridge University Press, Cambridge, 2019.
	
	\bibitem[DZ16]{DZ}
	Semyon Dyatlov and Joshua Zahl.
	\newblock Spectral gaps, additive energy, and a fractal uncertainty principle.
	\newblock {\em Geom. Funct. Anal.}, 26(2016),
	1011–1094.
	
	\bibitem[Eks16]{Eks16}
	Fredrik Ekstr\"{o}m.
	\newblock Fourier dimension of random images.
	\newblock {\em Ark. Mat.}, 54(2):455--471, 2016.
	
	\bibitem[ES17]{ES17}
	Fredrik Ekstr\"{o}m and J\"{o}rg Schmeling.
	\newblock A survey on the {F}ourier dimension.
	\newblock In {\em Patterns of dynamics}, volume 205 of {\em Springer Proc.
		Math. Stat.}, pages 67--87. Springer, Cham, 2017.
	
	

	
	
	\bibitem[Fal86]{Fal86}
	Kenneth Falconer.
	\newblock {\em The geometry of fractal sets}. Volume~85 of {\em Cambridge Tracts in Mathematics}.
	\newblock Cambridge University Press, Cambridge, 1986.
	
	
	\bibitem[FFJ15]{FFJ15}
	Kenneth Falconer, Jonathan Fraser, and Xiong Jin.
	\newblock Sixty years of fractal projections.
	\newblock In {\em Fractal geometry and stochastics {V}}, volume~70 of {\em
		Progr. Probab.}, pages 3--25. Birkh\"{a}user/Springer, Cham, 2015.
		
		\bibitem[FJ17]{FJ17}
		 Kenneth  Falconer and Xiong Jin.
		 \newblock Self-similar sets: projections, sections and percolation. 
		 \newblock  Recent developments in fractals and related fields, 113–127, Trends Math., Birkhäuser/Springer, Cham, 2017.
	
	\bibitem[FS18]{FS18}
	Jonathan Fraser and Tuomas Sahlsten.
	\newblock On the {F}ourier analytic structure of the {B}rownian graph.
	\newblock {\em Anal. PDE}, 11(1):115--132, 2018.
	
	
	\bibitem[FJ19]{FJ19}
	Kenneth Falconer and Xiong Jin.
	\newblock Exact dimensionality and projection properties of {G}aussian
	multiplicative chaos measures.
	\newblock {\em Trans. Amer. Math. Soc.}, 372(4):2921--2957, 2019.
	
	\bibitem[Fan94]{Fan-aihua-lq}
	Ai~Hua Fan.
	\newblock Sur les dimensions de mesures.
	\newblock {\em Studia Math.}, 111(1):1--17, 1994.
	
	
	\bibitem[FLR02]{Fan-dimension}
	Ai Hua Fan, Ka-Sing Lau, and Hui Rao.
	\newblock Relationships between different dimensions of a measure.
	\newblock {\em Monatsh. Math.}, 135(3):191--201, 2002.
	
	
	\bibitem[Fan02]{Fan-JMPA}
	Ai~Hua Fan.
	\newblock On {M}arkov-{M}andelbrot martingales.
	\newblock {\em J. Math. Pures. Appl. (9)}, 81(10):967--982, 2002.
	
	\bibitem[FOS14]{FOS}
	Jonathan Fraser, Tuomas Orponen, and Tuomas Sahlsten.
	\newblock On {F}ourier analytic properties of graphs.
	\newblock {\em Int. Math. Res. Not. IMRN}, (10):2730--2745, 2014.
	
	\bibitem[Fra24]{Fra24}
	Jonathan Fraser.
	\newblock {The Fourier spectrum and sumset type problems},
	\newblock {\em Mathematische Annalen}, to appear.
	
	\bibitem[FH23]{FH23}
	Robert Fraser and Kyle Hambrook.
	\newblock Explicit {S}alem sets in {$\Bbb{R}^n$}.
	\newblock {\em Adv. Math.}, 416:Paper No. 108901, 23, 2023.
	
	\bibitem[FS18]{FS}
	Jonathan Fraser and Tuomas Sahlsten.
	\newblock On the {F}ourier analytic structure of the {B}rownian graph.
	\newblock {\em Anal. PDE}, 11(1):115--132, 2018.
	
	\bibitem[GV23]{GV23}
	Christophe Garban, Vincent Vargas.
	\newblock Harmonic analysis of Gaussian multiplicative chaos on the circle.
	\newblock{\em arXiv:2311.04027}, 28 Feb 2024.
	
	\bibitem[GMW86]{RA-Homeo}
	Siegfried Graf, R. Daniel Mauldin, and Stanley Williams.
	\newblock Random homeomorphisms.
	\newblock {\em Adv. Math.}, 60(3):239--359, 1986.
	
	
	
	
	
	\bibitem[Ham17]{Ham17}
	Kyle Hambrook.
	\newblock Explicit {S}alem sets in {$\Bbb{R}^2$}.
	\newblock {\em Adv. Math.}, 311:634--648, 2017.
	
	\bibitem[HLa13]{HL13}
	Kyle Hambrook and Izabella \L~aba.
	\newblock On the sharpness of {M}ockenhaupt's restriction theorem.
	\newblock {\em Geom. Funct. Anal.}, 23(4):1262--1277, 2013.
	
	\bibitem[HLa16]{HL16}
	Kyle Hambrook and Izabella \L~aba.
	\newblock Sharpness of the {M}ockenhaupt-{M}itsis-{B}ak-{S}eeger restriction
	theorem in higher dimensions.
	\newblock {\em Bull. Lond. Math. Soc.}, 48(5):757--770, 2016.
	
	\bibitem[HQW23]{HQW-23}
	Yong Han, Yanqi Qiu, and Zipeng Wang.
	\newblock Moments of {M}andelbrot cascades at critical exponents.
	\newblock {\em arXiv: 2312.12847}, Dec. 2023.
	
	
	\bibitem[Heu16]{Heu}
	Yanick Heurteaux.
	\newblock An introduction to {M}andelbrot cascades.
	\newblock In {\em New trends in applied harmonic analysis}, Appl. Numer.
	Harmon. Anal., pages 67--105. Birkh\"{a}user/Springer, Cham, 2016.
	
	
	
	\bibitem[HS15]{HS15}
	Michael Hochman and Pablo Shmerkin.
	\newblock Equidistribution from fractal measures.
	\newblock {\em Invent. Math.}, 202(1):427--479, 2015.
	
	\bibitem[HS09]{Shi}
	Yueyun Hu and Zhan Shi.
	\newblock Minimal position and critical martingale convergence in branching
	random walks, and directed polymers on disordered trees.
	\newblock {\em Ann. Probab.}, 37(2):742--789, 2009.
	
	
	\bibitem[IKM20]{Iksanov}
	Alexander Iksanov, Konrad Kolesko, and Matthias Meiners.
	\newblock Fluctuations of {B}iggins' martingales at complex parameters.
	\newblock {\em Ann. Inst. Henri Poincar\'{e} Probab. Stat.}, 56(4):2445--2479,
	2020.
	

	
	
	\bibitem[Jaf12]{Jaffuel}
	Bruno Jaffuel.
	\newblock The critical barrier for the survival of branching random walk with
	absorption.
	\newblock {\em Ann. Inst. Henri Poincar\'{e} Probab. Stat.}, 48(4):989--1009,
	2012.
	
	
	
	\bibitem[JLN73]{JLN73}
Anatole Joffe, Lucien Le~Cam, and Jacque Neveu.
\newblock Sur la loi des grands nombres pour des
  variables al\'{e}atoires de {B}ernoulli attach\'{e}es \`a un arbre
  dyadique.
  \newblock {\em C. R. Acad. Sci. Paris S\'{e}r. A-B}, vol.~277,
  pp.~A963--A964, 1973.
  
  
  \bibitem[JSW19]{JSW19}
Janne Junnila, Eero Saksman, and Christian Webb.
\newblock Decompositions of log-correlated fields
  with applications. 
  \newblock {\em Ann. Appl. Probab.}, vol.~29, no.~6,
  pp.~3786--3820, 2019.
	
	
	
	\bibitem[Kah85a]{Kahane-book}
	Jean-Pierre Kahane.
	\newblock {\em Some random series of functions}, volume~5 of {\em Cambridge
		Studies in Advanced Mathematics}.
	\newblock Cambridge University Press, Cambridge, second edition, 1985.
	
	\bibitem[Kah85b]{Ka85b}
	Jean-Pierre Kahane.
	\newblock Sur le chaos multiplicatif.
	\newblock{\em Ann. Sci. Math. Quebec} 9, no. 2,
	105–150, 1985.
	
	\bibitem[Kah87]{Kahane-1987}
	Jean-Pierre Kahane.
	\newblock Positive martingales and random measures.
	\newblock {\em Chinese Ann. Math. Ser. B}, 8(1):1--12, 1987.
	
	
	\bibitem[Kah93]{Ka-93}
	Jean-Pierre Kahane.
	\newblock Fractals and random measures. 
	\newblock {\em Bull. Sci. Math.}, 1:153--159, 1993. 
	
	\bibitem[KP76]{Kahane-Peyriere-advance}
	Jean-Pierre Kahane and Jacques Peyri\`ere.
	\newblock Sur certaines martingales de {B}enoit {M}andelbrot.
	\newblock {\em Adv. Math.}, 22(2):131--145, 1976.
	
	
	\bibitem[Kau81]{Kau81}
	Robert Kaufman.
	\newblock On the theorem of {J}arn\'{\i}k and {B}esicovitch.
	\newblock {\em Acta Arith.}, 39(3):265--267, 1981.
	
	\bibitem[Ker36]{K36}
	Richard Kershner.
	\newblock On {S}ingular {F}ourier-{S}tieltjes {T}ransforms.
	\newblock {\em Amer. J. Math.}, 58(2):450--452, 1936.
	
	
	\bibitem[Kin93]{Kingman}
	John Kingman. 
	\newblock {\em Poisson Processes}. {\em Oxford Studies in Probability}, vol. 3, Oxford Science Publications. 
	\newblock 
	The Clarendon Press, Oxford University Press, New York, 1993.
	
	
	\bibitem[Kol61]{K61}
Andrey~N. Kolmogorov.
\newblock  A refinement of previous hypotheses concerning the local  structure of turbulence in a viscous incompressible fluid at high {R}eynolds
  number, 
  \newblock {\em J. Fluid Mech.}  13 (1962), p.~82-85, Original  Russian text and French translation in Mecanique de Ia Turbulence (Marseille,  1961), Paris: CNRS, 447-458.
	
	
	
	
	
	\bibitem[La14]{Laba}
	Izabella {\L}aba.
	\newblock Harmonic analysis and the geometry of fractals.
	\newblock In {\em Proceedings of the {I}nternational {C}ongress of
		{M}athematicians---{S}eoul 2014. {V}ol. {III}}, pages 315--329. Kyung Moon  Sa, Seoul, 2014.
	
	
	\bibitem[LaP09]{LP09}
	Izabella {\L}aba and Malabika Pramanik.
	\newblock Arithmetic progressions in sets of fractional dimension.
	\newblock {\em Geom. Funct. Anal.}, 19(2):429--456, 2009.
	
	
	\bibitem[LL59]{LK-59}
Lev~D. Landau, Evgenii~M. Lifshitz.
\newblock \emph{Fluid mechanics}.
\newblock Course of Theoretical Physics, Vol. 6, Pergamon Press, London-Paris-Frankfurt; Addison-Wesley  Publishing Company, Inc., Reading, MA, 1959, Translated from the Russian by  J. B. Sykes and W. H. Reid, xii+536~pages.
	
	
	\bibitem[LBM$^+$15]{LBM15}
	Alessandra Lanotte, Roberto Benzi, Shiva Malapaka, Federico Toschi, and
	Luca Biferale.
	\newblock Turbulence on a fractal {F}ourier set.
	\newblock {\em Phys. Rev. Lett.}, 115(26):264502, 5, 2015.
	
	

	
	\bibitem[Li18]{Li18}
	Jialun Li.
	\newblock Decrease of {F}ourier coefficients of stationary measures.
	\newblock {\em Math. Ann.}, 372(3-4):1189--1238, 2018.
	
	\bibitem[Li22]{Li22}
	Jialun Li.
	\newblock Fourier decay, renewal theorem and spectral gaps for random walks on
	split semisimple {L}ie groups.
	\newblock {\em Ann. Sci. \'{E}c. Norm. Sup\'{e}r. (4)}, 55(6):1613--1686, 2022.
	
	\bibitem[LNP21]{LNP}
	Jialun Li, Fr\'{e}d\'{e}ric Naud, and Wenyu Pan.
	\newblock Kleinian {S}chottky groups, {P}atterson-{S}ullivan measures, and
	{F}ourier decay.
	\newblock {\em Duke Math. J.}, 170(4):775--825, 2021.
	\newblock With an appendix by Li.
	
	\bibitem[LS20]{LS20}
	Jialun Li and Tuomas Sahlsten.
	\newblock Fourier transform of self-affine measures.
	\newblock {\em Adv. Math.}, 374:107349, 35, 2020.
	
	\bibitem[LS22]{LS22}
	Jialun Li and Tuomas Sahlsten.
	\newblock Trigonometric series and self-similar sets.
	\newblock {\em J. Eur. Math. Soc. (JEMS)}, 24(1):341--368, 2022.
	
	
	\bibitem[LL24]{LL24}
	Longhui Li and Bochen Liu.
	\newblock 	Dimension of Diophantine approximation and applications. 
	\newblock arXiv preprint.   arXiv:2409.12826v3, 2024.
	
	
	
\bibitem[LQT24]{LQT24}
	Zhaofeng Lin, Yanqi Qiu, and Mingjie Tan.
	\newblock Harmonic analysis of multiplicative chaos Part I: the proof of Garban-Vargas conjecture for 1D GMC.
	\newblock arXiv preprint. arXiv: 2411.13923v2, 2025.
	
	
\bibitem[LQT25]{LQT25}
	Zhaofeng Lin, Yanqi Qiu, and Mingjie Tan.
	\newblock Harmonic analysis of multiplicative chaos Part II: Fourier dimensions of classical multiplicative chaos measures.
	\newblock arXiv preprint. arXiv:2505.03298v2, 2025.
	
	
	
	\bibitem[Lit36]{L36}
	John Littlewood.
	\newblock {On the Fourier coefficients of functions of bounded variation}.
	\newblock {\em The Quarterly Journal of Mathematics}, os-7(1):219--226, 1936.
	
	
	\bibitem[LRR03]{LRR03}
Quansheng Liu, Emmanuel Rio, and Alain Rouault.
\newblock  Limit theorems for multiplicative
  processes. 
  \newblock {\em J. Theoret. Probab.}, vol.~16, no.~4, pp.~971--1014 (2004), 2003.
	
	
	\bibitem[Lyo85]{Ly085}
	Russell Lyons.
	\newblock Fourier-{S}tieltjes coefficients and asymptotic distribution modulo
	{$1$}.
	\newblock {\em Ann. of Math. (2)}, 122(1):155--170, 1985.
	
	\bibitem[Lyo95]{Ly95}
	Russell Lyons.
	\newblock Seventy years of {R}ajchman measures.
	\newblock In {\em Proceedings of the {C}onference in {H}onor of {J}ean-{P}ierre
		{K}ahane ({O}rsay, 1993)}, number Special Issue, pages 363--377, 1995.
	
	
	\bibitem[MRV15]{MRV15}
	Thomas Madaule, Rémi Rhodes, and Vincent Vargas. 
	\newblock Continuity estimates for the complex cascade model on the phase boundary.
	\newblock {\em arXiv preprint arXiv:1502.05655}, 2015.
	
	
	\bibitem[Mad17]{Mad}
	Thomas Madaule.
	\newblock Convergence in law for the branching random walk seen from its tip.
	\newblock {\em J. Theoret. Probab.}, 30(1):27--63, 2017.
	
	
	
	
	
	\bibitem[Man72]{M72}
	Beno\^{i}t Mandelbrot.
	\newblock Possible refinement of the lognormal hypothesis concerning
	the distribution of energy in intermittent turbulence, Statistical Models and
	Turbulence. 
	\newblock In Rosenblatt, M. and Atta, C.V. ed. {\em Lectures Notes in Physics.}
	12, 333–351. Springer–Verlag, New York, 1972.
	
	
	\bibitem[Man74a]{M74}
	Beno\^{i}t Mandelbrot.
	\newblock Intermittent turbulence in self-similar cascades: divergence of high
	moments and dimension of the carrier.
	\newblock {\em Journal of Fluid Mechanics}, 62(2):331–358, 1974.
	
	\bibitem[Man74b]{M74-2}
	Beno\^{i}t Mandelbrot.
	\newblock Multiplications aléatoires itérées et distributions invariantes par moyennes pondérées.
	\newblock {\em  C. R. Acad. Sci. Paris}, 278:289–292 and 355–358., 1974.
	
	
	\bibitem[Man76]{M76}
	Beno\^{i}t Mandelbrot.
	\newblock 
	Intermittent turbulence and fractal dimension: kurtosis and the spectral exponent 5/3+B. 
	\newblock{\em Lecture Notes in Math.}, Vol. 565, pp. 121--145,
	\newblock Springer, 1976.
	
	\bibitem[Man99]{Mandelbrot1999}
	Beno\^{i}t Mandelbrot.
	\newblock {\em Lognormal hypothesis and distribution of energy dissipation in
		intermittent turbulence}, pages 294--316.
	\newblock Springer New York, New York, NY, 1999.
	
	\bibitem[Man99b]{MA-book}
	Beno\^{i}t Mandelbrot.
	\newblock {\em Multifractals and {$1/f$} noise}.
	\newblock Selected Works of Benoit B. Mandelbrot. Springer-Verlag, New York,
	1999.
	
	
	
	\bibitem[Man02]{KP-en}
	Beno\^{i}t Mandelbrot.
	\newblock {\em Gaussian self-affinity and fractals}.
	\newblock Selected Works of Benoit B. Mandelbrot. Springer-Verlag, New York,
	2002.
	
	
	
	
	\bibitem[Mat15]{Mattila}
	Pertti Mattila.
	\newblock {\em Fourier analysis and {H}ausdorff dimension}. Volume 150 of {\em
		Cambridge Studies in Advanced Mathematics}.
	\newblock Cambridge University Press, Cambridge, 2015.
	
	\bibitem[Mat19]{Mat19}
	Pertti Mattila.
	\newblock Hausdorff dimension, projections, intersections, and {B}esicovitch
	sets.
	\newblock In {\em New trends in applied harmonic analysis. {V}ol. 2---harmonic
		analysis, geometric measure theory, and applications}, Appl. Numer. Harmon.
	Anal., pages 129--157. Birkh\"{a}user/Springer, Cham, 2019.
	
	\bibitem[Mit02]{Mit02}
	Themis Mitsis.
	\newblock A {S}tein-{T}omas restriction theorem for general measures.
	\newblock {\em Publ. Math. Debrecen}, 60(1-2):89--99, 2002.
	
	\bibitem[Moc00]{M00}
	Gerd Mockenhaupt.
	\newblock Salem sets and restriction properties of {F}ourier transforms.
	\newblock {\em Geom. Funct. Anal.}, 10(6):1579--1587, 2000.
	
	
	
	\bibitem[Mol96]{Mol96}
George, Molchan. 
\newblock Scaling exponents and multifractal dimensions for independent random cascades.
\newblock{\em Comm. Math. Phys.}, 179(3):681–702, 1996.
	
	
	
	
\bibitem[Obo62]{O-62}
Alexander~M. Obukhov.
\newblock  Some specific features of atmospheric turbulence.
\newblock { \em J. Fluid Mech.} 13 (1962), p.~77-81.

\bibitem[Ol05]{Ol05}
Lars Olsen. 
\newblock Typical Lq-dimensions of measures. 
\newblock {\em Monatshefte für Math.}, vol. 146, no. 2, pp. 143–157, Sep. 2005.
	
	
	\bibitem[PQ09]{PQ09}
	Fran\c{c}ois Parreau and Martine Queff\'{e}lec.
	\newblock {$M_0$} measures for the {W}alsh system.
	\newblock {\em J. Fourier Anal. Appl.}, 15(4):502--514, 2009.
	
	
	\bibitem[Pis16]{Pisier-book}
	Gilles Pisier.
	\newblock {\em Martingales in {B}anach spaces}. Volume 155 of {\em Cambridge
		Studies in Advanced Mathematics}.
	\newblock Cambridge University Press, Cambridge, 2016.
	
	\bibitem[Pra23]{Pra23}
	Malabika Pramanik.
	\newblock On some properties of sparse sets: a survey.
	\newblock In {\em I{CM}---{I}nternational {C}ongress of {M}athematicians.
		{V}ol. 4. {S}ections 5--8}, pages 3224--3248. EMS Press, Berlin,  2023.
	
	
	\bibitem[RS14]{RS14}
	Michał Rams and Károly Simon.
	\newblock  The dimension of projections of fractal percolations. 
	\newblock {\em J. Stat. Phys.} 154, 633–655 (2014).
	
	
	
	
	\bibitem[Rap22]{Rap22}
	Ariel Rapaport.
	\newblock On the {R}ajchman property for self-similar measures on {$\Bbb R^d$}.
	\newblock {\em Adv. Math.}, 403:Paper No. 108375, 53, 2022.
	
	\bibitem[Rau76]{Ra76}
	G\'{e}rard Rauzy.
	\newblock {\em Propri\'{e}t\'{e}s statistiques de suites arithm\'{e}tiques},
	volume~15 of {\em Collection SUP: ``Le Math\'{e}maticien''}.
	\newblock Presses Universitaires de France, Paris, 1976.
	
	\bibitem[RV14]{RV14}
	R\'{e}mi Rhodes and Vincent Vargas.
	\newblock Gaussian multiplicative chaos and applications: a review.
	\newblock {\em Probab. Surv.}, 11:315--392, 2014.
	
	\bibitem[R\'e70]{Renyi}
	Alfr\'ed R\'{e}nyi.
	\newblock {\em Probability theory}. Vol. 10 of {\em North-Holland Series
		in Applied Mathematics and Mechanics}.
	\newblock North-Holland Publishing Co., Amsterdam-London; American Elsevier
	Publishing Co., Inc., New York, 1970.
	\newblock Translated by L\'{a}szl\'{o} Vekerdi.
	
	\bibitem[Sah23]{Sah}
	Tuomas Sahlsten. 
	\newblock Fourier transforms and iterated function systems.
	\newblock {\em arXiv:2311.00585.}, 2023.
	
	\bibitem[Sal51]{Sal51}
	Raphaël Salem.
	\newblock On singular monotonic functions whose spectrum has a given
	{H}ausdorff dimension.
	\newblock {\em Ark. Mat.}, 1:353--365, 1951.
	
	\bibitem[Sol22]{Sol22}
	Boris Solomyak.
	\newblock Fourier decay for homogeneous self-affine measures.
	\newblock {\em J. Fractal Geom.}, 9(1-2):193--206, 2022.
	
	\bibitem[Shi15]{shi-zhan-book}
	Zhan Shi.
	\newblock {\em Branching random walks}, volume 2151 of {\em Lecture Notes in
		Mathematics}.
	\newblock Springer, Cham, 2015.
	
	
	\bibitem[Shm19]{Lq-survey}
	Pablo Shmerkin.
	\newblock {$L^q$} dimensions of self-similar measures and applications: a
	survey.
	\newblock In {\em New trends in applied harmonic analysis. {V}ol. 2---harmonic
		analysis, geometric measure theory, and applications}, Appl. Numer. Harmon.
	Anal., pages 257--292. Birkh\"{a}user/Springer, Cham, 2019.
	
	\bibitem[SS18]{SS18}
	Pablo Shmerkin and Ville Suomala.
	\newblock Spatially independent martingales, intersections, and applications.
	\newblock {\em Mem. Amer. Math. Soc.}, 251(1195):v+102, 2018.
	
	\bibitem[SX06]{SX06}
	Narn-Rueih Shieh and Yimin Xiao.
	\newblock Images of {G}aussian random fields: {S}alem sets and interior points.
	\newblock {\em Studia Math.}, 176(1):37--60, 2006.
	
	
	\bibitem[SV14]{SV14}
	Károly Simon and  Lajos Vágó.
	\newblock Projections of Mandelbrot percolation in higher dimensions.
	\newblock In: Fractals, Wavelets and Their Applications, pp. 175–190. Springer, Cham (2014)
	
	
	\bibitem[Ste70]{S70}
	Elias Stein.
	\newblock Singular integrals and differentiability properties of functions.
	\newblock {\em  Princeton University Press, 1970}.
	
	\bibitem[Ste93]{S93}
	Elias Stein.
	\newblock Harmonic analysis: real-variable methods, orthogonality, and oscillatory integrals.
	\newblock{\em Princeton University Press, 1993}.
	
	
	\bibitem[Var23]{V23}
	P\'{e}ter Varj\'{u}.
	\newblock Self-similar sets and measures on the line.
	\newblock In {\em I{CM}---{I}nternational {C}ongress of {M}athematicians.
		{V}ol. 5. {S}ections 9--11}, pages 3610--3634. EMS Press, Berlin, 2023.
	
	\bibitem[VY22]{VY22}
	P\'{e}ter Varj\'{u} and Han Yu.
	\newblock Fourier decay of self-similar measures and self-similar sets of
	uniqueness.
	\newblock {\em Anal. PDE}, 15(3):843--858, 2022.
	
	
	\bibitem[Way02]{W2002}
	Edward Waymire.
	\newblock Lectures on multiscale and multiplicative processes in fluid flows.
	\newblock {\em MaPhySto Lecture Notes NO 11, Aarhus University}, pages 1--75, 2002.
	
	\bibitem[Way05]{Way05}
	Edward Waymire.
	\newblock Probability \& incompressible {N}avier-{S}tokes equations: an overview of some recent developments.
	\newblock {\em Probab. Surv.}, 2:1--32, 2005.
	
	\bibitem[WW38]{WW38}
	Norbert Wiener and Aurel Wintner.
	\newblock Fourier-{S}tieltjes transforms and singular infinite
	convolutions.
	\newblock {\em Amer. J. Math.}, 60(3):513--522, 1938.
	
	\bibitem[Wol03]{Wolff}
	Thomas Wolff.
	\newblock {\em Lectures on harmonic analysis}, volume~29 of {\em University	Lecture Series}.
	\newblock American Mathematical Society, Providence, RI, 2003.
	\newblock Edited by {\L}aba and Carol Shubin.
	
	\bibitem[Yag66]{Yaglom}
Akiva M. Yaglom.
\newblock The influence of fluctuations in energy dissipation on the shape of turbulence characteristics in the inertial interval.
  \newblock {\em Doklady Akademii Nauk SSSR.} 16 (1966), p.~49-52, English  trans. Soviet Physics Doklady: 2, 26-29.
\end{thebibliography}

\end{document}